\documentclass[a4paper]{article}

\usepackage[utf8]{inputenc}
\usepackage[english]{babel}
\usepackage{cite}
\usepackage[nottoc]{tocbibind}
\usepackage{amsmath}
\usepackage{amsfonts}
\usepackage{amssymb}
\usepackage{amsthm}
\usepackage{anysize}
\usepackage{centernot}
\usepackage{mathtools}
\usepackage{graphicx}
\usepackage{dcolumn}
\usepackage{enumitem}
\usepackage{tikz}
\usepackage{ytableau}
\usepackage{genyoungtabtikz}

\DeclareMathOperator{\Lie}{Lie}
\DeclareMathOperator{\pad}{p}
\DeclareMathOperator{\suc}{s}
\DeclareMathOperator{\supp}{Supp}
\DeclareMathOperator{\spa}{Span}

\DeclareMathOperator{\ch}{char}

\DeclareMathOperator{\id}{Id}

\DeclareMathOperator{\adj}{Ad}
\DeclareMathOperator{\N}{N}
\newcommand{\cdr}[1]{\overline{#1}}

\newtheorem{theorem}{Theorem}[section]
\newtheorem*{theorem*}{Theorem}
\newtheorem{Lemma}[theorem]{Lemma}
\newtheorem{Proposition}[theorem]{Proposition}
\newtheorem{Corollary}[theorem]{Corollary}
\theoremstyle{definition}
\newtheorem{Definition}[theorem]{Definition}
\theoremstyle{example}

\newcommand{\matr}[1]{\mathbf{#1}}

\newenvironment{manualtheorem}[1]{%
  \manualtheoreminner
}{\endmanualtheoreminner}

\title{The $B$-orbits on a Hermitian symmetric variety in characteristic $2$}
\author{Michele Carmassi}

\begin{document}
\maketitle
\begin{abstract}
Let $G$ be a reductive linear algebraic group over an algebraically closed field $\mathbb{K}$ of characteristic $2$.
Fix a parabolic subgroup $P$ such that the corresponding parabolic subgroup over $\mathbb{C}$ has abelian unipotent radical and fix a Levi subgroup $L\subseteq P$.
We parametrize the orbits of a Borel $B\subseteq P$ over the Hermitian symmetric variety $G/L$ supposing the root system $\Phi$ is irreducible.
For $\Phi$ simply laced we prove a combinatorial characterization of the Bruhat order over these orbits.
We also prove a formula to compute the dimension of the orbits from combinatorial characteristics of their representatives.
\end{abstract}
\section{Introduction}

Let $G$ be a connected reductive linear algebraic group over an algebraically closed field $\mathbb{K}$.
Denote with $\mathfrak{g}$ the Lie algebra of $G$ and fix a maximal torus $T$.
This defines a decomposition
\[\mathfrak{g}=\mathfrak{t}\oplus\bigoplus_{\alpha\in\Phi}\mathfrak{u}_\alpha\]
where $\mathfrak{t}$ is the Lie algebra of $T$,  the $\mathfrak{u}_\alpha$ are the root spaces and $\Phi=\Phi(G,T)$ is the root system of $G$.
Fix a Borel subgroup $B\supseteq T$ which is the same as a basis $\Delta$ of $\Phi$ or a set of positive roots $\Phi^+\subseteq \Phi$.
Recall that the Weyl group $W$ of $\Phi$ can be identified with $N_G(T)/T$ and that the simple reflections $s_\alpha$ with $\alpha\in \Delta$ generate $W$.

We know that subsets $S\subseteq \Delta$ correspond to parabolic $P_S\supseteq B$.
Consider a parabolic $P=P_S$ such that $S=\Delta\setminus\left\lbrace\alpha_P\right\rbrace$ where $\alpha_P$ has the property that for every $\alpha=\sum_{\delta\in\Delta}a_\delta\delta\in \Phi^+$ we have $a_{\alpha_P}\leq 1$.
This implies that the unipotent radical $P^u$ of $P$ as well as the Lie algebra $\mathfrak{p}^u$ of $P^u$ are abelian.
We denote with $\Psi$ the set $\left\lbrace \alpha\in \Phi^+\mid \mathfrak{u}_\alpha\subseteq \mathfrak{p}^u\right\rbrace$.
In this hypothesis consider a Levi subgroup $L\subseteq P$.
If $\ch\mathbb{K}\neq 2$ there is an involution $\Theta\colon G\longrightarrow G$ such that $L$ is the identity component of the set of fixed points $G^\Theta$ and for this reason the quotient $G/L$ is called a \textit{Hermitian symmetric variety}.
We extend this terminology to the case of characteristic $2$, which is the case we are interested in.


The Borel subgroup $B$ acts on $G/L$ by multiplication and on $\mathfrak{p}^u$ through the adjoint representation.
In both cases the orbits are finite and we can define an order by stating that $\mathcal{O}<\mathcal{O}'$ if and only if $\mathcal{O}\subseteq \cdr{\mathcal{O}'}$ where on the right we have the Zariski closure of $\mathcal{O}'$.
This is called the \textit{Bruhat order}.

The situation is quite similar to the well-known case of a flag variety $G/B$.
In this case we have the \textit{Bruhat decomposition}
\[G/B=\bigsqcup_{v\in W} BvB/B\]
and $BvB/B<BwB/B$ if and only if $v<w$ where the order in $W$ (which is still called Bruhat order) has the following combinatorial characterization:
for every reduced expression $w=s_{\alpha_1}\cdots s_{\alpha_n}$ there must be a subsequence $1\leq i_1<\ldots<i_r\leq n$ such that $v=s_{\alpha_{i_1}}\cdots s_{\alpha_{i_r}}$.
More generally, if $P\supseteq B$ is any parabolic subgroup define $W_P$ the Weyl group of $P$ and $W^P$ the set of minimal length representatives of the quotient $W/W_P$.
Then we have the decomposition
\[G/P=\bigsqcup_{v\in W^P} BvP/P\]
and $BvP/P<BwP/P$ if and only if $v<w$.

Return now to our parabolic $P$ with abelian unipotent radical and consider the projection $\pi\colon G/L\longrightarrow G/P$.
Its fibres are isomorphic to $\mathfrak{p}^u$ (see \cite{Seitz}) and, if we denote with $\omega^P$ the longest element in $W^P$, the stabilizer in $B$ of $\omega^PP\in G/P$ is exactly $B_L=B\cap L$.
It follows that the $B_L$-orbits in $\mathfrak{p}^u$ correspond exactly to the $B$-orbits in $B\omega^PP$.
But in our hypothesis $P^u$ acts trivially on $\mathfrak{p}^u$ so the $B_L$-orbits and the $B$-orbits on $\mathfrak{p}^u$ coincide.
Moreover, the correspondence between the orbits is order-preserving.
It follows that we can see the $B$-orbits on $\mathfrak{p}^u$ as a subset of the $B$-orbits in $G/L$.
Actually, we can define for every $v\in W^P$ subgroups $B_v=v^{-1}Bv\cap P$ and actions of each $B_v$ on $\mathfrak{p}^u$ (Equation \ref{action}, page \pageref{action}) such that every $B$-orbit in $G/L$ corresponds to a unique $B_v$-orbit in $\mathfrak{p}^u$ for some $v\in W^P$.
It follows that the problem of parametrizing the $B$-orbits in $G/L$ can be reduced to parametrizing the $B_v$-orbits in $\mathfrak{p}^u$.
Note that $B_{\omega^P}=B_L$ and the action of $B_L$ coincides with the action of $B$ so among these actions there is also the adjoint action of $B$ on $\mathfrak{p}^u$ we were interested in from the beginning.

Until now, the results we gave were not dependent on the characteristic of the base field $\mathbb{K}$.
Suppose now that the characteristic is different from $2$.
Then the $B$-orbits on $\mathfrak{p}^u$ were parametrized by Panyushev \cite[Theorem 2.2]{Pan} while the orbits on the Hermitian symmetric variety were parametrized by Richardson and Springer in \cite{RS2}.
We won't use directly the parametrization by Richardson and Springer, so we won't introduce it.
We will use instead a reformulation of it by Gandini and Maffei (see \cite[Definition 1.1]{GM}) which uses as parameters the \textit{admissible pairs}.
These are pairs $(v,S)$ with $v\in W^P$ and $S$ an orthogonal subset of roots for which $v(S)<0$.
Here for orthogonal subset we mean a subset such that $\alpha$ and $\beta$ are orthogonal for every $\alpha\neq\beta\in S$.
Gandini and Maffei also proved a combinatorial characterization of the Bruhat order in $G/L$ which was itself a conjecture by Richardson and Ryan (see \cite[Conjecture 5.6.2]{RS2}).

In this paper, we will study the $B$-orbits on $G/L$ and $\mathfrak{p}^u$ in the hypothesis of $\ch(\mathbb{K})=2$.
The main objective will be to give a parametrization of the orbits both on $\mathfrak{p}^u$ and on $G/L$.
The results are divided in relations to the type of the root system $\Phi$ which we will always suppose irreducible.

Fix an element $e_\alpha\neq 0$ for every root space $\mathfrak{u}_\alpha\subseteq \Lie(G)=\mathfrak{g}$ and for $S\subseteq \Phi$ define $e_S=\sum_{\alpha\in S} e_\alpha$.
When the root system is simply laced, that is of type $\bf{ADE}$, we will obtain the following parametrization for $\mathfrak{p}^u$:
\begin{manualtheorem}{\ref{parametrizzPu}}
There is a correspondence:
\begin{align*}
\left\lbrace S \text{ orthogonal }\mid S\subseteq \Psi\right\rbrace &\leftrightarrow \left\lbrace B\text{-orbits in }\mathfrak{p}^u\right\rbrace\\
S &\mapsto Be_S
\end{align*}
\end{manualtheorem}

Now, for $S\subseteq \Psi$ orthogonal define $x_S=\exp(e_S)L/L$.
By Theorem 5.3 in \cite{Seitz}, the exponential map $\exp\colon \mathfrak{p}^u\longrightarrow P^u$ is well defined even in characteristic $2$.
Then, for $G/L$ we will prove:
\begin{manualtheorem}{\ref{parametrizzazioneADE}}
There is a correspondence:
\begin{align*}
\left\lbrace (v,S)\mid v\in W^P, v(S)<0, S\text{ orthogonal}\right\rbrace &\leftrightarrow \left\lbrace B\text{-orbits in }G/L\right\rbrace\\
(v,S)&\mapsto Bvx_S
\end{align*}
\end{manualtheorem}

These coincide with the parametrizations in \cite{GM} and we will see that the proof is similar.

For the simply laced case we will also study the Bruhat order in $G/L$ and, by restriction, on $\mathfrak{p}^u$.
The fact that the parametrization doesn't depend on the characteristic makes it easy to conjecture that the characterization of the order remains the same.
That is what actually happens, but the proof is not as straightforward as one might think because some intermediate results used by Gandini and Maffei which come from \cite{RS2} don't have a clear analogue in this setting.
In the end, we will prove the characterization by showing that the Bruhat order in characteristic $2$ and the Bruhat order in characteristic different from $2$ define the same order on the set of admissible pairs which are parameters for both.

Following \cite{RS}, to every admissible pair we can associate an involution in $W$ as
\[\sigma_{v(S)}=\prod_{\gamma\in v(S)}s_\gamma\]
Moreover, given a $w\in W$ denote with $[w]^P$ the representative  in $W^P$ of the coset $wW_P$.
The following result coincides with \cite[Theorem 1.3]{GM}:

\begin{manualtheorem}{\ref{ordineADE}}
In the simply laced case we have
\[Bux_R\leq Bvx_S\Leftrightarrow \sigma_{u(R)}\leq\sigma_{v(S)}\text{ and }[v\sigma_S]^P\leq[u\sigma_R]^P\leq u\leq v\]
In particular, the Bruhat order in the simply laced case doesn't depend on the characteristic of the base field.
\end{manualtheorem}
Note that this also gives a characterization of the order in $\mathfrak{p}^u$ if we restrict to $u=v=\omega^P$.

The type $\bf{C}$ case is more complicated because the Panyushev parametrization in orthogonal subsets fails.
To parametrize these orbits we introduce another definition of admissible pairs.
Note that for every short root $\gamma\in \Phi^+$ there is a unique long root $\delta$ such that $\delta-\gamma\in\Phi^+$.
In this case, we put $\suc(\gamma)=\delta$.

\begin{Definition}[Definition \ref{admissibleC} and Definition \ref{fulladmissibleC}]\label{parametrizzazioneC}
Let $\Phi^+(v)=\left\lbrace \alpha \in \Psi \mid v(\alpha)<0\right\rbrace$ and $S\subseteq \Phi^+(v)$. Then $S$ is \textit{full admissible} (for $v$) if $S$ can be partitioned as $X(S)\sqcup Z(S)$ where:
\begin{enumerate}

\item $X(S)$ is orthogonal;
\item every element of $Z(S)$ is a long root $\beta$ and for every $\beta\in Z(S)$ exists a $\alpha$ in $X(S)$ and $\gamma\in\Phi^+_P$ verifying $\beta=\alpha+\gamma$. This element is unique, so define $\pad(\beta)=\alpha$;
\item for every $\gamma\in X(S)$ long and $\alpha\in S$ short such that $\suc(\alpha)\in\Phi^+(v)$, $\suc(\alpha)>\gamma$ and $\alpha\nless\gamma$ we have $\suc(\alpha)\in Z(S)$;
\item for every $\gamma\in Z(S)$ and $\alpha\in S$ short such that $\suc(\alpha)\in\Phi^+(v)$, $\suc(\alpha)>\gamma$ and $\pad(\gamma)\nless\alpha$ we have $\suc(\alpha)\in Z(S)$.
\end{enumerate}
\end{Definition}

The most important result of this paper are Theorem \ref{primaparte} and \ref{secondaparte}, which together imply that in type $\matr{C}$:

\begin{theorem}
There is a correspondence:
\begin{align*}
\left\lbrace (v,S) \mid S\text{ is full admissible for }v\right\rbrace &\leftrightarrow B\text{-orbits in }G/L\\
(v,S)&\mapsto Bvx_S
\end{align*}
\end{theorem}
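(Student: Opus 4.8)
The plan is to reduce, exactly as explained in the introduction, to a question about the actions of the groups $B_v$ on $\mathfrak{p}^u$. Applying the projection $\pi\colon G/L\to G/P$ to an orbit $Bvx_S$ yields the Schubert cell $BvP/P$, and since distinct elements of $W^P$ index distinct cosets in $W/W_P$, the element $v$ is recovered from the orbit. Hence it suffices to fix $v\in W^P$ and show that $S\mapsto B_v e_S$ is a bijection between the subsets $S$ that are full admissible for $v$ and the $B_v$-orbits in $\mathfrak{p}^u$, where the identification of the fibre $\pi^{-1}(vP/P)$ with $\mathfrak{p}^u$ and the action of Equation~\ref{action} send $Bvx_S$ to the $B_v$-orbit of $e_S$. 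This bijection splits into an \emph{existence} statement — every $B_v$-orbit in $\mathfrak{p}^u$ contains a vector $e_S$ with $S$ full admissible for $v$ — and a \emph{uniqueness} statement — if $S,S'$ are full admissible for $v$ and $B_v e_S=B_v e_{S'}$ then $S=S'$. These are, respectively, Theorem~\ref{primaparte} and Theorem~\ref{secondaparte}, so the statement follows by combining them.

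For the existence part I would run a normalization procedure on an arbitrary $x=\sum_\alpha c_\alpha e_\alpha\in\mathfrak{p}^u$ in the spirit of Panyushev's argument in \cite{Pan} and its reworking in \cite{GM}: first use $B_v$ to clear the components along roots outside $\Phi^+(v)$, then repeatedly apply one-parameter unipotent subgroups $U_\gamma\subseteq B_v$ to remove further components. The decisive difference from odd characteristic is that conjugating $e_\alpha$ with $\alpha$ short by $U_\gamma$, $\gamma\in\Phi^+_P$, produces a term along $e_{\alpha+\gamma}$ — with $\alpha+\gamma$ the long root $\suc(\alpha)$ — but without the compensating quadratic term present when $\ch\mathbb{K}\neq 2$. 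As a result long-root components get created and destroyed according to combinatorial rules expressed through $\suc$ and $\pad$, and one must show that the configurations at which the procedure terminates are exactly the supports that decompose as $X(S)\sqcup Z(S)$ with $X(S)$ orthogonal (this part is as in \cite{Pan}) and with conditions (2), (3) and (4) of Definition~\ref{parametrizzazioneC} holding — since otherwise a further reduction step would still be available.

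For the uniqueness part I would show that the $B_v$-orbit of a normalized vector $e_S$ determines $S$ together with its partition $X(S)\sqcup Z(S)$: the support of a normalized representative is an orbit invariant, and conditions (2)--(4) make the splitting canonical, with $\pad$ reconstructing the map $Z(S)\to X(S)$. Combined with a verification that $e_S$ is genuinely a terminal configuration whenever $S$ is full admissible — so that its orbit does not collapse onto that of a strictly smaller parameter, which is where (3) and (4) are used for sufficiency and not merely for necessity — this yields injectivity, and together with the existence part the asserted correspondence.

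The step I expect to be the main obstacle is the bookkeeping in the existence proof. In characteristic $2$ the quadratic terms that structure the induction in the classical case are gone, so one has to track, for each long root $\beta=\suc(\alpha)$, precisely when the $e_\beta$-component can still be eliminated by a unipotent element of $B_v$ acting on $e_\alpha$ and when it is instead forced to persist by the presence of other surviving components; conditions (3) and (4) are exactly the record of the latter situations. Proving simultaneously that these conditions obstruct any further reduction \emph{and} are preserved by the reduction steps already carried out is the technical core of the argument, which is presumably why the type $\matr{C}$ case is split into the two theorems \ref{primaparte} and \ref{secondaparte} in the body of the paper.
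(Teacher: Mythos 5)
Your plan coincides with the paper's: reduce via $\pi\colon G/L\to G/P$ and the order isomorphism of Lemma \ref{orderiso} to classifying the $B_v$-orbits in $\mathfrak{p}^u$, then combine Theorem \ref{primaparte} (existence of a representative $e_S$) with Theorem \ref{secondaparte} (uniqueness for full admissible $S$). The only small imprecision is crediting Theorem \ref{primaparte} with producing a \emph{full} admissible representative, whereas it yields an admissible one and the upgrade uses the completion $\cdr{S}$ together with $B_ve_S=B_ve_{\cdr{S}}$ (Theorem \ref{completion} and the subsequent lemma) — a step your normalization discussion implicitly covers.
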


The type $\bf{B}$ case is similar to the type $\bf{C}$ case, but the combinatorics are simpler.
We obtain a parametrization which is similar to the one above and that can be proved in a more manual way (Theorem \ref{parametrizzazioneB}, page \pageref{parametrizzazioneB}).

We also show a generalization of the dimensional formula \cite[Lemma 7.2]{RS} that is true in any characteristic (Theorem \ref{theodim}, page \pageref{theodim}).
From this, we prove two formulas to compute the dimension of the orbits in the type $\bf{B}$ and $\bf{C}$ cases that depend only on the combinatorial characteristics of the representatives that parametrize the orbit.
Again, the most interesting result is the type $\bf{C}$ one which is the following:

\begin{manualtheorem}{\ref{theodimC}}
Let $v\in W^P$ and $S$ a full admissible for $v$.
Then:
\[\dim(Bvx_S)=\#\Psi+L(\sigma_{v(X(S))})-\#S_s+\#Z(S)\]
\end{manualtheorem}
Here we denote with $S_s$ the set of short roots in $S$ and with $L(\sigma_{v(X(S))})=\frac{l(\sigma_{v(X(S))})+\#S}{2}$ the length of $\sigma_{v(X(S))}$ as an involution.

The paper is organized as follows.
After a brief introduction of notations in section $2$, we recall and expand some results from \cite{GM} and \cite{RS} that are independent from the characteristic (section 3).
These facts are mostly about the specific combinatorics of the roots and the Weyl groups and will be of great use later.
Here, we will also introduce the action of the minimal parabolic subgroups.
The idea of studying this action to understand the Bruhat order is from Richardson and Springer and is central both in \cite{GM} and in \cite{RS}.
Unsurprisingly, it will also be the core tool we will use in this paper.

In section 4 we will prove the results regarding the simply laced case, while in sections 5 and 6 we will describe the parametrization of the $B$-orbits in the type $\bf{B}$ and $\bf{C}$ respectively.
The seventh and last section will be devoted to proving the dimensional formula and its corollaries.
\section{Notations}

Fix once and for all an algebraically closed field $\mathbb{K}$ of characteristic $2$ and a connected, reductive, linear algebraic group $G$ over $\mathbb{K}$.
In $G$, fix a torus $T$ and a Borel subgroup $B$ containing $T$.
They define a root system $\Phi=\Phi(G,T)$ and a basis $\Delta$ of $\Phi$, which is the same as a subset of positive roots $\Phi^+$.
We will often write $\alpha>0$ and $\alpha<0$ to mean that $\alpha\in\Phi^+$ and $\alpha\in-\Phi^+$ respectively.
The root system will always be reduced and irreducible.

For every root $\alpha\in \Phi$ we have a one-dimensional root space $\mathfrak{u}_\alpha$ in the Lie algebra $\mathfrak{g}$ of $G$ and a one-parameter subgroup $U_\alpha$ in $G$.
Recall that formally a one-parameter subgroup is a morphism of linear algebraic groups $u_\alpha\colon\mathbb{K}\longrightarrow G$ and $U_\alpha$ is just the image of this map.
Therefore, we will often use $u_\alpha(t)$ to denote an element in $U_\alpha$.
Fix once and for all representatives $e_\alpha\in \mathfrak{u}_\alpha$ and if $S\subseteq \Phi$ denote $e_S=\sum_{\alpha\in S} e_\alpha$.
As a sort of converse of this construction if $x\in\mathfrak{g}$ we know that $x$ can be written as $x=\sum_{\alpha\in\Phi}a_\alpha\alpha$ and we will call the \textit{support} of $x$, denoted as $\supp(x)$, the set $\left\lbrace \alpha\in\Phi\mid a_\alpha\neq 0\right\rbrace$.
If $M\subseteq \mathfrak{g}$ we will denote with $\supp(M)$ the union of $\supp(x)$ for all $x\in M$.

To every root system we can associate its Weyl group $W$.
It is defined as the subgroup of isometries generated by the reflections $s_\alpha$ that fix the hyperplane orthogonal to $\alpha$ and send $\alpha$ to $-\alpha$ for every root $\alpha\in\Phi$.
In this case, the Weyl group can be realized as the quotient $\N_G(T)/T$ where $\N_G(T)$ is the normalizer of $G$ in $T$.
For this reason, we will often treat a $v\in W$ as an element of $G$ by identifying it with a representative.
Every time we will do this, which representative we choose will be irrelevant.
Given a $v\in W$ we will denote with $\Phi^+(v)$ the set $\left\lbrace \alpha\in\Phi^+\mid v(\alpha)<0\right\rbrace$.

There is a correspondence between subsets of $S\subseteq \Delta$ and parabolic subgroups $P_S$ containing $B$ and in this correspondence algebraic facts about $P$ correspond to combinatorial properties of $S$.
For $\gamma\in\Phi$ and $\alpha\in\Delta$, denote with $\left[\gamma,\alpha\right]$ the coefficient of $\alpha$ in the unique expression of $\gamma$ as a linear combination of elements of $\Delta$.
There is a root $\theta\in\Phi$ called the \textit{highest root} identified by $\left[\theta,\alpha\right]\geq\left[\gamma,\alpha\right]$ for every $\gamma\in\Phi$ and $\alpha\in\Delta$.
Fix a parabolic group $P_S$ with $S=\Delta\setminus \left\lbrace\alpha_P\right\rbrace$ and $\alpha_P\in\Delta$ such that $\left[\theta,\alpha_P\right]=1$.
We will denote it $P$ without subscripts.
Note that with this hypothesis the unipotent radical $P^u$ of $P$ and its Lie algebra $\mathfrak{p}^u$ are abelian.
Moreover, every parabolic with abelian unipotent radical is obtained this way if $\Phi$ is simply laced or if $\ch\mathbb{K}\neq 2$ (see \cite[Lemma 2.2]{RRS}).
Note that these are the only parabolic subgroups of $G$ such that, for every field $\mathbb{F}$, if $G_\mathbb{F}$ is a linear algebraic group over $\mathbb{F}$ with the same root datum as $G$ and $P_\mathbb{F}$ is the parabolic subgroup corresponding to $P$, then $P_\mathbb{F}^u$ is abelian.

The parabolic $P$ admits a Levi decomposition $P=L\ltimes P^u$.
Here $L$ is called the \textit{Levi subgroup} of $P$ and is reductive.
Its root system $\Phi_P$ can be seen as the subsystem of $\Phi$ generated by $S$.
It follows that $S$ is a basis for $\Phi_P$ and we will denote it $\Delta_P$.
It is easy to see that 
\[\begin{array}{lcr}\Phi_P=\left\lbrace \gamma\in\Phi\mid \left[\gamma,\alpha_P\right]=0\right\rbrace & & \Phi^+\setminus \Phi_P=\left\lbrace \gamma\in\Phi\mid \left[\gamma,\alpha_P\right]=1\right\rbrace
\end{array}\]
Denote $\Psi=\Phi^+\setminus \Phi_P$ and note that if $\gamma,\delta\in\Psi$ and $\gamma-\delta\in\Phi$, then $\gamma-\delta\in \Phi_P$.
Also, we have
\[\mathfrak{p}^u=\bigoplus_{\gamma\in\Psi}\mathfrak{u}_\gamma\]

The choice of a parabolic subgroup $P_S=P$ gives naturally two subsets of the Weyl group.
The first is the subgroup of isometries generated by $\left\lbrace s_\alpha \mid \alpha\in S\right\rbrace$, which is the Weyl group associated to the Levi subgroup $L$.
We will denote this with $W_P$.

The second is the set of isometries 
\[W^P=\left\lbrace w\in W\mid w(\alpha)>0 \text{ for every }\alpha\in S\right\rbrace\]
They are related by $W=W^PW_P$.
Note that $w\in W^P$ if and only if $\Phi^+(w)\subseteq \Psi$ and there is a maximal element in $W^P$ which we note with $\omega^P$ and that is identified by $\Phi^+(\omega^P)=\Psi$.

\section{General results}
As said, $\mathbb{K}$ will be a characteristic $2$ field.
Note that if $\Phi$ is of type $\bf{G}_2$ then $G$ admits no parabolic subgroup of the type we are looking for, so we can suppose without loss of generality that $\Phi$ is not of this type.
Then we know that if $\mathbb{K}$ has odd or zero characteristic and $\alpha,\beta\in\Phi$ we have
\[u_\alpha(t).e_\beta=e_\beta+ate_{\beta+\alpha}+bt^2e_{\beta+2\alpha}\]
where $a\neq 0$ if and only if $\beta+\alpha\in\Phi$ and $b\neq 0$ if and only if $\beta+2\alpha\in\Phi$.
This is not true in characteristic $2$ and that's the ultimate reason for which the characterization by Panyushev in \cite{Pan} doesn't hold in this case.
The next result follows from \cite[Theorem 25.2]{Humphreys}.
\begin{Lemma}\label{IET}

Suppose $\alpha$, $\beta\in\Phi$ with $\alpha\neq -\beta$ and $u_\alpha(t)$ the one-parameter subgroup relative to $\alpha$.
Then
$$u_\alpha(t).e_\beta=e_\beta+ate_{\beta+\alpha}+bt^2e_{\beta+2\alpha}$$
where $a$ and $b$ don't depend on $t$ and:
\begin{enumerate}
\item $a=0$ if and only if $\beta+\alpha\notin \Phi$ or $\beta+\alpha\in \Phi$ and $\beta-\alpha\in \Phi$;
\item $b=0$ if and only if $\beta+2\alpha\notin \Phi$.
\end{enumerate}
\end{Lemma}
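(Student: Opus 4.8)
The plan is to read the statement off the explicit form of the action of a one-parameter unipotent subgroup on a root vector recorded in \cite[Theorem 25.2]{Humphreys}, combined with the single extra fact that, $\Phi$ being irreducible and (as already observed) not of type $\mathbf{G}_2$, every $\alpha$-string in $\Phi$ has length at most $3$. Concretely, fix $\alpha,\beta\in\Phi$ with $\alpha\neq-\beta$ and write $\beta-p\alpha,\dots,\beta+q\alpha$ ($p,q\ge 0$) for the $\alpha$-string through $\beta$. By \cite[Theorem 25.2]{Humphreys} one has
\[
u_\alpha(t).e_\beta=\sum_{i=0}^{q}c_i\,t^i\,e_{\beta+i\alpha},
\]
where $c_0=1$, each $c_i$ is an integer equal up to sign to $\binom{p+i}{i}$, and $c_1$ is the structure constant $N_{\alpha,\beta}$, of absolute value $p+1$, whenever $\beta+\alpha\in\Phi$. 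Since $p+q\le 2$ only the exponents $i=0,1,2$ can survive, so the action already has the stated shape with $a=c_1$ and $b=c_2$ (with $a=0$, resp.\ $b=0$, whenever $\beta+\alpha\notin\Phi$, resp.\ $\beta+2\alpha\notin\Phi$, the corresponding term then being absent).

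The two assertions are now a short inspection of $p$ and $q$. For (1): if $\beta+\alpha\notin\Phi$ then $a=0$, which is the first alternative; if $\beta+\alpha\in\Phi$ then $q\ge 1$, hence $p\le 1$, and $a=\pm(p+1)$, which in characteristic $2$ vanishes precisely when $p=1$, i.e.\ precisely when $\beta-\alpha\in\Phi$ --- this is the dichotomy in (1). For (2): if $\beta+2\alpha\notin\Phi$ then $b=0$; if $\beta+2\alpha\in\Phi$ then $\beta+\alpha\in\Phi$ as well (root strings are unbroken), so $q\ge 2$, and together with $p+q\le 2$ this forces $p=0$, whence $b=\pm\binom{2}{2}=\pm 1\neq 0$. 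The boundary cases $\beta=\alpha$ and $\beta\pm\alpha\notin\Phi$ are subsumed, since then the relevant coefficients vanish automatically.

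I do not expect a genuine obstacle here: the content of the lemma is simply that the root combinatorics is rigid enough that $a$ and $b$ are, up to sign, among $0$, $1$ and $2$, and $2=0$ in characteristic $2$. The only two points that need a moment's care are that the exponent $i$ in $u_\alpha(t).e_\beta$ never exceeds $2$ --- which is exactly the exclusion of type $\mathbf{G}_2$ --- and that the leading coefficient $c_1$ really is $\pm(p+1)$ and not some other normalization of the structure constant; both are immediate from \cite[Theorem 25.2]{Humphreys} once type $\mathbf{G}_2$ is set aside.
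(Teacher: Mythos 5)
Your argument is correct and is essentially the paper's own: the paper proves nothing beyond invoking \cite[Theorem 25.2]{Humphreys}, and your write-up just makes explicit how the lemma follows from that result — the coefficients are $\pm\binom{p+i}{i}$ along the $\alpha$-string, string length at most $3$ outside type $\mathbf{G}_2$ caps the exponent at $2$, and reduction mod $2$ kills exactly the coefficient $\pm(p+1)$ when $p=1$, i.e.\ when $\beta-\alpha\in\Phi$, while $b=\pm 1$ survives whenever $\beta+2\alpha\in\Phi$.
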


In this section we will cite many results from \cite{GM} regarding root systems and Weyl groups.
They clearly don't depend on the characteristic of $\mathbb{K}$ and will be of great importance in the next sections.

\begin{Proposition}[Proposition 2.5, \cite{GM}]
Let $v\in W^P$ and let $\alpha\in \Delta$ such that $s_\alpha v<v$. 
Put $\beta=-v^{-1}(\alpha)$.
Then $\beta$ is maximal in $\Phi^+(v)$ and minimal in $\Psi\setminus \Phi^+(s_\alpha v)$.

Vice versa:
\begin{enumerate}
\item if $\beta$ is maximal in $\Phi^+(v)$ then $\alpha=-v(\beta)\in \Delta$ and $s_\alpha v<v$;
\item if $\beta$ is minimal in $\Psi\setminus \Phi^+(v)$ then $\alpha=v(\beta)\in \Delta$ and $s_\alpha v>v$.
\end{enumerate}
\end{Proposition}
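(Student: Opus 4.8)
The plan is to reduce everything to root combinatorics, using three standard facts together with the two structural features peculiar to the present situation. The standard facts: for a simple root $\alpha$ one has $s_\alpha v<v$ iff $v^{-1}(\alpha)<0$, and $s_\alpha v>v$ iff $v^{-1}(\alpha)>0$; the reflection $s_\alpha$ stabilizes $\Phi^+\setminus\{\alpha\}$ and exchanges $\alpha$ with $-\alpha$; and a non-simple positive root is always a sum of two positive roots. The structural features: $v\in W^P$ forces $\Phi^+(v)\subseteq\Psi$ and $v(\Delta_P)\subseteq\Phi^+$, and no root of $\Phi$ has $\alpha_P$-coefficient exceeding $1$, so $\Psi$ and $\Phi_P$ are exactly the positive roots with $[\cdot,\alpha_P]$ equal to $1$ and to $0$ respectively; in particular, if $\mu,\nu\in\Psi$ with $\nu$ above $\mu$ in the root order, then $\nu-\mu$ is a nonnegative combination of $\Delta_P$. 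These last two features are precisely where the content sits — for a general parabolic the maximal roots of $\Phi^+(v)$ need not be sent to simple roots — so the whole difficulty is to arrange the case analyses so that both are genuinely invoked; no single computation is hard.

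For the forward direction, put $\beta=-v^{-1}(\alpha)$. From $v^{-1}(\alpha)<0$ we get $\beta\in\Phi^+$, and $v(\beta)=-\alpha$ gives $\beta\in\Phi^+(v)\subseteq\Psi$; moreover $(s_\alpha v)(\beta)=s_\alpha(-\alpha)=\alpha>0$, so $\beta\notin\Phi^+(s_\alpha v)$. For maximality of $\beta$ in $\Phi^+(v)$: if $\beta'\in\Phi^+(v)$ is strictly above $\beta$, then $\beta'-\beta$ is a nonnegative combination of $\Delta_P$, so $v(\beta')=-\alpha+v(\beta'-\beta)$ with $v(\beta'-\beta)$ a nonnegative combination of the positive roots $v(\delta)$, $\delta\in\Delta_P$; since $v(\beta')<0$, the coefficient of every simple root other than $\alpha$ in $v(\beta'-\beta)$ must vanish, so $v(\beta'-\beta)$ is a positive multiple of $\alpha$ and $v(\beta')$ a nonnegative multiple of $\alpha$, contradicting $v(\beta')<0$ together with $\Phi$ being reduced. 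For minimality of $\beta$ in $\Psi\setminus\Phi^+(s_\alpha v)$: if $\gamma\in\Psi\setminus\Phi^+(s_\alpha v)$ is strictly below $\beta$, then $\beta-\gamma$ is a nonnegative combination of $\Delta_P$ and $v(\gamma)=-\alpha-v(\beta-\gamma)$ has all coefficients $\le 0$, so $v(\gamma)<0$ and $\gamma\in\Phi^+(v)$; also $v(\gamma)\ne-\alpha$ (otherwise $\gamma=\beta$), hence $(s_\alpha v)(\gamma)=s_\alpha(v(\gamma))<0$, contradicting $\gamma\notin\Phi^+(s_\alpha v)$.

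For the two converses the common step is to show that the relevant image of $\beta$ is a simple root; the length statement is then automatic, since $-v(\beta)=\alpha$ gives $v^{-1}(\alpha)=-\beta<0$ (so $s_\alpha v<v$) and $v(\beta)=\alpha$ gives $v^{-1}(\alpha)=\beta>0$ (so $s_\alpha v>v$). In case (1), $\beta$ maximal in $\Phi^+(v)$ gives $\alpha:=-v(\beta)\in\Phi^+$; if $\alpha$ were not simple, write $\alpha=\alpha_1+\alpha_2$ with $\alpha_i\in\Phi^+$ and set $\xi_i=-v^{-1}(\alpha_i)$, so $\xi_1+\xi_2=\beta$. Both $\xi_i<0$ would force $\beta<0$; one positive and one negative, say $\xi_1>0>\xi_2$, would make $\xi_1=\beta-\xi_2$ strictly above $\beta$ and still in $\Phi^+(v)$, against maximality; both positive puts $\xi_1,\xi_2\in\Phi^+(v)\subseteq\Psi$, forcing $[\beta,\alpha_P]=2$, impossible. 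In case (2), $\beta$ minimal in $\Psi\setminus\Phi^+(v)$ gives $\beta\in\Psi$ and $\alpha:=v(\beta)>0$; if $\alpha$ were not simple, write $\alpha=\alpha_1+\alpha_2$, $\xi_i=v^{-1}(\alpha_i)$, $\xi_1+\xi_2=\beta$. Both $\xi_i<0$ is impossible; if both are positive then $[\xi_1,\alpha_P]+[\xi_2,\alpha_P]=1$, so one of them, say $\xi_1$ with $[\xi_1,\alpha_P]=1$, lies in $\Psi$, satisfies $v(\xi_1)=\alpha_1>0$, and is strictly below $\beta$, against minimality; if exactly one, say $\xi_2$, is negative, then $-\xi_2=\xi_1-\beta$ lies in $\Phi^+(v)\subseteq\Psi$, so $[-\xi_2,\alpha_P]=1$ and hence $[\xi_1,\alpha_P]=2$, impossible. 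So $\alpha\in\Delta$ in both cases, and the proof is complete. The main obstacle, as flagged, is not any single step but keeping the bookkeeping of $\alpha_P$-coefficients and of the inclusion $\Phi^+(v)\subseteq\Psi$ straight throughout the converse case analyses, since without either of them the statement is false.
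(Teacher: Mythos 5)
Your argument is correct and complete: the forward direction and both converses are handled by exactly the right bookkeeping, namely $\Phi^+(v)\subseteq\Psi$, the fact that differences of elements of $\Psi$ lie in the span of $\Delta_P$ (so are nonnegative combinations of $\Delta_P$ when comparable), and the bound $[\gamma,\alpha_P]\leq 1$, combined with the standard criteria $s_\alpha v<v\Leftrightarrow v^{-1}(\alpha)<0$ and the decomposition of a non-simple positive root as a sum of two positive roots. Note that the paper itself gives no proof of this statement --- it is quoted verbatim from \cite[Proposition 2.5]{GM} as a characteristic-independent fact about root systems and Weyl groups --- so there is nothing in the text to compare against; your write-up is a sound self-contained verification of the cited result, in the same elementary combinatorial spirit one would expect from the source.
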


We denote with $<$ the Bruhat order on $W$.
\begin{Lemma}\label{propord}
Let $u,v\in W$ and suppose $u<v$.
For every $\alpha\in \Delta$ we have:
\begin{enumerate}
\item if $s_\alpha u>u$ and $s_\alpha v>v$ then $s_\alpha u<s_\alpha v$;
\item if $s_\alpha u<u$ and $s_\alpha v<v$ then $s_\alpha u<s_\alpha v$;
\item if $s_\alpha u>u$ and $s_\alpha v<v$ then $u\leq s_\alpha v$ and $s_\alpha u\leq v$.
\end{enumerate}
\end{Lemma}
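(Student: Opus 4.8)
The plan is to deduce all three statements from the subword characterization of the Bruhat order on $W$: $x\le y$ holds if and only if some (equivalently, every) reduced expression of $y$ has a subexpression that is a reduced expression of $x$. Parts (1) and (2) express that left multiplication by $s_\alpha$ preserves the strict order between $u$ and $v$ when $s_\alpha$ is a descent of neither, respectively of both; part (3) is essentially the Lifting Property for Coxeter groups, and I would still include a short self-contained argument.

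Write $s=s_\alpha$. In parts (2) and (3), where $sv<v$, I fix a reduced expression $v=s\,\underline w$ with $\underline w$ a reduced expression of $sv$; in part (1), where $sv>v$, I instead write $sv=s\,\underline w$ with $\underline w$ a reduced expression of $v$. Since $u<v$, I choose a subexpression $\tau$ spelling $u$ inside the chosen reduced word for $v$ (inside $\underline w$ in part (1)). The argument then hinges on whether $\tau$ uses the distinguished leading letter $s$, and the hypothesis comparing $\ell(s_\alpha u)$ with $\ell(u)$ decides this. In part (1), $\tau$ sits inside $\underline w$, and because $su>u$ the word $s\tau$ is reduced and is a subexpression of the reduced word $s\underline w$ for $sv$, so $su\le sv$; this is strict since $su=sv$ would force $u=v$. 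In part (2), if $\tau$ uses the leading $s$ we get $su\le sv$ directly, while if it does not then $u\le sv$ and hence $su<u\le sv$; either way $su<sv$. In part (3), the assumption $su>u$ rules out $\tau$ using the leading $s$ (that would give $\ell(su)=\ell(u)-1$), so $\tau$ lies inside $\underline w$ and $u\le sv$; prepending $s$ to $\tau$, legitimate since $su>u$, then exhibits $su$ as a subexpression of $v=s\underline w$, so $su\le v$.

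The only point requiring care is the bookkeeping in part (3): one must check that the subexpression $\tau$ cannot involve the leading $s$ --- this is exactly where $s_\alpha u>u$ enters --- and that reattaching $s$ to $\tau$ keeps it reduced; once this is in place both inequalities $u\le sv$ and $su\le v$ follow. No step uses $\ch\mathbb{K}=2$; this is a purely Coxeter-theoretic fact about $W$.
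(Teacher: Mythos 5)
Your argument is correct: all three parts follow exactly as you describe from the subword characterization of the Bruhat order, and the only cosmetic omission is that in part (2), in the case where the subexpression $\tau$ does use the leading $s$, strictness again comes from the remark $s_\alpha u=s_\alpha v\Rightarrow u=v$, which you stated only in part (1). For comparison, the paper offers no proof at all: it records this lemma as a standard property of the Bruhat order (essentially the lifting property for Coxeter groups, as in the references it relies on), so your self-contained subword derivation is a legitimate, purely Coxeter-theoretic substitute and, as you note, involves no hypothesis on $\ch\mathbb{K}$.
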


Following \cite{RS} we will associate to every orbit a particular involution in $W$.
Note that many results in \cite{RS} are based on the characterization of $L$ as the identity component of the fixed points of an involution $\Theta\colon G\longrightarrow G$.
Such an involution doesn't necessarily exists when the characteristic of $\mathbb{K}$ is $2$ and we will give an alternative proof when the proof from Richardson and Springer is not suitable.

Now, let $\mathcal{I}\subseteq W$ be the subset of all involutions.
We can define an action of the set of simple reflections $s_\alpha$, for $\alpha\in \Delta$ on $\mathcal{I}$ in the following way:
$$s_\alpha\circ\sigma =\left\lbrace \begin{array}{ll}
                  s_\alpha\sigma & \text{if } s_\alpha\sigma=\sigma s_\alpha\\
                  s_\alpha\sigma s_\alpha &\text{if } s_\alpha\sigma\neq\sigma s_\alpha\\
                  
                \end{array}
              \right.
$$
Note that $s_\alpha\circ\sigma=\tau$ if and only if $s_\alpha\circ\tau=\sigma$.
\begin{Lemma}[Lemma 3.1, \cite{GM}]
Let $\alpha\in\Delta$ and $\sigma\in \mathcal{I}$.
Then $ s_\alpha \circ \sigma$ and $\sigma$ are always comparable.
Moreover, $s_\alpha \circ \sigma>\sigma$ if and only if $s_\alpha \sigma>\sigma$.
\end{Lemma}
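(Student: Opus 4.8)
The plan is to reduce the whole statement to a single fact about lengths: for $\sigma\in\mathcal{I}$ and $s=s_\alpha$ with $s\sigma\neq\sigma s$, one has $l(s\sigma s)=l(\sigma)+2$ if $s\sigma>\sigma$ and $l(s\sigma s)=l(\sigma)-2$ if $s\sigma<\sigma$. Granting this, the lemma follows quickly. If $s\sigma=\sigma s$ then $s_\alpha\circ\sigma=s\sigma$ is linked to $\sigma$ by a covering relation, so the two are comparable and trivially $s_\alpha\circ\sigma>\sigma$ exactly when $s\sigma>\sigma$. If $s\sigma\neq\sigma s$ and $s\sigma>\sigma$, then $s_\alpha\circ\sigma=s\sigma s$, and by the length fact $l(s\sigma s)=l(s\sigma)+1$; since raising the length by one under right multiplication by a simple reflection is a covering relation, we obtain a saturated chain $\sigma\lessdot s\sigma\lessdot s\sigma s$, hence $\sigma<s_\alpha\circ\sigma$. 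So the two are comparable and both sides of the claimed equivalence hold. The remaining case $s\sigma\neq\sigma s$, $s\sigma<\sigma$ is symmetric: the chain reverses, giving $s_\alpha\circ\sigma<\sigma$, and both sides of the equivalence fail. In every case the assertions of the lemma hold.

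It remains to establish the length fact, and this is where the only real work lies. First I would record that since $\sigma=\sigma^{-1}$ we have $l(s\sigma)=l((s\sigma)^{-1})=l(\sigma s)$, so in particular $s\sigma>\sigma$ if and only if $\sigma s>\sigma$ (and likewise for $<$). Because $l(s\sigma s)$ has the same parity as $l(\sigma)$ and $|l(s\sigma s)-l(\sigma)|\le 2$, the only possibilities are $l(s\sigma s)\in\{l(\sigma)-2,\ l(\sigma),\ l(\sigma)+2\}$; and once the middle value is ruled out, the bound $l(s\sigma s)\in\{l(s\sigma)-1,\ l(s\sigma)+1\}$ forces the sign to be $+2$ when $s\sigma>\sigma$ and $-2$ when $s\sigma<\sigma$. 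The crux — and the step I expect to be the main obstacle — is therefore to exclude $l(s\sigma s)=l(\sigma)$ under the hypothesis $s\sigma\neq\sigma s$.

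To rule this out, take first the case $s\sigma>\sigma$ and fix a reduced expression $\sigma=t_1\cdots t_k$; then $l(\sigma s)=l(s\sigma)=k+1$, so $\sigma s=t_1\cdots t_k s$ is again reduced. If $l(s\sigma s)=l(\sigma)=k$, then left multiplication by $s$ lowers the length of $\sigma s$, so by the Strong Exchange Condition $s\sigma s$ is obtained from $t_1\cdots t_k s$ by deleting one letter: deleting the final $s$ gives $s\sigma s=\sigma$, hence $s\sigma=\sigma s$, contrary to hypothesis; deleting some $t_i$ gives $s\sigma=(s\sigma s)s=t_1\cdots\widehat{t_i}\cdots t_k$, of length $k-1<k+1=l(s\sigma)$, a contradiction. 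Hence $l(s\sigma s)\neq l(\sigma)$. The case $s\sigma<\sigma$ is handled by the same device, now applied to a reduced expression of $s\sigma s$ that begins with $s$ — such a word exists because $s$ is a left descent of $s\sigma s$, and right multiplication by $s$ lowers its length, so Strong Exchange again forces the deletion either to produce the identity $s\sigma=\sigma s$ or to produce an expression for $\sigma$ that is too short. This completes the argument; we note that this length behaviour of the ``$\circ$''\nobreakdash-action on involutions is classical and could equally well be quoted from the literature.
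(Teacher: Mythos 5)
Your argument is correct. Note, though, that the paper does not prove this lemma at all: it is quoted verbatim as Lemma 3.1 of the cited work of Gandini and Maffei, which in turn rests on the standard theory of the $\circ$-action on involutions going back to Richardson and Springer. So what you have produced is a self-contained, elementary replacement for a citation. Your reduction is sound: everything follows from the length dichotomy $l(s\sigma s)=l(\sigma)\pm 2$ when $s\sigma\neq\sigma s$, combined with the standard facts that $w$ and $sw$ (resp.\ $w$ and $ws$) are Bruhat-comparable with the inequality dictated by length, and with $l(s\sigma)=l(\sigma s)$, which is exactly where the involution hypothesis enters (for general $w$ one can have $l(sws)=l(w)$ with $sws\neq w$, e.g.\ $w=st$ for non-commuting simple reflections). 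Your exclusion of the middle value via the Exchange Condition is also correct in both cases: in the case $s\sigma>\sigma$, applying exchange to the reduced word $t_1\cdots t_k s$ for $\sigma s$ forces either deletion of the final $s$, giving $s\sigma s=\sigma$ and hence commutation, or deletion of some $t_i$, giving $l(s\sigma)\leq k-1$ against $l(s\sigma)=k+1$; the descending case works symmetrically with a reduced word for $s\sigma s$ beginning with $s$. The trade-off is simply length versus provenance: the paper (and [GM]) treats this as known input from the combinatorics of twisted involutions, while your version makes the dependence explicit and keeps the whole lemma at the level of the Exchange Condition, which is arguably preferable in a characteristic-free setting where one wants to be sure no hidden use of the involution $\Theta$ of $G$ is being made.
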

Note that if $s_\alpha\sigma\neq \sigma s_\alpha$ then $s_\alpha\sigma s_\alpha>s_\alpha\sigma >\sigma$ and $s_\alpha\sigma s_\alpha>\sigma s_\alpha >\sigma$.

The action on involutions interacts with the Bruhat orders with properties similar to the one in Lemma \ref{propord}.

\begin{Lemma}[Lemma 3.2, \cite{GM}]\label{Bruhatcirc}
Let $\sigma,\tau\in \mathcal{I}$ and suppose $\sigma<\tau$.
For every $\alpha\in \Delta$ we have:
\begin{enumerate}
\item if $s_\alpha\circ\sigma>\sigma$ and $s_\alpha\circ\tau>\tau$ then $s_\alpha\circ\sigma<s_\alpha\circ\tau$;
\item if $s_\alpha\circ\sigma<\sigma$ and $s_\alpha\circ\tau<\tau$ then $s_\alpha\circ\sigma<s_\alpha\circ\tau$;
\item if $s_\alpha\circ\sigma>\sigma$ and $s_\alpha\circ\tau<\tau$ then $s_\alpha\circ\sigma\leq\tau$ and $\sigma\leq s_\alpha\circ \tau$.
\end{enumerate}
\end{Lemma}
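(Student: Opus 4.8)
The strategy is to reduce every assertion to the corresponding statement for the ordinary Bruhat order on $W$, namely Lemma~\ref{propord}, together with its right-handed counterpart: applying Lemma~\ref{propord} to $u^{-1}<v^{-1}$ (the Bruhat order is invariant under $w\mapsto w^{-1}$) and using $(us_\alpha)^{-1}=s_\alpha u^{-1}$, one obtains for $u<v$ and $\alpha\in\Delta$ that $us_\alpha<vs_\alpha$ whenever $us_\alpha$ and $vs_\alpha$ lie on the same side of $u$ and $v$, and that $u\le vs_\alpha$ and $us_\alpha\le v$ whenever $us_\alpha>u$ and $vs_\alpha<v$. Now $s_\alpha\circ\sigma$ equals $s_\alpha\sigma$ when $s_\alpha\sigma=\sigma s_\alpha$ and $s_\alpha\sigma s_\alpha$ otherwise, so each claim unfolds into a statement about one or two ordinary left and right multiplications by $s_\alpha$; I would therefore split into cases according to whether $s_\alpha$ commutes with $\sigma$ and with $\tau$. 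Two facts are used throughout: Lemma~3.1 of~\cite{GM}, which lets us replace $s_\alpha\circ\sigma>\sigma$ by $s_\alpha\sigma>\sigma$; and the remark following it, that $s_\alpha\sigma s_\alpha>s_\alpha\sigma>\sigma$ and $s_\alpha\sigma s_\alpha>\sigma s_\alpha>\sigma$ when $s_\alpha\sigma\neq\sigma s_\alpha$, so that in the non-commuting case $s_\alpha\circ\sigma>\sigma$ holds automatically.

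I would begin with part~(2), which is quickest: the hypothesis $s_\alpha\circ\sigma<\sigma$ rules out $s_\alpha\sigma\neq\sigma s_\alpha$ (that would force $s_\alpha\circ\sigma=s_\alpha\sigma s_\alpha>\sigma$), hence $s_\alpha\sigma=\sigma s_\alpha$, and likewise $s_\alpha\tau=\tau s_\alpha$; then $s_\alpha\circ\sigma=s_\alpha\sigma<\sigma$ and $s_\alpha\circ\tau=s_\alpha\tau<\tau$, and Lemma~\ref{propord}(2) applied to $\sigma<\tau$ gives $s_\alpha\sigma<s_\alpha\tau$. For part~(1) there are four cases. If $s_\alpha$ commutes with both $\sigma$ and $\tau$, this is exactly Lemma~\ref{propord}(1). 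If it commutes with $\sigma$ but not with $\tau$, then Lemma~\ref{propord}(1) gives $s_\alpha\sigma<s_\alpha\tau$, and $s_\alpha\tau<s_\alpha\tau s_\alpha=s_\alpha\circ\tau$, so transitivity finishes. If it commutes with $\tau$ but not with $\sigma$, pass from $s_\alpha\sigma<s_\alpha\tau$ to the inverses, $\sigma s_\alpha<s_\alpha\tau$; since $s_\alpha(\sigma s_\alpha)=s_\alpha\sigma s_\alpha>\sigma s_\alpha$ while $s_\alpha(s_\alpha\tau)=\tau<s_\alpha\tau$, Lemma~\ref{propord}(3) yields $s_\alpha\sigma s_\alpha\le s_\alpha\tau$, and this is strict because equality would give $\tau=\sigma s_\alpha$, whose square is trivial only if $s_\alpha\sigma=\sigma s_\alpha$. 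If $s_\alpha$ commutes with neither, apply Lemma~\ref{propord}(1) to get $s_\alpha\sigma<s_\alpha\tau$ and then its right-handed form (using $s_\alpha\sigma s_\alpha>s_\alpha\sigma$ and $s_\alpha\tau s_\alpha>s_\alpha\tau$) to get $s_\alpha\sigma s_\alpha<s_\alpha\tau s_\alpha$.

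For part~(3), the hypothesis $s_\alpha\circ\tau<\tau$ again forces $s_\alpha\tau=\tau s_\alpha$, so $s_\alpha\circ\tau=s_\alpha\tau<\tau$, while $s_\alpha\circ\sigma>\sigma$ gives $s_\alpha\sigma>\sigma$. If $s_\alpha$ also commutes with $\sigma$, then $s_\alpha\circ\sigma=s_\alpha\sigma$ and both desired inequalities are precisely Lemma~\ref{propord}(3) applied to $\sigma<\tau$. If not, $s_\alpha\circ\sigma=s_\alpha\sigma s_\alpha$; the inequality $\sigma\le s_\alpha\tau=s_\alpha\circ\tau$ is again Lemma~\ref{propord}(3), while for $s_\alpha\sigma s_\alpha\le\tau$ I would first obtain $s_\alpha\sigma\le\tau$ from Lemma~\ref{propord}(3), sharpen it to $s_\alpha\sigma<\tau$ (equality would make $s_\alpha\sigma$ an involution, forcing $s_\alpha\sigma=\sigma s_\alpha$), and then right-multiply by $s_\alpha$: the right-handed Lemma~\ref{propord}(3) with $s_\alpha\sigma s_\alpha>s_\alpha\sigma$ and $\tau s_\alpha=s_\alpha\tau<\tau$ gives $s_\alpha\sigma s_\alpha\le\tau$. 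The main obstacle is purely organizational: keeping the roughly eight sub-cases straight, deriving the right-handed version of Lemma~\ref{propord} by inversion, and applying in each boundary case the small observation that if $\sigma$ is an involution and $s_\alpha\sigma$ is also an involution then $s_\alpha$ and $\sigma$ commute---this is exactly what upgrades the $\le$'s produced by Lemma~\ref{propord}(3) to the strict inequalities required in part~(1).
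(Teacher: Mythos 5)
The paper does not prove this lemma at all --- it is quoted verbatim from \cite{GM} (Lemma 3.2 there), so there is no internal proof to compare with; your attempt has to stand on its own, and as written it has a genuine gap. The gap is the dichotomy you invoke in parts (2) and (3): you claim that $s_\alpha\circ\tau<\tau$ \emph{forces} $s_\alpha\tau=\tau s_\alpha$, on the grounds that ``in the non-commuting case $s_\alpha\circ\sigma>\sigma$ holds automatically.'' That is false. The remark after Lemma 3.1 is only valid under the implicit hypothesis $s_\alpha\sigma>\sigma$: the correct general statement is that when $s_\alpha\sigma\neq\sigma s_\alpha$ the length of $s_\alpha\sigma s_\alpha$ differs from that of $\sigma$ by $2$ \emph{in either direction}, namely $s_\alpha\sigma s_\alpha>s_\alpha\sigma>\sigma$ if $s_\alpha\sigma>\sigma$, and $s_\alpha\sigma s_\alpha<s_\alpha\sigma<\sigma$ if $s_\alpha\sigma<\sigma$. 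A concrete counterexample to your dichotomy: in type $\mathbf{A}_2$ take $\tau=s_1s_2s_1$ (the longest element, an involution) and $\alpha=\alpha_1$; then $s_1\tau\neq\tau s_1$ and yet $s_1\circ\tau=s_1\tau s_1=s_2<\tau$. Consequently your proofs of parts (2) and (3) only treat the commuting descent case and silently omit the case $s_\alpha\circ\tau=s_\alpha\tau s_\alpha<\tau$ with $s_\alpha\tau\neq\tau s_\alpha$ (and, in (2), the analogous case for $\sigma$), which is a substantial portion of the statement.

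Part (1) of your argument is essentially sound: the four-case analysis, the passage to inverses to get the right-handed version of Lemma \ref{propord}, and the upgrades from $\leq$ to $<$ via the observation that $\sigma s_\alpha$ is an involution only if $s_\alpha$ and $\sigma$ commute, all check out. The same toolkit would in fact handle the missing cases of (2) and (3) --- e.g.\ in the non-commuting descent case one writes $s_\alpha\circ\tau=s_\alpha\tau s_\alpha$ with $s_\alpha\tau s_\alpha<s_\alpha\tau<\tau$ and applies Lemma \ref{propord} once on the left and once on the right, exactly as you did for the double non-commuting ascent in (1) --- but as submitted the argument for (2) and (3) rests on a false reduction and is incomplete.
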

We define the \textit{length} of an involution $\sigma$ as
$$L(\sigma)=\frac{l(\sigma)+\lambda(\sigma)}{2}$$
where $l(\sigma)$ is the usual length in $W$ and $\lambda(\sigma)$ is the dimension of the $(-1)$-eigenspace of $\sigma$ on $\Phi\otimes \mathbb{R}$.

\begin{Lemma}\label{invlength}
Let $\alpha\in\Delta$ and $\sigma\in \mathcal{I}$.

$$L(s_\alpha\circ\sigma)=\left\lbrace \begin{array}{ll}
										L(\sigma)+1 & \text{if }s_\alpha\circ\sigma>\sigma\\
										L(\sigma)-1 & \text{if }s_\alpha\circ\sigma<\sigma\\
									\end{array}
							\right.
$$
\end{Lemma}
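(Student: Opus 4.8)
The natural approach is to combine the known behaviour of the ordinary length $l$ under left multiplication by a simple reflection with the behaviour of $\lambda$ (the dimension of the $(-1)$-eigenspace), splitting into the two cases that define the operation $s_\alpha\circ\sigma$. First I would recall the basic fact that for any $w\in W$ and $\alpha\in\Delta$ one has $l(s_\alpha w)=l(w)\pm 1$, with the sign $+$ iff $s_\alpha w>w$; and likewise $l(w s_\alpha)=l(w)\pm 1$. I would also note at the outset (using the remark just after Lemma 3.1 of \cite{GM}) that in the non-commuting case $s_\alpha\sigma\neq\sigma s_\alpha$ we have $l(s_\alpha\sigma s_\alpha)=l(\sigma)+2$ when $s_\alpha\circ\sigma>\sigma$, and $=l(\sigma)-2$ when $s_\alpha\circ\sigma<\sigma$ (apply the one-step facts twice, using that $s_\alpha\sigma$ and $\sigma s_\alpha$ lie strictly between). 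So in the commuting case $l$ changes by $\pm1$ and in the non-commuting case by $\pm2$, with the sign governed by whether $s_\alpha\circ\sigma>\sigma$.

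Next I would track $\lambda$. In the commuting case $s_\alpha\sigma=\sigma s_\alpha$, the product $s_\alpha\sigma$ is again an involution and $\alpha$ is an eigenvector: since $s_\alpha$ and $\sigma$ commute, $\sigma$ preserves $\mathbb{R}\alpha$, so either $\sigma(\alpha)=\alpha$ or $\sigma(\alpha)=-\alpha$. If $\sigma(\alpha)=\alpha$ then multiplying by $s_\alpha$ adds $\alpha$ to the $(-1)$-eigenspace, so $\lambda(s_\alpha\circ\sigma)=\lambda(\sigma)+1$; if $\sigma(\alpha)=-\alpha$ then it removes it, so $\lambda$ decreases by $1$. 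In the non-commuting case $s_\alpha\sigma s_\alpha$ is conjugate to $\sigma$, hence has the same $(-1)$-eigenspace dimension, so $\lambda(s_\alpha\circ\sigma)=\lambda(\sigma)$. Plugging into $L=\frac{l+\lambda}{2}$: in the non-commuting case $l$ changes by $\pm2$ and $\lambda$ by $0$, giving $\Delta L=\pm1$; in the commuting case $l$ changes by $\pm1$ and $\lambda$ by $\pm1$, and the point is that these two signs agree, again giving $\Delta L=\pm1$.

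The crux — and the main obstacle — is therefore the claim that in the commuting case the sign of the change in $l$ equals the sign of the change in $\lambda$, equivalently: $s_\alpha\circ\sigma>\sigma$ iff $\sigma(\alpha)=\alpha$ (when $s_\alpha\sigma=\sigma s_\alpha$). One direction: if $\sigma(\alpha)=\alpha$, then writing $\sigma$ in a reduced word, $s_\alpha\sigma$ should be longer because $\sigma^{-1}(\alpha)=\alpha>0$ forces $l(s_\alpha\sigma)>l(\sigma)$ by the standard criterion ($l(s_\alpha w)>l(w)\iff w^{-1}(\alpha)>0$). If instead $\sigma(\alpha)=-\alpha$, then $\sigma^{-1}(\alpha)=-\alpha<0$ and $l(s_\alpha\sigma)<l(\sigma)$. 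So the sign of $l(s_\alpha\sigma)-l(\sigma)$ is exactly the sign of $\pm$ in $\sigma(\alpha)=\pm\alpha$, which is precisely the sign of the change in $\lambda$ computed above. This matches "$s_\alpha\circ\sigma>\sigma$ iff $s_\alpha\sigma>\sigma$" from Lemma 3.1 of \cite{GM}, closing the loop. I would write the argument as a clean case analysis on (commuting vs.\ not) $\times$ ($>$ vs.\ $<$), presenting the four cases in a short displayed computation, and I expect the only place requiring care is the justification that $\sigma$ fixes the line $\mathbb{R}\alpha$ when it commutes with $s_\alpha$ and the invocation of the standard length criterion.
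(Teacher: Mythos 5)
Your argument is correct. The paper itself states Lemma \ref{invlength} without proof (it is recalled as a known fact, cf.\ \cite{GM} and \cite{RS}), so there is no internal proof to compare against; your route — splitting into the commuting and non-commuting cases, tracking $l$ via the standard criterion $l(s_\alpha w)>l(w)\iff w^{-1}(\alpha)>0$ and tracking $\lambda$ via the eigenspace decomposition — is exactly the standard argument and it closes. Two small points to make explicit when writing it up: in the commuting case, the identity $V_-(s_\alpha\sigma)=\bigl(V_-(s_\alpha)\cap V_+(\sigma)\bigr)\oplus\bigl(V_+(s_\alpha)\cap V_-(\sigma)\bigr)$ together with $\alpha^\perp\supseteq V_\mp(\sigma)$ (according as $\sigma(\alpha)=\pm\alpha$) is what justifies ``adds/removes $\alpha$ from the $(-1)$-eigenspace''; and in the non-commuting case the two-step change $l(s_\alpha\sigma s_\alpha)=l(\sigma)\pm 2$ in \emph{both} directions needs the observation that $\sigma(\alpha)\neq\pm\alpha$, so $s_\alpha$ preserves the sign of the root $\sigma(\alpha)$, making the second application of the one-step criterion go the same way as the first (the paper's remark after Lemma 3.1 of \cite{GM} is stated only for the increasing direction).
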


To every set $S\subseteq\Psi$ of mutually orthogonal roots we can naturally attach the involution

$$\sigma_S=\prod_{\alpha\in S} s_\alpha$$
Note that if $\alpha$ and $\beta$ are orthogonal then $s_\alpha s_\beta=s_\beta s_\alpha$, so $\sigma_S$ is well defined.
The $(-1)$-eigenspace of such involution is generated by $S$ so we have
$$L(\sigma_S)=\frac{l(\sigma_S)+\#S}{2}$$
\begin{Lemma}[Lemma 3.6, \cite{GM}]\label{sommaradici}
Let $\beta, \beta'\in \Psi$ be orthogonal.
Then:
\begin{enumerate}
\item $\beta$ and $\beta'$ are strongly orthogonal, that is $\beta\pm\beta'\notin\Psi$;
\item if $\beta+\alpha\in \Phi$ for some $\alpha\in \Phi^+$ then $\beta'+\alpha\notin \Phi$;
\item if $\beta-\alpha\in \Psi$ for some $\alpha\in \Phi^+$ then $\beta'-\alpha\notin \Psi$.
\end{enumerate}
\end{Lemma}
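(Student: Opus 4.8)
\emph{Proof proposal.} The statement involves only the root system $\Phi$ and the distinguished root $\alpha_P$, so the plan is entirely combinatorial and uses nothing about the characteristic. Fix a $W$-invariant inner product $(\,\cdot\,,\,\cdot\,)$ on $\Phi\otimes\mathbb{R}$. The backbone is the grading of $\Phi$ by the $\alpha_P$-level $[\,\cdot\,,\alpha_P]$: since $[\theta,\alpha_P]=1$, every root has $[\,\cdot\,,\alpha_P]\in\{-1,0,1\}$, with $\Psi$ the roots of level $1$ (all positive, since a root with a positive coefficient is positive), $\Phi_P$ the roots of level $0$, and $-\Psi$ the roots of level $-1$. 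In particular: no element of level $\geq 2$ is a root; a sum of two elements of $\Psi$ has level $2$ and is therefore never a root; and any root of level $1$ automatically lies in $\Psi$. I will also use the standard root-string facts and the hypothesis, already available above, that $\Phi$ is not of type $G_2$.

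Part (1) is immediate from the grading: $\beta+\beta'$ has level $2$, hence is not a root, and $\beta-\beta'$ has level $0$, hence lies in $\Phi_P$ (not in $\Psi$) if it is a root at all. One can upgrade this: as $\beta\perp\beta'$ the $\beta'$-string through $\beta$ is symmetric, so $\beta-\beta'\in\Phi$ would force $\beta+\beta'\in\Phi$; hence in fact $\beta\pm\beta'\notin\Phi$, i.e. $\beta,\beta'$ are strongly orthogonal in the usual sense. It is this strong form, together with orthogonality, that feeds the next two parts.

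For part (2), comparing $\alpha_P$-levels in $\beta+\alpha$ shows first that $\alpha\in\Phi_P\cap\Phi^+$. Assume for contradiction that $\beta'+\alpha$ is also a root. The driving observation is that $\beta+(\beta'+\alpha)$ has level $2$ and so is not a root; since $\beta\neq\pm(\beta'+\alpha)$ by part (1), this gives $(\beta,\beta'+\alpha)\geq 0$, i.e. $(\beta,\alpha)\geq 0$, and symmetrically $(\beta',\alpha)\geq 0$. Feeding $(\beta,\alpha)\geq 0$ into the $\alpha$-string through $\beta$ (which already extends upward, since $\beta+\alpha\in\Phi$) forces it to extend downward as well, so $\beta-\alpha\in\Phi$, hence $\beta-\alpha\in\Psi$; likewise $\beta'-\alpha\in\Psi$. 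Running the same argument on $(\beta-\alpha)+\beta'$ (again level $2$) yields $(\alpha,\beta')\leq 0$, so in fact $(\alpha,\beta')=0$, and symmetrically $(\alpha,\beta)=0$. We have reached a configuration in which $\beta,\beta',\alpha$ are pairwise orthogonal and yet $\beta+\alpha,\beta'+\alpha\in\Phi$; since $\Phi\neq G_2$, comparing squared lengths forces $\beta,\beta',\alpha$ to be short and $\beta+\alpha,\beta'+\alpha$ to be long, in a $B_2$-configuration, with $|\beta|^2=|\beta'|^2=|\alpha|^2$. A one-line computation then gives $(\beta+\alpha,\beta'+\alpha)=|\alpha|^2=\tfrac12|\beta'+\alpha|^2$, so $\langle\beta+\alpha,(\beta'+\alpha)^\vee\rangle=1$ and $\beta-\beta'=(\beta+\alpha)-(\beta'+\alpha)\in\Phi$, contradicting part (1).

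Part (3) reduces to part (2). From $\beta-\alpha\in\Psi$ we again get $\alpha\in\Phi_P\cap\Phi^+$. Suppose $\beta'-\alpha\in\Psi$ as well. If $(\alpha,\beta)>0$ then $(\beta'-\alpha)+\beta$ is a root of level $2$, which is impossible, and symmetrically $(\alpha,\beta')>0$ is impossible; so $(\alpha,\beta),(\alpha,\beta')\leq 0$. But then the $\alpha$-strings through $\beta$ and $\beta'$, which already extend downward, must also extend upward, so $\beta+\alpha,\beta'+\alpha\in\Phi$ — exactly the situation excluded by part (2). The only genuinely delicate point in all of this is the endgame of part (2): the pairwise-orthogonal configuration $\{\beta,\beta',\alpha\}$ that survives the level bookkeeping is precisely the $B_2/C_2$ phenomenon (orthogonal short roots with a root-sum), and one must check carefully that excluding $G_2$ pins the lengths down and that the two resulting long roots are non-orthogonal with Cartan integer $1$, so that part (1) can be invoked to close the loop. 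Everything else is bookkeeping with the $\alpha_P$-grading together with elementary root-string arithmetic, and in particular nothing depends on $\operatorname{char}\mathbb{K}$.
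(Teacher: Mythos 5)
Your argument is correct. Note that the paper itself does not reprove this statement — it is quoted verbatim from [GM, Lemma 3.6] as a characteristic-independent fact about the root system — so there is no in-paper proof to compare against; your write-up supplies exactly the kind of argument the citation is standing in for: the $\{-1,0,1\}$ grading by the coefficient of $\alpha_P$ (so that sums of two elements of $\Psi$ are never roots), elementary root-string arithmetic to upgrade orthogonality to strong orthogonality and to force the strings to extend in both directions, and the exclusion of type $\mathbf{G}_2$ to pin down the $B_2$-configuration in the endgame of part (2), where the Cartan integer $\langle\beta+\alpha,(\beta'+\alpha)^\vee\rangle=1$ yields $\beta-\beta'\in\Phi$ and contradicts part (1). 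All the small non-degeneracy checks ($\beta\neq\pm(\beta'+\alpha)$, $\beta-\alpha\neq\pm\beta'$, positivity of level-one roots) are accounted for, and the reduction of part (3) to part (2) via the sign analysis of $(\alpha,\beta)$, $(\alpha,\beta')$ is sound, so the proof is complete and, as required, uses nothing about $\ch\mathbb{K}$.
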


We say that a subset $S\subseteq \Phi$ is \textit{strongly orthogonal} if all the roots in $S$ are strongly orthogonal.
In our analysis of the characteristic $2$ case, we will use the following result.
\begin{Corollary}[Corollary 3.9, \cite{GM}]\label{SugualeT}
Let $S,T\subseteq \Phi$ be strongly orthogonal and suppose $\sigma_S=\sigma_T$.
Then $S=T$.
\end{Corollary}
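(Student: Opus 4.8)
The plan is to reconstruct $S$ from the linear map $\sigma_S$ and to exploit the very restricted combinatorics of $\Psi$. As recalled just above, the $(-1)$-eigenspace of $\sigma_S$ on $\Phi\otimes\mathbb{R}$ is $\mathrm{Span}(S)$, which has dimension $\#S$ because the roots of $S$, being pairwise orthogonal, are linearly independent. Hence $\sigma_S=\sigma_T$ forces $\mathrm{Span}(S)=\mathrm{Span}(T)=:V$ and $\#S=\#T$; it is therefore enough to prove the inclusion $S\subseteq T$, and the equality of cardinalities then upgrades this to $S=T$.

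Fix $\alpha\in S$. Since the reflections occurring in $\sigma_T$ commute, $\sigma_T(\alpha)=\alpha-\sum_{\beta\in T}\langle\alpha,\beta^{\vee}\rangle\beta$, and comparing with $\sigma_T(\alpha)=\sigma_S(\alpha)=-\alpha$ gives
\[2\alpha=\sum_{\beta\in T_\alpha}\langle\alpha,\beta^{\vee}\rangle\beta,\qquad T_\alpha:=\{\beta\in T\mid\langle\alpha,\beta\rangle\neq 0\}\neq\emptyset .\]
No coefficient $\langle\alpha,\beta^{\vee}\rangle$ with $\beta\in T_\alpha$ can be negative: a negative one would make $\alpha+\beta$ a root, which is impossible since $\alpha$ and $\beta$ both lie in $\Psi$ and such a sum has coefficient $2$ on $\alpha_P$. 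Given this, reading off the coefficient of $\alpha_P$ in the displayed relation yields $\sum_{\beta\in T_\alpha}\langle\alpha,\beta^{\vee}\rangle=2$, and pairing the relation with $\alpha$ (using $\langle\alpha,\beta\rangle=\tfrac12\langle\alpha,\beta^{\vee}\rangle\lVert\beta\rVert^{2}$) yields $4\lVert\alpha\rVert^{2}=\sum_{\beta\in T_\alpha}\langle\alpha,\beta^{\vee}\rangle^{2}\lVert\beta\rVert^{2}$. The goal is to deduce from these identities, the integrality and length-ratio bound on $\langle\alpha,\beta^{\vee}\rangle$, and the strong orthogonality of the elements of $T_\alpha$ (Lemma \ref{sommaradici}), that $T_\alpha=\{\beta\}$ with $\langle\alpha,\beta^{\vee}\rangle=2$; then $2\alpha=2\beta$ gives $\alpha=\beta\in T$.

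In the simply-laced case this is immediate: the positivity reduction forces $\langle\alpha,\beta^{\vee}\rangle=1$ for $\beta\in T_\alpha\setminus\{\alpha\}$, so $\sum_{\beta\in T_\alpha}\langle\alpha,\beta^{\vee}\rangle=2$ leaves only $T_\alpha=\{\alpha\}$ (done) or $T_\alpha=\{\beta_1,\beta_2\}$ with $\langle\alpha,\beta_i^{\vee}\rangle=1$, and the latter contradicts $4\lVert\alpha\rVert^{2}=\lVert\beta_1\rVert^{2}+\lVert\beta_2\rVert^{2}$ since all roots have the same length. The step I expect to be the main obstacle is the corresponding one in types $\mathbf{B}$ and $\mathbf{C}$: there two root lengths occur, $\langle\alpha,\beta^{\vee}\rangle$ can reach $2$, and the ``spread'' configuration $2\alpha=\beta_1+\beta_2$ with $\alpha$ short and $\beta_1,\beta_2$ long and orthogonal satisfies both derived identities, so the relations attached to $\alpha$ alone do not suffice to exclude it. To finish I would play the displayed relation off against its analogue obtained by exchanging $S$ and $T$ (applied to the long roots $\beta_1,\beta_2$), against the equality $\mathrm{Span}(S)=\mathrm{Span}(T)$ from the first paragraph, and against the explicit list of long and short roots of $\Psi$, using Lemmas \ref{IET} and \ref{sommaradici} to eliminate such configurations one root length at a time. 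Once $T_\alpha$ is known to be a singleton, $\alpha\in T$ follows, hence $S\subseteq T$, hence $S=T$.
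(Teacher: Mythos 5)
First, a point of comparison: the paper does not prove this statement at all --- it is imported verbatim from \cite{GM} (Corollary 3.9) --- so there is no in-paper argument to measure yours against, and I can only assess your proof on its own terms. Doing so, note that your argument silently replaces the printed hypothesis ``$S,T\subseteq\Phi$ strongly orthogonal'' by ``$S,T\subseteq\Psi$'': both the positivity step (``a negative coefficient would make $\alpha+\beta$ a root, impossible since the sum has coefficient $2$ on $\alpha_P$'') and the step where you read off the coefficient of $\alpha_P$ use that every root involved lies in $\Psi$. This restriction is not cosmetic, because the statement as printed is false in that generality: in $D_4$ the sets $\{e_1\pm e_2,\,e_3\pm e_4\}$ and $\{e_1\pm e_3,\,e_2\pm e_4\}$ are strongly orthogonal sets of positive roots with $\sigma_S=\sigma_T=-\mathrm{id}$, and more trivially $\{\alpha\}$ and $\{-\alpha\}$ give the same reflection. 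The $\Psi$-version (or its $W$-translates, which is how the corollary is actually invoked in Theorem \ref{orthogonalsubset} after conjugating by $v$) is the intended one, but you should state explicitly that this is what you prove.

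The genuine gap is the one you flag yourself: in types $\mathbf{B}$ and $\mathbf{C}$ the configuration $2\alpha=\beta_1+\beta_2$, with $\alpha\in S$ short and $\beta_1,\beta_2\in T$ long and orthogonal (in type $\mathbf{C}$: $\alpha=e_i+e_j$, $\beta_1=2e_i$, $\beta_2=2e_j$, all inside $\Psi$), is compatible with every identity you derive at $\alpha$, and the paragraph meant to exclude it is a list of tools rather than an argument --- Lemma \ref{IET}, in particular, concerns the group action on root vectors and has no bearing on this purely combinatorial statement. So as written the non-simply-laced case is unproved. The missing step is, however, short, and your own suggestion of exchanging $S$ and $T$ does close it: apply the displayed relation to the long root $\beta_1=2e_i\in T$. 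Every root of $\Psi$ not orthogonal to $2e_i$ is either $2e_i$ or of the form $e_i+e_k$, and none of these except $\alpha$ is orthogonal to $\alpha=e_i+e_j$; since $S$ is orthogonal and contains $\alpha$, the only element of $S$ not orthogonal to $\beta_1$ is $\alpha$, so the relation for $\beta_1$ reads $2\beta_1=\langle\beta_1,\alpha^{\vee}\rangle\,\alpha=2\alpha$, contradicting that $\beta_1$ is long and $\alpha$ short. (Type $\mathbf{B}$ is identical with $\alpha=\alpha_0$ and $\beta_{1,2}=e_1\mp e_i$, using that no root of $\Psi$ is orthogonal to $\alpha_0$, so that $S=\{\alpha_0\}$.) You should also record the small enumeration showing that, given $\sum_{\beta\in T_\alpha}\langle\alpha,\beta^{\vee}\rangle=2$ and the length identity, these doubled-root configurations and $T_\alpha=\{\alpha\}$ are the only possibilities (for $\alpha$ long one gets $T_\alpha=\{\alpha\}$ outright). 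With those two insertions the proof is complete.
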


Now, consider the projection map $\pi\colon G/L\longrightarrow G/P$.
It is $B$-equivariant.
Recall that $G/P=\bigcup_{v\in W^P} BvP/P$ and for $v\in W^P$ define $B^v=vPv^{-1}\cap B$ the stabilizer of $vP\in G/P$ in $B$.
Then
$$\pi^{-1}(BvP/P)= BvP/L\cong B\times^{B^v} \pi^{-1}(vP)=B\times^{B^v} vP/L$$
Hence we have a bijection between the $B$-orbits in $BvP/L$ and the $B^v$ orbits in $vP/L$ which is compatible with the Bruhat order.
If we define $B_v=P\cap v^{-1}Bv$ then these orbits are in bijection with the $B_v$-orbits in $P/L$.
\begin{Lemma}[Lemma 4.1, \cite{GM}]
Let $v\in W^P$. 
Then $B_L=B_v\cap L$ and $B_v=B_L\ltimes U_v$ where $U_v$ is the subgroup of $P^u$ generated by the $U_\alpha$ with $\alpha\in \Psi\setminus\Phi^+(v)$.
\end{Lemma}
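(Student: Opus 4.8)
The plan is to describe $B_v=P\cap v^{-1}Bv$ completely in terms of root subgroups and then read off both assertions. Recall $B=T\ltimes B^u$ with $B^u=\prod_{\alpha\in\Phi^+}U_\alpha$, and $P=L\ltimes P^u$ with $P^u=\prod_{\alpha\in\Psi}U_\alpha$ abelian and $B_L=B\cap L=T\ltimes\prod_{\alpha\in\Phi_P^+}U_\alpha$ the Borel subgroup of the reductive group $L$. Let $P^-\supseteq T$ be the parabolic opposite to $P$, so that $(P^-)^u=\prod_{\alpha\in-\Psi}U_\alpha$ and $P\cap P^-=L$. Since $v$ normalizes $T$ and $vU_\alpha v^{-1}=U_{v(\alpha)}$, we have $v^{-1}Bv=T\ltimes\prod_{v(\beta)>0}U_\beta$; using $\Phi^+(v)\subseteq\Psi$ (which holds because $v\in W^P$), the set $\{\beta\in\Phi\mid v(\beta)>0\}$ is the disjoint union $\Phi_P^+\sqcup(\Psi\setminus\Phi^+(v))\sqcup(-\Phi^+(v))$.

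First I would check $B_L\ltimes U_v\subseteq B_v$. For $\alpha\in\Phi_P^+$ the group $U_\alpha$ lies in $L\subseteq P$ and, since $v\in W^P$, satisfies $v(\alpha)>0$, so $U_\alpha\subseteq v^{-1}Bv$; for $\alpha\in\Psi\setminus\Phi^+(v)$ the group $U_\alpha$ lies in $P^u\subseteq P$ and has $v(\alpha)>0$; and $T$ lies in both $P$ and $v^{-1}Bv$. Hence $B_L,U_v\subseteq B_v$. Moreover $B_L\cap U_v=\{e\}$ (as $B_L\subseteq L$, $U_v\subseteq P^u$, $L\cap P^u=\{e\}$) and $B_L$ normalizes $U_v$: the key point is that $\Psi\setminus\Phi^+(v)$ is stable under adding roots of $\Phi_P^+$, because if $\alpha\in\Psi\setminus\Phi^+(v)$, $\gamma\in\Phi_P^+$ and $\alpha+n\gamma\in\Phi$ for some $n\geq 1$, then $[\alpha+n\gamma,\alpha_P]=1$ forces $\alpha+n\gamma\in\Psi$, while $v(\alpha+n\gamma)=v(\alpha)+nv(\gamma)$ is a root equal to a sum of positive roots and hence positive. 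So $B_L\ltimes U_v$ is a genuine subgroup contained in $B_v$.

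For the reverse inclusion, take $x\in B_v\subseteq v^{-1}Bv$. Ordering the roots of the unipotent radical of $v^{-1}Bv$ so that those in $-\Phi^+(v)$ come last, write $x=t\,y\,z$ with $t\in T$, $y\in\prod_{\alpha\in\Phi_P^+\cup(\Psi\setminus\Phi^+(v))}U_\alpha$ and $z\in\prod_{\beta\in-\Phi^+(v)}U_\beta$. Every root subgroup occurring in $t$ and $y$ lies in $P$, so $z=y^{-1}t^{-1}x\in P$; but $\prod_{\beta\in-\Phi^+(v)}U_\beta\subseteq(P^-)^u$ and $(P^-)^u\cap P=(P^-)^u\cap(P\cap P^-)=(P^-)^u\cap L=\{e\}$ since $P^-=L\ltimes(P^-)^u$. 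Therefore $z=e$ and $x=ty\in T\ltimes\prod_{\alpha\in\Phi_P^+\cup(\Psi\setminus\Phi^+(v))}U_\alpha=B_L\ltimes U_v$. This gives $B_v=B_L\ltimes U_v$, and then $B_v\cap L=(B_L\ltimes U_v)\cap L=B_L\ltimes(U_v\cap L)=B_L$, again because $U_v\subseteq P^u$ meets $L$ trivially.

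The ingredients I would invoke without proof are the standard structure theory of reductive groups: the unipotent radical of a Borel subgroup is the product of its root subgroups taken in any fixed order; a product of root subgroups over an additively closed subset of $\Phi^+$ is a closed subgroup; and two opposite parabolics containing a common maximal torus intersect in their common Levi factor. Everything else is root-system bookkeeping with the coefficient $[\,\cdot\,,\alpha_P]$ and with the characterization $\Phi^+(v)\subseteq\Psi$ of $W^P$. I expect the only mild obstacle to be arranging the factorization $x=t\,y\,z$ carefully enough that the step ``$z\in P$'' is rigorous; the argument needs no connectedness of $B_v$ and is characteristic-free.
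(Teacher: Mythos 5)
Your argument is correct. Note, however, that the paper does not prove this statement at all: it is quoted verbatim from Gandini--Maffei (Lemma 4.1 of \cite{GM}), so there is no internal proof to compare against; your write-up supplies a complete, characteristic-free proof of the cited fact. The decomposition $\{\beta\mid v(\beta)>0\}=\Phi_P^+\sqcup(\Psi\setminus\Phi^+(v))\sqcup(-\Phi^+(v))$, the closure of $\Psi\setminus\Phi^+(v)$ under adding roots of $\Phi_P^+$ (using $[\,\cdot\,,\alpha_P]\le 1$ together with $v\in W^P$), and the elimination of the factor over $-\Phi^+(v)$ via $(P^-)^u\cap P=\{e\}$ are all sound, and the factorization $x=tyz$ is legitimate because the unipotent radical of $v^{-1}Bv$ is the product of its root subgroups in any fixed order. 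The only cosmetic point is the step $(B_L\ltimes U_v)\cap L=B_L\ltimes(U_v\cap L)$, which is not a general group identity; it should be phrased as: if $bu\in L$ with $b\in B_L\subseteq L$ and $u\in U_v\subseteq P^u$, then $u\in L\cap P^u=\{e\}$ --- which is exactly the justification you give in words, so nothing of substance is missing.
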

Note that the Lie algebra of $U_v$ is $\mathfrak{u}_v=\bigoplus_{\alpha\in \Psi\setminus\Phi^+(v)} \mathfrak{u}_\alpha$ and that if $\omega^P$ is the longest element in $W^P$, then the $B$ action is equal to the $B_L=B\cap L$-action which is by definition the $B_{\omega^P}$-action.

Let $\exp\colon\mathfrak{p}_u\longrightarrow P^u$ be the exponential map and compose it with the projection $\pi\colon G\longrightarrow G/L$.
We obtain an isomorphism $r_P\colon \mathfrak{p}_u\longrightarrow P/L$ that is not $P$-equivariant if we consider the adjoint action on $\mathfrak{p}_u$ and the left multiplication on $P/L$.
We want to define an action of $P$ on $\mathfrak{p}_u$ that makes $r_P$ a $P$-equivariant map.
Consider the isomorphisms
$$L\ltimes \mathfrak{p}_u\cong L\ltimes P^u\cong P$$
from left to right $(g,y)\longmapsto g\exp(y)$.
Note that with this identification we have $B_v=B_L\ltimes \mathfrak{u}_v$.
Let $(g,y)\in P$ and $x\in\mathfrak{p}_u$. 
Define the action

\begin{equation}\label{action}
(g,y).x=\adj_g(x+y)
\end{equation}

From this definition it is easy to see that if $u<v$ and $x\in\mathfrak{p}^u$ we have the containment $B_vx\subseteq B_ux$.
More precisely, suppose $u=s_\alpha v<v$ for some $\alpha\in \Delta$ and denote $\beta=v^{-1}(\alpha)$.
Then $B_u=B_v\ltimes \mathfrak{u}_\beta$ and 
\[B_ux=B_v.\mathfrak{u}_\beta.x=\bigcup_{t\in\mathbb{K}}B_v(x+te_\beta)\]

\begin{Lemma}[Lemma 4.2, \cite{GM}]\label{orderiso}
Let $v\in W^P$.
Then the map $B_ve\longmapsto Bv\exp(e)L/L$ is an order isomorphism between the $B_v$-orbits in $\mathfrak{p}_u$ and the $B$-orbits in $BvP/L$.
\end{Lemma}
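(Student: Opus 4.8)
The plan is to exhibit the map of the statement as a composite of three order isomorphisms between orbit sets: the first one coming from the variety isomorphism $r_P$ introduced above, and the other two being the isomorphisms already produced in the discussion preceding the lemma.

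First I would bring in $r_P$. It is by construction an isomorphism of varieties $\mathfrak{p}^u\to P/L$, and with the action \ref{action} it becomes $P$-equivariant: writing an element of $P$ in the form $g\exp(y)$ with $g\in L$ and $y\in\mathfrak{p}^u$, a short computation gives
\begin{align*}
r_P\bigl((g,y).x\bigr)&=\exp\bigl(\adj_g(x+y)\bigr)L/L=g\exp(x+y)g^{-1}L/L\\
&=g\exp(x+y)L/L=g\exp(y)\exp(x)L/L=\bigl(g\exp(y)\bigr).r_P(x),
\end{align*}
where the successive equalities use that $\exp$ intertwines $\adj_g$ with conjugation by $g$, that $g\in L$, that $\mathfrak{p}^u$ is abelian, and the definition of the action of $P$ on $P/L$ by left multiplication. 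Since $B_v=P\cap v^{-1}Bv$ is a subgroup of $P$, the map $r_P$ is in particular $B_v$-equivariant, and an equivariant isomorphism of varieties carries orbits to orbits and orbit closures to orbit closures; hence it induces an order isomorphism between the $B_v$-orbits in $\mathfrak{p}^u$ and the $B_v$-orbits in $P/L$, given by $B_ve\mapsto B_v\exp(e)L/L$.

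Next I would chain on the other two order isomorphisms. Left translation $pL\mapsto vpL$ is an isomorphism $P/L\to vP/L$ that intertwines the $B_v$-action with the $B^v$-action via the group isomorphism $b\mapsto vbv^{-1}$ identifying $B_v$ with $B^v=vPv^{-1}\cap B$, so it gives an order isomorphism between the $B_v$-orbits in $P/L$ and the $B^v$-orbits in $vP/L$, sending $B_v\exp(e)L/L\mapsto B^v v\exp(e)L/L$. Finally, the fibre-bundle identification $\pi^{-1}(BvP/P)=BvP/L\cong B\times^{B^v}vP/L$ recorded above gives the Bruhat-order-compatible bijection between the $B^v$-orbits in the fibre $vP/L=\pi^{-1}(vP)$ and the $B$-orbits in $BvP/L$, under which $B^v y\mapsto By$. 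Composing the three maps and following a representative,
\[
B_ve\ \longmapsto\ B_v\exp(e)L/L\ \longmapsto\ B^v v\exp(e)L/L\ \longmapsto\ Bv\exp(e)L/L,
\]
we obtain exactly the map of the statement, now presented as a composite of order isomorphisms, hence itself an order isomorphism.

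The only step that is not pure bookkeeping is the Bruhat-order-compatibility of the last bijection — that the closure of a $B$-orbit in $BvP/L$ meets the fibre $vP/L$ precisely in the closure, computed inside the fibre, of the corresponding $B^v$-orbit. This is the assertion already recorded in the discussion preceding the statement, and I would justify it by noting that $\pi$ restricts on $BvP/L$ to a $B$-equivariant locally trivial fibration over $BvP/P\cong B/B^v$, which is a single $B$-orbit; consequently the closure of a $B$-orbit in $BvP/L$ is the $B$-saturation of the closure of its trace on the fibre, and the two inclusions needed follow at once. This is the point where one should check that nothing goes wrong in characteristic $2$, and, exactly as in \cite{GM}, nothing does: the whole argument is insensitive to the characteristic of $\mathbb{K}$.
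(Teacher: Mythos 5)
Your proof is correct and follows essentially the same route as the paper, which takes this lemma from \cite{GM}: you simply assemble the chain already set up in the discussion preceding the statement, namely the $P$-equivariance of $r_P\colon\mathfrak{p}^u\to P/L$ under the twisted action of Equation \ref{action}, translation by $v$ identifying $B_v$-orbits with $B^v$-orbits, and the associated-bundle identification $BvP/L\cong B\times^{B^v}vP/L$. The one point the paper leaves implicit, the order-compatibility of the fibre-bundle bijection, you justify correctly via the closure correspondence for associated bundles, so nothing is missing.
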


We have the following formula regarding the dimensions:

\begin{Lemma}[Lemma 4.2, \cite{GM}]\label{dimensionrel}
Let $v\in W^P$ and $e$ an element in $\mathfrak{p}_u$.
Then the following formula holds
\[\dim Bv\exp(e)L/L=l(v)+\dim B_ve\]
\end{Lemma}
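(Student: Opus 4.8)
The plan is to cut the orbit $Bv\exp(e)L/L$ with the $B$-equivariant projection $\pi\colon G/L\to G/P$, reading off a ``base'' contribution $l(v)$ from the Schubert cell $BvP/P$ and a ``fibre'' contribution which I will then identify with $\dim B_ve$.

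Since $v\exp(e)\in vP$, the orbit $Bv\exp(e)L/L$ is contained in $\pi^{-1}(BvP/P)=BvP/L$. Recall from the discussion preceding Lemma \ref{orderiso} that $BvP/L\cong B\times^{B^v}(vP/L)$ as $B$-varieties, and that under this isomorphism the $B$-orbits in $BvP/L$ correspond to the $B^v$-orbits in $vP/L$, the $B$-orbit attached to a $B^v$-orbit $\mathcal{O}$ being $B\times^{B^v}\mathcal{O}$. For the orbit $Bv\exp(e)L/L$ the corresponding $B^v$-orbit is $B^v(v\exp(e)L/L)$, so
\[
\dim Bv\exp(e)L/L=(\dim B-\dim B^v)+\dim B^v(v\exp(e)L/L)=l(v)+\dim B^v(v\exp(e)L/L),
\]
where the last equality uses $\dim B-\dim B^v=\dim BvP/P=l(v)$, valid because $v\in W^P$. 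It remains to show $\dim B^v(v\exp(e)L/L)=\dim B_ve$. Left translation by $v^{-1}$ is an isomorphism of varieties $vP/L\to P/L$ intertwining the $B^v$-action with the $B_v$-action via $B_v=v^{-1}B^vv$, so $\dim B^v(v\exp(e)L/L)=\dim B_v(\exp(e)L/L)$; and the isomorphism $r_P\colon\mathfrak{p}^u\to P/L$, $x\mapsto\exp(x)L/L$, is $P$-equivariant for the action of Equation \ref{action}, so its restriction to $B_v\subseteq P$ gives $B_v(\exp(e)L/L)=r_P(B_ve)$ and hence $\dim B_v(\exp(e)L/L)=\dim B_ve$. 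Chaining the three equalities yields the formula.

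The substantive point is the dimension identity $\dim(B\times^{B^v}\mathcal{O})=(\dim B-\dim B^v)+\dim\mathcal{O}$ for the homogeneous fibre bundle over $B/B^v\cong BvP/P$; this is standard but relies on the bundle description recalled above, so it should be invoked cleanly. Everything else is bookkeeping, the one caveat being that each identification used (the bundle isomorphism, translation by $v^{-1}$, the map $r_P$) must be an isomorphism of varieties — not merely a bijection of orbit sets — so that the orbit dimensions really match up. I would also stress that the argument is insensitive to the characteristic: the only characteristic-sensitive ingredient is that $\exp$ is a well-defined isomorphism $\mathfrak{p}^u\to P^u$, which is furnished by Theorem 5.3 in \cite{Seitz}.
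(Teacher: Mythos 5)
Your argument is correct and is essentially the argument this paper delegates to \cite{GM}: the paper only cites Lemma 4.2 there, and its own preceding discussion (the identification $BvP/L\cong B\times^{B^v}vP/L$, the definition $B_v=P\cap v^{-1}Bv=v^{-1}B^vv$, and the $P$-equivariant isomorphism $r_P$ for the action of Equation \ref{action}) is exactly the skeleton you assemble. The bookkeeping is sound — including the identification of the fibre contribution via translation by $v^{-1}$ and $r_P$, and the base contribution $\dim B-\dim B^v=\dim BvP/P=l(v)$ — so nothing is missing; if you want to avoid invoking the bundle dimension identity you could instead note that $\mathrm{Stab}_B(v\exp(e)L/L)\subseteq B^v$ and count stabilizer dimensions directly.
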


\begin{theorem}\label{BxB}
Let $x,y\in\mathfrak{p}_u$ with $B_vx\subseteq\cdr{B_vy}$.
Then 
\[Bv\exp(x)v^{-1}B\subseteq \cdr{Bv\exp(y)v^{-1}B}\]
\end{theorem}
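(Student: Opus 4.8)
The plan is to reduce the statement about double-coset closures in $G$ to the statement about $B_v$-orbit closures in $\mathfrak{p}^u$, which is the hypothesis. The key observation is that $Bv\exp(z)v^{-1}B$ is the image under the multiplication map $B \times v\exp(z)v^{-1} \times B \to G$, and since $v\exp(z)v^{-1} \in vP^uv^{-1}$, this whole set lies inside $BvPv^{-1}B = B v P B$ (a single $(B,B)$-double coset union indexed by $v$ composed with things in $W_P$). More precisely, I would consider the orbit map. Let me set $U = P^u$ and recall $\exp\colon \mathfrak{p}^u \to U$ is an isomorphism of varieties (valid in characteristic $2$ by \cite{Seitz}, Theorem 5.3, as cited). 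Conjugation by $v$ sends $U$ to $vUv^{-1}$. So the assignment $z \mapsto Bv\exp(z)v^{-1}B$ is a map from $\mathfrak{p}^u$ to the set of $(B \times B)$-orbits on $G$ under left-right multiplication, and it factors through $\mathfrak{p}^u \to G/? $ appropriately.

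First I would make the connection with the earlier setup precise. By Lemma \ref{orderiso}, the map $B_ve \mapsto Bv\exp(e)L/L$ is an order isomorphism onto the $B$-orbits in $BvP/L$; in particular $B_vx \subseteq \overline{B_vy}$ implies $Bv\exp(x)L/L \subseteq \overline{Bv\exp(y)L/L}$ inside $G/L$. Now I want to push this forward along the natural map $G/L \to B\backslash G /B$-type data — but more directly, I would use the multiplication-and-projection map $\mu\colon G/L \to G$, $gL \mapsto$ (not well-defined). Instead the cleanest route: consider the morphism $f\colon G \to G$ given by $g \mapsto$ ... no. Let me use this: the set $Bv\exp(z)v^{-1}B$ equals $B \cdot (v\exp(z)v^{-1}B)$ and the second factor $v\exp(z)v^{-1}B$ is the image of $v\exp(z)L$... wait, $\exp(z) \in P^u \subseteq P$, and $vP v^{-1}$ normalizes... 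Actually the right statement is: the map $\phi\colon G/L \to B\backslash G/B$ given by $gL \mapsto BgB$ is well-defined precisely because $L \subseteq B$? No, $L \not\subseteq B$. The correct normalization is $gL \mapsto BgL B = BgB$ since $L \subseteq P$ and we only need $g$ defined up to the right; but $BgL B$ depends only on $gL$, and moreover it is a union of double cosets. So $\phi$ is well-defined and continuous (it is the composite $G/L \to G/B$ — no, that's not well-defined either since $L \not\subseteq B$).

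The honest approach: $Bv\exp(z)v^{-1}B = B\cdot v\exp(z)v^{-1} \cdot B$. Consider the multiplication morphism $m\colon B \times G \times B \to G$. The image of $B \times \{v\exp(z)v^{-1}\} \times B$ is our double coset. Now I claim the family $\{v\exp(z)v^{-1}\}_{z \in \mathfrak{p}^u}$ together with left $B$-translation realizes exactly $B v \exp(z) v^{-1}$, and $v\exp(z)v^{-1} = v\exp(z)L \cdot (\text{coset rep})v^{-1}$... The key reduction I would actually prove is: \emph{the map $\mathfrak{p}^u \to B\backslash G / B$ sending $z \mapsto Bv\exp(z)v^{-1}B$ factors as $\mathfrak{p}^u \to (\text{$B$-orbits in } G/L) \to B\backslash G/B$, where the first map $z \mapsto Bv\exp(z)L$ is the one from Lemma \ref{orderiso} and the second, $BgL \mapsto BgB$ (using $v^{-1}Bv \cap P \supseteq B_L$), is order-preserving and continuous in the appropriate sense.} Granting this factorization, the hypothesis $B_vx \subseteq \overline{B_vy}$ gives $Bvx_? \subseteq \overline{Bvy_?}$ in $G/L$ by Lemma \ref{orderiso}, and applying the continuous $B$-equivariant-on-both-sides map $G \to B\backslash G/B$ (or rather, taking closures along the continuous multiplication map) yields the claimed inclusion of double-coset closures. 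Concretely: if $Z \subseteq \overline{Z'}$ for $B$-stable $Z,Z' \subseteq G/L$, and $q\colon G/L \to B\backslash G/B$ is defined by composing the quotient $G \to G/L$ with $g \mapsto BgB$ (well-defined since this only uses right-translation by $L \subseteq P$ and then by all of $B$), then $q$ is continuous and $q(Z) \subseteq q(\overline{Z'}) \subseteq \overline{q(Z')}$; pulling back to $G$, $BZB \subseteq \overline{BZ'B}$ where $BZB$ means the preimage, which is exactly the union of the relevant double cosets.

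\textbf{Main obstacle.} The hard part will be justifying that taking the preimage of $q(\overline{Z'})$ and the closure in $G$ commute correctly, i.e.\ that $q^{-1}(\overline{q(Z')}) = \overline{q^{-1}(q(Z'))}$ — this needs that $q$, or rather the map $G/L \to G$, $gL \mapsto g\omega^P \cdot (\ldots)$, has a good interaction with closures, and ultimately it reduces to: the multiplication map $B \times_{B^v} v\exp(\mathfrak{p}^u) \cdot (\text{stuff}) \to G$ followed by $B$-multiplication on the right is proper onto its image, or at least closed on $B\times B$-stable sets. I would handle this by realizing the double coset union $\bigcup BvwB$ ($w \in W_P$) as $Bv P^u B$, noting $B v P^u v^{-1}$ is a subgroup... actually $vP^uv^{-1}$ need not be in $B$, so instead I would argue via the fibration $\pi\colon G/L \to G/P$ and the fact, used already in the text, that $\pi^{-1}(BvP/P) = BvP/L \cong B \times^{B^v} vP/L$: the closure of a $B$-orbit in $G/L$ maps under $G/L \to G/B \to$ (no) — rather, I would directly use that for $x \in \mathfrak{p}^u$, $\overline{B_vx}$ computed in $\mathfrak{p}^u$ and the closure of $Bv\exp(x)L/L$ in $G/L$ correspond under the \emph{closed} immersion $P/L \hookrightarrow G/L$ and the description $BvP/L \cong B \times^{B^v} vP/L$, so that $\overline{Bv\exp(x)L/L} \cap vP/L \supseteq v\exp(\overline{B_vx})L/L$; then composing with the proper map $G/L \to G/P$ is not what's needed, but composing $v\exp(\overline{B_vx})v^{-1} \subseteq \overline{v\exp(B_vx)v^{-1}}$ (closure in $G$, since conjugation by $v$ and $\exp$ are isomorphisms hence homeomorphisms) with left and right multiplication by $B$ (a continuous map $B \times (\cdot) \times B \to G$ whose image of a closed $B$-biinvariant set... ) gives the result once one knows $B v \exp(\mathfrak{p}^u) v^{-1} B$ is locally closed with the Bruhat-type stratification — which follows from the Bruhat decomposition of $G$ and the fact that $v\exp(\mathfrak{p}^u)v^{-1} \subseteq \bigcup_{w \in W_P} BvwB$ decomposes each double coset compatibly. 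I expect the cleanest writeup routes everything through Lemma \ref{orderiso} and Lemma \ref{dimensionrel} plus a dimension count to upgrade "$\subseteq$ on a dense subset" to "$\subseteq$ on the closure", avoiding delicate properness arguments entirely.
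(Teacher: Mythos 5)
There is a genuine gap, on two counts. First, the reduction you propose through $G/L$ does not produce the object in the statement: the assignment $gL\mapsto BgB$ is not well defined (since $L\not\subseteq B$, the set $BgLB$ is in general a union of several $(B,B)$-double cosets, not $BgB$), and even at the level of $B$-orbits the coset your factorization would output is $Bv\exp(z)B$ (or the union $Bv\exp(z)LB$), whereas the theorem concerns $Bv\exp(z)v^{-1}B$; the conjugation by $v$ is essential and is not recoverable from the coset $v\exp(z)L$ alone (for $z=0$ one gets $BvB$ versus $B$). Second, the difficulty you yourself flag as the ``main obstacle'' --- commuting preimages with closures, properness or closedness of the relevant maps, or a dimension count to upgrade a containment on a dense subset --- is never actually resolved, so the argument is incomplete as it stands; and Lemma \ref{orderiso} and Lemma \ref{dimensionrel} concern orbits in $G/L$ and $\mathfrak{p}^u$, not double cosets in $G$, so they cannot by themselves supply the missing step.

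The missing idea, which makes the statement elementary and is how the paper argues, is to stay in $G$ and observe that the twisted $B_v$-action of Equation \ref{action}, transported by $\exp$, is realized by left and right multiplication by elements of $v^{-1}Bv$: the $B_L$-part acts by conjugation and the $U_v$-part by multiplication, so
\[B_v.\exp(y)\subseteq B_L\exp(y)U_vB_L=B_L\exp(y)B_v\subseteq v^{-1}Bv\,\exp(y)\,v^{-1}Bv.\]
Since $\exp$ and conjugation by $v$ are isomorphisms of varieties, hence homeomorphisms, the hypothesis $x\in\cdr{B_vy}$ gives $\exp(x)\in\cdr{B_v.\exp(y)}$ and therefore $v\exp(x)v^{-1}\in\cdr{Bv\exp(y)v^{-1}B}$; finally, the closure of a $(B,B)$-stable subset of $G$ is again $(B,B)$-stable, so $Bv\exp(x)v^{-1}B\subseteq\cdr{Bv\exp(y)v^{-1}B}$. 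No passage through $G/L$, no properness argument, and no dimension count is needed.
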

\begin{proof}
Consider $x\in\cdr{B_vy}$ and apply the exponential.
We get $\exp(x)\in \cdr{B_v.\exp(y)}$ where $B_L\subseteq B_v$ acts by inner automorphisms and $U_v=\prod_{\alpha\in\left(\Psi\setminus \Phi^+(v)\right)}U_\alpha$ acts by multiplication.
We then have
$$
B_v.\exp(y)\subseteq B_L\exp(y)U_vB_L=B_L\exp(y)B_v\subseteq v^{-1}Bv\exp(y)v^{-1}Bv
$$
where we used the fact that $B_v\subseteq v^{-1}Bv$.
Then
$$v\exp(x)v^{-1}\in \cdr{Bv\exp(y)v^{-1}B}$$ 
and 
$$Bv\exp(x)v^{-1}B\subseteq \cdr{Bv\exp(y)v^{-1}B}\qedhere$$ 
\end{proof}

Now fix $v\in W^P$ and $S\subseteq \Phi^+(v)$ orthogonal. 
Define $g_{v(S)}=v\exp(e_S)v^{-1}$ and consider the double coset $Bg_{v(S)}B$.
The roots in $v(S)$ are negative, so $g_{-v(S)}\in B$.
$$Bg_{v(S)}B=Bg_{-v(S)}g_{v(S)}g_{-v(S)}B=B\sigma_{v(S)}B$$

where the last equality holds because $v(S)$ is orthogonal and the root vectors $e_\alpha$ verify 

$$\exp(e_{-\alpha})\exp(e_\alpha)\exp(e_{-\alpha})T/T=s_\alpha\in W$$

\begin{theorem}\label{orthogonalsubset}
Fix $v\in W^P$ and $S,T\subseteq \Phi^+(v)$ orthogonal subsets.
Then $B_ve_S=B_ve_T$ implies $S=T$.
\end{theorem}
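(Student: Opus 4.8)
The plan is to reduce the equality of orbits to an equality of double cosets in $G$, and then invoke the injectivity result already available for strongly orthogonal sets. Concretely, suppose $B_v e_S = B_v e_T$ with $S, T \subseteq \Phi^+(v)$ orthogonal. First I would apply Theorem \ref{BxB} (or rather the computation preceding Theorem \ref{orthogonalsubset}): the hypothesis $B_v e_S = B_v e_T$ certainly gives $B_v e_S \subseteq \cdr{B_v e_T}$ and vice versa, so by that theorem $B v \exp(e_S) v^{-1} B \subseteq \cdr{B v \exp(e_T) v^{-1} B}$ and symmetrically. Since double cosets are locally closed and of the same dimension here, the two closures agree, hence $B g_{v(S)} B = B g_{v(T)} B$ as double cosets (one could also argue directly: $B_v e_S = B_v e_T$ means $\exp(e_S)$ and $\exp(e_T)$ differ by an element of $B_v \cdot$ an element of $B_v$ conjugated appropriately, which lands inside $v^{-1} B v \exp(e_T) v^{-1} B v$, giving the double coset equality without any closure argument).

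Next, by the displayed computation just before the statement, $B g_{v(S)} B = B \sigma_{v(S)} B$ and $B g_{v(T)} B = B \sigma_{v(T)} B$, using that $v(S)$ and $v(T)$ are orthogonal sets of negative roots and that the chosen root vectors realize the simple reflections via the $\exp(e_{-\alpha})\exp(e_\alpha)\exp(e_{-\alpha})$ formula. By the uniqueness of the Bruhat decomposition of $G$, $B \sigma_{v(S)} B = B \sigma_{v(T)} B$ forces $\sigma_{v(S)} = \sigma_{v(T)}$ as elements of $W$. Now $v(S)$ and $v(T)$ are orthogonal subsets of $\Phi$; by Lemma \ref{sommaradici}(1) orthogonal subsets of $\Psi$ are strongly orthogonal, but here $v(S), v(T)$ lie in $-\Psi$ (the roots are negative); applying the sign-flip, $-v(S)$ and $-v(T)$ are orthogonal subsets of $\Psi$, hence strongly orthogonal, hence $v(S)$ and $v(T)$ are strongly orthogonal. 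Then Corollary \ref{SugualeT} applied to $v(S)$ and $v(T)$ — which have equal associated involutions and are strongly orthogonal — yields $v(S) = v(T)$, and applying $v^{-1}$ gives $S = T$.

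The one point that needs care, and which I expect to be the main obstacle, is justifying the passage from $B_v e_S = B_v e_T$ to $B g_{v(S)} B = B g_{v(T)} B$ cleanly in characteristic $2$, since Theorem \ref{BxB} is phrased with closures and one wants an honest equality of double cosets. I would handle this by unwinding the action \eqref{action} directly: $B_v e_S = B_v e_T$ means there is $(g,y) \in B_v = B_L \ltimes \mathfrak{u}_v$ with $\adj_g(e_T + y) = e_S$, i.e. $g \exp(e_T + y) g^{-1} \equiv \exp(e_S) \pmod{L}$, and then, exactly as in the proof of Theorem \ref{BxB}, $B_v \exp(e_T) \subseteq v^{-1} B v \exp(e_T) v^{-1} B v$ gives $v \exp(e_S) v^{-1} \in B v \exp(e_T) v^{-1} B$, hence $B g_{v(S)} B \subseteq B g_{v(T)} B$; the reverse inclusion is symmetric. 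Everything else — the reduction to $\sigma_{v(S)} = \sigma_{v(T)}$ via Bruhat uniqueness, and the final step via Corollary \ref{SugualeT} — is formal and characteristic-free, which is precisely why this argument survives the passage to characteristic $2$.
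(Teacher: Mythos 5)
Your proposal is correct and takes essentially the same route as the paper: Theorem \ref{BxB} applied in both directions gives $Bg_{v(S)}B=Bg_{v(T)}B$, the displayed computation turns this into $B\sigma_{v(S)}B=B\sigma_{v(T)}B$, uniqueness in the Bruhat decomposition forces $\sigma_{v(S)}=\sigma_{v(T)}$, and Corollary \ref{SugualeT} then yields $S=T$. (The only cosmetic slip is the claim that $-v(S)\subseteq\Psi$, which need not hold; it is harmless because $v$, being an isometry of $\Phi$, transports the strong orthogonality of $S$ and $T$ furnished by Lemma \ref{sommaradici} directly to $v(S)$ and $v(T)$ — or one can conjugate by $v^{-1}$ and apply Corollary \ref{SugualeT} to $S$ and $T$ themselves.)
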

\begin{proof}
By Theorem \ref{BxB} we get
$$Bvg_Sv^{-1}B=Bvg_Tv^{-1}B$$
and by the discussion above this implies
$$B\sigma_{v(S)}B=B\sigma_{v(T)}B$$
using the characterization of the order in the flag variety we get $v(S)=v(T)$, hence $S=T$ by Lemma \ref{SugualeT}.
\end{proof}

Given a simple root $\alpha\in \Delta$ we can define a parabolic subgroup $P_\alpha$ which is the subgroup generated by $B$ and $U_{-\alpha}$.
It is minimal among the parabolic subgroups that strictly contain $B$ and every such subgroup is obtained this way.

Now fix a $B$-orbit $BxL/L$ in $G/L$ and a simple root $\alpha\in\Delta$.
The minimal parabolic subgroup $P_\alpha$ acts on $G/L$, so the $B$-orbit $BxL/L$ is contained in the $P_\alpha$-orbit $P_\alpha xL/L$.
\begin{Proposition}[4.2,\cite{RS}]
The Borel subgroup $B$ acts on $P_\alpha xL/L$ with finitely many orbits, in fact there are at most $3$ $B$-orbits in $P_\alpha xL/L$.
\end{Proposition}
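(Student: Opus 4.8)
The plan is to reduce the statement to counting the orbits of a connected group acting on $\mathbb{P}^1$. Fix $x\in G$, put $\overline{x}=xL/L$ and $Q=P_\alpha\cap xLx^{-1}$, the stabilizer of $\overline{x}$ in $P_\alpha$. A point $p\overline{x}$ of the orbit $P_\alpha xL/L$ is determined by the coset $pQ$, so the $B$-orbits contained in $P_\alpha xL/L$ are in bijection with the double cosets $B\backslash P_\alpha/Q$; and sending $p$ to $p^{-1}$ identifies this set with the set of orbits of $Q$ acting by left multiplication on $P_\alpha/B$. Since $P_\alpha$ is minimal among the parabolic subgroups strictly containing $B$, we have $P_\alpha/B\cong\mathbb{P}^1$, on which $Q$ acts algebraically, so it suffices to bound by $3$ the number of $Q$-orbits on $\mathbb{P}^1$.

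The first step is to show that the identity component $Q^\circ$ acts non-trivially on this $\mathbb{P}^1$. As recalled in the introduction, $B$ has only finitely many orbits on $G/L$, hence $Q$ has only finitely many orbits on $P_\alpha/B\cong\mathbb{P}^1$; if $Q^\circ$ acted trivially the $Q$-action would factor through the finite group $Q/Q^\circ$, and a finite group acting on the infinite set $\mathbb{P}^1$ has infinitely many orbits, a contradiction.

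The second step is to invoke the classification of connected subgroups of $\operatorname{Aut}(\mathbb{P}^1)=\mathrm{PGL}_2$: up to conjugacy the image of $Q^\circ$ is either $\mathrm{PGL}_2$, its Borel subgroup, a one-dimensional unipotent subgroup, or a one-dimensional torus, and in these four cases the number of orbits on $\mathbb{P}^1$ is $1$, $2$, $2$, $3$ respectively. Hence $Q^\circ$, and therefore $Q$ (whose orbits are unions of $Q^\circ$-orbits), has at most $3$ orbits on $\mathbb{P}^1$. Reversing the identifications of the first paragraph, $B$ has at most $3$ orbits on $P_\alpha xL/L$, which in particular are finitely many.

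The reduction in the first paragraph — checking that $B$-orbits on $P_\alpha xL/L$ genuinely correspond to $Q$-orbits on $P_\alpha/B$, and that the latter carries an algebraic $Q$-action — together with the non-triviality argument of the second paragraph are the only points that need care; the rest is standard. It is worth emphasizing that this argument uses nothing about the involution $\Theta$ (which does not exist in characteristic $2$), only the characteristic-independent finiteness of the $B$-orbits on $G/L$, so it is valid in the setting of this paper.
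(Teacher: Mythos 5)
Your argument is correct, and it takes a genuinely different route from the paper: the paper gives no proof at all, citing \cite{RS} (4.2), whose argument is written for symmetric varieties in characteristic $\neq 2$ and rests on the involution $\Theta$ and the classification of its fixed points in the rank-one quotient. Your reduction --- identifying the $B$-orbits in $P_\alpha xL/L$ with $B\backslash P_\alpha/Q$ for $Q=P_\alpha\cap xLx^{-1}$, hence with $Q$-orbits on $P_\alpha/B\cong\mathbb{P}^1$, and then classifying the possible connected closed subgroups of $\mathrm{PGL}_2$ (trivial, $\mathbb{G}_a$, $\mathbb{G}_m$, Borel, $\mathrm{PGL}_2$, a list valid in every characteristic, with orbit counts $2,3,2,1$ in the nontrivial cases) --- avoids $\Theta$ entirely, so it actually supplies the characteristic-$2$ justification that the paper's bare citation leaves implicit; this is exactly the kind of ``alternative proof'' the paper announces it will give when the Richardson--Springer arguments are unavailable, and in that sense your proof buys more than the citation does.

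The one point that needs to be made precise is your input ``$B$ has finitely many orbits on $G/L$,'' used to rule out a finite image of $Q$ in $\mathrm{Aut}(\mathbb{P}^1)$. The introduction asserts this finiteness, but in characteristic $2$ it is not a classical fact one can simply quote: within this paper it is a consequence of the parametrization results (Theorem \ref{parametrizzazioneADE} in the simply laced case, Theorem \ref{parametrizzazioneB} in type $\mathbf{B}$, Theorems \ref{primaparte} and \ref{secondaparte} in type $\mathbf{C}$, combined with Lemma \ref{orderiso}). Since the Proposition you are proving itself contains a finiteness assertion, you should say explicitly that those parametrization theorems are proved without using it --- they are, as the Proposition only enters later, in the Bruhat-order and dimension arguments --- so that no circularity arises. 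With that remark added, the proof is complete; the remaining steps (the double-coset bijection, algebraicity of the $Q$-action on $P_\alpha/B$, the fact that a finite group has infinitely many orbits on the infinite set $\mathbb{P}^1(\mathbb{K})$, and that $Q$-orbits are unions of $Q^\circ$-orbits) are all correct as written.
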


There must be a unique $B$-orbit $\mathcal{O}$ in $P_\alpha vx_S$ such that $\cdr{\mathcal{O}}=P_\alpha vx_S$.
We will call $\mathcal{O}$ the \textit{open orbit} of $P_\alpha vx_S$.

The dimension of $P_\alpha$ is $\dim B+1$, so $\dim Bvx_S\leq \dim P_\alpha vx_S\leq \dim Bvx_S+1$.
This implies that if $\mathcal{O}$ and $\mathcal{O}'$ are distinct $B$-orbits in $P_\alpha vx_S$ then they are comparable if and only if one of them is the open orbit.

\section{The simply laced case}

Suppose from now on that $G$ is a connected, reductive, linear algebraic group and that the root system $\Phi$ of $G$ is of type \textbf{ADE}.
Then if $\alpha,\beta\in \Phi$ and $\langle\alpha,\beta\rangle=0$ we know that $\alpha+\beta,\alpha-\beta\notin \Phi$ because $(\alpha+\beta,\alpha+\beta)=(\alpha,\alpha)+(\beta,\beta)$ and all the roots must have the same length.
It follows from Lemma \ref{IET} that

$$u_\alpha(t).e_\beta=e_\beta+ate_{\alpha+\beta}$$

where $a$ is a constant that depends only on $\alpha$ and $\beta$ and is non-null if and only if $\alpha+\beta$ is a root in $\Phi$.

For every $v\in W^P$ we can consider the action of $B_v$ on $\mathfrak{p}^u$ defined by Equation \ref{action}.

The following theorem and its proof coincide with Proposition 4.7 in \cite{GM} which itself follows directly from Lemma 4.6.
We opted to write the proof instead of just giving a reference because the hypothesis are different: in \cite{GM} it is $\ch(\mathbb{K})\neq 2$ and the result is valid for every root system.
Here, we have $\ch(\mathbb{K})=2$, but we are limited to simply laced root systems.

\begin{theorem}\label{paramBv}
For every $v\in W^P$ there is a correspondence

\begin{align*}
\left\lbrace S\subseteq \Phi^+(v) \text{ orthogonal}\right\rbrace &\leftrightarrow \left\lbrace B\text{-orbits in }\mathfrak{p}^u\right\rbrace\\
S &\mapsto B_ve_S
\end{align*}
\end{theorem}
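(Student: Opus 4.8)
The plan is to show the map is well-defined and surjective, with injectivity already provided by Theorem~\ref{orthogonalsubset}. The heart of the argument is an induction on $v\in W^P$ via the action of minimal parabolic subgroups, exactly as in Gandini--Maffei. The base case is $v=\omega^P$ where $B_v=B_L$ acts on $\mathfrak{p}^u$ by the adjoint action; here one needs to know that every $B_L$-orbit contains some $e_S$ with $S\subseteq\Psi$ orthogonal. Since the excerpt is building toward Theorem~\ref{parametrizzPu}, I would instead run a single induction on $l(v)$ (decreasing from $\omega^P$) that simultaneously proves both surjectivity statements, using the containment $B_vx\subseteq B_ux$ for $u<v$ recorded after Equation~\ref{action}.

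For the inductive step, suppose the statement holds for some $v$ and let $u=s_\alpha v<v$ with $\alpha\in\Delta$, $\beta=v^{-1}(\alpha)$; recall $B_u=B_v\ltimes\mathfrak{u}_\beta$ and $B_ux=\bigcup_{t\in\mathbb{K}}B_v(x+te_\beta)$. By Proposition~2.5 (cited from \cite{GM}), $\beta$ is maximal in $\Phi^+(v)$ and minimal in $\Psi\setminus\Phi^+(u)$. Given an orthogonal $S\subseteq\Phi^+(u)$, I would distinguish two cases. If $\beta\notin S$, then $S\subseteq\Phi^+(v)$ and $B_ue_S\supseteq B_ve_S$; one checks that adding $te_\beta$ moves $e_S$ inside the closure of an orbit already of the form $B_ve_{S'}$, so no new orbit type is needed — but one must handle the possibility that $\beta$ is orthogonal to $S$, in which case $S\cup\{\beta\}$ is a candidate orthogonal set and $B_ue_{S\cup\{\beta\}}$ should be the generic orbit over $B_ve_S$. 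If $\beta\in S$, the key point is that acting by $U_{-\gamma}$ for suitable $\gamma$ (using the simply laced commutation formula $u_\alpha(t).e_\beta=e_\beta+ate_{\alpha+\beta}$) one can eliminate or absorb $e_\beta$. Conversely, every $B_u$-orbit, being a union $\bigcup_t B_v(x+te_\beta)$, is controlled by the generic fibre and the special fibre $x$, both understood inductively, so one shows each $B_u$-orbit meets some $e_S$. The orthogonality constraint $S\subseteq\Phi^+(u)$ and Lemma~\ref{sommaradici} (strong orthogonality, plus parts (2),(3)) are what guarantee the relevant root sums $\beta\pm\gamma$ behave, so that the $B_v$-action on $x+te_\beta$ cannot create a root not already in $\Psi$.

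The main obstacle, and the place where characteristic $2$ bites, is precisely the structure constant computation hiding in the phrase "$e_\beta$ can be absorbed": in odd characteristic Gandini--Maffei use that $u_\alpha(t)$ can produce a $t^2e_{\beta+2\alpha}$ term and more crucially that certain $\mathfrak{sl}_2$-triples behave well, whereas here we only have the additive formula $u_\alpha(t).e_\beta=e_\beta+ate_{\alpha+\beta}$ with $a\neq 0$ iff $\alpha+\beta\in\Phi$. I expect the real work is a case analysis over which of $\beta\pm\gamma$ lie in $\Phi$ for $\gamma\in S$, showing that when $\beta$ is orthogonal to all of $S$ one gets a genuinely new orbit $B_ue_{S\cup\{\beta\}}$ strictly larger than the others, and when $\beta$ fails to be orthogonal to some $\gamma\in S$ one can use $u_{\beta-\gamma}(t)$ or $u_{\gamma-\beta}(t)$ to merge $x+te_\beta$ back into $B_ve_S$-type orbits. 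Counting orbits (at most $3$ per $P_\alpha$-orbit, one open) against the count of orthogonal subsets then forces the bijection. Finally, injectivity is immediate from Theorem~\ref{orthogonalsubset}, completing the correspondence.
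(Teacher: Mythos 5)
Your computational core matches the paper's argument: injectivity is exactly Theorem \ref{orthogonalsubset}, and surjectivity rests on the decomposition $B_ux=\bigcup_{t\in\mathbb{K}}B_v(x+te_\beta)$ together with the dichotomy ``$\beta$ orthogonal to $S$'' (giving the two candidate orbits $B_ve_S$ and $B_ve_{S\cup\{\beta\}}$) versus ``$\beta$ not orthogonal to some $\gamma\in S$'', where one acts with $u_{\beta-\gamma}(s)\subseteq B_L$ (note $\beta-\gamma\in\Phi_P^+$ because $\beta$ is maximal in $\Phi^+(v)$) to absorb the term $te_\beta$, using Lemma \ref{sommaradici} to see that the other $e_\tau$ are fixed and the simply laced formula to get a nonzero structure constant. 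This is precisely how the paper argues; in particular no counting of $B$-orbits inside $P_\alpha$-orbits is needed, and only $u_{\beta-\gamma}$ is available ($\gamma-\beta$ is a negative root, so $U_{\gamma-\beta}\not\subseteq B_v$).

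However, the logical scaffolding of your induction is backwards, and as written the argument is circular. You announce an induction on $l(v)$ ``decreasing from $\omega^P$'' and in the step you assume the statement for $v$ and pass to $u=s_\alpha v<v$; with that direction the base case is $v=\omega^P$, i.e.\ precisely Theorem \ref{parametrizzPu}, which is never established (you observe yourself that taking $\omega^P$ as base case would beg the question, but your proposed fix does not change the direction, and going downward is in any case the easy, contentless implication since $B_u$-orbits are unions of $B_v$-orbits). The paper inducts \emph{upward} on $l(v)$: the base case is $v=\id$, where $B_{\id}=B_L\ltimes\mathfrak{u}_{\id}$ with $\mathfrak{u}_{\id}=\mathfrak{p}^u$, so the action of Equation \ref{action} contains all translations and is transitive; the inductive step takes a $B_v$-orbit $\mathcal{O}$, writes the $B_u$-orbit containing it as $B_ue_S$ with $S\subseteq\Phi^+(u)$ orthogonal by the hypothesis at the \emph{shorter} element $u=s_\alpha v$, and then runs exactly the two-case analysis above on $B_ue_S=\bigcup_tB_v(e_S+te_\beta)$. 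Your own case split betrays the directional confusion: for $S\subseteq\Phi^+(u)$ the case ``$\beta\in S$'' is empty, since $\Phi^+(u)=\Phi^+(v)\setminus\{\beta\}$. With the direction reversed and the base case $v=\id$ supplied, your outline becomes the paper's proof.
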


\begin{proof}
We have already seen in Theorem \ref{orthogonalsubset} that such a map is injective.
We will show by induction on $l=l(v)$ that every $B_v$-orbit admits an element of the form $e_S$ where $S$ is orthogonal.

Suppose $l=0$, then $v=\id$ and the action of $B_{\id}$ is transitive on $\mathfrak{p}^u$. 

Now suppose $l>0$ and consider an orbit $\mathcal{O}$ in $\mathfrak{p}^u$.
Fix $\alpha\in\Delta$ for which $u=s_\alpha v<v$ and $\beta=v^{-1}(-\alpha)$.
By induction there is an orthogonal set $S$ such that $B_u\mathcal{O}=B_ue_S$.

If $S'=S\cup\left\lbrace \beta\right\rbrace$ is orthogonal then
$$B_ue_S=B_v(e_S+te_\beta)=B_ve_S\cup B_ve_{S'}$$
so it must be either $\mathcal{O}=B_ve_S$ or $\mathcal{O}=B_ve_{S'}$.

If $S'=S\cup\left\lbrace \beta\right\rbrace$ is not orthogonal then there must be $\gamma\in S$ such that $\langle\gamma,\beta\rangle\neq 0$.
But roots in $\Psi$ can't be added and $\beta$ is maximal so $\delta=\beta-\gamma$ must be a positive root in $\Phi_P$ and the one parameter subgroup $U_\delta$ is contained in $B_L\subseteq B_v$.
If we let such subgroup act on $e_S+te_\beta$ we get
\[u_\delta(s).(e_S+te_\beta)=u_\delta(s).(e_{S\setminus \left\lbrace\gamma\right\rbrace}+e_\gamma+te_\beta)=e_{S\setminus \left\lbrace\gamma\right\rbrace}+e_\gamma+ase_\beta+te_\beta=e_S+(as+t)e_\beta\]
where we used that $\beta=\delta+\gamma$ and that, by Lemma \ref{sommaradici}, $\tau+\delta$ is not a root for every $\tau\in S\setminus\left\lbrace\gamma\right\rbrace$, hence $u_\delta(s)$ fixes all $e_\tau$, $\tau\in S\setminus\left\lbrace\gamma\right\rbrace$.
Note that $a\neq 0$, so there is $s\in\mathbb{K}$ such that $u_\delta(s).(e_S+te_\beta)=e_S$.
Hence, $B_v(e_S+te_\beta)=B_ve_S$.

In both cases, the claim follows.
\end{proof}

As we said, Theorem \ref{paramBv} is true for simply laced root systems in any characteristic and for all root systems if the characteristic is not $2$.
If the characteristic is indeed $2$ and the root system is not simply laced, not only the proof fails, but the claim is false.
We will see this in Sections \ref{B} and \ref{tipoC} were we will see the non-simply laced root systems.

Theorem \ref{paramBv} easily implies the following parametrization.

\begin{theorem}\label{parametrizzPu}
There is a correspondence:
\begin{align*}
\left\lbrace S \text{ orthogonal }\mid S\subseteq \Psi\right\rbrace &\leftrightarrow B\text{-orbits in }\mathfrak{p}^u\\
S &\mapsto Be_S
\end{align*}
\end{theorem}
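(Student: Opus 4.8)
The plan is to obtain Theorem~\ref{parametrizzPu} as the special case $v=\omega^P$ of Theorem~\ref{paramBv}, where $\omega^P$ is the longest element of $W^P$. So almost all the work has already been done; what remains is to unwind the identifications on both sides of the correspondence.

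First I would recall that $\omega^P$ is characterised by $\Phi^+(\omega^P)=\Psi$. Hence the orthogonal subsets $S\subseteq\Phi^+(\omega^P)$ parametrising the left-hand side of Theorem~\ref{paramBv} are exactly the orthogonal subsets $S\subseteq\Psi$, which is precisely the index set appearing in Theorem~\ref{parametrizzPu}. Next I would invoke the facts established in Section~3: $B_{\omega^P}=B_L=B\cap L$, and the action of $B_{\omega^P}$ on $\mathfrak{p}^u$ given by Equation~\ref{action} coincides with the adjoint action of $B$ on $\mathfrak{p}^u$ — indeed, since $P^u$ acts trivially on $\mathfrak{p}^u$ under our hypothesis, the $B_L$-orbits and the $B$-orbits on $\mathfrak{p}^u$ agree. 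Therefore $B_{\omega^P}e_S=Be_S$ for every orthogonal $S\subseteq\Psi$, and the $v=\omega^P$ instance of Theorem~\ref{paramBv} reads verbatim as the asserted bijection
\[\left\lbrace S\text{ orthogonal}\mid S\subseteq\Psi\right\rbrace\leftrightarrow\left\lbrace B\text{-orbits in }\mathfrak{p}^u\right\rbrace,\qquad S\mapsto Be_S.\]

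I do not expect any genuine obstacle here: the argument is a direct reinterpretation of an already-proved statement. The only points deserving a line of care are bookkeeping: confirming that "$B$-orbits in $\mathfrak{p}^u$" means the same (adjoint) action in both theorems, and noting that $e_S=\sum_{\alpha\in S}e_\alpha$ indeed lies in $\mathfrak{p}^u$ whenever $S\subseteq\Psi$, since $\mathfrak{p}^u=\bigoplus_{\gamma\in\Psi}\mathfrak{u}_\gamma$. With these remarks in place the proof is immediate.
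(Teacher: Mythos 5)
Your proposal is correct and follows exactly the paper's own argument: specialize Theorem~\ref{paramBv} at $v=\omega^P$, using $\Phi^+(\omega^P)=\Psi$, $B_{\omega^P}=B_L$, and the coincidence of the $B_L$- and $B$-orbits on $\mathfrak{p}^u$. Nothing further is needed.
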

\begin{proof}
We know that the $B$-orbits and the $B_L$-orbits coincide and $B_L=B_{\omega^P}$ where $\omega^P$ is the longest element in $W^P$.
By definition $\Phi^+(\omega^P)=\Psi$ and the claim follows.
\end{proof}

Remember that a parametrization of the $B_v$-orbits also gives a parametrization of the $B$-orbits in the Hermitian symmetric variety $G/L$:

\begin{theorem}\label{parametrizzazioneADE}
There is a correspondence
\begin{align*}
\left\lbrace (v,S)\mid v\in W^P, S\subseteq \Phi^+(v), S\text{ orthogonal}\right\rbrace &\leftrightarrow B\text{-orbits in }G/L\\
(v,S)&\mapsto Bvx_S
\end{align*}
\end{theorem}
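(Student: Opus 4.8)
The plan is to assemble the statement from the stratification of $G/L$ over $G/P$ together with the fibrewise parametrization already obtained. First I would record that, since $\pi\colon G/L\to G/P$ is $B$-equivariant and $G/P=\bigsqcup_{v\in W^P}BvP/P$ is the Bruhat decomposition, we get a $B$-stable decomposition $G/L=\bigsqcup_{v\in W^P}BvP/L$ with $BvP/L=\pi^{-1}(BvP/P)$. In particular every $B$-orbit in $G/L$ lies in exactly one stratum $BvP/L$, and the index $v\in W^P$ is uniquely determined by the orbit.

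Next I would fix $v\in W^P$ and apply Lemma \ref{orderiso}: the map $B_ve\mapsto Bv\exp(e)L/L$ is a bijection between the $B_v$-orbits in $\mathfrak{p}^u$ and the $B$-orbits in $BvP/L$. Composing with Theorem \ref{paramBv}, which says that $S\mapsto B_ve_S$ is a bijection from the orthogonal subsets $S\subseteq\Phi^+(v)$ onto the $B_v$-orbits in $\mathfrak{p}^u$, yields a bijection between $\{S\subseteq\Phi^+(v)\text{ orthogonal}\}$ and the $B$-orbits in $BvP/L$, given explicitly by $S\mapsto Bv\exp(e_S)L/L=Bvx_S$ (using $x_S=\exp(e_S)L/L$, and noting that $\exp(e_S)\in P^u\subseteq P$, so indeed $vx_S\in vP/L\subseteq BvP/L$).

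Finally I would glue the strata: every $B$-orbit in $G/L$ is of the form $Bvx_S$ for a unique $v\in W^P$ (the stratum index) and then a unique orthogonal $S\subseteq\Phi^+(v)$ (the fibrewise injectivity, which is already contained in Theorem \ref{orthogonalsubset}). Conversely, each such pair yields an orbit in $BvP/L$, and pairs with distinct $v$ land in disjoint strata; hence the whole map $(v,S)\mapsto Bvx_S$ is a bijection.

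I do not expect a genuine obstacle here: the theorem is essentially a bookkeeping consequence of Theorem \ref{paramBv}, Lemma \ref{orderiso}, and the Bruhat decomposition of $G/P$. The one point that needs slight care is the separation of the pieces — one must invoke the uniqueness of $v$ in the Bruhat decomposition of $G/P$ \emph{before} appealing to fibrewise injectivity, since a priori the coset $vx_S$ might be rewritten with a different $v'$; the disjointness $\pi^{-1}(BvP/P)\cap\pi^{-1}(Bv'P/P)=\emptyset$ for $v\neq v'$ rules this out.
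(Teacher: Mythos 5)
Your proposal is correct and follows exactly the route the paper intends: Theorem \ref{parametrizzazioneADE} is stated there as an immediate consequence of Theorem \ref{paramBv} combined with Lemma \ref{orderiso} and the decomposition $G/L=\bigsqcup_{v\in W^P}\pi^{-1}(BvP/P)$ coming from the Bruhat decomposition of $G/P$. Your extra remark about fixing the stratum index $v$ before invoking fibrewise injectivity is precisely the bookkeeping the paper leaves implicit, so nothing is missing.
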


In the setting of simply laced root systems we will say that a pair $(v,S)$ with $v\in W^P$ and $S\subseteq \Phi^+(v)$ is \textit{admissible} if $S$ is orthogonal.
We will denote the set of admissible pairs with $V_L$.
From the theorem above and \cite[Proposition 4.7]{GM} the admissible pairs parametrize the $B$-orbits in $G/L$ regardless of the characteristic of the base field $\mathbb{K}$.

It is now natural to ask if the Bruhat order on the $B$-orbits depends on the characteristic.
The answer is no, but instead of proving the characterization directly, we will show that both orders agree as orders on $V_L$.
In the last part of this chapter, most proofs will mirror the equivalent proofs in \cite{GM}.
The most important original result is Lemma \ref{Springer} which in \cite{GM} derives from the existence of an involution that fixes $L$ which we do not have in characteristic $2$.

To start, fix a simple root $\alpha\in \Delta$.
Consider the minimal parabolic subgroup $P_S$ where $S=\left\lbrace\alpha\right\rbrace$ which we will denote for simplicity with $P_\alpha$.
Recall that we can let $P_\alpha$ act  on $Bvx_R$ on the left, obtaining $P_\alpha vx_R$ which is the union of, at most, three $B$-orbits, one of which is open and dense in $P_\alpha vx_R$.
The following definition comes from \cite{RS}, but, as before, we use the notation of \cite{GM}.

\begin{Definition}
Consider $P_\alpha vx_S\supseteq Bvx_S$ and define

\[m_\alpha(v,S)\doteqdot(u,T) \text{ if and only if }Bux_T \text { is open in }P_\alpha vx_S\]
\[\mathcal{E}_\alpha(v,S)\doteqdot\left\lbrace (u,T)\neq (v,S) \text{ admissible}\mid m_\alpha(u,T)=(v,S)\right\rbrace\]

\end{Definition}

Notice that from the correspondence between subsets of $\Delta$ and parabolic subgroups containing $B$ we know that $P_\alpha=B\cup Bs_\alpha B$ which can also be written as $P_\alpha=Bs_\alpha\cup BU_{-\alpha}$ where $U_{-\alpha}$ is the one parameter subgroup associated to $-\alpha$.

\begin{theorem}\label{frecciafacile}
Let $v\in W^P$ and $S,T\subseteq \Phi^+(v)$ be orthogonal subsets.
If $\cdr{B_ve_S}\supseteq B_ve_T$ then $\sigma_{v(S)}\geq \sigma_{v(T)}$.
\end{theorem}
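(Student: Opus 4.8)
The plan is to reduce the containment statement about $B_v$-orbits in $\mathfrak{p}^u$ to a containment of double cosets in $G$, and then apply the known combinatorial description of the Bruhat order on the flag variety. Concretely, starting from $B_ve_T\subseteq\cdr{B_ve_S}$, I would invoke Theorem \ref{BxB} with $x=e_T$ and $y=e_S$ to obtain
\[Bv\exp(e_T)v^{-1}B\subseteq\cdr{Bv\exp(e_S)v^{-1}B}.\]
By the discussion preceding Theorem \ref{orthogonalsubset}, since $T,S\subseteq\Phi^+(v)$ are orthogonal, we have $Bv\exp(e_T)v^{-1}B=B\sigma_{v(T)}B$ and $Bv\exp(e_S)v^{-1}B=B\sigma_{v(S)}B$, using that $v(T)$ and $v(S)$ are orthogonal sets of negative roots and that $g_{-v(T)},g_{-v(S)}\in B$.

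The next step is to translate the closure containment of double cosets into the Bruhat order. The closure of $Bw B$ in $G$ (equivalently in $G/B$) is $\bigcup_{w'\le w}Bw'B$, so $B\sigma_{v(T)}B\subseteq\cdr{B\sigma_{v(S)}B}$ forces $\sigma_{v(T)}\le\sigma_{v(S)}$ in the Bruhat order on $W$. This is exactly the desired conclusion $\sigma_{v(S)}\ge\sigma_{v(T)}$.

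The main point requiring care is the passage from $B_ve_T\subseteq\cdr{B_ve_S}$ (a closure taken inside $\mathfrak{p}^u$ under the $B_v$-action of Equation \ref{action}) to the hypothesis of Theorem \ref{BxB}, but this is immediate since Theorem \ref{BxB} is stated precisely for $x,y\in\mathfrak{p}^u$ with $B_vx\subseteq\cdr{B_vy}$; one simply notes that $B_ve_T\subseteq\cdr{B_ve_S}$ is equivalent to $e_T\in\cdr{B_ve_S}$. A secondary subtlety is that in characteristic $2$ the identity $\exp(e_{-\alpha})\exp(e_\alpha)\exp(e_{-\alpha})T/T=s_\alpha$ used in the computation $Bg_{v(S)}B=B\sigma_{v(S)}B$ must hold, and for orthogonal (hence strongly orthogonal, by Lemma \ref{sommaradici}) subsets the factors for distinct roots commute and act independently, so the product decomposes cleanly; this was already established in the text right before Theorem \ref{orthogonalsubset}. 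So I expect no real obstacle: the proof is a short chain invoking Theorem \ref{BxB}, the double-coset identification, and the standard Bruhat-order description on $G/B$.

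\begin{proof}
Since $B_ve_T\subseteq\cdr{B_ve_S}$ we may apply Theorem \ref{BxB} with $x=e_T$ and $y=e_S$, obtaining
\[Bv\exp(e_T)v^{-1}B\subseteq\cdr{Bv\exp(e_S)v^{-1}B}.\]
As $S,T\subseteq\Phi^+(v)$ are orthogonal, the sets $v(S)$ and $v(T)$ are orthogonal subsets of $-\Phi^+$, so by the computation preceding Theorem \ref{orthogonalsubset} we have $Bv\exp(e_S)v^{-1}B=B\sigma_{v(S)}B$ and $Bv\exp(e_T)v^{-1}B=B\sigma_{v(T)}B$. Hence $B\sigma_{v(T)}B\subseteq\cdr{B\sigma_{v(S)}B}$. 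By the characterization of the Bruhat order on the flag variety $G/B$, this gives $\sigma_{v(T)}\leq\sigma_{v(S)}$, that is, $\sigma_{v(S)}\geq\sigma_{v(T)}$.
\end{proof}
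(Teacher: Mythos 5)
Your proof is correct and follows essentially the same route as the paper: apply Theorem \ref{BxB} to pass from $B_ve_T\subseteq\cdr{B_ve_S}$ to the double coset containment, identify $Bv\exp(e_S)v^{-1}B=B\sigma_{v(S)}B$ via the computation with $g_{v(S)}$ preceding Theorem \ref{orthogonalsubset}, and conclude by the Bruhat order characterization on the flag variety. The extra remarks on strong orthogonality and the characteristic-$2$ validity of the $\exp$ identity are sound but already covered by the cited discussion.
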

\begin{proof}
By Theorem \ref{BxB} we have $Bvg_Tv^{-1}B\subseteq \cdr{Bvg_Sv^{-1}B}$.
Hence, $Bg_{v(T)}B\subseteq \cdr{Bg_{v(S)}B}$ and that implies $\sigma_{v(T)}\leq\sigma_{v(S)}$.
\end{proof}

Notice that with the correspondence $B_ve_S\longleftrightarrow Bvx_S$ which preserves the Bruhat order we also get that $Bvx_S\subseteq \cdr{Bvx_T}$ implies $\sigma_{v(S)}\leq\sigma_{v(T)}$.

We will use for simplicity a property that is true only in the simply laced case.
Suppose that $\alpha,\beta\in\Phi$ are not orthogonal.
Then it must be 
$$\langle\alpha,\beta\rangle=\langle\beta,\alpha\rangle=\pm 1$$
So we have
$$s_\alpha(\beta)=\beta-\langle \beta,\alpha\rangle \alpha=\left\lbrace \begin{array}{ll}
                  \beta-\alpha & \text{if } (\beta,\alpha)>0\\
                  \beta+\alpha &\text{if } (\beta,\alpha)<0\\
                  
                \end{array}
              \right.
$$

We will need a technical lemma.

\begin{Lemma}\label{Springer}
Let $(v,S)$ be an admissible pair and $\alpha\in\Delta$ a simple root.
If $s_\alpha\sigma_{v(S)}<\sigma_{v(S)}$ then $\mathcal{E}_\alpha(v,S)\neq \varnothing$.
\end{Lemma}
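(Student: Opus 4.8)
The plan is to reduce the statement to a concrete calculation in $\mathfrak{p}^u$ using the dictionary between $B$-orbits in $G/L$ and $B_v$-orbits in $\mathfrak{p}^u$. Via Lemma \ref{orderiso}, finding an element of $\mathcal{E}_\alpha(v,S)$ amounts to exhibiting an admissible pair $(u,T)\neq(v,S)$ whose $P_\alpha$-orbit closure contains $Bvx_S$ as its open orbit; equivalently, I want to produce a $B_v$-orbit in $\mathfrak{p}^u$ that sits strictly below $B_v e_S$ in the order on $P_\alpha$-orbits, with $B_v e_S$ open. Since $\dim P_\alpha vx_S \le \dim Bvx_S+1$, the hypothesis $s_\alpha\sigma_{v(S)}<\sigma_{v(S)}$ should be what forces the $P_\alpha$-orbit to be strictly bigger than $Bvx_S$, hence to contain another $B$-orbit below it.

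First I would analyze $\alpha$ relative to $v$. Either $s_\alpha v > v$ or $s_\alpha v < v$; I expect to handle these cases separately, and the interesting one is probably $s_\alpha v > v$ (so that $\alpha$ genuinely acts on the left, moving $v$ up in $W^P$ or permuting cosets). Writing $\beta$ for the relevant root ($-v^{-1}(\alpha)$ or $v^{-1}(\alpha)$ depending on the case), I would use Proposition 2.5 of \cite{GM} (quoted in the excerpt) to control the maximality/minimality of $\beta$ in $\Phi^+(v)$ or in $\Psi\setminus\Phi^+(s_\alpha v)$. The condition $s_\alpha\sigma_{v(S)}<\sigma_{v(S)}$ translates, through the root-combinatorics of $\sigma_{v(S)}=\prod_{\gamma\in v(S)}s_\gamma$, into a statement about how $\alpha$ interacts with the orthogonal set $v(S)$: either $\alpha\in v(S)$, or there is a unique $\gamma\in v(S)$ with $\langle\alpha,\gamma\rangle\neq 0$ and $s_\alpha(\gamma)<0$. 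In the simply laced setting this means $\alpha+\gamma$ or $\gamma-\alpha$ behaves in a prescribed way, and I would use the explicit formula $u_\alpha(t).e_\beta = e_\beta + at e_{\alpha+\beta}$ (valid here since $\Phi$ is \textbf{ADE}) together with $P_\alpha = Bs_\alpha \cup BU_{-\alpha}$ to compute $P_\alpha$ acting on $x_S$.

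Concretely, I would compute $s_\alpha \cdot (v x_S)$ and $u_{-\alpha}(t)\cdot(v x_S)$ in $G/L$, push everything down to $\mathfrak{p}^u$ via $r_P^{-1}$, and identify the resulting $B_v$-orbit (or $B_u$-orbit for the appropriate $u$) as $B_u e_T$ for an explicit orthogonal $T$ — most likely $T = s_\alpha(S)$ with one element added or removed, or $T=S$ with $\beta$ adjoined. The candidate $(u,T)$ is then shown to be admissible (orthogonality of $T$, and $T\subseteq\Phi^+(u)$), distinct from $(v,S)$, and to satisfy $m_\alpha(u,T)=(v,S)$ by a dimension count: using Lemma \ref{dimensionrel} and Lemma \ref{invlength} one checks $\dim Bvx_S = \dim Bux_T + 1$, so $Bvx_S$ is forced to be the open orbit of $P_\alpha u x_T = P_\alpha v x_S$, and the fact that at most one $B$-orbit lies strictly above another in a $P_\alpha$-orbit (noted at the end of Section 3) pins down the arrow.

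The main obstacle is the step where \cite{GM} invokes the involution $\Theta$ with $G^\Theta{}^\circ = L$: Richardson–Springer's argument that $\mathcal{E}_\alpha(v,S)$ is nonempty when $s_\alpha$ lowers the associated involution uses symmetric-space machinery (the ``type'' of a simple root for the involution) that is simply unavailable in characteristic $2$. So the real work is to replace that soft argument with the hands-on computation sketched above — carefully tracking the constant $a$, the cases $\alpha\in v(S)$ versus $\alpha\notin v(S)$, and the two sub-cases $s_\alpha v \gtrless v$ — and to verify in each case that the explicit pair $(u,T)$ one writes down is genuinely admissible and genuinely maps to $(v,S)$ under $m_\alpha$. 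I anticipate that the subcase analysis, rather than any single deep idea, is where the length and delicacy of the proof will lie, with Lemma \ref{sommaradici} and Lemma \ref{IET} doing the bookkeeping for which sums of roots are or are not roots.
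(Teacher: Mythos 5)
Your overall skeleton does match the paper's: both proofs write $P_\alpha=Bs_\alpha\cup BU_{-\alpha}$, let it act on $vx_S$, translate into explicit computations with $e_S$ in $\mathfrak{p}^u$, and exhibit concrete candidate pairs. But your case split on $s_\alpha v\gtrless v$ is not the right one, and it hides the case that carries all the difficulty. The correct trichotomy is on $\beta=v^{-1}(\alpha)$: $\beta\in\Psi$, $\beta\in-\Psi$, or $\beta\in\Delta_P$. Under the hypothesis $s_\alpha\sigma_{v(S)}<\sigma_{v(S)}$ the case $\beta\in\Psi$ (i.e.\ $s_\alpha v>v$ with $s_\alpha v\in W^P$) is \emph{impossible} — so the case you call ``the interesting one'' does not occur in that form — and the genuinely delicate case is $\beta\in\Delta_P$, where $s_\alpha v=vs_\beta\notin W^P$, the candidate element of $\mathcal{E}_\alpha(v,S)$ is $(v,s_\beta(S))$ with the \emph{same} $v$ (so no move in $W^P$ at all, and the reflection acting is $s_\beta\in W_P\subseteq L$, not $s_\alpha$), and one needs a further four-fold subcase analysis according to whether $\beta$ can be added and/or subtracted to a root of $S$, with the hypothesis again ruling out two of the subcases. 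Your sketch gestures at ``permuting cosets'' but never isolates this case or its combinatorics, which is where most of the paper's proof lives.

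The more serious gap is the mechanism you propose for certifying that $Bvx_S$ is the open orbit: a dimension count $\dim Bvx_S=\dim Bux_T+1$ via Lemma \ref{dimensionrel} and Lemma \ref{invlength}. In characteristic $2$ the bridge between involution length and orbit dimension, $\dim Bvx_S=\#\Psi+L(\sigma_{v(S)})$, is Lemma \ref{dimADE}, which the paper proves by induction \emph{using} Lemma \ref{Springer}; so invoking it here is circular, and Lemma \ref{dimensionrel} alone only reduces the question to comparing $\dim B_ve_S$ with $\dim B_ue_T$, which are unknown at this stage. The paper avoids this by a different, characteristic-free tool: Theorem \ref{frecciafacile} (resting on Theorem \ref{BxB} and the Bruhat order on $B\backslash G/B$). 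The two candidate orbits in $P_\alpha vx_S$ must be comparable, and if the wrong one were open one would get $\sigma_{vs_\beta(S)}\geq\sigma_{v(S)}$, contradicting $\sigma_{vs_\beta(S)}=s_\alpha\circ\sigma_{v(S)}<\sigma_{v(S)}$; in the case $\beta\in-\Psi$ the open orbit is instead identified by the explicit $B_v$-computations. Without Theorem \ref{frecciafacile} (or an equivalent substitute for the Richardson--Springer machinery you rightly note is unavailable), your argument does not close.
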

\begin{proof}
Recall that $s_\alpha\sigma_{v(S)}<\sigma_{v(S)}$ if and only if $v\sigma_S v^{-1}(\alpha)=\sigma_{v(S)}(\alpha)<0$.
Denote $\beta=v^{-1}(\alpha)$.
There are three cases:
\begin{description}
\item [Case $\beta\in \Psi$:]

Note that $\beta\notin \Phi^+(v)$ because $v(\beta)=\alpha>0$.
Consider

$$X=\left\lbrace \alpha\in S\mid s_\alpha(\beta)\neq \beta\right\rbrace=\left\lbrace \alpha_1,\ldots,\alpha_n\right\rbrace$$
Then $v\sigma_S (\beta)=v\sigma_X (\beta)=v(\beta -\alpha_1-\cdots -\alpha_n)$ because elements of $\Psi$ can't be summed.
But now $v(\beta -\alpha_1-\cdots -\alpha_n)=\alpha-v(\alpha_1)-\cdots-v(\alpha_n)$ and $v(\alpha_i)<0$ for every $i$, so $v(\beta -\alpha_1-\cdots -\alpha_n)>0$.
This contradicts the hypothesis;

\item [Case $\beta\in -\Psi$:]

Note that $-\beta\in \Phi^+(v)$.
If we define $X$ as above we have
$$v\sigma_S (\beta)=v\sigma_X (\beta)=\left\lbrace \begin{array}{ll}
                  v(\beta+\alpha_1+\cdots +\alpha_n)=\alpha+v(\alpha_1)+\cdots +v(\alpha_n) & \text{if } -\beta\notin S\\
                  v(-\beta) &\text{if } -\beta\in S\\
                  
                \end{array}
              \right.
$$

It follows that $v\sigma_S (\beta)<0$ if and only if $X\neq\varnothing$.

Consider $P_\alpha=Bs_\alpha\sqcup BU_{-\alpha}$.
Then
\begin{align*}
P_\alpha vx_S&=Bs_\alpha vx_S\sqcup BU_{-\alpha}vx_S\\
&=Bs_\alpha vx_S\sqcup BvU_{-\beta}x_S\\
&=Bs_\alpha vx_S\sqcup Bvx_S\cup \bigcup_{t\in\mathbb{K}^*}Bvu_{-\beta}(t)x_S
\end{align*}
We have $s_\alpha v<v$ and $s_\alpha v\in W^P$ so $Bs_\alpha vx_S$ can't be the open orbit.

Note that $u_{-\beta}(t)=\exp(te_{-\beta})$, so $u_{-\beta}(t)x_S=\exp(e_S+te_{-\beta})$.

Now if $-\beta\in S$, then 
$$B_v\left(e_S+te_{-\beta}\right)=\left\lbrace\begin{array}{ll}
B_ve_S &\text{ if }t\neq -1\\
B_ve_{S\setminus\left\lbrace -\beta\right\rbrace}&\text{ if }t= -1
\end{array}
\right.
$$
So if we set $S'=S\setminus\left\lbrace -\beta\right\rbrace$
$$P_\alpha vx_S=Bs_\alpha vx_{S'}\sqcup Bvx_{S'}\sqcup Bvx_S$$
Then the open $B$-orbit in $P_\alpha vx_S$ is $Bvx_S$ and $\left\lbrace(s_\alpha v,S'),(v,S')\right\rbrace=\mathcal{E}_\alpha(v,S)$.

Suppose $-\beta\notin S$.
We know that there is $\alpha\in S$ such that $(\alpha,-\beta)>0$.
But $-\beta$ is maximal in $\Phi^+(v)$ so there exist $\gamma \in \Delta_P$ such that $-\beta=\alpha+\gamma$.
So for every $t\in\mathbb{K}$ there is an $s\in\mathbb{K}$ such that 
\[u_{\gamma}(s).e_S=\exp(e_S+te_{-\beta})\]
Hence 
$$P_\alpha vx_S =Bs_\alpha vx_S \sqcup Bvx_{S\cup \left\lbrace -\beta\right\rbrace}$$
and $B_ve_{S\cup \left\lbrace -\beta\right\rbrace}=B_ve_S$ is the open orbit.
We obtain $\left\lbrace(s_\alpha v,S)\right\rbrace=\mathcal{E}_\alpha(v,S)$.


\item [Case $\beta\in \Delta_P$:]
As a first thing, note that this is the only remaining case.
In fact if $\beta\in\Phi_P$, then $\beta$ must be positive and if $\beta=\gamma_1+\cdots+\gamma_n$ is the decomposition in simple roots, then $v(\beta)=\alpha=v(\gamma_1)+\cdots+v(\gamma_n)$ and this is absurd because $v(\gamma_i)>0$ for every $i\in \left\lbrace 1,\ldots,n\right\rbrace$.

We have $s_\alpha v=vs_\beta$ so $[s_\alpha v]^P=v$ where $[s_\alpha v]$ is the representative in $W^P$ of the coset $s_\alpha vW_P$ in $W$.

There are four subcases depending on if $\beta$ can be added or subtracted to roots in $S$.
Remember that $\beta$ can be added or subtracted at most to a single root and it can't be added and subtracted to the same root.
\begin{description}
\item [$\beta$ can't be added nor subtracted to any root in $S$]\hfill

In this case $\beta$ is orthogonal to $S$. 
So $v\sigma_S(\beta)=v(\beta)>0$.
Hence, this case is impossible;
\item [$\beta$ can be added but not subtracted to a root in $S$]\hfill

Denote with $\gamma$ the root such that $\gamma+\beta\in \Phi$.
We have $v\sigma_S(\beta)=v(\gamma+\beta)$ and $\gamma+\beta\in \Phi^+(v)$ follows.
Write $P_\alpha=Bs_\alpha\sqcup BU_{-\alpha}$ 
\begin{align*}
P_\alpha vx_S&=Bs_\alpha vx_S\sqcup BvU_{-\beta}x_S\\
&=Bvx_{s_\beta(S)}\sqcup Bvx_S\sqcup \bigcup_{t\in\mathbb{K}^*}Bv\exp(u_{-\beta}(t).e_S)
\end{align*}
where we used the fact that $s_\beta\in L$ and $U_{-\beta}\subseteq L$.

We have $Bv\exp(u_{-\beta}(t).e_S)=Bvx_S$ for every $t$ and $s_\beta(S)=S'\cup\left\lbrace \gamma+\beta\right\rbrace$ where $S'=S\setminus \left\lbrace \gamma\right\rbrace$, so $s_\beta(S)\neq S$.

If we compute the involutions we get

$$\sigma_{vs_\beta(S)}=\sigma_{s_\alpha v(S)}=s_\alpha \sigma_{v(S)}s_\alpha=s_\alpha\circ\sigma_{v(S)}$$

The orbits $Bvx_S$ and $Bvx_{s_\beta(S)}$ must be comparable.
Using Theorem \ref{frecciafacile} we find that $\cdr{Bvx_S}\supseteq Bvx_{s_\beta(S)}$ so $\left\lbrace(v,s_\beta(S))\right\rbrace= \mathcal{E}_\alpha(v,S)$;

\item [$\beta$ can be subtracted but not added to a root in $S$]\hfill

Denote with $\gamma$ the root such that $\gamma-\beta\in \Phi$.
As before we have $v\sigma_S(\beta)=v(\beta-\gamma)$, but then both $v(\beta)=\alpha$ and $v(-\gamma)$ are positive, so this case is impossible.

\item [$\beta$ can be subtracted and added to two roots in $S$]\hfill

Denote with $\gamma_+$ the root such that $\gamma_++\beta\in\Phi$ and with $\gamma_-$ the root such that $\gamma_--\beta\in\Phi$.
We have $v\sigma_S(\beta)=v(\beta+\gamma_+-\gamma_-)<0$ so the root $\beta+\gamma_+-\gamma_-$ which is in $\Phi_P$ must be negative.
The orbit $P_\alpha vx_S$ can be again decomposed as
$$P_\alpha vx_S=Bvx_{s_\beta(S)}\sqcup Bvx_S\sqcup \bigcup_{t\in\mathbb{K^*}}Bv\exp(u_{-\beta}(t).e_S)$$

Consider $B_vu_{-\beta}(t).e_S$.
Given that $\delta=\gamma_--\beta-\gamma_+$ is positive we have 
$$u_\delta(t)e_{\gamma_+}=e_{\gamma_+}+ate_{(\gamma_--\beta)}$$
where $a\neq 0$ is a constant.
Note that $u_{-\beta}(t).e_S=e_S+ate_{(\gamma_--\beta)}$ as well.
It follows that for every $t_0\in\mathbb{K}^*$ there is a $t$ such that
$$u_\delta(t)e_S=u_{-\beta}(t_0).e_S$$
Again, in $P_\alpha vx_S$ we have two orbits $Bvx_S$ and $Bvx_{s_\beta(S)}$ and they must be comparable.
They are different because $s_\beta(S)=S'\cup\left\lbrace \gamma_++\beta,\gamma_--\beta\right\rbrace$ where $S'=S\setminus \left\lbrace \gamma_+,\gamma_-\right\rbrace$ and $\gamma_++\beta\neq \gamma_+,\gamma_-$.

By computing the involutions we obtain as before
$$\sigma_{vs_\beta(S)}=s_\alpha\sigma_{v(S)}s_\alpha=s_\alpha\circ \sigma_{v(S)}<\sigma_{v(S)}$$
So by Theorem \ref{frecciafacile} $Bvx_S$ is the open orbit and $\left\lbrace(v,s_\beta(S))\right\rbrace=\mathcal{E}_\alpha(v,S)$.\qedhere
\end{description}

\end{description}
\end{proof}

From this we can compute the dimension of the orbits.

\begin{Lemma}\label{dimADE}
Let $(v,S)\in V_L$.
Then $\dim Bvx_S=\#\Psi+L(\sigma_{v(S)})$.
\end{Lemma}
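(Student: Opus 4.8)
The plan is to induct on $l(v)$, using the minimal parabolic action together with the dimension relations in Lemma \ref{dimensionrel} and the length formula for involutions in Lemma \ref{invlength}. First I would dispose of the base case $v=\id$: then $W_P$ acts transitively on $\mathfrak{p}^u$ so the only admissible pair is $(\id,\varnothing)$, $\sigma_{v(S)}=\id$ has length $0$, and $\dim Bx_\varnothing=\#\Psi$ since the fibre over the base point of $G/P$ is all of $\mathfrak{p}^u\cong P^u$; this matches the formula. For the inductive step, fix $\alpha\in\Delta$ with $u=s_\alpha v<v$ and set $\beta=v^{-1}(-\alpha)$, which is maximal in $\Phi^+(v)$ by Proposition 2.5 of \cite{GM}. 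By Lemma \ref{dimensionrel} it suffices to understand $\dim B_v e_S$ in terms of $\dim B_u e_S$, i.e.\ to track what happens to the orbit when we pass from the action of $B_u=B_v\ltimes\mathfrak{u}_\beta$ to the action of $B_v$.

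The heart of the argument is the case analysis already set up in the proof of Theorem \ref{paramBv}: writing $S$ for the orthogonal representative obtained by induction for the $B_u$-orbit, either (a) $S'=S\cup\{\beta\}$ is orthogonal, in which case $B_u e_S=B_v e_S\sqcup B_v e_{S'}$ splits into two orbits, the larger of which (of dimension $\dim B_u e_S$) is $B_v e_S$ and the smaller ($\dim B_u e_S-1$) is $B_v e_{S'}$; or (b) $S'$ is not orthogonal, in which case $B_v e_S=B_u e_S$ has the same dimension. So for an admissible pair $(v,R)$ with $R\subseteq\Phi^+(v)$ orthogonal: if $\beta\notin R$ then $R\subseteq\Phi^+(u)$, $(u,R)$ is admissible, and $\dim Bvx_R=\dim Bux_R+1$ (the $+1$ from $l(v)=l(u)+1$ in Lemma \ref{dimensionrel}, since $\dim B_v e_R=\dim B_u e_R$); if $\beta\in R$, write $R=R_0\cup\{\beta\}$ with $R_0=R\setminus\{\beta\}\subseteq\Phi^+(u)$ orthogonal, so $(u,R_0)$ is admissible and $\dim B_v e_R=\dim B_u e_{R_0}-1$, giving $\dim Bvx_R=\dim Bux_{R_0}$. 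I would then confront these two recursions with the claimed right-hand side $\#\Psi+L(\sigma_{v(R)})$: in the first case $v(R)=s_\alpha(u(R))$ with $\alpha\notin u(R)$ and $\alpha$ not orthogonal to — actually here one checks $\sigma_{v(R)}=s_\alpha\circ\sigma_{u(R)}$ with $s_\alpha\circ\sigma_{u(R)}>\sigma_{u(R)}$, so $L$ increases by $1$ by Lemma \ref{invlength}, matching the $+1$; in the second case one must verify $\sigma_{v(R)}=\sigma_{u(R_0)}$, so $L$ is unchanged, matching. For this last identity note $v(R)=v(R_0)\cup\{-\alpha\}$ and $u(R_0)=s_\alpha v(R_0)$; since $\beta$ is maximal in $\Phi^+(v)$ the root $-\alpha=v(\beta)$ interacts with $v(R_0)$ exactly so that $\prod_{\gamma\in v(R)}s_\gamma = s_{-\alpha}\prod_{\gamma\in v(R_0)}s_\gamma s_{-\alpha}\cdot(\text{correction}) = \prod_{\gamma\in u(R_0)}s_\gamma$, and one uses Lemma \ref{sommaradici} and the simply-laced reflection formula recorded just before Lemma \ref{Springer} to see the conjugation by $s_\alpha$ permutes $v(R_0)$ to $u(R_0)$ componentwise.

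The main obstacle I anticipate is the bookkeeping in that last identity $\sigma_{v(R)}=\sigma_{u(R_0)}$ when $\beta\in R$: one must carefully distinguish whether some $\gamma\in R_0$ has $\langle\gamma,\beta\rangle\neq 0$ — if so, $\gamma-\beta\in\Phi_P$ and the maximality of $\beta$ plus Lemma \ref{sommaradici} force this $\gamma$ to be unique, and conjugation by $s_\alpha$ sends $v(\gamma)$ to $v(\gamma)\mp\alpha = v(\gamma-\beta)$ or similar, which is exactly an element of $u(R_0)$; while the roots of $R_0$ orthogonal to $\beta$ are fixed by $s_\alpha$ up to the ambient conjugation. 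A clean way to organize this is to observe that $u(R_0)$ and $v(R)$ are both strongly orthogonal subsets (Lemma \ref{sommaradici}(1)) with the same associated involution, hence equal by Corollary \ref{SugualeT} — so it is enough to show $\sigma_{v(R)}$ and $\sigma_{u(R_0)}$ are equal as elements of $W$, which one gets by the direct computation $v\exp(e_{R_0})v^{-1}\cdot v\exp(e_\beta)v^{-1}$ conjugated appropriately, mirroring the manipulation in the paragraph before Theorem \ref{orthogonalsubset}. I would present the whole step as a single displayed chain of equalities for each of the two cases, invoking Lemma \ref{invlength} and Lemma \ref{dimensionrel} at the end.
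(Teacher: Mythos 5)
Your overall strategy (induct on $l(v)$, pass from $B_u$ to $B_v=B_u$ modulo the one-dimensional piece $\mathfrak{u}_\beta$, and match the dimension jump against $L(\sigma_{v(S)})$ via Lemma \ref{invlength}) could be made to work, but as written it contains a genuine error: you identify the dense orbit backwards. When $S'=S\cup\{\beta\}$ is orthogonal, $e_S=\lim_{t\to 0}(e_S+te_\beta)$ lies in $\overline{B_ve_{S'}}$, and since $B_ve_S\neq B_ve_{S'}$ (Theorem \ref{orthogonalsubset}) the orbit $B_ve_S$ sits in the boundary of $\overline{B_ve_{S'}}$; hence the open $B_v$-orbit in $B_ue_S$ is $B_ve_{S'}$, with $\dim B_ve_S=\dim B_ue_S-1$, not the other way around. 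Consequently both of your recursions are reversed in the orthogonal subcases: if $\beta\in R$ (so $\beta\perp R_0$) one has $\dim B_ve_R=\dim B_ue_{R_0}$ and $\dim Bvx_R=\dim Bux_{R_0}+1$, while if $\beta\notin R$ and $\beta\perp R$ one has $\dim B_ve_R=\dim B_ue_R-1$ and $\dim Bvx_R=\dim Bux_R$. The companion involution identities you propose are false for the same reason: when $\beta\in R$, orthogonality forces $v(R_0)=s_\alpha v(R_0)=u(R_0)$, so $\sigma_{v(R)}=s_\alpha\,\sigma_{u(R_0)}$ with $\alpha$ orthogonal to $u(R_0)$, giving $L(\sigma_{v(R)})=L(\sigma_{u(R_0)})+1$ rather than your claimed equality $\sigma_{v(R)}=\sigma_{u(R_0)}$; and when $\beta\notin R$ with $\beta\perp R$ one has $\sigma_{v(R)}=\sigma_{u(R)}$, so $L$ does not increase. (A rank-one check: $G=SL_2$, $v=s_\alpha$, $R=\{\alpha\}$ gives $\dim Bvx_R=2=\dim Bux_\varnothing+1$ and $\sigma_{v(R)}=s_\alpha\neq\id$.) Your claims are internally consistent with each other but, followed through, they would prove $\dim Bvx_S=\#\Psi+L(\sigma_{v(S)})-\#S$ type discrepancies — i.e.\ the induction does not close on the stated formula. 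A smaller slip: in the base case it is $B_{\id}$ (which contains the translations by $\mathfrak{p}^u$), not $W_P$, that acts transitively on $\mathfrak{p}^u$; also "$\beta\notin R$" must be split according to whether $\beta$ is orthogonal to $R$, since your equality $\dim B_ve_R=\dim B_ue_R$ is correct only in the non-orthogonal subcase.

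For comparison, the paper avoids this bookkeeping entirely by inducting on $L(\sigma_{v(S)})$ instead of $l(v)$: the base case is $S=\varnothing$ for arbitrary $v$ (where $B_ve_\varnothing$ has dimension $\#\Psi-\#\Phi^+(v)$, so $\dim Bvx_\varnothing=\#\Psi$ by Lemma \ref{dimensionrel}), and the inductive step uses Lemma \ref{Springer}: if $L(\sigma_{v(S)})>0$ pick $\alpha$ with $s_\alpha\sigma_{v(S)}<\sigma_{v(S)}$, obtain $(u,R)\in\mathcal{E}_\alpha(v,S)$ with $\sigma_{u(R)}=s_\alpha\circ\sigma_{v(S)}$, and use that the open orbit in a $P_\alpha$-orbit has dimension exactly one more, together with Lemma \ref{invlength}. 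If you correct the density identification and the two recursions above, your $l(v)$-induction becomes a legitimate alternative, but as it stands the argument fails.
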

\begin{proof}
We know by \ref{Springer} that
\[\sigma_{v(S)}(\alpha)<0\Rightarrow\mathcal{E}_\alpha(v,S)\neq\varnothing\]
Now take an orbit $Bvx_S$ and suppose $\sigma_{v(S)}=\id$ which means $S=\varnothing$.
Then $B_v e_S=B_v.0$ is the minimal $B_v$-orbit in $\mathfrak{p}^u$ and it is easy to see that is dimension is $\#\Psi-\#\Phi^+(v)$.
It follows that $\dim Bvx_S=\#\Psi$ and $Bvx_S$ is also a minimal orbit in $G/L$.

Now suppose $L(\sigma_{v(S)})=l>0$ and fix $\alpha\in\Delta$ such that $\mathcal{E}_\alpha(v,S)\neq\varnothing$ which we know exists.
Then by induction if $(u,R)\in\mathcal{E}_\alpha(v,S)$ we have
\[\dim Bvx_S=\dim Bux_R+1=\#\Psi+L(\sigma_{u(R)})+1=\#\Psi+L(\sigma_{v(S)})\qedhere \]
\end{proof}
Note that this coincide with the dimension formula in \cite{GM}

We see that the set $V_L$ of admissible pairs for a simply laced root system doesn't depend on the characteristic of the base field.
This set already admits an order which was given by Gandini and Maffei in \cite{GM} and that we will repeat here.
\begin{Definition}\label{deford}
Let $(u,R), (v,S)\in V_L$.
We say that $(u,R)\leq (v,S)$ if and only if $\sigma_{u(R)}\leq \sigma_{v(S)}$ and $[v\sigma_S]^P\leq[u\sigma_R]^P\leq u\leq v$.
\end{Definition}
Note that the inequality $[u\sigma_R]^P\leq u$ is always true because $u(R)<0$.

From now until the end of this section, we will write $(u,S)\leq (v,R)$ for the definition above, $(u,S)\leq_2 (v,R)$ for the order induced on $V_L$ by the Bruhat order in characteristic $2$ and $(u,S)\leq_{\neq 2} (v,R)$ for the order induced on $V_L$ by the Bruhat order in characteristic different from $2$.
We have the following result:

\begin{theorem}[Theorem 1.3,\cite{GM}]\label{ordVL}
\[(u,R)\leq (v,S) \Leftrightarrow (u,R)\leq_{\neq 2} (v,S)\]
\end{theorem}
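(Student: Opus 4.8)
The plan is to observe that this statement is essentially a transcription of \cite[Theorem 1.3]{GM} into the language of admissible pairs, so that the only point requiring justification is the compatibility of the two parametrizations involved — the one fixed here and the one used in \cite{GM}.

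First I would recall that over a field of characteristic different from $2$ the $B$-orbits on $G/L$ are parametrized by exactly the set $V_L$ of admissible pairs via $(v,S)\mapsto Bvx_S$, where $x_S=\exp(e_S)L/L$; this is \cite[Definition 1.1]{GM} together with \cite[Proposition 4.7]{GM}, and (as noted in the introduction) the exponential map is available in every characteristic, so the representatives $x_S$ used here literally coincide with those of \cite{GM}. Under this bijection the relation $(u,R)\leq_{\neq 2}(v,S)$ is, by definition, the statement $Bux_R\subseteq\overline{Bvx_S}$ with closure taken in the characteristic-$\neq 2$ variety.

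Next I would invoke \cite[Theorem 1.3]{GM}, which asserts precisely that $Bux_R\subseteq\overline{Bvx_S}$ holds if and only if $\sigma_{u(R)}\leq\sigma_{v(S)}$ and $[v\sigma_S]^P\leq[u\sigma_R]^P\leq u\leq v$. But the right-hand side of this equivalence is, verbatim, the condition defining $(u,R)\leq(v,S)$ in Definition \ref{deford}. Chaining the two equivalences gives $(u,R)\leq(v,S)\Leftrightarrow (u,R)\leq_{\neq 2}(v,S)$, which is the claim.

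Since the argument is a pure translation, I do not expect a genuine obstacle; the only step one must not gloss over is that the orbit-to-pair correspondence of \cite{GM} and the one fixed here (Theorem \ref{parametrizzazioneADE}) are literally the same map, which is immediate because both are given by $(v,S)\mapsto Bvx_S$ with the same $x_S$ and both are bijections onto the finite set of $B$-orbits. The substantive work — proving that the characteristic-$2$ Bruhat order $\leq_2$ likewise agrees with $\leq$ — is deferred to the remaining results of this section.
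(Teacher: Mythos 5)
Your proposal is correct and matches the paper's treatment: Theorem \ref{ordVL} is stated as a citation of \cite[Theorem 1.3]{GM}, whose characterization of the characteristic-$\neq 2$ Bruhat order is verbatim the condition of Definition \ref{deford}, read through the common parametrization $(v,S)\mapsto Bvx_S$. Your only added content is making that translation explicit, which is exactly what the paper implicitly relies on.
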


As said, the proof of this theorem makes great use of the action of the minimal parabolic subgroups on $V_L$.
To formalize this, we can define a monoid $M(W)$ generated by elements of the form $m(\alpha)$ for $\alpha\in\Delta$.
This is part of a more generic definitions for Coxeter groups $(W,\Delta)$ that can be found in \cite[3.10]{RS}.
We don't need to know most of the intricacies of this definition.
For us, it is enough to know that the monoid acts on $V_L$ and that at the level of generators the action is given by
\[m(\alpha).(v,S)=m_\alpha(v,S)\]
Note that this is visually the same definition as in the characteristic different from $2$ case, but right now we don't know if the $m_\alpha(v,S)$ coincide.
Thankfully, they do.

\begin{Lemma}\label{azioneuguale}
Suppose that we have no limitation on $\ch(\mathbb{K})$.
Fix $(v,S)\in V_L$ and $\alpha\in \Delta$.
Denote $\beta=v^{-1}(\alpha)$.
We have:
\begin{enumerate}
\item if $\sigma_{v(S)}(\alpha)<0$, then $m_\alpha (v,S)=(v,S)$.
\item if $s_\alpha v<v$, then $m_\alpha (v,S)=(v,S')$ where $S'=S\cup \left\lbrace -\beta\right\rbrace$ if $-\beta$ and $S$ are orthogonal and $S'=S$ otherwise;
\item if $v<s_\alpha v\in W^P$, then $m_\alpha (v,S)=(s_\alpha v, S')$ where $S'=S\cup \left\lbrace\beta\right\rbrace$ if $S$ and $\beta$ are orthogonal and $S'=S$ otherwise;
\item if $\beta\in\Delta_P$ and $\sigma_{v(S)}(\alpha)>0$, then $m_\alpha (v,S)=(v,S')$ where
\[
S'=\left\lbrace\begin{array}{ll}
s_\beta(S) & \text{ if } s_\beta(S)\neq S\\
\text{the representative of the }B_v\text{-orbit of } u_{-\beta}(1).e_S & \text{otherwise}
\end{array}\right.
\]
Moreover, in the last case the result doesn't depend on the characteristic of the base field,
\end{enumerate}
\end{Lemma}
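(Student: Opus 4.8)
The plan is to analyze the four cases separately, in each case writing down the decomposition $P_\alpha v x_S = Bs_\alpha v x_S \sqcup BU_{-\alpha}vx_S$ (using $P_\alpha = Bs_\alpha \sqcup BU_{-\alpha}$) and then identifying which of the at most three $B$-orbits inside it is open and dense. The crucial simplification is that $BU_{-\alpha}vx_S = BvU_{-\beta}x_S$ where $\beta = v^{-1}(\alpha)$, so everything reduces to understanding the action of the one-parameter subgroup $U_{-\beta}$ on $e_S$ inside $\mathfrak{p}^u$, together with the coset $Bvx_S$ versus $Bs_\alpha v x_S$. For case (1), $\sigma_{v(S)}(\alpha)<0$ is equivalent to $s_\alpha\sigma_{v(S)}<\sigma_{v(S)}$, which is exactly the hypothesis of Lemma \ref{Springer}; one checks (reusing the case analysis there, or citing it) that in each subcase the open orbit of $P_\alpha vx_S$ is $Bvx_S$ itself, so $m_\alpha(v,S) = (v,S)$. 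For cases (2) and (3), $s_\alpha v < v$ (resp.\ $v < s_\alpha v \in W^P$) means $\beta = v^{-1}(-\alpha) \in \Phi^+(v)$ (resp.\ $-\beta \in \Phi^+(v)$, i.e.\ $\beta \in -\Psi$), and the argument runs as in the proof of Theorem \ref{paramBv}: acting by $U_{-\beta}$ moves $e_S$ to $e_S + te_{\mp\beta}$; if $\mp\beta$ is orthogonal to $S$ this produces a strictly larger orbit $B_ve_{S'}$ with $S' = S \cup \{\mp\beta\}$, and if not, the maximality of $\mp\beta$ in $\Phi^+(v)$ forces $\mp\beta = \alpha' + \gamma$ with $\alpha' \in S$ and $\gamma \in \Delta_P$, and a suitable $u_\gamma(s) \in B_L \subseteq B_v$ absorbs the $e_{\mp\beta}$ term, so the orbit doesn't change and $S' = S$. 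One then verifies that the resulting larger orbit (together with $Bs_\alpha v x_S$, which cannot be open because $s_\alpha v \in W^P$ has smaller length in case (2)) is the open one.

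For case (4), $\beta \in \Delta_P$, so $s_\alpha v = v s_\beta$ and $[s_\alpha v]^P = v$; here $U_{-\beta} \subseteq B_L \subseteq L$, so $BvU_{-\beta}x_S = Bvx_S \cup \bigcup_{t \neq 0} Bv\exp(u_{-\beta}(t).e_S)$, and also $Bs_\alpha v x_S = Bvs_\beta x_S = Bv x_{s_\beta(S)}$. The subcases are exactly those in the third bullet (Case $\beta\in\Delta_P$) of the proof of Lemma \ref{Springer}: if $s_\beta(S) \neq S$ then the two $B$-orbits $Bvx_S$ and $Bvx_{s_\beta(S)}$ are distinct, must be comparable, and by Theorem \ref{frecciafacile} together with $\sigma_{vs_\beta(S)} = s_\alpha \circ \sigma_{v(S)} < \sigma_{v(S)}$ one concludes $Bvx_S$ is open, hence $m_\alpha(v,S) = (v, s_\beta(S))$; if $s_\beta(S) = S$, then $\beta$ is orthogonal to $S$, the orbit $Bv\exp(u_{-\beta}(t).e_S)$ for $t\neq 0$ is a single new $B$-orbit (the open one), represented by $u_{-\beta}(1).e_S$, and $m_\alpha(v,S) = (v, S')$ with $S'$ the orthogonal representative of that $B_v$-orbit.

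The last assertion — that in case (4) the answer does not depend on $\operatorname{ch}(\mathbb{K})$ — is the point that needs the most care, and I expect it to be the main obstacle. When $s_\beta(S) \neq S$ it is immediate since $s_\beta(S)$ is a purely combinatorial operation. When $s_\beta(S) = S$, one must check that the $B_v$-orbit of $u_{-\beta}(1).e_S$ has the same orthogonal-set representative regardless of characteristic. The key input is Lemma \ref{IET}: since $\beta$ is orthogonal to every root of $S$, $u_{-\beta}(t).e_S = e_S + \sum_{\gamma \in S} a_\gamma t e_{\gamma - \beta}$ with the sum over those $\gamma \in S$ with $\gamma - \beta \in \Phi$, and the quadratic term $t^2 e_{\gamma - 2\beta}$ never appears because $\gamma - 2\beta \notin \Phi$ when $\gamma$ and $\beta$ are orthogonal (lengths again, in type $\mathbf{B},\mathbf{C}$ one uses $\beta$ simple and short or checks directly). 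Thus $u_{-\beta}(t).e_S$ has the same form — linear in $t$ with characteristic-independent supports and nonzero coefficients — in every characteristic, and running the reduction-to-orthogonal-form algorithm of Theorem \ref{paramBv} on it (which only ever uses the linear-approximation formula and root-sum combinatorics from Lemmas \ref{IET} and \ref{sommaradici}, all characteristic-free) yields the same orthogonal set $S'$. I would isolate this as a short sublemma: the $B_v$-orbit closure structure, and in particular the orthogonal representative, attached to $u_{-\beta}(1).e_S$ with $\beta$ orthogonal to $S$ is independent of $\operatorname{ch}(\mathbb{K})$. Assembling the four cases then gives the statement, and the final remark records that $m_\alpha$ here agrees with the map $m_\alpha$ of \cite{GM} in odd characteristic.
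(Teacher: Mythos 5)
Your overall strategy is the same as the paper's -- decompose $P_\alpha vx_S$ into at most three $B$-orbits and identify the open one case by case, reducing to one-parameter-subgroup computations on $e_S$ -- but two steps would fail as written. First, in case (3) your uniform reduction breaks down: there the relevant root $v^{-1}(\alpha)$ lies in $\Psi\setminus\Phi^+(v)$, not in $\Phi^+(v)$ as your parenthetical asserts, and with $\beta=v^{-1}(\alpha)\in\Psi$ the subgroup $U_{-\beta}$ lies in the unipotent radical opposite to $P^u$, so it does not act on $\mathfrak{p}^u$ via Equation \ref{action}; hence the single decomposition $P_\alpha=Bs_\alpha\sqcup BU_{-\alpha}$ does not reduce this case to a computation inside $\mathfrak{p}^u$. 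The paper instead uses $P_\alpha=B\cup Bs_\alpha U_\alpha$, so the unipotent curve becomes $u_\beta(t)$ with $\beta\in\Psi$, the orbits produced are $B_{s_\alpha v}$-orbits (which is exactly why the output pair carries $s_\alpha v$), the absorption step uses maximality of $\beta$ in $\Phi^+(s_\alpha v)$ (not in $\Phi^+(v)$), and one must rule out $Bvx_S$ being open because $s_\alpha v>v$ -- the mirror of the remark you make only for case (2). Relatedly, in case (2) your dichotomy misses the subcase $-\beta\in S$, where there is no $\alpha'\in S$ with $-\beta-\alpha'\in\Phi^+_P$ to absorb; there the $t=1$ term gives the smaller orbit $Bvx_{S\setminus\{-\beta\}}$ and the open orbit is still $Bvx_S$, so $S'=S$ holds but by a different argument.

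Second, in case (4) with $s_\beta(S)\neq S$ your justification is inverted and inconsistent with your own conclusion: the hypothesis $\sigma_{v(S)}(\alpha)>0$ gives $s_\alpha\sigma_{v(S)}>\sigma_{v(S)}$, hence $\sigma_{vs_\beta(S)}=s_\alpha\circ\sigma_{v(S)}>\sigma_{v(S)}$, and then (via Theorem \ref{frecciafacile}, or the dimension count of Lemma \ref{dimADE} as the paper does) the \emph{open} orbit is $Bvx_{s_\beta(S)}$, not $Bvx_S$; only with this direction does $m_\alpha(v,S)=(v,s_\beta(S))$ follow, whereas if $Bvx_S$ were open you would get $m_\alpha(v,S)=(v,S)$. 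A smaller point on the subcase $s_\beta(S)=S$: since $S$ is orthogonal and the system is simply laced, set equality forces $s_\beta$ to fix $S$ pointwise (a root $\gamma$ is never orthogonal to $\gamma\mp\beta$), so $\beta$ is orthogonal, hence strongly orthogonal, to every root of $S$; then $u_{-\beta}(t)$ fixes $e_S$, no new orbit appears, and the characteristic-independence you propose to isolate as a sublemma is immediate in this subcase -- your claim that a "single new open orbit" arises does not occur, though your final formula for $S'$ is then trivially correct. With these corrections (and dropping the type $\mathbf{B}$, $\mathbf{C}$ remarks, which are out of scope here since $V_L$ is defined only in the simply laced setting), your plan matches the paper's proof.
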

\begin{proof}
\begin{enumerate}
\item
If $\ch(\mathbb{K})=2$ the result follows from Lemma \ref{Springer}.
If $\ch(\mathbb{K})\neq 2$ it follows from \cite[Lemma 7.4]{RS}.
\item
Note that this implies $-\beta\in\Phi^+(v)$.
Consider $P_\alpha=Bs_\alpha\cup BU_{-\alpha}$.
Then
\begin{align*}
P_\alpha vx_S&=Bs_\alpha vx_S\cup BU_{-\alpha}vx_S\\
&=Bs_\alpha vx_S\cup BvU_{-\beta}x_S\\
&=Bs_\alpha vx_S\cup Bvx_S\cup \bigcup_{t\in\mathbb{K}^*}Bvu_{-\beta}(t)x_S
\end{align*}

Now $Bs_\alpha vx_S\leq Bvx_S$ because $s_\alpha v <v$ and $u_{-\beta}(t)x_S=\exp(e_S+te_{-\beta})$.
If $-\beta\in S$ the last orbit is equal to $Bvx_S$ except when $t=1$ in which case is $Bvu_{-\beta}(1)x_S=Bvx_{S\setminus\left\lbrace -\beta\right\rbrace}$.
This last orbit is clearly smaller than $Bvx_S$.

If instead $-\beta\notin S$ suppose that $-\beta$ and $S$ are orthogonal.
Then it is clear that $Bvu_{-\beta}(t)x_S=Bvx_{S\cup\left\lbrace-\beta\right\rbrace}$ for every $t\in\mathbb{K}^*$ and the claim follows because $Bvx_{S\cup\left\lbrace-\beta\right\rbrace}\geq Bvx_S$.

Suppose now $-\beta\notin S$ and that there is $\gamma$ such that $-\beta-\gamma=\delta\in\Phi_P$ (note that $-\beta$ is maximal in $\Phi^+(v)$).
Then $u_\delta(s)$ acts as the identity in $S\setminus \left\lbrace \gamma\right\rbrace$ and sends $e_\gamma$ to $e_\gamma+ase_{-\beta}$ where $a\neq 0$.
It follows that for every $t$ there is $s\in\mathbb{K}$ such that $u_{\delta}(t).(e_S+te_{-\beta})=e_S$ and the claim follows.

\item 

Note that $\beta$ is minimal in $\Psi\setminus \Phi^+(v)$ and maximal in $\Phi^+(s_\alpha v)$.
Consider $P_\alpha=B\cup Bs_\alpha U_\alpha$
Then
\begin{align*}
P_\alpha vx_S&=Bvx_S\cup Bs_\alpha U_{\alpha}vx_S\\
&=Bvx_S\cup Bs_\alpha v x_S\cup\bigcup_{t\in\mathbb{K}^*}Bs_\alpha u_{\alpha}(t) vx_S\\
&=Bvx_S\cup Bvs_\alpha x_S\cup \bigcup_{t\in\mathbb{K}^*}Bs_\alpha vu_{\beta}(t)x_S
\end{align*}

We have $s_\alpha v>v$ and $s_\alpha v\in W^P$ so $Bvx_S$ can't be the open orbit.

As before $u_{\beta}(t)x_S=\exp(e_S+te_{\beta})$.
By reasoning as in the point above, the claim follows.

\item 

We have $s_\alpha v=vs_\beta$ so $[s_\alpha v]^P=v$ where $[s_\alpha v]$ is the representative in $W^P$ of the coset $s_\alpha vW_P$ in $W^P$.

Write $P_\alpha=Bs_\alpha\cup BU_{-\alpha}$ 
\begin{align*}
P_\alpha vx_S&=Bs_\alpha vx_S\cup BvU_{-\beta}x_S\\
&=Bvx_{s_\beta(S)}\cup Bvx_S\cup \bigcup_{t\in\mathbb{K}^*}Bv\exp(u_{-\beta}(t).e_S)
\end{align*}
where we used the facts that $s_\beta$ can be represented by an element in $L$ and $U_{-\beta}\subseteq L$.

Suppose $s_\beta(S)\neq S$.
Then $\sigma_{vs_\beta(S)}>\sigma_{v(S)}$ and by Lemma \ref{dimADE} $Bvx_{s_\beta(S)}$ must be the open orbit in $P_\alpha vx_S$ given that its dimension is higher then the dimension of $Bvx_S$.

Hence, suppose $s_\beta(S)=S$.
As a first thing, note that the support of $u_{-\beta}(1).e_S$ is $S$ if there is no $\gamma\in S$ such that $\gamma-\beta\in\Phi$ and it is $S\cup \left\lbrace \gamma-\beta\right\rbrace$ otherwise.
We claim that, in this last case, the support uniquely determines the orthogonal subsets $S'$ that parametrizes the $B_v$-orbit.
In particular, the result is independent from the characteristic of $\mathbb{K}$.

To see this note that $u_{-\beta}(t).e_S=e_S+ke_{\gamma-\beta}$ where $k\in\mathbb{K}^*$ depends on $t$ and our choices of base vectors in the root spaces.
Now, $\gamma$ is certainly not orthogonal to $\gamma-\beta$ and there is at most one other root $\delta\in S$ that is not orthogonal to $\gamma -\beta$.
In this last case, it must be $\delta+\beta\in\Phi$, so $(\delta,\beta)<0$ and $(\gamma -\beta,\delta)>0$.
Suppose at first that such $\delta$ doesn't exist and let $u_\beta(s)$ act on $e_S+ke_{\gamma-\beta}$.
It must be $u_\beta(s).\left(e_S\right)=e_S$, so there is $s\in\mathbb{K}^*$ such that $u_\beta(s).\left(e_S+ke_{\gamma-\beta}\right)=e_{S\setminus\left\lbrace\gamma\right\rbrace}+ke_{\gamma-\beta}$.
The set $\left(S\setminus\left\lbrace\gamma\right\rbrace\right)\cup \left\lbrace \gamma-\beta\right\rbrace$ is orthogonal and the claim follows.

Suppose then that such a $\delta$ is in $S$.
Then $\tau=\gamma-\beta-\delta\in\Phi_P$.
If $\tau$ is positive $U_\tau\subseteq B$, so we can act with $u_\tau(s)$ on $e_S+ke_{\gamma-\beta}$ without changing the $B$-orbit.
Note that $\tau$ is orthogonal to all roots in $S$ except $\delta$ and $\gamma$.
Moreover, clearly $\delta+\tau=\gamma-\beta\in\Phi^+$, so $\tau$ can be added to $\tau$, which implies that it cannot be added to $\gamma$.
It follows that $u_\tau(s)$ acts as the identity on the root spaces of every root in $S\setminus\left\lbrace\delta\right\rbrace$.
It is then easy to see that there is an $s\in\mathbb{K}^*$ such that $u_\tau(s).\left(e_S+ke_{\gamma-\beta}\right)=e_S$.

Suppose at last $\tau$ negative, so $-\tau$ positive.
We still have $s\in\mathbb{K}^*$ such that $u_\beta(s).\left(e_\gamma+ke_{\gamma-\beta}\right)=k e_{\gamma-\beta}$, but for such an $s$ it must be $u_\beta(s).e_{\delta}=e_\delta+k'e_{\delta+\beta}$ for some $k'\in\mathbb{K}^*$.
Then $u_\beta(s).\left(e_S+ke_{\gamma-\beta}\right)=e_{S\setminus\left\lbrace\gamma\right\rbrace}+ke_{\gamma-\beta}+k'e_{\delta+\beta}$.
If we now let $u_{-\tau}(r)$ act on $e_{S\setminus\left\lbrace\gamma\right\rbrace}+ke_{\gamma-\beta}+k'e_{\delta+\beta}$ we see that it is actually the identity on $e_{S\setminus\left\lbrace\gamma\right\rbrace}$ and even on $k'e_{\delta+\beta}$ because $\tau$ can be added to $\delta+\beta$, so it can't be subtracted.
We then can easily find $r\in\mathbb{K}^*$ such that 
\[u_{-\tau}(r)u_{\gamma}(s).\left(e_S+ke_{\gamma-\beta}\right)=e_{S'}+ke_{\gamma-\beta}+k'e_{\delta+\beta}\]
where $S'=S\setminus\left\lbrace \gamma,\delta\right\rbrace$.
The set $S'\cup\left\lbrace \gamma-\beta,\delta+\beta\right\rbrace$ is orthogonal because 
\[(\gamma-\beta,\delta+\beta)=\underbrace{(\gamma,\delta)}_0+\underbrace{(-\beta,\delta)}_1+\underbrace{(\gamma,\beta)}_1-\underbrace{(\beta,\beta)}_2=0\]
This completes the proof.\qedhere
\end{enumerate}
\end{proof}
This lemma clearly proves that the value of $m_\alpha(v,S)$ doesn't depend on the base field.

We can also define a \textit{length function} $l\colon V_L\longrightarrow \mathbb{N}$ as $l(v,S)=\dim Bvx_S-d$ where $d=\min_{(v,S)\in V_L}\dim Bvx_S$.
By Lemma \ref{dimADE} and \cite[Formula 1]{GM}, this definition doesn't depend on the characteristic of the base field.

To conclude, we need a definition from \cite{RS}.
Let $\preceq$ be an order on $V_L$.
\begin{Definition}[One-step property]
Let $x\in V_L$ and $\alpha\in\Delta$ such that $m(\alpha).x\neq x$.
Then $y\preceq m(\alpha).x$ if and only if at least one of the following is true:
\begin{enumerate}
\item $y\preceq x$;
\item there is $z$ such that $m(\alpha).z=m(\alpha).y$ and $z\preceq x$;
\end{enumerate}
\end{Definition}

We also need this important result.
\begin{theorem}\label{stord}
Let $\preceq$ be an order on $V_L$ such that:
\begin{enumerate}
\item $x\preceq m(\alpha).x$;
\item if $x\preceq y$ then $m(\alpha).x\preceq m(\alpha).y$;
\item if $x\preceq y$ and $l(y)\leq l(x)$, then $x=y$
\end{enumerate}
In this case we say that $\preceq$ agrees with the action of $\Delta$.
Suppose also that $\preceq$ has the one-step property.
Then 
\[(u,R)\preceq (v,S)\Leftrightarrow (u,R)\leq (v,S)\]
\end{theorem}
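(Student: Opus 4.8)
The plan is to reduce the statement to the purely formal assertion that \emph{an order on $V_L$ which agrees with the action of $\Delta$ and has the one-step property is uniquely determined}, and then to observe that the combinatorial order $\le$ of Definition \ref{deford} is one such order. For this second point one can invoke Theorem \ref{ordVL}, which identifies $\le$ with the order $\le_{\ne 2}$ induced by the geometric Bruhat order on $G/L$ in characteristic $\ne 2$: for that geometric order, condition 1 holds because $Bvx_S$ lies in the closure of the open $B$-orbit of $P_\alpha vx_S$, condition 2 because the $P_\alpha$-action is order preserving on orbit closures, condition 3 because an orbit contained in the closure of another of the same dimension must coincide with it, and the one-step property is established in \cite{RS}. (Alternatively, all four properties can be checked directly from Definition \ref{deford} using Lemma \ref{azioneuguale}, Lemma \ref{dimADE}, Lemma \ref{invlength}, Lemma \ref{propord} and Corollary \ref{SugualeT}, by reproducing the four-case analysis of Lemma \ref{azioneuguale}; this is essentially what is done in \cite{GM}.)

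To establish the uniqueness, equip $V_L$ with the length function $l$ of Lemma \ref{dimADE}, normalized so that its minimum is $0$. Two facts about the monoid action are needed. First, if $m(\alpha).x\ne x$ then $l(m(\alpha).x)=l(x)+1$: indeed $m_\alpha(v,S)$ parametrizes the open orbit of $P_\alpha vx_S$, and since $\dim Bvx_S\le\dim P_\alpha vx_S\le\dim Bvx_S+1$, the dimension goes up by exactly one whenever $Bvx_S$ is not already open. Second, every $x\in V_L$ with $l(x)>0$ equals $m(\alpha).x'$ for some $\alpha\in\Delta$ and some $x'\ne x$, so that $l(x')=l(x)-1$: writing $x=(v,S)$, Lemma \ref{dimADE} gives $\sigma_{v(S)}\ne\id$, hence some $s_\alpha$ satisfies $s_\alpha\sigma_{v(S)}<\sigma_{v(S)}$, and then $\mathcal{E}_\alpha(v,S)\ne\varnothing$ by Lemma \ref{Springer}, which provides such an $x'$.

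Now let $\preceq$ and $\preceq'$ be two orders on $V_L$ that both agree with the action of $\Delta$ and both have the one-step property, and for $x\in V_L$ write $D(x)=\{y\mid y\preceq x\}$ and $D'(x)=\{y\mid y\preceq' x\}$. We prove $D(x)=D'(x)$ by induction on $l(x)$, which forces the two orders to coincide. If $l(x)=0$, condition 3 gives $D(x)=\{x\}=D'(x)$, since $y\preceq x$ together with $l(x)\le l(y)$ implies $y=x$. If $l(x)=n\ge 1$, pick $\alpha$ and $x'$ with $m(\alpha).x'=x$, $x'\ne x$ and $l(x')=n-1$. The one-step property applied to $x'$ and $\alpha$ states that $y\preceq x$ if and only if $y\in D(x')$ or there is $z\in D(x')$ with $m(\alpha).z=m(\alpha).y$; thus $D(x)$ is a function of $D(x')$ and of the map $m(\alpha)$ alone, and $D'(x)$ is the same function of $D'(x')$ and $m(\alpha)$. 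Since $m(\alpha)$ does not depend on the order (Lemma \ref{azioneuguale}) and $D(x')=D'(x')$ by the inductive hypothesis, we conclude $D(x)=D'(x)$.

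Taking $\preceq$ to be the order in the statement and $\preceq'$ to be $\le$ — both of which agree with the action of $\Delta$ and have the one-step property — gives $(u,R)\preceq(v,S)\Leftrightarrow(u,R)\le(v,S)$, as desired. I expect the main obstacle to be the very first step: checking that the combinatorial order $\le$ of Definition \ref{deford} genuinely has the one-step property and agrees with the action of $\Delta$. Granting this from \cite{GM} and \cite{RS} makes the argument immediate, but carried out by hand it requires reworking the four-case analysis of Lemma \ref{azioneuguale} together with the Bruhat-order inequalities of Lemma \ref{propord} and Lemma \ref{Bruhatcirc}. Everything after that is a formal induction on down-sets.
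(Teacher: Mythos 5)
Your proposal is correct, and at bottom it follows the same route as the paper: identify $\leq$ with $\leq_{\neq 2}$ (Theorem \ref{ordVL}) and then appeal to a uniqueness principle for orders on $V_L$ compatible with the $M(W)$-action. The difference is that the paper disposes of the uniqueness step by citation — $\preceq$ coincides with the standard order of Richardson and Springer by \cite[Section 6]{RS}, which equals $\leq_{\neq 2}$ by \cite[Theorem 7.11]{RS} — whereas you prove it directly: a down-set induction on the length $l$, whose inputs are that $m(\alpha)$ raises $l$ by exactly one whenever it moves a pair, and that every pair of positive length is of the form $m(\alpha).x'$ with $x'\neq x$ (Lemma \ref{Springer} together with Lemma \ref{dimADE}), with the characteristic-independence of $m$ (Lemma \ref{azioneuguale}) and of $l$ making the comparison with $\leq_{\neq 2}$ legitimate. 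What this buys is a self-contained argument that never unwinds the standard-order formalism of \cite{RS} — and in fact your induction uses only reflexivity, condition 3 and the one-step property, so it is even slightly more economical than the stated hypotheses — at the cost of having to verify separately that $\leq_{\neq 2}$ satisfies those hypotheses, which the paper gets for free from \cite[Theorem 7.11]{RS}; for that verification the only point to keep explicit is the one-step property of $\leq_{\neq 2}$, which you may take from \cite{RS} or obtain by the same Lemma \ref{conj} argument the paper uses for $\leq_2$ in Theorem \ref{2ordine}. With that reference in place there is no gap.
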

\begin{proof}
By Section 6 of \cite{RS} the order $\preceq$ coincides with what Richardson and Springer call the \textit{standard order}, which in turn (see \cite[Theorem 7.11]{RS}) coincides with the order $\leq_{\neq 2}$ on $V_L$.
We use Theorem \ref{ordVL} to conclude.
\end{proof}

Finally, we need a general result from \cite{Conjugacy}.
\begin{Lemma}[Lemma 2, \cite{Conjugacy}]\label{conj}
Let $G$ act on a variety $V$. 
Suppose $H\subseteq G$ is closed and $U\subseteq V$ be a closed subset of $V$ invariant under $H$.
If $G/H$ is complete, then $G.U$ is closed.
\end{Lemma}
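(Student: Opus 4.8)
The plan is to realise $G\cdot U$ as the image of a \emph{closed} subset of $G/H\times V$ under the second projection, and then to use completeness of $G/H$ to see that this projection is a closed map. The geometric input is the standard isomorphism $G\times^{H}V\cong G/H\times V$, together with the fact that $G\times U$ is a closed, $H$-saturated subset of $G\times V$.

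Concretely, I would first let $H$ act on $G\times V$ by $h\cdot(g,v)=(gh^{-1},hv)$. This action is free, since it is already free on the first factor, so the geometric quotient $q\colon G\times V\longrightarrow G\times^{H}V$ exists (the standard associated–fibre–bundle construction for a closed subgroup $H\subseteq G$, which one may carry out étale-locally over $G/H$ if one does not wish to assume $V$ quasi-projective) and $G\times^{H}V$ carries the quotient topology; in particular a subset of $G\times^{H}V$ is closed precisely when its $q$-preimage is closed in $G\times V$. Next I would check that the morphism $G\times V\longrightarrow G/H\times V$, $(g,v)\longmapsto(gH,gv)$, is $H$-invariant, and hence descends to an isomorphism $\theta\colon G\times^{H}V\longrightarrow G/H\times V$ (the inverse sends $(gH,w)$ to the class of $(g,g^{-1}w)$, which is well defined exactly because of the $H$-invariance).

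Now $G\times U$ is closed in $G\times V$ because $U$ is closed in $V$, and it is $H$-stable because $U$ is $H$-invariant; hence $G\times U$ is a saturated closed subset, so $q(G\times U)=G\times^{H}U$ is closed in $G\times^{H}V$, and therefore $\theta\bigl(G\times^{H}U\bigr)$ is closed in $G/H\times V$. By the formula for $\theta$ this set is $\{(gH,gu)\mid g\in G,\ u\in U\}$. Finally, since $G/H$ is complete, the second projection $\mathrm{pr}_{2}\colon G/H\times V\longrightarrow V$ is a closed morphism (this is precisely the universal closedness built into completeness), so
\[
\mathrm{pr}_{2}\Bigl(\theta\bigl(G\times^{H}U\bigr)\Bigr)=\{\,gu\mid g\in G,\ u\in U\,\}=G\cdot U
\]
is closed in $V$, as desired. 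The only genuinely delicate point is the existence of the quotient $G\times^{H}V$ as a variety with the quotient topology; once that is invoked, the rest of the argument is formal, consisting of the isomorphism $\theta$, the saturatedness of $G\times U$, and the closedness of projection along a complete factor.
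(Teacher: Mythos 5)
Your argument is correct, but note that the paper does not prove this lemma at all: it is imported verbatim as Lemma 2 of the cited reference, so there is no internal proof to compare with. Your route is the standard one (closed saturated set in $G\times V$, descend to $G/H\times V$, project along the complete factor $G/H$), and every step you write down is sound: $G\times U$ is closed and stable under the twisted $H$-action, the map $(g,v)\mapsto(gH,gv)$ is $H$-invariant, its image of $G\times U$ is exactly $\left\lbrace (gH,gu)\mid g\in G,\ u\in U\right\rbrace$, and universal closedness of the complete variety $G/H$ makes $\mathrm{pr}_2$ a closed map, with image $G\cdot U$. The one point you flag as delicate --- the existence of $G\times^{H}V$ as a geometric quotient carrying the quotient topology --- is in fact unnecessary and is the only place where your write-up leans on machinery that requires extra hypotheses (quasi-projectivity of $V$ or an \'etale-local trivialization argument). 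You can bypass it entirely: the morphism $\Phi\colon G\times V\to G/H\times V$, $\Phi(g,v)=(gH,gv)$, is the composition of the automorphism $(g,v)\mapsto(g,gv)$ of $G\times V$ with $\pi\times\mathrm{id}_V$, where $\pi\colon G\to G/H$ is faithfully flat, hence open; so $\Phi$ is open, surjective, and its fibres are precisely the $H$-orbits for your twisted action. Since $G\times U$ and its complement $G\times(V\setminus U)$ are both $H$-stable, their images under $\Phi$ are complementary subsets of $G/H\times V$, and the image of the open one is open; therefore $\Phi(G\times U)$ is closed, and the rest of your argument proceeds unchanged. This gives the same proof with no appeal to the associated-bundle construction.
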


We can finally obtain the characterization of the Bruhat order in $G/L$ we were looking for:
\begin{theorem}\label{2ordine}
Let $(u,R),(v,S)\in V_L$.
Then
\[(u,R)\leq (v,S) \Leftrightarrow (u,R)\leq_2(v,S)\]
\end{theorem}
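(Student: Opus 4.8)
The plan is to verify that the order $\leq$ of Definition \ref{deford} satisfies the hypotheses of Theorem \ref{stord}, but now with $\leq_2$ playing the role of the abstract order $\preceq$. The crucial observation that makes this feasible is Lemma \ref{azioneuguale}: the monoid action $m(\alpha)$ on $V_L$ is literally the same map whether $\ch(\mathbb{K})=2$ or not, since all four cases of that lemma produce a value independent of the characteristic. Likewise, by Lemma \ref{dimADE} the dimensions of the orbits $Bvx_S$—and hence the length function $l$ on $V_L$—do not depend on the characteristic. So the combinatorial data entering Theorem \ref{stord} is identical in both settings; what remains is to check that $\leq_2$ itself satisfies the three axioms of ``agreeing with the action of $\Delta$'' together with the one-step property.

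The steps, in order, would be: (1) Show $x\leq_2 m(\alpha).x$ for all $x\in V_L$ and $\alpha\in\Delta$. This is essentially the statement that a $B$-orbit is contained in the closure of the open $B$-orbit of the enclosing $P_\alpha$-orbit, which holds in any characteristic by the remark following the Proposition of \cite{RS} 4.2 (dimension differs by at most $1$, so distinct orbits in $P_\alpha vx_S$ are comparable iff one is the open one). (2) Show the action is monotone: if $x\leq_2 y$ then $m(\alpha).x\leq_2 m(\alpha).y$. Here I would argue geometrically using that $P_\alpha$ is a group: applying $P_\alpha\cdot(-)$ to closures and using that $P_\alpha x L/L=\cdr{m(\alpha).x}$-orbit's closure, one gets $\cdr{P_\alpha x L/L}\subseteq\cdr{P_\alpha yL/L}$ from $\cdr{BxL/L}\subseteq\cdr{ByL/L}$; the map $\cdr{P_\alpha(-)}$ sends closed $B$-stable sets to closed $P_\alpha$-stable sets, which is where Lemma \ref{conj} (completeness of $P_\alpha/B\cong\mathbb{P}^1$) is used, and then one reads off the open orbits. (3) Show the ``no strict drop'' axiom: if $x\leq_2 y$ and $l(y)\leq l(x)$ then $x=y$; this is immediate because $x\leq_2 y$ means $BxL/L\subseteq\cdr{ByL/L}$, so $\dim BxL/L\leq\dim ByL/L$ with equality forcing $BxL/L=ByL/L$, and $l$ was defined so that $l(x)=\dim Bvx_S-d$. (4) Verify the one-step property for $\leq_2$: given $m(\alpha).x\neq x$, one must show $y\leq_2 m(\alpha).x$ iff $y\leq_2 x$ or there is $z$ with $m(\alpha).z=m(\alpha).y$ and $z\leq_2 x$. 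This follows from analyzing the at-most-three $B$-orbits inside $P_\alpha xL/L$ and how closures of $B$-orbits interact with the $P_\alpha$-sweep; the ``$\Leftarrow$'' direction is easy from step (1)–(2), and ``$\Rightarrow$'' uses that $P_\alpha\cdot\cdr{ByL/L}$ is closed and contains $\cdr{m(\alpha).x}$'s orbit closure. Once all four hold, Theorem \ref{stord} applied with $\preceq\;=\;\leq_2$ gives $(u,R)\leq_2(v,S)\Leftrightarrow(u,R)\leq(v,S)$, which is the claim.

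The main obstacle I expect is step (2) and the ``$\Rightarrow$'' half of step (4)—the monotonicity and one-step arguments—because in characteristic $2$ we lack the involution $\Theta$ fixing $L$ that Richardson–Springer and Gandini–Maffei rely on to control how $B$-orbits propagate under the parabolic action. The honest fix is to replace those $\Theta$-based arguments with direct geometry: $P_\alpha$ acts on $G/L$, $P_\alpha/B$ is a complete variety, and Lemma \ref{conj} guarantees $P_\alpha\cdot Z$ is closed whenever $Z$ is closed and $B$-stable. Combining this with the explicit description of $P_\alpha vx_S$ as a union of at most three $B$-orbits (and the explicit representatives computed in Lemma \ref{azioneuguale}) should let one track which orbit is open and deduce all the required closure containments purely from dimension counts and the $P_\alpha$-sweep, with no reference to a ground-field involution. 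This is precisely the role flagged for Lemma \ref{Springer} in the paper: it supplies the characteristic-$2$ substitute for the missing Richardson–Springer input, ensuring $\mathcal{E}_\alpha(v,S)\neq\varnothing$ whenever $s_\alpha\sigma_{v(S)}<\sigma_{v(S)}$, which is exactly what is needed to make the inductive dimension and order arguments go through.
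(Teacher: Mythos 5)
Your proposal is correct and follows essentially the same route as the paper: verify that $\leq_2$ agrees with the action of $\Delta$ (using that $m_\alpha$ and the length function are characteristic-independent by Lemmas \ref{azioneuguale} and \ref{dimADE}) and that it has the one-step property via Lemma \ref{conj} applied to $P_\alpha\cdot\cdr{\mathcal{O}}$, then conclude with Theorem \ref{stord}. The paper treats the ``agrees with the action'' axioms as immediate and concentrates on exactly the $P_\alpha$-sweep argument you outline for the one-step property, so no substantive difference remains.
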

\begin{proof}
We want to prove the correspondence with Theorem \ref{stord}.
Then, we need to show that $\leq_2$ agrees with the action of $\Delta$ and that it has the one-step property.
The first part is clear, because the action of $\Delta$ is exactly the action of the minimal parabolic subgroups and the length function $l$ is the dimension up to a constant.
We need to show that $\leq_2$, which is the Bruhat order, verifies the one-step property.
From Theorem \ref{conj} with $G=P_\alpha$ and $H=B$ we obtain that $P_\alpha \cdr{\mathcal{O}}$ is closed.
But $\cdr{\mathcal{O}}=\bigcup_{\mathcal{O}'\leq \mathcal{O}}\mathcal{O}'$, so
\[\cdr{P_\alpha\mathcal{O}}=P_\alpha\cdr{\mathcal{O}}=\bigcup_{\mathcal{O}'\leq \mathcal{O}}P_\alpha\mathcal{O}'\]
Then take $(v,S)$ and $\alpha$ such that $m_\alpha(v,S)\neq (v,S)$ and fix $(u,R)\leq_2 m_\alpha(v,S)$.
This means $Bux_R\subseteq \cdr{P_\alpha vx_S}$.
By what we said above this implies that there is $Bv'x_{S'}\subseteq \cdr{Bvx_S}$ such that $Bux_R\subseteq P_\alpha v'x_{S'}$.
Hence, $m_\alpha(u,R)=m_\alpha(v',S')$.
This proves the one-step property and the theorem.
\end{proof}

It follows that the characterization of the Bruhat order doesn't depend on the characteristic.
The final result is the following:
\begin{theorem}\label{ordineADE}
In the simply laced case we have
\[Bux_R\leq Bvx_S\Leftrightarrow \sigma_{u(R)}\leq\sigma_{v(S)}\text{ and }[v\sigma_S]^P\leq[u\sigma_R]^P\leq u\leq v\]
In particular, the Bruhat order in the simply laced case doesn't depend on the characteristic of the base field.
\end{theorem}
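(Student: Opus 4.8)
The plan is to observe that, by the time we reach this statement, everything has essentially been done, and what remains is bookkeeping. First I would recall that, under the parametrization of Theorem \ref{parametrizzazioneADE}, the relation $Bux_R\leq Bvx_S$ in the Bruhat order on $G/L$ is by definition the relation $(u,R)\leq_2(v,S)$ on the set $V_L$ of admissible pairs. By Theorem \ref{2ordine} this is equivalent to $(u,R)\leq(v,S)$, where $\leq$ is the combinatorial order of Definition \ref{deford}. Unwinding that definition yields precisely
\[\sigma_{u(R)}\leq\sigma_{v(S)}\quad\text{and}\quad[v\sigma_S]^P\leq[u\sigma_R]^P\leq u\leq v,\]
so the biconditional follows at once.

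For the last sentence — independence of the characteristic — I would invoke Theorem \ref{ordVL}, i.e.\ \cite[Theorem 1.3]{GM}: the same combinatorial condition characterizes $\leq_{\neq 2}$, the order induced on $V_L$ by the Bruhat order in characteristic different from $2$. Hence $\leq_2$ and $\leq_{\neq 2}$ coincide as orders on $V_L$, and since $V_L$ parametrizes the $B$-orbits in $G/L$ regardless of the characteristic (Theorem \ref{parametrizzazioneADE} together with \cite[Proposition 4.7]{GM}), the Bruhat order on $G/L$ is literally the same poset in both settings.

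The genuine work all sits upstream rather than in this proof. It is Theorem \ref{2ordine} that carries the load: its proof requires the one-step property of the Bruhat order (obtained from the completeness argument of Lemma \ref{conj} applied to $P_\alpha/B$), the fact that the action of $\Delta$ on $V_L$ agrees with the order (via Lemma \ref{dimADE} for the length function and Lemma \ref{azioneuguale} for the characteristic-independence of $m_\alpha$), and Lemma \ref{Springer} to guarantee $\mathcal{E}_\alpha(v,S)\neq\varnothing$ whenever $\sigma_{v(S)}(\alpha)<0$; Theorem \ref{stord} then matches $\leq_2$ to the standard order and thence to $\leq$. Consequently I expect no real obstacle here: the only point demanding care is to state the dictionary between a $B$-orbit in $G/L$, an admissible pair, and an element of $V_L$ cleanly enough that the three equivalences chain together without ambiguity, so that the statement is genuinely a corollary of Theorem \ref{2ordine} and Definition \ref{deford}.
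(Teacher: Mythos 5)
Your proposal is correct and follows the paper's own argument: the paper likewise derives the statement directly by combining Theorem \ref{2ordine}, Theorem \ref{ordVL} and Definition \ref{deford}, with all the substantive work done upstream. Nothing is missing.
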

\begin{proof}
By Theorem \ref{2ordine}
\[(u,R)\leq (v,S) \Leftrightarrow (u,R)\leq_2(v,S)\]
and by Theorem \ref{ordVL}
\[(u,R)\leq (v,S) \Leftrightarrow (u,R)\leq_{\neq 2}(v,S)\]
Hence the two orders coincide.
The claim follows from Definition \ref{deford}.
\end{proof}

\section{The parametrization in the type B case}\label{B}

Let $\mathbb{K}$ be an algebraically closed field of characteristic $2$ and $G$ a connected, reductive, linear algebraic group over $\mathbb{K}$ with a type $\bf{B}$ root system.

We can define a realization of this root system in the following way:
take in $\mathbb{R}^n$ the sets of vectors $\left\lbrace \pm e_1,\ldots,\pm e_n\right\rbrace_{1\leq i\leq n}$ and $\left\lbrace \pm e_i\pm e_j\right\rbrace_{1\leq i,j\leq n}$ where $e_i$ is the canonical base.
We may choose as a base $\left\lbrace e_1-e_{2},\ldots,e_{n-1}-e_n,e_n\right\rbrace$.
The highest root $\theta$ is $e_1+e_2=(e_1-e_2)+2(e_2-e_3)+\cdots+2e_n$.
It follows that $\alpha_P=e_1-e_2$ and $\Delta_P$ is obtained by omitting $e_1-e_2$, hence $\Phi_P=\left\lbrace e_i\pm e_j\right\rbrace_{i,j\neq 1}\cup\left\lbrace e_i\right\rbrace{i\neq 1}$.
On the other hand
\[\Psi=\left\lbrace e_1\pm e_i\right\rbrace_{2\leq i\leq n}\cup \left\lbrace e_1\right\rbrace\]

Note that roots in $\Psi$ are always comparable and that there is a unique short root $\alpha_0=e_1$.
Moreover, no root in $\Psi$ is orthogonal to $\alpha_0$ while if $\alpha$ is long then there is a unique root $\alpha^\perp$ that is orthogonal to $\alpha$. 
If $\alpha<\alpha^\perp$ it must be $\alpha<\alpha_0<\alpha^\perp$.
To simplify the notation we put $\alpha_0^\perp=\alpha_0$.

Thanks to Lemma \ref{IET} we know that if $\alpha_0$ is the short root in $\Psi$ and $\beta\in \Phi_P$, then
$$u_\beta(t).e_{\alpha_0}=e_{\alpha_0} \text{ for every }t\in\mathbb{K}$$
If, instead, $\alpha\in \Psi$, $\alpha\neq \alpha_0$ and $\beta\in\Phi$ then
$$u_\beta(t).e_\alpha=e_\alpha+a_{\beta,\alpha}te_{\alpha+\beta}+b_{\beta,\alpha}t^2e_{\alpha+2\beta}$$
where $a_{\beta,\gamma}\neq 0$ (respectively, $b_{\beta,\gamma}\neq 0$) if and only if $\alpha+\beta\in\Phi$(respectively, $\alpha+2\beta\in\Phi$).

Recall that if $x=\sum_{\alpha\in \Psi}a_\alpha e_\alpha\in \mathfrak{p}^u$ the set $\supp(x)=\left\lbrace \alpha\in\Psi\mid a_\alpha\neq 0\right\rbrace$ is called the \textit{support} of $x$.
If $M$ is a subset of $\mathfrak{p}^u$, the \textit{support} of $M$ is the union of $\supp(x)$ for every $x\in M$.
Moreover, if $\beta\in\Psi$ we will denote with $\Psi_{\beta}$ the set $\left\lbrace \alpha\in\Psi\mid \alpha\leq \beta\right\rbrace$.

\begin{theorem}\label{parametrizzazioneB}
Fix $v\in W^P$ and consider the family
$$\mathcal{H}_v=\left\lbrace S\subseteq \Phi^+(v)\mid S \text{ is orthogonal or } S=\left\lbrace \alpha_0, \alpha_0+\gamma\right\rbrace \text{ with }\alpha_0 \text{ short and }\gamma\in\Phi^+_P\right\rbrace$$
Then there is a bijection
\begin{align*}
\mathcal{H}_v&\longleftrightarrow \left\lbrace B_v\text{-orbits in }\mathfrak{p}_u\right\rbrace\\
S&\longleftrightarrow B_ve_S
\end{align*}
\end{theorem}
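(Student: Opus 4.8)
The plan is to imitate the proof of Theorem~\ref{paramBv}: injectivity of $S\mapsto B_ve_S$ is handled separately, and surjectivity by induction on $l(v)$. The place where the simply laced argument fails — and where the pairs $\{\alpha_0,\alpha_0+\gamma\}$ are forced on us — is the following consequence of Lemma~\ref{IET}: if $\beta$ is a long root with $\beta>\alpha_0$ then $\gamma:=\beta-\alpha_0\in\Phi^+_P$, yet $u_{\gamma}(s).e_{\alpha_0}=e_{\alpha_0}$, since both $\alpha_0+\gamma=\beta$ and $\alpha_0-\gamma=2\alpha_0-\beta$ are roots. So $e_\beta$ cannot be absorbed into $e_{\alpha_0}$ the way a long root can be absorbed into a long root.

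For surjectivity, when $l(v)=0$ we have $v=\id$, $B_{\id}$ acts transitively on $\mathfrak p^u$, and $\mathcal H_{\id}=\{\varnothing\}$. For $l(v)>0$ pick $\alpha\in\Delta$ with $u=s_\alpha v<v$; by Proposition~2.5 of~\cite{GM} and standard properties of $W^P$, $u\in W^P$, $\beta:=-v^{-1}(\alpha)=\max\Phi^+(v)$, $\Phi^+(u)=\Phi^+(v)\setminus\{\beta\}$, $B_u=B_v\ltimes\mathfrak u_\beta$, and $B_ue=\bigcup_{t\in\mathbb K}B_v(e+te_\beta)$. Given an orbit $\mathcal O$, by induction $B_u\mathcal O=B_ue_S$ for some $S\in\mathcal H_u$ (hence $\beta\notin S$), and it suffices to show $B_v(e_S+te_\beta)=B_ve_{S'}$ for some $S'\in\mathcal H_v$, for every $t$. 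I would split into cases. If $S\cup\{\beta\}$ is orthogonal — automatic once $\beta\perp S$ — then $e_S+te_\beta$ lies in $B_ve_S$ for $t=0$ and, after normalising the scalar with the torus, in $B_ve_{S\cup\{\beta\}}$ for $t\neq0$, both in $\mathcal H_v$. If $S$ is orthogonal but $\beta$ is not orthogonal to some long $\gamma\in S$, then $\beta-\gamma\in\Phi^+_P$ and, exactly as in Theorem~\ref{paramBv} (using Lemma~\ref{sommaradici} for the other roots of $S$), $u_{\beta-\gamma}(s)$ replaces $t$ by $as+t$ with $a\neq0$, so $B_v(e_S+te_\beta)=B_ve_S$; the same absorption works when $S=\{\alpha_0,\delta\}$ is a special pair (then $\beta$ is long with $\beta>\delta$ and $u_{\beta-\delta}(s)$ absorbs $e_\beta$ into $e_\delta$), and when $\beta=\alpha_0$ is short (then $S$ is $\varnothing$ or $\{e_1-e_i\}$, and $u_{e_i}(s)$ moves $e_{e_1-e_i}$ while introducing a component along the root $e_1+e_i>\beta$, which is killed by translating by $\mathfrak u_v$ — here one uses that $\Phi^+(v)$ is a lower set of the chain $\Psi$). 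The one remaining case is $S=\{\alpha_0\}$ with $\beta$ long: by the observation above $e_\beta$ cannot be removed, so $e_S+te_\beta$ is in $B_ve_{\{\alpha_0\}}$ for $t=0$ and in $B_ve_{\{\alpha_0,\beta\}}$ for $t\neq0$, and $\{\alpha_0,\beta\}=\{\alpha_0,\alpha_0+(\beta-\alpha_0)\}$ has exactly the special form. A short check confirms the cases are exhaustive.

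For injectivity, on orthogonal subsets it is Theorem~\ref{orthogonalsubset}. For a special pair $S=\{\alpha_0,\delta\}$ with $\delta=\alpha_0+\gamma=e_1+e_{i_0}$, I would prove that every element of $B_ve_S$ has a nonzero $e_{\alpha_0}$-component and a nonzero $e_\delta$-component. With $\beta=\max\Phi^+(v)$ (necessarily long and $\geq\delta$) one has $\mathfrak u_v=\bigoplus_{\mu>\beta}\mathfrak u_\mu$; the subgroups $U_\nu$ with $\nu\in\Phi^+_P$ act on $\mathfrak p^u$ by $\adj$, sending $e_\mu$ into $e_\mu+\sum_{\mu'>\mu}\mathbb K e_{\mu'}$ for the total order on $\Psi$, while $\mathfrak u_v$ acts by translation. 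Using Lemma~\ref{IET} one checks that the only components that can feed into the $e_{\alpha_0}$- or $e_\delta$-slots are those along the roots $e_1-e_i$ and along the roots $e_1+e_q$ with $e_1+e_q<\delta$; these vanish on $e_S$, lie outside the support of $\mathfrak u_v$, and stay zero under the $U_\nu$, so the $e_{\alpha_0}$- and $e_\delta$-components of any point in $B_ve_S$ are, up to a torus scalar, those of $e_S$, hence nonzero. No other member of $\mathcal H_v$ has $e_{S'}$ with both components nonzero — an orthogonal set cannot contain $\alpha_0$ together with a long root, and a distinct special pair uses a different long root — which gives injectivity and, incidentally, confirms that the pairs of orbits produced in the surjectivity cases are genuinely distinct.

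The main obstacle I expect is the bookkeeping: making the case split of the inductive step provably exhaustive, and carrying out the computation with Lemma~\ref{IET} underlying the injectivity invariant, where one must track precisely which root spaces can receive contributions under the $\adj$-action of $B_L$. The conceptual content, by contrast, is concentrated in the single remark that $u_{\beta-\alpha_0}$ fixes $e_{\alpha_0}$ for $\beta$ long, which simultaneously obstructs absorption and produces the new family of orbits.
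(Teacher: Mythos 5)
Your proposal is correct in substance, but it follows a genuinely different route from the paper. For surjectivity the paper does not induct on $l(v)$: it takes an arbitrary element $x$ of the orbit, assumes $\supp(x)\subseteq\Phi^+(v)$, and removes the offending roots of the support from the bottom up, using that the minimal root $\alpha$ of $\supp(x)$ (or the minimal $\alpha'$ above $\alpha_0$ when $\alpha=\alpha_0$) satisfies $\beta-\alpha\in\Phi^+_P$ for every larger non-orthogonal $\beta$ in the support; your inductive step over $\beta=\max\Phi^+(v)$ with the case split on $S\in\mathcal{H}_u$ reaches the same conclusion and is closer in spirit to Theorem \ref{paramBv}, at the cost of more cases (your absorption steps also need the quadratic term of Lemma \ref{IET} checked: it either vanishes or lands at a root above $\beta$, hence outside $\Phi^+(v)$, so it is harmless). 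For injectivity the paper argues more economically: the minimum of $\supp(\mathcal{O})$ is an invariant, orthogonal representatives are separated by Theorem \ref{orthogonalsubset}, and when the minimum is $\alpha_0$ the second root of a special pair is detected because $\spa(e_{\alpha_0})$ is $B_L$-stable, so $\beta\notin\supp(Be_{\alpha_0})$; your component-nonvanishing invariant for $e_{\alpha_0}$ and $e_\delta$ is a more hands-on version of the same triangularity phenomenon and does work, but note that it genuinely uses the fact that $\Phi^+(v)$ is a lower ideal of the chain $\Psi$ (so that $\mathfrak{u}_v$ is supported strictly above $\delta$); this is true and easy -- if $\mu\le\nu$ with $\nu\in\Phi^+(v)$ then $\nu-\mu$ is a nonnegative combination of $\Delta_P$ and $v(\Delta_P)>0$ forces $v(\mu)<0$ -- but it should be stated and proved rather than only invoked. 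Both proofs ultimately rest on the same two ingredients, Lemma \ref{IET} and the remark that $U_\nu$ fixes $e_{\alpha_0}$ for $\nu\in\Phi_P$; what the paper's version buys is brevity and no case analysis, while yours makes the mechanism producing the special pairs $\left\lbrace\alpha_0,\alpha_0+\gamma\right\rbrace$ more transparent and parallels the simply laced argument.
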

\begin{proof}
We will first show that the map is surjective.
Fix an orbit $\mathcal{O}$ and an element $x\in\mathcal{O}$.
We can suppose, by acting with $\mathfrak{u}_v\subseteq B_v$ if needed, that $S=\supp(x)\subseteq \Phi^+(v)$ (see Equation \ref{action}).
Now, $S$ has a minimal root $\alpha$.
If $S=\left\lbrace \alpha\right\rbrace$ we are done, because if $x=x_\alpha=te_\alpha$ for some $t\in\mathbb{K}^*$, then $e_\alpha\in T.x_\alpha$.
The same is true if $S$ is orthogonal, because orthogonal roots are independent.

Suppose $\#S\neq 1$, $S$ not orthogonal and write $x=\sum_{\alpha\in S}a_\alpha\alpha$

If $\alpha$ is long, consider $\beta$ minimal in $\left(S\setminus \left\lbrace \alpha,\alpha^\perp\right\rbrace\right)$.
We want to find an element $x'\in\mathcal{O}$ such that $S'=\supp(x')\subseteq \Phi^+(v)$ and $S'\cap \Psi_\beta=\left(S\cap\Psi_\beta\right)\setminus \left\lbrace \beta\right\rbrace$.
Given that the roots in $\Psi$ are a finite number, this implies inductively that we can find $x\in\mathcal{O}$ such that $\supp(x)$ is orthogonal.

Now, $\beta\neq \alpha^\perp$, hence $\langle\beta,\alpha\rangle\neq 0$.
Then we know that $\gamma=\beta-\alpha\in\Phi^+$ because $\beta>\alpha$ and by the characterization of $\Psi$ and $\Phi_P$ it must be $\gamma\in\Phi_P^+$.
It follows that the one-parameter subgroup $U_\gamma$ is contained in $B_L\subseteq B_v$ and we must have $u_\gamma(t).e_\alpha=e_\alpha+ate_\beta$ or $u_\gamma(t).e_\alpha=e_\alpha+ate_\beta+bt^2e_{\alpha+2\gamma}$ where in both cases $a\neq 0$.
Note that $u_\gamma(t)$ act as the identity on $\alpha^\perp$ and all the other roots in $S$ are greater than $\beta$, hence in both cases the coefficient of $e_\beta$ in $u_\gamma(t).x$ is $a_\beta+at$.
It follows that there is $t_0\in\mathbb{K}^*$ such that $\beta\notin\supp(u_\gamma(t_0).x)$ and, to be more precise 
\[\supp(u_\gamma(t_0).x)\cap\Psi_{\beta}=\left(\supp(x)\cap \Psi_{\beta}\right)\setminus\left\lbrace\beta\right\rbrace\]
Again,  we can act on $u_\gamma(t_0).x$ with elements of $\mathfrak{u}_v\subseteq B_v$ to obtain an element $x'\in\mathcal{O}$ such that $\supp(x')= \supp(u_\gamma(t_0).x)\cap \Phi^+(v)$ and that's the element we were looking for.

If instead $\alpha=\alpha_0$ is short, take $\alpha'$ minimal in $S\setminus\left\lbrace \alpha\right\rbrace$.
If $S=\left\lbrace\alpha,\alpha'\right\rbrace$ we are done (they are linearly independent), so suppose $S\setminus\left\lbrace \alpha,\alpha'\right\rbrace\neq\varnothing$ and take $\beta$ minimal in it.
As before, we want to find $x'\in\mathcal{O}$ such that $S'=\supp(x')\subseteq \Phi^+(v)$ and $S'\cap \Psi_\beta=\left(S\cap\Psi_\beta\right)\setminus \left\lbrace \beta\right\rbrace$.

To do this, note that $\langle \beta,\alpha'\rangle\neq 0$ because $\alpha'^\perp<\alpha_0<\alpha'<\beta$, hence $\beta\neq \alpha'^\perp$.
Then, there is $\gamma=\beta-\alpha'\in\Phi^+_P$, hence $U_\gamma\subseteq B_L\subseteq B_v$and we have
\[u_\gamma(t).e_{\alpha'}=e_{\alpha'}+ate_\beta\]
while $u_\gamma(t).e_\alpha=e_\alpha$ by Lemma \ref{IET}.
As before, it follows that there is $t_0\in\mathbb{K}^*$ such that
\[\supp(u_\gamma(t_0).x)\cap \Psi_{\beta}=\left\lbrace\alpha,\alpha'\right\rbrace\]
and we can suppose $\supp(u_\gamma(t_0).x)\subseteq \Phi^+(v)$.
This concludes the proof of surjectivity.

We will now show the injectivity.
Suppose $S,R\in\mathcal{H}_v$ and $\mathcal{O}=B_ve_S=B_ve_R$.
Note that it must be 
$$\min S=\min R=\min \supp(\mathcal{O})$$
Denote $\alpha=\min \supp(\mathcal{O})$.
If $\alpha$ is not the short root, then both $S$ and $R$ must be orthogonal and, by Theorem \ref{orthogonalsubset}, $B\sigma_{v(S)}B=B\sigma_{v(R)}B$, so $\sigma_{v(S)}=\sigma_{v(R)}$ and $S=R$.

Now suppose $\alpha$ short and $\beta=\min\left\lbrace\supp(\mathcal{O})\setminus \left\lbrace \alpha\right\rbrace\right\rbrace$.
If $\beta\notin \Phi^+(v)$, then it is clear that $S=R=\left\lbrace\alpha\right\rbrace$, hence suppose $\beta\in\Phi^+(v)$.
Now, $\beta\notin\supp(Be_{\alpha})$ because $\spa(e_\alpha)$ is $B_L$-stable by Lemma \ref{IET}.
It follows that $\beta\in S$ and $\beta\in R$, so it must be 
\[S=\left\lbrace\alpha,\beta\right\rbrace =R\qedhere\]
\end{proof}
\section{The parametrization in the type C case}\label{tipoC}

Let $\mathbb{K}$ be an algebraically closed field of characteristic 2. Suppose that $G$ is a connected, reductive, linear algebraic group over $\mathbb{K}$ with a type $\bf{C}$ root system.
Fix $T\subseteq B$ a maximal torus and a Borel subgroup.
For our examples, we will take $G=\matr{Sp}(2n,\mathbb{K})=\left\lbrace M\in \matr{SL}(2n,\mathbb{K})\mid M^t\Omega M=\Omega\right\rbrace$ where
\[\begin{array}{lcr}
\Omega=\left(\begin{array}{c|c}
0 & L\\
\hline
-L& 0
\end{array}\right) & & L=\left(\begin{array}{ccc}
& & 1\\
& \reflectbox{$\ddots$} &\\
1& &
\end{array}\right)
\end{array}
\]
Note that in characteristic $2$ we have $-L=L$.
We choose as a maximal torus $T$ the diagonal matrices in $G$ and as Borel subgroup $B$ the upper triangular matrices in $G$.
With these choices, the only parabolic subgroup that verifies the hypothesis is 
\[P=\left\lbrace \left(\begin{array}{c|c}
						A & B\\
						\hline
						0 & C
						\end{array}\right)\in \matr{Sp}(2n,\mathbb{K})
						\mid A,B,C\in M(n,\mathbb{K})
\right\rbrace
 \]

We will make use of the following realization of the root system in $\mathbb{R}^n$

$$\Phi=\left\lbrace \pm e_i\pm e_j \mid i \neq j\right\rbrace \cup \left\lbrace \pm 2e_i \right\rbrace $$

where $e_1, \ldots, e_n$ is the canonical base of $\mathbb{R}^n$. As a (ordered) base of such root system we choose the set $\Delta=\left(e_1-e_2, \ldots, e_{n-1}-e_n, 2e_n\right)$.

If we label the roots of $\Delta$ as $\left(\alpha_1, \ldots, \alpha_{n-1},\alpha_n \right)$ the longest root is $2\alpha_1+2\alpha_2+\cdots+2\alpha_{n-1}+\alpha_n$. 
It follows that 
\[\Psi=\left\lbrace e_i+e_j\right\rbrace_{1\leq i<j\leq n}\cup\left\lbrace 2e_i\right\rbrace_{1\leq i\leq n}\]

On the other hand, \[
\Phi_P=\left\lbrace e_i-e_j\right\rbrace_{i,j=1,\ldots,n}\]
Note that $\Phi_P$ is of type $\bf{A}_{n}$.

As before we have that if $u_\alpha(t)$ is the one-parameter subgroup of $\alpha$, then 
$$u_\alpha(t).x_\beta=x_\beta + atx_{\beta + \alpha} + bt^2x_{\beta+2\alpha}$$
 for some $a,b\in\mathbb{F}$ and $a=0$ if and only if $\beta+\alpha \notin \Phi$ or both $\beta+\alpha \in \Phi$ and $\beta-\alpha \in \Phi$. 
 Note that in both cases $b=0$, too.

 Moreover, note that if $\beta$ is long and $\alpha$ is short and there is $\gamma\in\Phi_P^+$ with $\alpha+\gamma=\beta$, then also $\alpha-\gamma\in \Phi$.
In fact, $s_{\alpha}(\alpha+\gamma)=\alpha+\gamma-\langle \alpha, \alpha+\gamma\rangle \alpha$
and $\langle \alpha, \alpha+\gamma\rangle=2$, so $\gamma-\alpha=s_\alpha(\beta)\in \Phi$.
This  implies $(\alpha,\gamma)=0$. 

In our example, the algebra $\mathfrak{p}^u$ is the set 
\[\mathfrak{p}^u=\left\lbrace\left(\begin{array}{c|c}
					0 & M\\
					\hline
					0 & 0
					\end{array}\right)\in \mathfrak{sp}(2n,\mathbb{K})\mid M\in M(n,\mathbb{K})\right\rbrace\]
It is easy to see that $\left(\begin{array}{c|c}
					0 & M\\
					\hline
					0 & 0
					\end{array}\right)\in \mathfrak{sp}(2n,\mathbb{K})$ if and only if $M$ is symmetric with respect to the anti-diagonal, that is $M_{i,j}=M_{n-j+1,n-i+1}$.
For this reason, we will represent the matrix $M$ with only the upper-left entries in the following way (here is n=5)
\[
\Yvcentermath1
M=\gyoung(1;1;0;1;0,0;0;1;1,1;1;0,1;1,1)\]
Every root space is generated by an element with $0\in\mathbb{K}$ in every square but one.
In particular, if we number the row and the column starting from the upper right vertex
\begin{equation}\label{numbering}
\Yvcentermath1
M=\gyoung(:5:4:3:2:1:,0;1;0;0;0:1,0;0;0;0::2,0;0;0:::3,0;0::::4,0:::::5)
\end{equation}

then the root space whose generator has a $1\in\mathbb{K}$ in row $i$ and column $j$ with respect to this numbering is relative to the root $e_i+e_j$ while if the $1$ is in position $(i,i)$ it corresponds to the root $2e_i$.
For simplicity, from now on an empty square will mean a square with the zero element of $\mathbb{K}$ while a $\bullet$ will mean every element different from the zero element.
The previous matrix will then become
\[
\Yvcentermath1
M=\gyoung(;;\bullet;;;,;;;;,;;;,;;,;)\]
Note that this notation shouldn't generate ambiguity because in our examples the roots will be linearly independent, so the $B$-orbit will not depend on the specific values of $\bullet$.

Given that there are one to one correspondences between roots in $\Psi$, root spaces in $\mathfrak{p}^u$ and generators of root spaces up to multiplication by scalars, we will often denote a root with the correspondent diagram, for example if $M$ is the diagram above, then $M=e_1+e_4$.

Now, fix a root $\alpha\in\Psi$.
With the notation above, the roots smaller than $\alpha$ are on the lower left (symbol $\cdot$ in the first diagram) while the roots that are bigger are on the upper right (symbol $\circ$ in the first diagram).
Moreover, the roots that are not orthogonal to $\alpha$ are exactly the roots that share a row or a column with $\alpha$  with a caveat: the row and column are "reflected" by the antidiagonal.
What we mean is that if we label the rows and columns as in Diagram \ref{numbering} and $\alpha$ is in the $i$-th row and $j$-th column, the roots that are not orthogonal to $\alpha$ will be exactly the roots in row $i$ and $j$ and in column $i$ and $j$ (symbol $\star$ in the second diagram).

\[
\begin{array}{lccr}
\gyoung(;;\circ\circ\circ\circ,\cdot\bullet\circ\circ,\cdot\cdot;,\cdot\cdot,\cdot) & & &\gyoung(;;\star;;\star;,\star\bullet\star\star,;;\star;,\star\star,;) 
\end{array}
\]
To keep the diagrams simple, we will always make our examples by fixing $v=\omega^P$ the longest element in $W^P$.

A first result that we can prove thanks to the realization is the next one.
\begin{Proposition}\label{cortelunghe}
Let $S\subseteq \Psi$ be a subset of short roots.
Then for every $b\in B$ the support $\supp (b.e_S)$ contains only short roots.
\end{Proposition}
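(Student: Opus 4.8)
The claim is that in type $\mathbf{C}$, if $S\subseteq\Psi$ consists only of short roots (that is, roots of the form $2e_i$), then $\supp(b.e_S)$ contains only short roots for every $b\in B$. The plan is to reduce the statement to a check on one-parameter subgroups $U_\gamma$ with $\gamma\in\Phi_P^+$ together with the torus $T$, since $B$ is generated by $T$ and the $U_\gamma$ with $\gamma>0$, and the root subgroups $U_\delta$ with $\delta\in\Psi$ act trivially on $\mathfrak{p}^u$ (recall $P^u$ is abelian). The torus action only rescales coefficients, so it preserves the support. Hence it suffices to show: if $x\in\mathfrak{p}^u$ has only short roots in its support, and $\gamma\in\Phi_P^+$, then $u_\gamma(t).x$ again has only short roots in its support.

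The key step is the following observation about root sums. A short root in $\Psi$ has the form $2e_i$. If we add to it a root $\gamma\in\Phi_P=\{e_k-e_l\}$, the result $2e_i+(e_k-e_l)$ is a root only if $i=l$ (giving $e_i+e_k$, which is \emph{long} in the type $\mathbf{C}$ convention used here — wait, one must be careful: in this paper's normalization the roots $e_i\pm e_j$ are the long roots and $2e_i$ are the short roots, per the realization $\Phi=\{\pm e_i\pm e_j\}\cup\{\pm 2e_i\}$). So $2e_i$ short, $\gamma = e_i - e_k\in\Phi_P$, then $2e_i + \gamma = e_i + e_k$ is long: naively this would move us out of the short roots. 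The resolution, and this is the real content, is the computation already recorded in the excerpt: if $\beta$ is long, $\alpha$ short, and $\alpha+\gamma=\beta$ with $\gamma\in\Phi_P^+$, then also $\alpha-\gamma\in\Phi$. By Lemma \ref{IET}, part (1), whenever both $\beta+\gamma$ and $\beta-\gamma$ are roots the coefficient $a$ in $u_\gamma(t).e_\beta$ vanishes; applying this with $\beta$ replaced by our short root $\alpha$, we get that $u_\gamma(t).e_\alpha = e_\alpha$ exactly — the short root vectors are fixed by $U_\gamma$ whenever adding $\gamma$ would produce a long root. (And if $\alpha+\gamma\notin\Phi$ then trivially $u_\gamma(t).e_\alpha=e_\alpha$ as well, the $b$-term being zero since $\alpha+2\gamma$ is never a root when $\alpha$ is short.) Therefore each short root vector $e_\alpha$ appearing in $x$ is fixed by $u_\gamma(t)$, so $u_\gamma(t).x = x$ and the support is unchanged.

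Carrying this out: first I would write $b = t_0\prod u_{\gamma_k}(t_k)$ with $t_0\in T$ and the $\gamma_k$ ranging over positive roots, ordered so the product makes sense; note that positive roots are either in $\Phi_P$ or in $\Psi$, and the $U_\delta$ for $\delta\in\Psi\subseteq\mathfrak{p}^u$ act trivially on $\mathfrak{p}^u$ because $\mathfrak{p}^u$ is abelian (this is the action of Equation \ref{action} restricted to $P^u$). Then I reduce to a single $u_\gamma(t)$ with $\gamma\in\Phi_P^+$ and a single short-root generator $e_\alpha = e_{2e_i}$. Then I invoke the displayed fact from the type $\mathbf{C}$ discussion ("if $\beta$ is long and $\alpha$ is short and there is $\gamma\in\Phi_P^+$ with $\alpha+\gamma=\beta$, then also $\alpha-\gamma\in\Phi$") combined with Lemma \ref{IET}(1) to conclude $u_\gamma(t).e_\alpha=e_\alpha$, handling separately the trivial case $\alpha+\gamma\notin\Phi$. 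Putting the pieces back together, $b.e_S = t_0.e_S$, whose support equals $S$, a set of short roots.

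The main obstacle — really the only subtlety — is being careful that when $\alpha+\gamma$ \emph{is} a root, it is genuinely true that $\alpha-\gamma$ is also a root, so that Lemma \ref{IET}(1) kills the linear term; this is precisely the computation $s_\alpha(\alpha+\gamma) = \alpha+\gamma - \langle\alpha,\alpha+\gamma\rangle\alpha = \gamma-\alpha$ using $\langle\alpha,\alpha+\gamma\rangle = 2$ (valid because $\alpha$ is short and $\alpha+\gamma$ long, so their pairing in the short-root direction is $2$), which is already spelled out in the excerpt. Everything else is bookkeeping: that $T$ preserves supports, that $\Psi$-root subgroups act trivially, and that it suffices to treat generators of $B$ one at a time acting on generators of root spaces one at a time (the latter because the relevant root subgroups fix \emph{each} short root vector, hence fix any linear combination of them).
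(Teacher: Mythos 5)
Your proposal rests on an inverted length convention, and this is not just a naming issue. In the paper's realization of type $\mathbf{C}$, the roots $\pm e_i\pm e_j$ are the \emph{short} roots and $\pm 2e_i$ are the \emph{long} ones; the paper says so explicitly (e.g.\ ``for every short root $\alpha=e_i+e_j\in\Psi$ there are only two long roots\dots namely $2e_i$ and $2e_j$''), and the definition of $\suc$ depends on it. You assert the opposite and run the whole argument on the generators $e_{2e_i}$. With that reading, the statement you actually argue for is false: take $\beta=2e_2$ and $\gamma=e_1-e_2\in\Phi_P^+$; then $\beta+\gamma=e_1+e_2\in\Phi$ while $\beta-\gamma=3e_2-e_1\notin\Phi$, so by Lemma \ref{IET} the linear coefficient is nonzero, and moreover $\beta+2\gamma=2e_1\in\Phi$, hence $u_\gamma(t).e_{2e_2}=e_{2e_2}+ate_{e_1+e_2}+bt^2e_{2e_1}$ with $a,b\neq 0$. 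In particular $e_{2e_i}$ is not fixed by $U_\gamma$, and the displayed fact you invoke (``$\alpha$ short, $\alpha+\gamma$ long forces $\alpha-\gamma\in\Phi$'') simply does not apply to your configuration, where $\alpha=2e_i$ is the long root and $\alpha+\gamma$ the short one.

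The gap persists even after restoring the correct convention ($\alpha=e_i+e_j$ short): your central claim, that every short root vector is fixed by every $u_\gamma(t)$ with $\gamma\in\Phi_P^+$ so that $b.e_S=t_0.e_S$, is still false. For $\gamma=e_k-e_i$ with $k<i$ one has $\alpha+\gamma=e_k+e_j\in\Phi$ but $\alpha-\gamma\notin\Phi$, so $u_\gamma(t).e_\alpha=e_\alpha+ate_{e_k+e_j}$ with $a\neq 0$: the support genuinely moves, it just stays inside the short roots. What is true, and what the paper proves, is only that no long root can enter the support: the unique positive $\gamma$ with $\alpha+\gamma$ long is $\gamma=e_i-e_j$, and then $\alpha-\gamma=2e_j\in\Phi$ kills the linear term by Lemma \ref{IET}, while $\alpha+2\gamma$ is never a root for $\alpha$ short. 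Because the support can change within the short roots after each factor, you cannot conclude after one generator that nothing happened; you need the induction on the number of one-parameter factors of $b$, applying the single-factor statement to each short root currently in the support, which is exactly how the paper argues. Your bookkeeping reductions (torus preserves supports, $U_\delta$ with $\delta\in\Psi$ acts trivially, reduction to single factors and single generators) are fine, but the heart of the proof must be the vanishing of the long-root coefficient, not the fixation of the short root vectors.
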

\begin{proof}
Note that if $S=\left\lbrace \beta_1,\ldots,\beta_n\right\rbrace$, then $\supp(b.e_S)\subseteq \bigcup_{i=1}^n\supp(b.e_{\beta_i})$.
It follows that we can suppose that $S=\left\lbrace\beta\right\rbrace$ and $\beta=e_i+e_j$ with $i<j$.

We can write $b=tx_{\alpha_1}\cdots x_{\alpha_n}$ with $t\in T$ and $x_{\alpha_i}\in U_{\alpha_i}$.
Note that the action of $t$ doesn't change the support, so we can suppose $t=\id$.
We will prove the claim by induction on $n$.

Suppose $n=1$, then $b=u_\alpha(t)$.
But now $\beta+\alpha$ is a long root if and only if $\alpha=e_i-e_j$ and $\beta-\alpha=2e_j\in \Phi$, so $b.e_\beta=e_\beta$ by Lemma \ref{IET}.
Now suppose $n>1$ and consider $b'=x_{\alpha_2}\cdots x_{\alpha_{n}}$.
By induction we have that $\supp(b'.e_S)$ contains no long root and for every (short) root $\beta'\in\supp(b'.e_S)$ we have that $\supp(x_{\alpha_1}.e_{\beta'})$ contains no long root.
It follows that $\supp(b.e_S)$ contains only short roots as well.
\end{proof}

For every short root $\alpha=e_i+e_j\in\Psi$ there are only two long roots in $\Psi$ that are not orthogonal with $\alpha$, namely $2e_i$ and $2e_j$.
Of them, one is bigger and one is smaller than $\alpha$.
We will denote the former with $\suc(\alpha)$.
In the cases where $\alpha$ is long we will define $\suc(\alpha)=\alpha$.

Now, fix an element $v\in W^P$. 
We will now define a family of representatives for the $B_v$-orbits on $\mathfrak{p}_u$.

\begin{Definition}\label{admissibleC}
Let $\Phi^+(v)=\left\lbrace \alpha \in \Psi \mid v(\alpha)<0\right\rbrace$ and $S\subseteq \Phi^+(v)$. Then $S$ is \textit{admissible} (for $v$) if $S$ can be partitioned as $X(S)\sqcup Z(S)$ where:
\begin{enumerate}[label=(\roman*)]
\item\label{1} $X(S)$ is orthogonal;
\item\label{2} every element of $Z(S)$ is a long root $\beta$ and for every $\beta\in Z(S)$ exists a $\alpha$ in $X(S)$ and $\gamma\in\Phi^+_P$ verifying $\beta=\alpha+\gamma$. This element is unique, so define $\pad(\beta)=\alpha$.
\end{enumerate}
\end{Definition}

Note that in point \ref{2} of Definition \ref{admissibleC} $\alpha$ and $\gamma$ are short and we have $\langle\alpha,\gamma\rangle=0$ while $\langle\beta,\alpha\rangle\neq 0$ and $\langle\beta,\gamma\rangle\neq 0$.
We are exactly in the situation where $\gamma$ can be both added and subtracted to $\alpha$, hence while $\alpha+\gamma$ is a (long) root, the action of $U_\gamma$ on $e_\alpha$ is the identity.

We may wonder about the uniqueness of the elements introduced in Definition \ref{admissibleC}.
It is clear that $X(S)=S_s\cup\left\lbrace \beta\in S_l\mid \beta \text{ and }S_s \text{ are orthogonal}\right\rbrace$, hence the partition is unique.
Regarding the uniqueness in point \ref{2}, if $\beta\in Z(S)$ suppose that there are $\tau,\delta\in X(S)$ and $\gamma_\tau,\gamma_\delta\in \Phi^+_P$ such that $\gamma_\tau+\tau=\gamma_\delta+\delta=\beta$.
Then, by Lemma \ref{sommaradici}, $\gamma_\tau$ and $\delta$ can't be added nor subtracted, hence $\langle\gamma_\tau,\delta\rangle=0$.
But $\langle \tau,\delta\rangle=\langle \beta-\gamma_\tau,\delta\rangle=\underbrace{\langle \beta,\delta\rangle}_{\neq 0}-\underbrace{\langle\gamma_\tau,\delta\rangle}_0\neq 0$ and that's a contradiction.

There is another important thing to note about this definition.
Take $S$ admissible and $\beta\notin S$ long.
Then $S\cup\left\lbrace\beta\right\rbrace$ is still admissible if and only if for every $\alpha\in S$, $\langle\alpha,\beta\rangle\neq 0$ implies $\alpha<\beta$.
In particular, $S\cup\left\lbrace\beta\right\rbrace$ is admissible whenever $\beta\nless\alpha$ for every $\alpha\in S$.

Now analyse the following diagrams where we denoted with $X_i$ the roots that are in $X(S)$ and with $Z_i$ the roots in $Z(S)$.
\[
\Yboxdim{16pt}
\Yvcentermath1
\begin{array}{rlcrl}
a1) & \gyoung(<X_1>;;;;{Z_1},;;{X_2};;{Z_2},;;;,;;,;)& & a2) & \gyoung(<X_1>;;;;,;;{X_2};;{Z_2},;;;,;;,;)
\end{array}
\]
\[
\Yboxdim{16pt}
\Yvcentermath1
\begin{array}{rlcrl}
b1) & \gyoung(<X_1>;;;;{Z_1},;;;;{X_2},;;;,;;,;)&  & b2) & \gyoung(<X_1>;;;;,;;;;{X_2},;;;,;;,;)
\end{array}\]

It is easy to see that they are all admissible.
Moreover $a1)$ and $a2)$ generate the same $B_v$-orbit and the same can be said for $b1)$ and $b2)$.
To see this we will show that there is $b\in B_L$ that sends $a2)$ to $a1)$ .
Recall that with our conventions we have:
\[
\begin{array}{cccc}
X_1=e_1+e_5 & Z_1=2e_1 & X_2=e_2+e_4 & Z_2=2e_2
\end{array}
\]
Then $u_{e_1-e_2}(t)$ acts as the identity on $X_1$ and sends $e_{\left\lbrace X_2,Z_2\right\rbrace}$ to $e_{\left\lbrace X_2,Z_2\right\rbrace}+ate_{e_1+e_4}+bte_{e_1+e_2}+ct^2e_{2e_1}$ for fixed non-zero $a,b,c\in\mathbb{K}^*$.
Now the subgroups $u_{e_i-e_5}(s)$ for $i=2,4$ act as the identity on all the roots except $X_1$ and we can use it to delete the components along $e_{e_1+e_4}$ and $e_{e_1+e_2}$.

Schematically
\[\Yvcentermath1
\Yboxdim{16pt}
\begin{array}{rcccl}
\gyoung(\bullet;;;;,;;\bullet;;\bullet,;;;,;;,;) & \xrightarrow{u_{e_1-1_2}(t)} & \begin{tikzpicture}[baseline=-6ex]\Yvcentermath1
\tgyoung(0cm,0cm,\bullet;\bullet;;\bullet;\bullet,;;\bullet;;\bullet,;;;,;;,;)
\draw[->] (21pt,-4.5pt) to [bend left] (21pt,4.5pt);
\draw[->] (52pt,-5pt) to [bend left] (52pt,5pt);
\draw[->] (60.5pt,0.4cm) to [bend left] (69.5pt,0.4cm); 

\end{tikzpicture}& \xrightarrow{u_{e_4-1_5}(s),u_{e_2-e_5}(r)} & \begin{tikzpicture}[baseline=-6ex]\Yvcentermath1
\tgyoung(0cm,0cm,\bullet;\circ;;\circ;\bullet,;;\bullet;;\bullet,;;;,;;,;)
\draw[->] (10pt,0.4cm) to [bend left] (52pt,0.4cm);
\draw[->] (10pt,0.4cm) to [bend left] (20pt,0.4cm);
\end{tikzpicture}

\end{array}\]

For $b1)$ and $b2)$ the reasoning is similar.
It follows that to get the uniqueness we need to be stricter.

\begin{Definition}\label{fulladmissibleC}
Let $S=X(S)\sqcup Z(S)$ be admissible for $v$.
Then $S$ is \emph{full admissible} if:

\begin{enumerate}
\item for every $\gamma\in X(S)$ long and $\alpha\in S$ short such that $\suc(\alpha)\in\Phi^+(v)$, $\suc(\alpha)>\gamma$ and $\alpha\ngtr\gamma$ we have $\suc(\alpha)\in Z(S)$;
\item for every $\gamma\in Z(S)$ and $\alpha\in S$ short such that $\suc(\alpha)\in\Phi^+(v)$, $\suc(\alpha)>\gamma$ and $\alpha\ngtr \pad(\gamma)$ we have $\suc(\alpha)\in Z(S)$.
\end{enumerate}
\end{Definition}

It is easy to see that $a1)$ and $b1)$ verify Definition \ref{fulladmissibleC} while $a2)$ and $b2)$ don't.
Actually, we can obtain $a1)$ from $a2)$ and $b1)$ from $b2)$ through the following process.

\begin{Definition}\label{fac}
Let $S=X(S)\sqcup Z(S)$ be admissible for $v$.
Denote
\begin{align*}
A(S)&=\left\lbrace \beta\in\Phi^+(v)\setminus S\mid \exists\alpha\in S_s, \beta=\suc(\alpha) \text{ and }\exists\gamma\in X(S) \text{ long}, \beta>\gamma \text{ and } \alpha\ngtr\gamma\right\rbrace\\
B(S)&=\left\lbrace \beta\in\Phi^+(v)\setminus S\mid \exists \alpha\in S_s,\beta=\suc(\alpha)\text{ and }\exists\gamma\in Z(S), \beta>\gamma\text{ and }\alpha\ngtr\pad(\gamma)\right\rbrace
\end{align*}
Then the set
\[\cdr{S}=S\cup A(S)\cup B(S)\]
is called the \textit{full admissible completion} of $S$.
\end{Definition}
Note that $S$ is full admissible if and only if $A(S)=B(S)=\varnothing$.

We want to show that the full admissible completion of an admissible set $S$ is indeed full admissible.
Note that $A(S),B(S)\subseteq \suc(S_s)$, hence the roots in $A(S)$ and $B(S)$ are not orthogonal to exactly one root in $S$.
In particular, $\cdr{S}$ is admissible with partition $X(\cdr{S})=X(S)$ and $Z(\cdr{S})=Z(S)\cup A(S)\cup B(S)$.
We will need the following lemma.
\begin{Lemma}\label{Lemmautile}
Suppose we have $\alpha,\beta,\gamma\in\Psi$ such that $\suc(\alpha)>\suc(\beta)>\suc(\gamma)$ and $\alpha\ngtr\beta,\beta\ngtr\gamma$.
Then $\alpha\ngtr\gamma$.
\end{Lemma}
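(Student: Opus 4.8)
The plan is to do everything inside the explicit type $\mathbf{C}$ model set up above, where every root of $\Psi$ is written $e_i+e_j$ with $1\le i\le j\le n$ (the case $i=j$ being the long root $2e_i$). For $\delta\in\Psi$ I will abbreviate $\delta=e_{i_\delta}+e_{j_\delta}$ with $i_\delta\le j_\delta$. The first step is to record, in this model, two facts that are already implicit in the diagram discussion. The order: $\delta\ge\varepsilon$ if and only if $i_\delta\le i_\varepsilon$ and $j_\delta\le j_\varepsilon$. This comes from expanding roots in the simple roots $\alpha_1,\dots,\alpha_n$: the coefficient of $\alpha_k$ in $e_i+e_j$ equals the number of the two indices $i,j$ that are $\le k$ when $k<n$, and equals $1$ when $k=n$; comparing these coefficient vectors gives exactly the stated criterion (and it matches the ``lower-left $=$ smaller, upper-right $=$ bigger'' picture). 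The successor: $\suc(\delta)=2e_{i_\delta}$ for every $\delta\in\Psi$. Indeed, for short $\delta=e_i+e_j$ with $i<j$ the two long roots of $\Psi$ not orthogonal to $\delta$ are $2e_i$ and $2e_j$, and by the order criterion $2e_i>e_i+e_j>2e_j$, so $\suc(\delta)=2e_i$; for $\delta$ long it is the definition. In particular $2e_x>2e_y$ exactly when $x<y$.

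Next I translate the hypotheses. From $\suc(\alpha)>\suc(\beta)>\suc(\gamma)$, that is $2e_{i_\alpha}>2e_{i_\beta}>2e_{i_\gamma}$, we obtain $i_\alpha<i_\beta<i_\gamma$. Since $i_\alpha\le i_\beta$ (and $i_\alpha<i_\beta$ already excludes $\alpha=\beta$), the order criterion shows that $\alpha>\beta$ is equivalent to $j_\alpha\le j_\beta$; as $\alpha\ngtr\beta$ by hypothesis, this forces $j_\alpha>j_\beta$. The identical argument, now using $i_\beta<i_\gamma$ together with $\beta\ngtr\gamma$, gives $j_\beta>j_\gamma$.

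Combining the two inequalities, $j_\alpha>j_\beta>j_\gamma$, hence $j_\alpha>j_\gamma$. But $\alpha>\gamma$ would require in particular $j_\alpha\le j_\gamma$, which is now impossible; therefore $\alpha\ngtr\gamma$, as claimed. (No special treatment is needed when $\alpha$, $\beta$ or $\gamma$ happens to be long, since the convention $i_\delta\le j_\delta$ with $i_\delta=j_\delta$ allowed covers that case uniformly.)

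I do not expect a real obstacle here: once the order-and-$\suc$ dictionary of the preceding pages is in place, the statement is a one-line monotonicity argument in the two index coordinates $i_{(\cdot)}$ and $j_{(\cdot)}$. The only point that deserves attention is feeding the hypotheses $\alpha\ngtr\beta$ and $\beta\ngtr\gamma$ into the correct coordinate: because the smaller index strictly increases along $\alpha,\beta,\gamma$, each ``$\ngtr$'' must be converted into a strict inequality on the \emph{larger} index, and it is the resulting chain on the larger indices that yields the conclusion.
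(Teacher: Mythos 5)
Your proof is correct and follows essentially the same route as the paper: translate $\suc$ and the partial order on $\Psi$ into inequalities on the indices of $e_i+e_j$ and chase the resulting chain. The only (harmless) difference is that you treat long roots uniformly via the convention $i_\delta\le j_\delta$, whereas the paper first observes $\alpha,\beta$ must be short and splits the case $\gamma$ long or short.
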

\begin{proof}
Note that it must be $\alpha,\beta$ short.
Then we can write $\alpha=e_i+e_j,\suc(\alpha)=2e_i,\beta=e_h+e_k,\suc(\beta)=2e_h$ and the hypothesis imply $i<h<k<j$.

Now suppose $\gamma=e_r+e_t$ short and $\suc(\gamma)=2e_r$ with $r<t$.
Then $\suc(\gamma)<\suc(\beta)$ implies $r>h$ and $\beta\ngtr\gamma$ implies $t<k$.
Hence, $i<h<r<t<k<j$.
If $\gamma=\suc(\gamma)=2e_r$ a similar reasoning gives $i<h<r<k<j$.
In both cases the claim is proved.
\end{proof}

\begin{theorem}\label{completion}
Suppose that $S=X(S)\sqcup Z(S)$ is admissible for $v$.
Then $\cdr{S}$ is full admissible for $v$.
\end{theorem}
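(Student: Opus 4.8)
The plan is to verify that $\cdr{S} = S \cup A(S) \cup B(S)$ satisfies both conditions of Definition \ref{fulladmissibleC}, i.e.\ that $A(\cdr{S}) = B(\cdr{S}) = \varnothing$. First I would record the structural facts already noted before the statement: since $A(S), B(S) \subseteq \suc(S_s)$, each root added lies in $\suc(S_s)$, so it is long and is non-orthogonal to exactly one root of $S$ (namely the short root of which it is the successor); consequently $\cdr{S}$ is again admissible with $X(\cdr{S}) = X(S)$ and $Z(\cdr{S}) = Z(S) \cup A(S) \cup B(S)$. In particular $S_s = \cdr{S}_s$, so the short roots do not change, which is crucial because the conditions in Definition \ref{fulladmissibleC} are driven by short roots $\alpha \in S$ with $\suc(\alpha) \in \Phi^+(v)$.

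Next I would take a short root $\alpha \in \cdr{S} = S$ (same short roots) with $\suc(\alpha) \in \Phi^+(v)$ and $\suc(\alpha) \notin \cdr{S}$, and a long root $\gamma \in X(\cdr{S}) = X(S)$ with $\suc(\alpha) > \gamma$ and $\alpha \ngtr \gamma$; I must show this situation cannot occur, i.e.\ derive a contradiction. The point is that such an $\alpha$ would already have placed $\suc(\alpha)$ into $A(S) \subseteq \cdr{S}$ by the very definition of $A(S)$ — unless $\gamma$ was itself only added during the completion. But $\gamma \in X(\cdr{S}) = X(S)$, so $\gamma$ was \emph{not} added; the completion only adds elements to the $Z$-part. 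Hence $\gamma \in X(S)$ already and $\suc(\alpha) \in A(S) \subseteq \cdr{S}$, contradicting $\suc(\alpha) \notin \cdr{S}$. So condition (1) holds for $\cdr{S}$.

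For condition (2), the analogous argument is the real content. Take a short $\alpha \in S$ with $\suc(\alpha) \in \Phi^+(v) \setminus \cdr{S}$ and a long $\gamma \in Z(\cdr{S}) = Z(S) \cup A(S) \cup B(S)$ with $\suc(\alpha) > \gamma$ and $\alpha \ngtr \pad(\gamma)$; I want $\suc(\alpha) \in Z(\cdr{S})$, i.e.\ a contradiction with $\suc(\alpha) \notin \cdr{S}$. If $\gamma \in Z(S)$, then directly $\suc(\alpha) \in B(S) \subseteq \cdr{S}$, contradiction. The delicate case is $\gamma \in A(S) \cup B(S)$, i.e.\ $\gamma = \suc(\beta)$ for some short $\beta \in S$ with $\beta \ngtr \gamma'$ for the relevant $\gamma' \in X(S)$ (if $\gamma \in A(S)$) or $\beta \ngtr \pad(\gamma')$ for $\gamma' \in Z(S)$ (if $\gamma \in B(S)$). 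Here I would invoke Lemma \ref{Lemmautile}: from $\suc(\alpha) > \gamma = \suc(\beta) > \suc(\gamma')$ (or $> \gamma'$ itself when $\gamma'$ is long and equals its own successor) together with $\alpha \ngtr \pad(\gamma) = \beta$ (note $\pad(\suc(\beta))$ should be identified with $\beta$ via the admissibility structure — I would check this carefully) and $\beta \ngtr \gamma'$, conclude $\alpha \ngtr \gamma'$. Then $\alpha$ together with $\gamma' \in X(S)$ (resp.\ $\gamma' \in Z(S)$) witnesses $\suc(\alpha) \in A(S)$ (resp.\ $\suc(\alpha) \in B(S)$), so $\suc(\alpha) \in \cdr{S}$, the desired contradiction. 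One must also handle the $B(S)$ subcase where the "distance" is measured through $\pad$, applying Lemma \ref{Lemmautile} with the appropriate successors.

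The main obstacle I anticipate is bookkeeping the precise relationship between $\pad$, $\suc$, and the comparability conditions when $\gamma$ is one of the \emph{newly added} roots: specifically, confirming that $\pad(\gamma) = \beta$ when $\gamma = \suc(\beta)$ lies in $Z(\cdr{S})$, and that $\beta$ really is short so $\suc(\beta)$ makes sense — and then feeding exactly the right triple of roots into Lemma \ref{Lemmautile} so that its hypotheses ($\suc(\alpha) > \suc(\beta) > \suc(\gamma')$, $\alpha \ngtr \beta$, $\beta \ngtr \gamma'$) are literally met, including the degenerate readings where a long root is its own successor. Once the correct triple is identified in each subcase, the conclusion is immediate from the lemma; the rest is the routine observation that the completion never enlarges the $X$-part.
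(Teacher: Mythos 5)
Your proposal is correct and follows essentially the same route as the paper: check that $\cdr{S}$ is admissible with $X(\cdr{S})=X(S)$ and $Z(\cdr{S})=Z(S)\cup A(S)\cup B(S)$, observe that the offending short root $\alpha$ must already lie in $S$ since only long roots were added, dispose of the cases $\gamma\in X(S)$ and $\gamma\in Z(S)$ directly from the definitions of $A(S)$ and $B(S)$, and use Lemma \ref{Lemmautile} when $\gamma\in A(S)\cup B(S)$. In fact you spell out the application of Lemma \ref{Lemmautile} (identifying $\pad(\suc(\beta))=\beta$ and feeding in the triple with $\gamma'$ or $\pad(\gamma')$ as third root) in more detail than the paper does.
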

\begin{proof}
It is clear by the definition that $v(\cdr{S})<0$, so we want to show $A(\cdr{S})=B(\cdr{S})=\varnothing$.

Suppose that we have $\gamma\in X(\cdr{S})=X(S)$ long and $\alpha\in \cdr{S}$ short with $\suc(\alpha)\in\Phi^+(v)$, $\suc(\alpha)>\gamma$, $\alpha\nless\gamma$ and $\suc(\alpha)\notin Z(\cdr{S})$.
Then $\alpha\in S$ because to obtain $\cdr{S}$ we only added long roots.
Hence, $\suc(\alpha)\in A(S)\subseteq Z(\cdr{S})$ and we have a contradiction.

On the other hand, suppose we have $\gamma\in Z(\cdr{S})=Z(S)\cup A(S)\cup B(S)$ long and $\alpha\in \cdr{S}$ short with $\suc(\alpha)\in\Phi^+(v)$, $\suc(\alpha)>\gamma$, $\pad(\gamma)\nless \alpha$ and $\suc(\alpha)\notin Z(\cdr{S})$.
As before, $\alpha\in S$, hence we can have $\suc(\alpha)\notin Z(\cdr{S})$ only if $\gamma\notin Z(S)$.
But Lemma \ref{Lemmautile} shows us that in this hypothesis $\gamma\in A(S)$ and $\gamma\in B(S)$ both imply $\suc(\alpha)\in A(S)\cup B(S)\subseteq Z(\cdr{S})$ and that's a contradiction.
\end{proof}

We have an easy corollary.
\begin{Corollary}\label{compcomp}
Let $S$ be admissible.
Then $\cdr{S}=S$ if and only if $S$ is full admissible.
In particular, $\cdr{\cdr{S}}=\cdr{S}$ for every $S$ admissible.
\end{Corollary}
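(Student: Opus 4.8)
The plan is to derive both directions from the characterization already available, namely that $S$ is full admissible if and only if $A(S)=B(S)=\varnothing$ (the remark following Definition \ref{fac}), together with Theorem \ref{completion}. First I would prove the easy direction: if $S$ is full admissible, then $A(S)=B(S)=\varnothing$ by definition of full admissibility, so $\cdr{S}=S\cup A(S)\cup B(S)=S$. Conversely, if $\cdr{S}=S$, then since $\cdr{S}=S\cup A(S)\cup B(S)$ and the three sets $S$, $A(S)$, $B(S)$ are pairwise disjoint (because $A(S),B(S)\subseteq\Phi^+(v)\setminus S$ by their very definition), the equality $\cdr{S}=S$ forces $A(S)=B(S)=\varnothing$, hence $S$ is full admissible. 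So the first sentence of the corollary is essentially a restatement of the set-theoretic content of Definition \ref{fac}, and no real work is needed beyond noting the disjointness.

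For the ``in particular'' clause, fix any admissible $S$. By Theorem \ref{completion}, $\cdr{S}$ is full admissible. Applying the first part of the corollary to the admissible set $\cdr{S}$ (which is admissible, being full admissible) gives $\cdr{\cdr{S}}=\cdr{S}$ precisely because $\cdr{S}$ is full admissible. This closes the argument.

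I do not expect any genuine obstacle here; the only point that needs a word of care is the disjointness of $S$, $A(S)$ and $B(S)$, which is immediate from the clause ``$\beta\in\Phi^+(v)\setminus S$'' appearing in the definitions of both $A(S)$ and $B(S)$, so that adding them to $S$ is a disjoint union and cannot collapse unless both are empty. Everything else is a direct invocation of Theorem \ref{completion} and the definitional equivalence ``full admissible $\Leftrightarrow$ $A(S)=B(S)=\varnothing$'' established just before the statement.
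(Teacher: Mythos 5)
Your argument is correct and follows essentially the same route as the paper: the equivalence with $A(S)=B(S)=\varnothing$ (noted right after Definition \ref{fac}) gives the first claim, and Theorem \ref{completion} gives $\cdr{\cdr{S}}=\cdr{S}$. Your explicit remark on the disjointness of $S$, $A(S)$, $B(S)$ is a small but welcome clarification that the paper leaves implicit.
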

\begin{proof}
We already saw that $A(S)\cup B(S)=\varnothing$ if and only if $S$ is full admissible.
Then, the fact that $\cdr{\cdr{S}}=\cdr{S}$ is a consequence of Theorem \ref{completion}.
\end{proof}

Now, return to our pairs of examples $a1)$, $a2)$ and $b1)$,$b2)$.
In both examples we are adding a single root denoted with $Z_1$.
However, the completion from $a2)$ to $a1)$ gives $A=\left\lbrace Z_1\right\rbrace$ and $B=\varnothing$, while the completion from $b2)$ to $b1)$ gives $A=\varnothing$ and $B=\left\lbrace Z_1\right\rbrace$.
This two examples completely model what may happen in general.
Specifically, the way we showed that the pairs of elements $a1)$,$a2)$ and $b1)$,$b2)$ are in the same $B_v$-orbit can be easily extended to generic elements of a $B_v$-orbit in $\mathfrak{p}^u$ to obtain the following result.

\begin{Lemma}
Let $S$ be admissible for $v$ and $\cdr{S}$ its full admissible completion.
Then $B_ve_S=B_ve_{\cdr{S}}$.
\end{Lemma}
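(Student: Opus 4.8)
The plan is to prove that $B_v e_S = B_v e_{\cdr{S}}$ by adding the roots of $A(S) \cup B(S)$ one at a time and showing at each stage that the $B_v$-orbit does not change. Since $\cdr{S} = S \cup A(S) \cup B(S)$ with every added root a long root in $\suc(S_s)$, it suffices to show: if $\beta = \suc(\alpha)$ with $\alpha \in S_s$ and $\beta \in A(S) \cup B(S)$, then $e_{S \cup \{\beta\}}$ lies in $B_v e_S$. Equivalently, we want to exhibit an element $b \in B_v$ (in fact in $B_L$) such that $b.e_S = e_{S \cup \{\beta\}}$, or rather such that $b.e_{S\cup\{\beta\}} = e_{S'}$ for an admissible $S'$ closer to $S$. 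This mirrors exactly the explicit computations done for the pairs $a1),a2)$ and $b1),b2)$ in the excerpt.

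First I would set up the induction: order the roots $\beta_1 < \beta_2 < \cdots < \beta_m$ of $A(S) \cup B(S)$ and show that $B_v e_{S \cup \{\beta_1, \dots, \beta_k\}}$ is independent of $k$. The key step, handling one $\beta = \suc(\alpha)$, splits into the two cases of Definition \ref{fac}. In case $\beta \in A(S)$, there is a long root $\gamma \in X(S)$ with $\beta > \gamma$ and $\alpha \nless \gamma$; writing $\gamma = 2e_i$, $\beta = 2e_j$, $\alpha = e_i + e_j$ (so $i < j$ from $\beta > \gamma$; and $\alpha \nless \gamma$ forces the combinatorial position seen in the diagram), the one-parameter subgroup $U_{e_i - e_j} \subseteq B_L$ acts on $e_\alpha$ producing a component along $e_\gamma = e_{2e_i}$ and possibly along intermediate short roots, after which further root subgroups $U_{e_k - e_j} \subseteq B_L$ clean up the unwanted components exactly as in the worked example. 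In case $\beta \in B(S)$, there is $\gamma \in Z(S)$ with $\beta > \gamma$ and $\alpha \ngtr \pad(\gamma)$, and one runs the symmetric argument using $\pad(\gamma) \in X(S)$ in the role that $X_1$ played; here $U_{e_i - e_j}$ acting on $e_{\pad(\gamma)}$ (or on $e_\alpha$) creates the needed component along $e_\beta$, and the clean-up subgroups lie in $B_L$ because the relevant differences of $e_i$'s are roots in $\Phi_P^+$.

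The main obstacle I expect is bookkeeping: ensuring that the "clean-up" root subgroups used to kill the spurious components truly lie in $B_v$ (equivalently, in $B_L$, i.e. their roots lie in $\Phi_P^+$), and that acting with them does not disturb the other root vectors $e_\tau$ for $\tau \in S$ — this requires invoking Lemma \ref{sommaradici} (strong orthogonality) and Lemma \ref{IET} repeatedly to see that the relevant sums $\tau + \delta$ are not roots, or that when they are roots the coefficient vanishes in characteristic $2$. A secondary subtlety is that along the way one might leave the family of admissible sets momentarily (as in the diagrams, where a middle-stage matrix has extra $\bullet$'s), so the induction should be phrased in terms of orbits of supports rather than of admissible sets, and only at the end does one read off that the terminal support is $S \cup \{\beta\}$ and hence, continuing, $\cdr{S}$. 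Since $B_L \subseteq B_v$ for every $v \in W^P$, all the operations are legitimate inside $B_v$, and assembling the one-step results gives $B_v e_S = B_v e_{\cdr{S}}$.

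Finally, I would remark that the reverse inclusion is automatic since $S \subseteq \cdr{S}$, so the two sets $e_S$ and $e_{\cdr{S}}$ generate the same $B_v$-orbit, and by Corollary \ref{compcomp} this orbit has the canonical full admissible representative $\cdr{S}$; this is precisely what is needed to make Definition \ref{fulladmissibleC} yield a genuine parametrization, to be completed in Theorems \ref{primaparte} and \ref{secondaparte}.
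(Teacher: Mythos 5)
Your overall strategy -- add the roots of $A(S)\cup B(S)$ one at a time and realize each addition by an explicit element of $B_L\subseteq B_v$, mirroring the computations for the pairs $a1),a2)$ and $b1),b2)$ -- is the intended one (the paper itself only asserts that those two examples ``model what may happen in general''). But the computational core of your argument is wrong: you propose to create the new long-root component $e_\beta$, $\beta=\suc(\alpha)$, by letting a subgroup $U_{e_i-e_j}$ act on the short root vectors $e_\alpha$ (or $e_{\pad(\gamma)}$). In characteristic $2$ this is precisely what cannot happen. If $\alpha=e_r+e_s$ with $r<s$, the only $\delta\in\Phi_P$ with $\alpha+\delta$ long is $\delta=e_r-e_s$; then $\alpha-\delta=2e_s$ is also a root, so by Lemma \ref{IET} the linear coefficient vanishes, and $\alpha+2\delta\notin\Phi$, so $u_\delta(t).e_\alpha=e_\alpha$. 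More generally, Proposition \ref{cortelunghe} says that the support of $b.e_{S_s}$ contains no long root for any $b\in B$. This obstruction is exactly why the orthogonal parametrization fails in type $\mathbf{C}$ and why the sets $Z(S)$ exist at all, and it is pointed out explicitly right after Definition \ref{admissibleC}. (Your coordinates also betray the confusion: with $\gamma=2e_i$ and $\alpha=e_i+e_j$ these two roots are not orthogonal, although $\alpha\in S_s\subseteq X(S)$ and $\gamma\in X(S)$ should both lie in the orthogonal set $X(S)$; and $\beta=2e_j>\gamma=2e_i$ forces $j<i$, not $i<j$.)

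What actually produces $e_\beta$ in the worked examples is the quadratic term of Lemma \ref{IET} applied to a long root already present in $S$: write $\beta=\gamma+2\delta$ with $\delta\in\Phi_P^+$, where $\gamma$ is the long root of $X(S)$ in the $A(S)$ case and the root of $Z(S)$ in the $B(S)$ case; then $u_\delta(t).e_\gamma=e_\gamma+ate_{\gamma+\delta}+bt^2e_\beta$ with $b\neq0$ (since $\gamma-\delta\notin\Phi$ and $\gamma+2\delta\in\Phi$), which is insensitive to the characteristic. The spurious term $e_{\gamma+\delta}$ (and, in the $B(S)$ case, the term $e_{\pad(\gamma)+\delta}$ coming from $u_\delta(t).e_{\pad(\gamma)}$) is then removed by root subgroups $U_\epsilon$ with $\epsilon\in\Phi_P^+$ acting on $e_\alpha$: this is the only role of the short root $\alpha$ with $\suc(\alpha)=\beta$, and the positional conditions $\alpha\ngtr\gamma$, resp.\ $\alpha\ngtr\pad(\gamma)$, in Definition \ref{fac} are exactly what guarantee that these $\epsilon$ are positive roots of $\Phi_P$ and (together with Lemma \ref{sommaradici}) that the clean-up does not disturb the remaining $e_\tau$, $\tau\in S$. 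With this mechanism substituted for yours, the one-root-at-a-time induction you set up does go through and reproduces the paper's argument; as written, however, your key step contradicts Lemma \ref{IET} and Proposition \ref{cortelunghe}, so the proof has a genuine gap.
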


Now, if $S=X\sqcup Z$ is admissible we will say that a root $\beta\in Z$ is \textit{essential} if $\beta\notin\cdr{S\setminus\left\lbrace\beta\right\rbrace}$.
This is equivalent by Corollary \ref{compcomp} to asking that $\cdr{S\setminus\left\lbrace\beta\right\rbrace}\neq \cdr{S}$.
From what we said above we see that if $\beta$ is not essential then $S\setminus\left\lbrace\beta\right\rbrace$ is still admissible and the elements $e_{S}$ and $e_{S\setminus\left\lbrace\beta\right\rbrace}$ are in the same orbit. 

Moreover, note that if $\beta$ is essential in $S$ and $\beta<\alpha\in\Psi$, then $\beta$ is essential also in $S\cap \left\lbrace \gamma\in\Psi\mid \gamma<\alpha\right\rbrace$.


Now suppose $S$ admissible.
The following easy lemma will be useful later.

\begin{Lemma}\label{independentC}
Let $S$ be admissible for $v$.
Then $S$ is linearly independent in $\Phi\otimes\mathbb{R}$.
\end{Lemma}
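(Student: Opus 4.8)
The plan is to argue entirely inside the explicit type $\mathbf C$ realization. For $\rho\in\Psi$ write $\supp(\rho)\subseteq\{1,\dots,n\}$ for the set of indices occurring in $\rho$, so $\supp(e_i+e_j)=\{i,j\}$ and $\supp(2e_i)=\{i\}$; recall that in this realization two roots of $\Psi$ are orthogonal exactly when their supports are disjoint (this is the description of non‑orthogonality as "sharing a row or a column" given after Diagram \ref{numbering}). Since $X(S)$ is orthogonal, the sets $\supp(\alpha)$ for $\alpha\in X(S)$ are pairwise disjoint; in particular $X(S)$ is already linearly independent, so the whole content is to control $Z(S)$.

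First I would pin down the shape of the roots in $Z(S)$. Let $\beta\in Z(S)$ and put $\alpha=\pad(\beta)$, so $\beta=\alpha+\gamma$ with $\gamma\in\Phi^+_P$ and, as recorded after Definition \ref{admissibleC}, $\alpha$ is short with $\langle\beta,\alpha\rangle\neq0$. Writing $\alpha=e_i+e_j$, the long root $\beta$ is one of the only two long roots not orthogonal to $\alpha$, namely $2e_i$ or $2e_j$; and since $\gamma>0$ we have $\beta>\alpha$, hence $\beta$ is the larger of these two, i.e. $\beta=\suc(\alpha)$. Two consequences: $\supp(\beta)\subsetneq\supp(\alpha)=\supp(\pad(\beta))$, and distinct elements of $Z(S)$ cannot have the same image under $\pad$ (each is $\suc$ of that image).

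Now I would group $S$ into clusters. For $\alpha\in X(S)$ set $C_\alpha=\{\alpha\}\cup\{\beta\in Z(S)\mid \pad(\beta)=\alpha\}$. By the previous paragraph $C_\alpha\subseteq\{\alpha,\suc(\alpha)\}$, and $\bigcup_{\rho\in C_\alpha}\supp(\rho)=\supp(\alpha)$; moreover, since $\pad$ is a function and $X(S)\cap Z(S)=\varnothing$, the $C_\alpha$ partition $S$. Suppose $\sum_{\rho\in S}c_\rho\rho=0$. The supports of the different clusters, being the supports $\supp(\alpha)$ of the $\alpha\in X(S)$, are pairwise disjoint; looking only at the coordinates lying in $\supp(\alpha)$ therefore forces $\sum_{\rho\in C_\alpha}c_\rho\rho=0$ for each $\alpha$ separately. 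Finally each $C_\alpha$ is linearly independent: either $C_\alpha=\{\alpha\}$, or $C_\alpha=\{\alpha,\suc(\alpha)\}$ with $\alpha$ short and $\suc(\alpha)$ a long root, which is not a scalar multiple of $\alpha$. Hence every $c_\rho$ vanishes and $S$ is linearly independent.

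There is no genuinely hard step here; the only point needing care is the identification $\beta=\suc(\pad(\beta))$ for $\beta\in Z(S)$ — equivalently, the observation that of the two long roots $2e_i,2e_j$ meeting $\pad(\beta)=e_i+e_j$ only the larger one can be written as $\pad(\beta)+\gamma$ with $\gamma\in\Phi^+_P$, because the complementary difference lies in $-\Phi^+_P$. This is exactly what prevents a cluster from containing both long roots through a fixed short root, and hence keeps each cluster (and then all of $S$) linearly independent.
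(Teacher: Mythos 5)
Your proof is correct, but it runs along a slightly different track than the paper's. The paper replaces each $\beta\in Z(S)$ by $\gamma=\beta-\pad(\beta)\in\Phi^+_P$ and observes that the resulting set $S'=\bigl(S\setminus Z(S)\bigr)\cup\{\gamma_i\}$ is pairwise orthogonal, hence linearly independent, and that this triangular substitution (each $\beta_i$ is traded for $\beta_i-\pad(\beta_i)$ with $\pad(\beta_i)$ still in the set) preserves linear independence; this is a two-line argument that never leaves the abstract root-system level. You instead work inside the explicit type $\mathbf C$ realization: you first pin down that every $\beta\in Z(S)$ equals $\suc(\pad(\beta))$ (so $\pad$ is injective on $Z(S)$), then partition $S$ into clusters $C_\alpha\subseteq\{\alpha,\suc(\alpha)\}$ whose index supports are pairwise disjoint because $X(S)$ is orthogonal, and finish by noting a short root and its $\suc$ are not proportional. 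Both arguments hinge on the same combinatorial fact (orthogonality in $\Psi$ is disjointness of index supports); yours is longer but makes explicit two structural facts ($Z(S)\subseteq\suc(S_s)$ and injectivity of $\pad$) that the paper uses tacitly elsewhere, and it avoids having to justify that the substitution $S\mapsto S'$ preserves independence, which the paper asserts without comment.
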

\begin{proof}
For every $\beta_i\in Z(S)$ there is $\gamma_i\in\Phi^+_P$ such that $\pad(\beta_i)+\gamma_i=\beta_i$.
Then $S$ is linearly independent if and only if $S'=\left(S\setminus \bigcup\left\lbrace\beta_i\right\rbrace\right)\cup\bigcup\left\lbrace\gamma_i\right\rbrace$ is linearly independent.
But this is clear because the elements of $S'$ are all pairwise orthogonal.
%
\end{proof}

The following lemma is the key to prove that the full admissible pairs parametrize the orbits.
It basically gives us an algorithm to obtain an admissible representative of an orbit.
Then, we saw above that we can complete it to a full admissible representative for the same orbit.
Note that if $R\subseteq \Phi$ and $\alpha\in \Phi$ we will write $R>\alpha$ meaning that $\beta>\alpha$ for every $\beta\in R$.

\begin{Lemma}\label{corBL}
Suppose $\beta\in \Phi^+(v)$ and $S$ admissible such that for every $\alpha\in S$, $\beta\nless \alpha$.
Moreover, suppose that $S\cup\left\lbrace\beta\right\rbrace$ is not admissible . 
Then, for every $t\in\mathbb{K}$ there is $u\in B_L$ such that
$$\supp(u\left(e_S+te_\beta\right)-e_S)>\beta$$
\end{Lemma}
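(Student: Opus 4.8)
The plan is to analyze why $S\cup\{\beta\}$ fails to be admissible and, in each case, exhibit an explicit element $u\in B_L$ (a product of one-parameter subgroups $u_\gamma$ with $\gamma\in\Phi_P^+$, possibly together with a torus element) that "pushes" the $e_\beta$-component up into root spaces indexed by roots strictly larger than $\beta$. Since $\beta\nless\alpha$ for every $\alpha\in S$ but $S\cup\{\beta\}$ is not admissible, the obstruction to admissibility must come from Definition \ref{admissibleC}: either $\beta$ is long and there is $\alpha\in S$ with $\langle\alpha,\beta\rangle\neq 0$ and $\alpha\ngtr\beta$ (so necessarily $\alpha<\beta$ is impossible, forcing $\alpha$ and $\beta$ to be non-comparable, hence $\alpha$ short with $\beta=\suc(\alpha)$ would contradict $\suc(\alpha)>\alpha$ — so in fact $\alpha$ is long, $\langle\alpha,\beta\rangle\neq 0$, $\alpha\ngtr\beta$, $\alpha\nless\beta$; using the type $\textbf{C}$ realization, $\beta=2e_i$ and $\alpha=2e_j$ can't be non-orthogonal, so $\alpha=e_j+e_k$ short and $\beta=2e_i$ with $i\in\{j,k\}$ — but $\beta=\suc(\alpha)$ then means $\beta>\alpha$, contradiction, so the only surviving obstruction is that $\beta$ is long and $\beta=\suc(\alpha)$ for some short $\alpha\in S$ with $\beta<\alpha$, i.e. $\beta$ is the \emph{smaller} of the two long roots meeting $\alpha$), or $\beta$ is short and already non-orthogonal to some $\alpha\in S$ in a bad way. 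I would first use the type $\textbf{C}$ realization to reduce the list of genuinely possible configurations to a short list, then handle each.

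**The main computation.** In each surviving case, write $\beta=\alpha+\gamma$ (or $\beta-\alpha=-\gamma$) for a suitable $\gamma\in\Phi_P^+$; here the key structural fact, already recorded in the excerpt, is that when $\alpha$ is short, $\gamma\in\Phi_P^+$ and $\alpha+\gamma=\beta$ is long, then $\alpha-\gamma\in\Phi$ too and $(\alpha,\gamma)=0$, so the action of $U_\gamma$ on $e_\alpha$ is \emph{trivial}. This is exactly what makes $e_S+te_\beta$ and $e_S$ lie in different $B_v$-orbits a priori, but also what we exploit: I would act by a well-chosen $u_\delta(s)$ with $\delta\in\Phi_P^+$ strictly "raising" the index, so that $u_\delta(s).(e_S+te_\beta) = e_S + t e_\beta + (\text{terms supported on roots}>\beta)$ and, by adjusting $s$, cancel the $e_\beta$-term against a term coming from $u_\delta(s).e_{\alpha'}$ for some $\alpha'\in S$ with $\alpha'>\beta$ — or, when no such cancellation source exists, observe that $S\cup\{\beta\}$ was actually admissible, contradicting the hypothesis. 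Concretely one mimics the schematic computation done just before Definition \ref{fulladmissibleC} for the examples $a2)\to a1)$ and $b2)\to b1)$: first apply a subgroup $u_{e_i-e_j}(t)$ that moves mass from lower rows up into row $i$, then clean up the unwanted intermediate components with further $u_{e_k-e_l}(s)$'s acting on root vectors above $\beta$. Throughout one uses Lemma \ref{IET} to control which $u_\gamma(t).e_\tau$ produce a $t$-linear term and which are the identity, and Lemma \ref{sommaradici} to guarantee that the relevant auxiliary roots in $S$ are not affected.

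**Organizing the cases and the main obstacle.** I would structure the proof as: (1) translate "$S\cup\{\beta\}$ not admissible, $\beta\nless\alpha$ for all $\alpha\in S$" via Definition \ref{admissibleC} into the statement that $\beta$ is long, $\beta=\suc(\alpha_0)$ for some short $\alpha_0\in S$ with $\alpha_0>\beta$, and moreover (depending on whether $\suc(\alpha_0)$ collides with some long root already in $X(S)$ or with a $\pad$-source of some root in $Z(S)$) we are in the analogue of case $a)$ or case $b)$; (2) in the "$a$" subcase, with $\gamma_0\in X(S)$ long, $\beta>\gamma_0$, and $\pad(\beta)=\alpha_0\nless\gamma_0$, write down $u=\prod u_{\delta}(s_\delta)$ as above and verify the support condition; (3) in the "$b$" subcase, with $\gamma_0\in Z(S)$, $\beta>\gamma_0$, $\pad(\gamma_0)\nless\alpha_0$, do the analogous computation with $\pad(\gamma_0)$ playing the role of the extra source. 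The main obstacle I anticipate is bookkeeping: making sure the correcting subgroups $u_\delta(s)$ one introduces to kill the spurious intermediate terms (the analogues of $u_{e_4-e_5},u_{e_2-e_5}$ in the schematic) act as the identity on every root vector in $S$ that sits \emph{below or at} level $\beta$, so that the net change to $e_S+te_\beta$ really is supported strictly above $\beta$; this is where one must repeatedly invoke $\beta\nless\alpha$ for $\alpha\in S$, the orthogonality relations of Lemma \ref{sommaradici}, and the "short root vectors are untouched by these $U_\gamma$" phenomenon. Once that is in place, the statement $\supp(u(e_S+te_\beta)-e_S)>\beta$ follows directly.
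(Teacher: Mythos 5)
Your reduction of the hypothesis ``$S\cup\{\beta\}$ not admissible, $\beta\nless\alpha$ for all $\alpha\in S$'' goes wrong, and the error propagates into the whole plan. If $\beta$ were long, then any $\alpha\in X(S)$ non-orthogonal to $\beta$ is comparable to $\beta$ (non-orthogonal roots of $\Psi$ differ by a root of $\Phi_P$), and $\alpha>\beta$ is excluded by the standing hypothesis $\beta\nless\alpha$; so $\alpha<\beta$, $\beta=\alpha+\gamma$ with $\gamma\in\Phi_P^+$, and $S\cup\{\beta\}$ \emph{would} be admissible (with $\beta\in Z$, or in $X$ if $\beta\perp X(S)$), a contradiction. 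Hence the only possible configuration is $\beta$ \emph{short} with $\beta=\alpha+\gamma$, $\alpha\in X(S)$, $\gamma\in\Phi_P^+$ --- which is exactly where the paper starts its computation. Your ``surviving obstruction'' ($\beta$ long, $\beta=\suc(\alpha_0)$ yet $\beta<\alpha_0$) is internally contradictory ($\suc(\alpha_0)>\alpha_0$ by definition) and violates $\beta\nless\alpha_0$; and your items (2)--(3), organized around cases $a)$/$b)$ of Definition \ref{fulladmissibleC} with $\pad(\beta)$, $\pad(\gamma_0)$, describe the situation where $S\cup\{\beta\}$ \emph{is} admissible but not full admissible, i.e.\ the completion lemma following Definition \ref{fac}, not Lemma \ref{corBL}.

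The cancellation mechanism you propose also cannot work. You want to remove the $te_\beta$ component using terms coming from $u_\delta(s).e_{\alpha'}$ with $\alpha'\in S$, $\alpha'>\beta$; but for $\delta\in\Phi_P^+$ such terms are supported on $\alpha'+\delta,\alpha'+2\delta>\alpha'>\beta$, so they never produce an $e_\beta$-term. Likewise, the ``key structural fact'' you invoke (triviality of $U_\gamma$ on $e_\alpha$ when $\alpha+\gamma=\beta$ is long) is precisely the configuration in which no cancellation is available --- and in which the lemma's hypothesis fails anyway. The paper's argument runs the other way: with $\beta$ short and $\beta=\alpha+\gamma$ as above, Lemma \ref{IET} gives $u_\gamma(s).e_\alpha=e_\alpha+ase_\beta+bs^2e_{\beta+\gamma}$ with $a\neq 0$ (since $\alpha-\gamma\notin\Phi$ here), while the remaining root vectors of $S$ are either fixed (Lemma \ref{sommaradici} for $X(S)\setminus\{\alpha\}$) or, in the case $\alpha=\pad(\delta)$ for some $\delta\in Z(S)$, moved only into root spaces indexed by $\delta+\gamma,\delta+2\gamma>\beta$; choosing $s_0$ with $as_0=t$ yields $u_\gamma(s_0).e_S=e_S+te_\beta+(\text{terms}>\beta)$, and applying $u_\gamma(s_0)^{-1}$ gives the statement. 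So the proof is a single one-parameter computation with a cancellation source $\alpha\in S$ \emph{below} $\beta$, not a multi-step cleanup modeled on the $a1)/a2)$, $b1)/b2)$ examples.
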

\begin{proof}
Note that if $\gamma\in\supp(x)$ is minimal, then $\gamma\in \supp(ux)$ for every $u\in B_L$.
It follows that the thesis is equivalent to proving the existence of $u\in B_L$ such that
$$\supp(ue_S-e_S-te_\beta)>\beta$$
to see this, simply apply $u^{-1}\in B_L$.

Now put $X=X(S)$, $Z=Z(S)$ and $S'=S\cup \left\lbrace\beta\right\rbrace$ .
It must be $X\cup \left\lbrace\beta\right\rbrace$ not orthogonal or $S'$ would be admissible with partitions $X(S')=X\cup \left\lbrace\beta\right\rbrace$ and $Z(S')=Z$.
Similarly $\beta$ must be short or $X(S')=X$ and $Z(S')=Z\cup\left\lbrace\beta\right\rbrace$ would be an admissible partition for $S'$.
Then there exists $\gamma\in\Phi^+_P$ and $\alpha\in X$ such that $\beta=\alpha+\gamma$.
Note that $\alpha'+\gamma\notin \Phi$ for every $\alpha'\in X\setminus\left\lbrace \alpha\right\rbrace$.

If $\alpha\notin \pad(Z)$, then $u_\gamma(s).e_S=e_S+ase_{\beta}+bs^2e_{\beta+\gamma}$ for some constant $a\in\mathbb{K}^*$ and $b\in \mathbb{K}$.
It follows that there is $s_0\in\mathbb{K}^*$ for which
$$\supp(u_\gamma(s_0).e_S-\left(e_S+te_\beta\right))>\beta$$

If there is $\delta\in Z$ with $\alpha=\pad(\delta)$, we have $u_\gamma(s).e_\delta=e_\delta+cse_{\delta+\gamma}+ds^2e_{\delta+2\gamma}$ for some $c,d\in\mathbb{K}$ (not necessarily different from zero).
But $\beta=\alpha+\gamma<\delta+\gamma<\delta+2\gamma$, so we can still find $s_0\in\mathbb{K}^*$ such that $u_\gamma(s_0).e_{S}$ has the property we are looking for.
\end{proof}

\begin{theorem}\label{primaparte}
Every orbit $\mathcal{O}$ contains an element of the form $e_S$ where $S$ is admissible.
\end{theorem}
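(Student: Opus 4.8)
### Proof plan

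The plan is to mimic the proof of Theorem~\ref{paramBv} (the simply laced case), but now using Lemma~\ref{corBL} as the replacement for the ``subtract a root in $\Phi_P^+$'' move that worked so cleanly in the simply laced setting. The overall structure will be an induction on $l(v)$, exactly as before: for $v=\id$ the action of $B_{\id}$ is transitive on $\mathfrak{p}^u$ and the statement is trivial (take $S=\varnothing$). For the inductive step I would fix $\alpha\in\Delta$ with $u=s_\alpha v<v$, put $\beta=-v^{-1}(\alpha)$, and recall that $\beta$ is maximal in $\Phi^+(v)$, that $B_u=B_v\ltimes U_\beta$, and that $B_ux=\bigcup_{t\in\mathbb{K}}B_v(x+te_\beta)$. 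Given an orbit $\mathcal{O}$ in $\mathfrak{p}^u$, by induction $B_u\mathcal{O}=B_ue_S$ for some admissible $S$; note $\beta\notin S$ since $S\subseteq\Phi^+(u)$ and $\beta\notin\Phi^+(u)=\Phi^+(s_\alpha v)$, and moreover $\beta\nless\alpha'$ for every $\alpha'\in S$ because $\beta$ is maximal in $\Phi^+(v)\supseteq S$.

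Now I would split into two cases according to whether $S'=S\cup\{\beta\}$ is admissible. If $S\cup\{\beta\}$ is admissible, then from $B_ue_S=\bigcup_{t}B_v(e_S+te_\beta)$ and the fact that $e_S+te_\beta$ for $t\neq0$ lies in $B_v e_{S'}$ (the roots are independent by Lemma~\ref{independentC}, so one can rescale via $T\subseteq B_L$), we get $B_ue_S=B_ve_S\cup B_ve_{S'}$, whence $\mathcal{O}$ equals one of $B_ve_S$, $B_ve_{S'}$, both of which are of the required form. If $S\cup\{\beta\}$ is \emph{not} admissible, then Lemma~\ref{corBL} applies: for each $t$ there is $u\in B_L\subseteq B_v$ with $\supp\!\big(u(e_S+te_\beta)-e_S\big)>\beta$. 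I then want to iterate: the ``error term'' $u(e_S+te_\beta)-e_S$ is supported strictly above $\beta$, and I can keep absorbing it — at each stage either the new leading root can be adjoined admissibly (reducing to the first case on a larger set), or Lemma~\ref{corBL} applies again and pushes the remaining support strictly higher. Since $\Psi$ is finite this terminates, and in the end $e_S+te_\beta\in B_v e_S$ for all such $t$, so $B_ue_S=B_ve_S$ and $\mathcal{O}=B_ve_S$.

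The main obstacle I anticipate is making the ``iterate and push the support upward'' argument genuinely rigorous, because Lemma~\ref{corBL} as stated handles the situation where one adds a single root $\beta$ to an admissible $S$ with $\beta\nless\alpha'$ for all $\alpha'\in S$; after one step the leftover vector need not be of the form $e_{S''}+te_{\beta'}$ with $S''$ admissible and $\beta'$ above everything, so I may need a slightly more general lemma (or a cleaner induction on the size of $\Psi\cap\{\gamma:\gamma\text{ below the current leading root}\}$ is not quite the right measure — rather an induction from the top of $\Psi$ downward). Concretely I expect to prove, by downward induction on the minimal element of $\Psi\setminus\Psi_\beta$ or equivalently by induction on $\#\{\gamma\in\Psi:\gamma>\beta\}$, the sharper statement: \emph{if $x\in\mathfrak{p}^u$ with $\supp(x)\subseteq\Phi^+(v)$ and $\supp(x)\cap\Psi_\beta$ admissible for some $\beta$, then $B_v x$ contains an element $x'$ with $\supp(x')$ admissible and $\supp(x')\cap\Psi_\beta=\supp(x)\cap\Psi_\beta$ possibly enlarged admissibly}. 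Granting a lemma of this shape, the case analysis above closes. A secondary point to check carefully is that when $S\cup\{\beta\}$ happens to be admissible but $\beta$ is long with $\pad$-type relations to $S$, the two resulting sets $B_ve_S$ and $B_ve_{S'}$ are really the only possibilities — this is where one uses that roots in $\Psi$ cannot be summed together with the independence from Lemma~\ref{independentC}, so the $T$-action separates the two cases $t=0$ and $t\neq0$.
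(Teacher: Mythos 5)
Your overall strategy (induction on $l(v)$, splitting according to whether $S\cup\{\beta\}$ is admissible, with Lemma~\ref{corBL} replacing the simply laced cancellation move) is viable, but as written the proof has a genuine gap exactly at the point you flag: in the case where $S\cup\{\beta\}$ is not admissible you invoke an iteration (``keep absorbing the error term, apply Lemma~\ref{corBL} again to the new leading root'') that Lemma~\ref{corBL} does not support, since after one step the element is no longer of the form $e_{S''}+te_{\beta'}$ with $S''$ admissible and $\beta'$ not below $S''$, and the ``sharper statement'' you propose as a substitute is left vague (``possibly enlarged admissibly'' is not a precise assertion) and unproved; proving it would essentially amount to redoing the whole theorem. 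So the argument does not close as submitted. The missing observation that closes it is cheap: $\beta=-v^{-1}(\alpha)$ is \emph{maximal} in $\Phi^+(v)$, so every root strictly greater than $\beta$ lies in $\Psi\setminus\Phi^+(v)$. Hence the element $u(e_S+te_\beta)$ produced by Lemma~\ref{corBL} equals $e_S+z$ with $z\in\mathfrak{u}_v$, and by the very definition of the $B_v$-action (Equation~\ref{action}) translation by $\mathfrak{u}_v$ does not change the $B_v$-orbit; therefore $B_v(e_S+te_\beta)=B_ve_S$ in a single step, no iteration is needed, and your induction on $l(v)$ terminates.

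For comparison, the paper proceeds differently: it fixes $v$, takes an arbitrary $x\in\mathcal{O}$, normalizes $\supp(x)\subseteq\Phi^+(v)$ using $\mathfrak{u}_v$, and then runs an induction on the layered (ascending chain) structure of $\supp(x)$, using Lemma~\ref{corBL} repeatedly to push inadmissible minimal roots strictly upward; there the pushed-up support can land back inside $\Phi^+(v)$, which is why that proof genuinely needs the chain bookkeeping you were trying to emulate. In your setup that difficulty disappears precisely because the added root $\beta$ is maximal in $\Phi^+(v)$. With the maximality observation inserted, your induction on $l(v)$ gives a correct and arguably cleaner alternative proof (it parallels Theorem~\ref{paramBv} closely, with Lemma~\ref{independentC} justifying the torus rescaling in the admissible case exactly as you say); without it, the key case is not established.
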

\begin{proof}
Take an element $x\in\mathcal{O}$. 
We can assume without loss of generality that $S=\supp(x)\subseteq\Phi^+(v)$. 
If $S$ is admissible, by Lemma \ref{independentC} $S$ is independent, hence there is an element $t$ in the torus $T$ such that $t.x=e_S$ and the claim is proved. 
Suppose $S$ not admissible and define an ascending chain in $S$

$$\left\lbrace \begin{array}{ll}
                  S_1=\min(S)\\
                  S_{i+1}=S_i\cup \min (S\setminus S_i)
                  
                \end{array}
              \right.
$$
Put $x=\sum_{\alpha\in S}a_\alpha e_\alpha$.

Given that the elements of $S_1$ are pairwise incomparable, $S_1$ must be orthogonal.
Hence, there must be an $i_0\geq 1$ such that $S_{i}$ is admissible for every $i\leq i_0$ but $S_{i_0+1}$ is not. 
Note that at most one long root can be in $\left(S_{i_0+1}\setminus S_{i_0}\right)$ because the long roots are always comparable.

Denote $\mathcal{S}_{i_0+1}=\left\lbrace \beta\in \left(S_{i_0+1}\setminus S_{i_0}\right)\mid \left(S_{i_0}\cup \left\lbrace \beta\right\rbrace\right) \text{ is admissible}\right\rbrace$. 
It is easy to see that $S_{i_0}\cup\mathcal{S}$ is also admissible.
For, there is at most a long root $\beta\in\mathcal{S}_{i_0+1}$ and all the other roots must be orthogonal to $S_{i_0}$.
Given that all the roots in $\mathcal{S}_{i_0+1}$ are pairwise orthogonal it is clear that $S'=S_{i_0}\cup\left\lbrace\mathcal{S}_{i_0+1}\setminus\left\lbrace\beta\right\rbrace\right)$ is admissible.
Now, $\beta$ is a long root which is not smaller than any root in $S'$, hence $S'\cup\left\lbrace\beta\right\rbrace$ is still admissible.
Given that $S'$ is admissible, we can suppose by Lemma \ref{independentC} that $a_\alpha=1$ for every $\alpha\in S'$.
 
Now if $\beta\in \left(S_{i_0+1}\setminus S'\right)$ we know by Lemma \ref{corBL} that there is $u\in B_L$ such that \[\supp\left(u.(e_{S'}+a_\beta e_\beta)-e_{S'}\right)>\beta\]
Then $u.x$ is such that $S_j(u.x)=S_j(x)$ for every $j\leq i_0$ and $S_{i_0+1}(u.x)=S_{i_0+1}(x)\setminus\left\lbrace\beta\right\rbrace$.
By induction we obtain the thesis.
\end{proof}

We have an easy corollary that comes directly from the proof of Theorem \ref{primaparte} and will be useful later.

\begin{Corollary}\label{corprimaparte}
Let $\mathcal{O}$ be a $B_v$-orbit and $x\in\mathcal{O}$.
Suppose $S\subseteq\supp (x)$ admissible such that for every $\alpha\in \left(\supp (x)\setminus S\right)$ and for every $\gamma\in S$ we have $\alpha\nleq\gamma$.
Then, if $\beta\in \min \left(\supp (x)\setminus S\right)$ and $S\cup \left\lbrace\beta\right\rbrace$ is admissible there is $T\supseteq S\cup \left\lbrace\beta\right\rbrace$ admissible such that $\mathcal{O}=B_ve_T$.
\end{Corollary}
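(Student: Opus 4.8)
The plan is to extract \textbf{Corollary \ref{corprimaparte}} directly from the inductive machinery in the proof of \textbf{Theorem \ref{primaparte}}, rather than re-running it from scratch. The setup is: we have $x\in\mathcal{O}$, a sub-admissible-set $S\subseteq\supp(x)$ which is ``downward closed relative to its complement'' (no element of $\supp(x)\setminus S$ lies below any element of $S$), a minimal $\beta\in\min(\supp(x)\setminus S)$, and the hypothesis that $S\cup\{\beta\}$ is admissible. We want an admissible $T\supseteq S\cup\{\beta\}$ with $\mathcal{O}=B_v e_T$.

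First I would observe that the hypotheses on $S$ are exactly the state of the induction in the proof of Theorem \ref{primaparte} at the moment one has processed the chain up through some $S_{i_0}$: there, $S_{i_0}$ (or the enlarged admissible set $S'$) plays the role of our $S$, and $\beta$ is a minimal element of the complement. So I would set $S':=S\cup\{\beta\}$, which is admissible by hypothesis and still satisfies the relative-downward-closure condition (since $\beta$ is minimal in the complement of $S$, nothing remaining in $\supp(x)\setminus S'$ sits below $\beta$, and nothing sits below $S$). By Lemma \ref{independentC}, $S'$ is linearly independent, so after acting by a torus element we may assume the coefficient $a_\alpha=1$ for all $\alpha\in S'$. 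Then for each remaining root in $\supp(x)\setminus S'$, taken in increasing order, I apply Lemma \ref{corBL}: each such $\beta'$ is not below any element of the current admissible set, and if the current set plus $\beta'$ fails to be admissible, Lemma \ref{corBL} produces $u\in B_L\subseteq B_v$ with $\supp\big(u(e_{S'}+a_{\beta'}e_{\beta'})-e_{S'}\big)>\beta'$, which lets us replace $x$ by $u.x$, removing $\beta'$ from the support below its level while leaving the part of the support $\le\beta'$ (in particular $S'$ itself) untouched; if instead the current set plus $\beta'$ \emph{is} admissible, we simply enlarge the set. Iterating, since $\Psi$ is finite, we terminate with an admissible $T\supseteq S'=S\cup\{\beta\}$ and an element of $\mathcal{O}$ whose support is $T$; after one more torus adjustment this element is $e_T$, so $\mathcal{O}=B_v e_T$.

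The one point requiring a little care — and the main (mild) obstacle — is verifying that the enlargement step preserves \emph{admissibility} of the running set, not merely independence: when we adjoin a new root $\beta'$ because ``$S'\cup\{\beta'\}$ is admissible,'' we need the relative-downward-closure hypothesis to keep holding so that Lemma \ref{corBL} stays applicable at the next stage. This is exactly the bookkeeping already done inside the proof of Theorem \ref{primaparte} with the sets $\mathcal{S}_{i_0+1}$ and $S'=S_{i_0}\cup(\mathcal{S}_{i_0+1}\setminus\{\beta\})$, where one uses that at most one long root can appear among newly minimal roots (long roots being totally ordered) and that all genuinely new roots at a given level are pairwise orthogonal. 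So the corollary is genuinely immediate from that proof: one just starts the induction from our given $S\cup\{\beta\}$ instead of from $S_1=\min(S)$, and notes that the output set $T$ contains $S\cup\{\beta\}$ by construction because the procedure never deletes roots at or below the level of already-admitted roots. I would phrase the proof as: ``This is immediate from the proof of Theorem \ref{primaparte}, taking $S\cup\{\beta\}$ in place of $S_{i_0}$ as the starting admissible set and applying Lemma \ref{corBL} to the roots of $\supp(x)$ above it in increasing order,'' filling in the two-sentence check above.
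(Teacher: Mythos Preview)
Your proposal is correct and follows essentially the same route as the paper: both extract the corollary from the inductive procedure of Theorem~\ref{primaparte} together with Lemma~\ref{corBL}. The only difference in framing is that the paper locates $S$ within the chain $R_1\subseteq R_2\subseteq\cdots$ built from $R=\supp(x)$ and argues that $S\cup\{\beta\}$ lies inside one of the sets $R_{j-1}\cup\mathcal{S}_j$ preserved by the algorithm, whereas you restart the inductive step directly from $S\cup\{\beta\}$; the two organizations are equivalent.
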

\begin{proof}
Put $R=\supp(x)$.
Then, with the notation of Theorem \ref{primaparte} there is a minimum $j$ such that $S\subseteq R_j$.

If $S=R_J$, then $\beta\in\mathcal{S}_{j+1}$, hence the proof of Theorem \ref{primaparte} gives us an admissible set $T\supseteq S\cup\left\lbrace\beta\right\rbrace$ such that $\mathcal{O}=B_ve_T$.

If instead $S\subsetneq R_j$, given that there is no element in $R\setminus S$ smaller than a element in $S$ and $S$ is admissible it must be $R_{j-1}\subseteq S\subseteq R_{j-1}\cup\mathcal{S}_{j}$.
Moreover, given that $S\cup\left\lbrace \beta\right\rbrace$ is also admissible and $\beta$ is minimal it must be $S\cup\left\lbrace \beta\right\rbrace\subseteq R_{j-1}\cup\mathcal{S}_{j}$.
As before, the proof of Theorem \ref{primaparte} shows us that there is $T\supseteq R_{j-1}\cup\mathcal{S}_{j}$ such that $\mathcal{O}=B_ve_T$.
\end{proof}

We saw that there is a $P$-equivariant map
$$\exp\colon \mathfrak{p^u}\longrightarrow P^\mathfrak{u}$$
Recall that $B_v=B_L\ltimes U_v$, hence if $x\neq y\in \mathcal{O}$ we have $Bv\exp(x)v^{-1}B=Bv\exp(y)v^{-1}B$ and that if $S=\supp(x)$ is orthogonal, then $Bv\exp(x)v^{-1}B=Bv\sigma_{S}v^{-1}B$ where $\sigma_S=\prod_{\alpha\in S} s_\alpha$ is the involution related to $S$.

Consider $S=X\cup Z$ admissible and recall that for every element $\alpha\in S$ there is at most one other $\beta\in S$ such that $(\alpha,\beta)\neq 0$.
In this case, we can suppose $\alpha<\beta$ and we have $\alpha\in X$, $\beta\in Z$.
Denote $\gamma=\beta-\alpha$.
We have $s_{\alpha}(\alpha+\gamma)=\gamma-\alpha\in-\Phi^+(v)$ and $s_{\alpha+\gamma}(\alpha)=-\gamma\in \Phi^-_P$.
Note that this follows only from the relative length of the roots involved, so it is true even in type $\bf{B}$.

\begin{Lemma}\label{involuzione}
Fix an orbit $\mathcal{O}=B_ve_S$ where $S=X\cup Z$ is admissible for $v$.
Then,
$$Bv\exp(e_S)v^{-1}B=Bv\sigma_{X}v^{-1}B=B\sigma_{v\left(X\right)}B$$
\end{Lemma}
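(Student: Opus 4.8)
The claim has two equalities. The second equality, $Bv\sigma_X v^{-1}B = B\sigma_{v(X)}B$, is exactly the computation already carried out right before Theorem \ref{orthogonalsubset}: since $X$ is orthogonal and $X\subseteq\Phi^+(v)$, the roots in $v(X)$ are negative and mutually orthogonal, so $v\exp(e_X)v^{-1}$ differs from an element of $B$ by a factor $\sigma_{v(X)}$, whence $Bv\exp(e_X)v^{-1}B = B\sigma_{v(X)}B$; combined with $v\exp(e_X)v^{-1} = (v\exp(e_X)v^{-1})$ and $Bv\sigma_X v^{-1}B = B\sigma_{v(X)}B$ via the same identity $\exp(e_{-\alpha})\exp(e_\alpha)\exp(e_{-\alpha})T/T = s_\alpha$ applied inside $v(\cdot)v^{-1}$, this part is routine. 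So the real content is the first equality $Bv\exp(e_S)v^{-1}B = Bv\sigma_X v^{-1}B$, i.e.\ that adjoining $Z$ to $X$ does not change the $B$-double-coset of $v\exp(e_S)v^{-1}$.

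First I would reduce to showing $v\exp(e_S)v^{-1}$ and $v\exp(e_X)v^{-1}$ lie in the same $B$-double-coset. By Theorem \ref{BxB} it suffices to show $B_v e_S = B_v e_X$, i.e.\ that $e_S$ and $e_X$ are in the same $B_v$-orbit on $\mathfrak{p}^u$; this is a special case of the (as yet unproved, but structurally prior) fact that completing by long roots of the form $\pad(\beta)+\gamma$ does not change the orbit — precisely the phenomenon illustrated in the $a1)/a2)$, $b1)/b2)$ examples and recorded in the Lemma stating $B_ve_S = B_ve_{\cdr S}$. Concretely, enumerate $Z = \{\beta_1,\dots,\beta_r\}$; for each $\beta_i$ write $\beta_i = \alpha_i + \gamma_i$ with $\alpha_i = \pad(\beta_i)\in X$ and $\gamma_i\in\Phi_P^+$. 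The key local fact (established in the discussion after Definition \ref{admissibleC}) is that $\langle\alpha_i,\gamma_i\rangle = 0$, so $\gamma_i$ can be both added to and subtracted from $\alpha_i$; hence $U_{\gamma_i}\subseteq B_L\subseteq B_v$ acts trivially on $e_{\alpha_i}$ but $u_{\gamma_i}(s)$ moves $e_{\beta_i}$ — more precisely, since $\beta_i$ is long and short $+$ long gives nothing in $\Phi$, conjugating appropriately one can produce, from $e_X$, any linear combination $e_X + \sum_i t_i e_{\beta_i}$ by acting with a suitable product of root subgroups $U_{\gamma_i}$ together with auxiliary subgroups that clean up collateral terms (the $u_{e_i - e_5}$-type corrections in the worked example). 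Running this in order of increasing height, I get $e_S\in B_v e_X$, hence $B_v e_S = B_v e_X$ (the reverse inclusion is automatic since $e_X\in B_v e_S$ by the same argument read backwards, or simply because both generate the same orbit).

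Then, applying Theorem \ref{BxB} to $x = e_S$, $y = e_X$ (and vice versa) gives $Bv\exp(e_S)v^{-1}B \subseteq \cdr{Bv\exp(e_X)v^{-1}B}$ and conversely; but in fact Theorem \ref{BxB}'s proof shows more — when $B_v x = B_v y$ one gets the honest equality $Bv\exp(x)v^{-1}B = Bv\exp(y)v^{-1}B$, since the chain of inclusions $B_v.\exp(y)\subseteq B_L\exp(y)B_v \subseteq v^{-1}Bv\exp(y)v^{-1}Bv$ is symmetric in $x$ and $y$ once they lie in a common orbit. This yields $Bv\exp(e_S)v^{-1}B = Bv\exp(e_X)v^{-1}B$, and composing with the $X$-case gives the full statement.

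**Main obstacle.** The delicate point is the collateral-term bookkeeping in the orbit identification: when $u_{\gamma_i}(s)$ acts on $e_X$ to create $e_{\beta_i}$, it may simultaneously create unwanted components along roots $\alpha'+\gamma_i$ for $\alpha'\in X$, $\alpha'\ne\alpha_i$ — but by Lemma \ref{sommaradici}(2) applied to the orthogonal pair $\alpha_i,\alpha'$ in $\Psi$, $\alpha'+\gamma_i\notin\Phi$, so in fact no such term arises, and the only genuine side effect is along $\delta_j+\gamma_i$ or $\delta_j+2\gamma_i$ for roots $\delta_j\in Z$ already present; these are strictly higher than $\beta_i$ and $\gamma_i$, so they can be absorbed by induction on height exactly as in the displayed diagram chases. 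Thus the obstacle is real but dissolves once one invokes the strong-orthogonality consequences of Lemma \ref{sommaradici} and processes $Z$ bottom-up; I would state this as a short inductive lemma (essentially the content of the unnumbered Lemma $B_ve_S = B_ve_{\cdr S}$) and cite it here.
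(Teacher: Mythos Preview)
Your reduction to the orbit equality $B_ve_S=B_ve_X$ is incorrect: this equality is \emph{false} in general, and indeed its failure is precisely what makes the type $\mathbf C$ parametrization in characteristic $2$ richer than the orthogonal-subset parametrization. For a concrete counterexample take $n=2$, $v=\omega^P$, $X=\{e_1+e_2\}$, $Z=\{2e_1\}$. Then $B_v=B_L$ is generated by $T$ and $U_{e_1-e_2}$; by Lemma~\ref{IET} we have $u_{e_1-e_2}(t).e_{e_1+e_2}=e_{e_1+e_2}$ (since $e_1+e_2\pm(e_1-e_2)\in\Phi$ forces $a=0$, and $\beta+2\alpha\notin\Phi$ forces $b=0$), so $B_L.e_X=\mathbb K^\ast e_{e_1+e_2}$, which never contains $e_S$. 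The mechanism you propose---acting with $U_{\gamma_i}$ on $e_X$ to produce $e_{\beta_i}$---fails for exactly this reason: $u_{\gamma_i}$ fixes $e_{\alpha_i}$ (this is the ``both added and subtracted'' clause of Lemma~\ref{IET}), and $\beta_i+\gamma_i\notin\Phi$, so $u_{\gamma_i}$ fixes $e_{\beta_i}$ too. You have also misread the $a1)/a2)$ example: there the new $Z$-root $Z_1$ is produced by acting on $e_{Z_2}$, a root \emph{already in $Z$}, not on any root of $X$; the lemma $B_ve_S=B_ve_{\cdr S}$ only lets you add \emph{non-essential} roots to $Z$, never go from $X$ to $X\cup Z$ when $Z$ contains essential roots.

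The paper's proof does not pass through $B_v$-orbits at all. It manipulates the double coset directly: write $\exp(e_S)=\exp(e_Z)\exp(e_X)$, use that $v(X),v(Z)<0$ to absorb $\exp(e_{-X})$ and $\exp(e_{-Z})$ into $B$ on either side, and repeatedly apply $\exp(e_{-\alpha})\exp(e_\alpha)\exp(e_{-\alpha})\in s_\alpha T$ together with the conjugation rule $s_\tau\exp(e_\mu)=\exp(e_{s_\tau(\mu)})s_\tau$. After several such moves one is left with $Bv\,s_Zs_Xs_{s_X(Z)}\,v^{-1}B$, and the identity $s_Zs_Xs_{s_X(Z)}=s_Z(s_Xs_X)s_Zs_X=s_X$ finishes the argument. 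The point is that while $e_S$ and $e_X$ lie in different $B_v$-orbits, the extra $\exp(e_Z)$ factor gets absorbed into $B$ at the level of double cosets once you pass through the Weyl group; Lemma~\ref{involuzione} is genuinely weaker than the orbit equality you tried to prove.
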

\begin{proof}
We have $Bv\exp(e_S)v^{-1}B=Bv\exp(e_Z)\exp(e_X)v^{-1}B$.
By hypothesis, $v(S)<0$ so both $v(-X)$ and $v(-Z)$ are in $B$.
Recall that we can chose $x_\alpha$ such that 
$$\exp(-x_{-\alpha})\exp(x_{\alpha})\exp(-x_{-\alpha})=s_{\alpha}$$
where $s_{\alpha}$ is a representative in the normalizer $N_{G}(T)$ for the reflection relative to $\alpha$.
The sets $X$ and $Z$ are orthogonal (relatively to themselves), so we can write:
$$Bv\exp(e_Z)\exp(e_X)v^{-1}B=Bvs_Z\exp(e_{-Z})\exp(e_{-X})s_{X}v^{-1}B$$
Note that the terms $\exp(e_{-Z})$ and $\exp(e_{-X})$ commute so
\begin{align*}
Bvs_Z\exp(e_{-Z})\exp(e_{-X})s_{X}v^{-1}B &=Bvs_Z\exp(e_{-X})\exp(e_{-Z})s_{X}v^{-1}B\\
&=Bv\exp(e_{-s_{Z}(X)})s_Zs_{X}\exp(e_{-s_X(Z)})v^{-1}B
\end{align*}
Note now that $-vs_Z(X)>0$ because an element of $X$ is either fixed by $s_Z$ or mapped in $\Phi^-_P$ by the discussion above and $-vs_X(Z)<0$ because every element in $Z$ is mapped in $-\Phi^+(v)$.
We have
\begin{align*}
Bv\exp(e_{-s_{Z}(X)})s_Zs_{X}\exp(e_{-s_X(Z)})v^{-1}B &=Bvs_Zs_{X}\exp(e_{s_X(Z)})s_{s_X(Z)}v^{-1}B\\
&= Bvs_Z\exp(e_{Z})s_{X}s_{s_X(Z)}v^{-1}B \\
&= Bv\exp(e_{-Z})s_Zs_Xs_{s_X(Z)}v^{-1}B=\\
&=Bvs_Zs_Xs_{s_X(Z)}v^{-1}B
\end{align*}
because $-v(Z)>0$.
The claim follows from $s_Zs_Xs_{s_X(Z)}=s_Zs_X(s_Xs_{Z}s_X)=s_X$.
\end{proof}

It follows from Corollary \ref{SugualeT} that if $B_ve_S=B_ve_T$ with $S$ and $T$ admissible then $X(S)=X(T)$. 
To prove the last part of the classification we will need the following technical lemma.

\begin{Lemma}\label{tecnico}
Fix $v\in W^P$ and let $\left(S,2e_i,e_i+e_j,b\right)$ be a quadruple with $S\subseteq \Phi^+(v)$ admissible, $e_i+e_j,2e_i\in\Phi^+(v)\setminus S$ and $b\in B_v$ such that:
\begin{enumerate}
\item $S<2e_i$;
\item $S\cup \left\lbrace 2e_i,e_i+e_j\right\rbrace$ is admissible and $2e_i$ is essential;
\item $S\cup\left\lbrace 2e_i\right\rbrace\subseteq\supp(b.e_S)$.
\end{enumerate}
Then there is $\alpha\in\supp(b.e_S)\setminus S$ such that $\alpha<2e_i$ and $\alpha\ngeq e_i+e_j$.
\end{Lemma}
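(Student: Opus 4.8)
The plan is to analyse how the long root $2e_i$ can enter $\supp(b.e_S)$. Write $b=g\exp(y)$ with $g\in B_L$ and $y\in\mathfrak{u}_v$, so that by Equation \ref{action} $b.e_S=\adj_g(e_S+y)$; put $x=e_S+y$, whose support is the disjoint union $S\sqcup\supp(y)$, since $S\subseteq\Phi^+(v)$ while $\supp(y)\subseteq\Psi\setminus\Phi^+(v)$. Because $2e_i\in\Phi^+(v)\setminus S$ it does not lie in $\supp(x)$, so $2e_i$ is produced by $g$. Factor $g=t\,u_{\gamma_N}(t_N)\cdots u_{\gamma_1}(t_1)$ with $t$ in the torus (which does not change supports) and $\gamma_1,\dots,\gamma_N\in\Phi_P^+$, and set $x_0=x$, $x_k=u_{\gamma_k}(t_k).x_{k-1}$, so $\supp(b.e_S)=\supp(x_N)$.

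The first step is to pin down, via Lemma \ref{IET} and the type $\matr{C}$ root combinatorics recalled at the start of this section, the only mechanism by which a factor $u_\gamma(t)$ can introduce $e_{2e_i}$ into a support. If $\mu\in\Psi$ and $\mu+\gamma=2e_i$ with $\gamma\in\Phi_P^+$ then $\mu=e_i+e_b$ and $\gamma=e_i-e_b$ with $b>i$; but $(e_i+e_b)\pm(e_i-e_b)=2e_i,2e_b$ are both roots, so by Lemma \ref{IET} the linear coefficient vanishes and $u_{e_i-e_b}(t)$ fixes $e_{e_i+e_b}$. Hence $e_{2e_i}$ can appear only through a quadratic term, and the only possibility is $\gamma=e_i-e_b$ (with $b>i$) acting on a vector whose support contains $2e_b$, since $u_{e_i-e_b}(t).e_{2e_b}=e_{2e_b}+a\,t\,e_{e_i+e_b}+c\,t^2\,e_{2e_i}$ with $a,c\neq 0$. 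Iterating (a long root $2e_b$ is itself creatable only from $2e_d$ with $d>b$), one sees that $\supp(x)=S\sqcup\supp(y)$ must contain a long root $2e_b$ with $b>i$, and that each time $2e_i$ is produced the corresponding $e_i+e_b$ is deposited with a nonzero quadratic-term contribution.

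Next set $\mathcal{B}=\{\mu\in\Psi:\mu<2e_i,\ \mu\ngeq e_i+e_j\}$; a coordinate computation (using $i<j$) shows that $\mathcal{B}$ is precisely the set of roots $\le 2e_i$ lying outside the interval $[e_i+e_j,2e_i]$, so in particular $\mathcal{B}$ contains every $2e_c$ and every $e_c+e_d$ with $i<c$, and contains $e_i+e_d$ exactly when $d>j$. The previous step already exhibits a root of $\mathcal{B}$ in $\supp(x)$, and at intermediate stages. To land one such root in $\supp(b.e_S)\setminus S$ I would use the structural fact that adding a positive root of $\Phi_P$ to a root of $\Psi$ can only lower an index, so $\spa(e_\mu:\mu\not\le 2e_i)$ and $\spa(e_\mu:\mu\not<2e_i)$ are $B_L$-submodules of $\mathfrak{p}^u$ and the projection $\mathfrak{p}^u\to\mathfrak{p}^u/\spa(e_\mu:\mu\not\le 2e_i)$ is $B_L$-equivariant. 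Applying it to $b.e_S=\adj_g(e_S+y)$ (noting $S<2e_i$ by hypothesis (1), so $e_S$ survives) reduces the question to the module supported on roots $\le 2e_i$, where $2e_i$ is retained but everything is more rigid. Assuming the conclusion fails, i.e. $\supp(b.e_S)\cap\mathcal{B}\subseteq S$, this projection of $b.e_S$ is then supported on $S\cup[e_i+e_j,2e_i]$ while still containing $2e_i$; a case analysis on whether the long root $2e_b$ supplying $2e_i$ lies in $S$ or in $\supp(y)$, combined with hypothesis (2) in the form "$2e_i\notin A(S\cup\{e_i+e_j\})\cup B(S\cup\{e_i+e_j\})$" (no long root of $S$ below $2e_i$ forces $2e_i$ in), should yield a contradiction: e.g. if $2e_b\in S$ then essentiality forces $e_i+e_j>2e_b$, resp. $e_i+e_j>\pad(2e_b)$, which pins down $b$ and, via the common index $b$, is incompatible with orthogonality of the partition of $S\cup\{2e_i,e_i+e_j\}$.

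The main obstacle is exactly this last point: since $\adj_g$ only gives an inclusion of supports, a root of $\mathcal{B}$ that is visibly created at some stage can be cancelled by a later one-parameter factor, so one must single out a root of $\mathcal{B}$ whose presence in $\supp(b.e_S)$ is \emph{forced}, not merely possible. I expect the clean implementation is an induction on the number $N$ of one-parameter factors of $g$ (equivalently on $2e_i$ in the poset $\Psi$): peel off the factor $u_{\gamma_k}(t_k)$ that first creates $2e_i$, observe that inside the rigid module of roots $\le 2e_i$ the source $2e_b$ (or the deposited $e_i+e_b$) cannot be destroyed by the remaining factors without also destroying $2e_i$ — which is excluded by hypothesis (3) — and feed the remainder back into the inductive hypothesis.
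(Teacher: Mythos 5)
There is a genuine gap: your argument never actually produces a root of $\mathcal{B}=\left\lbrace\mu<2e_i,\ \mu\ngeq e_i+e_j\right\rbrace$ inside $\supp(b.e_S)\setminus S$, which is the whole content of the lemma. Your first two steps are fine (the factorization $b.e_S=\adj_g(e_S+y)$, the observation that in characteristic $2$ the component along $e_{2e_i}$ can only be created quadratically from a long root $2e_b$ with $b>i$, hence $\supp(e_S+y)$ contains such a root), but that root may lie in $S$ or be cancelled by later factors, and you say so yourself. The proposed repair --- induction on the number of one-parameter factors together with the claim that ``the source $2e_b$ (or the deposited $e_i+e_b$) cannot be destroyed by the remaining factors without also destroying $2e_i$'' --- is exactly the unproved point, and it is dubious as stated: the $2e_i$-coefficient of $\adj_g(e_S+y)$ is a sum of contributions along several chains (different long roots $2e_c$, different orderings of the factors of $g$), so nothing prevents later factors from killing your intermediate witness while $2e_i$ survives through another chain; no conservation principle is established. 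Likewise the final ``case analysis \ldots should yield a contradiction'' is only a gesture, and the two hypotheses that carry the weight --- essentiality of $2e_i$ and admissibility of $S\cup\left\lbrace 2e_i,e_i+e_j\right\rbrace$ --- never enter your argument in a precise way.

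For comparison, the paper does not track the creation of $2e_i$ through a factorization at all. It uses the orbit-level machinery: since $2e_i\in\supp(b.e_S)$ and $S\cup\left\lbrace 2e_i\right\rbrace$ is admissible with $X(S\cup\left\lbrace 2e_i\right\rbrace)\neq X(S)$, Corollary \ref{corprimaparte} together with the invariance of the $X$-part given by Lemma \ref{involuzione} (and Corollary \ref{SugualeT}) forces the existence of a \emph{minimal} root $\alpha_b\in\supp(b.e_S)$ with $\alpha_b<2e_i$ such that $S\cup\left\lbrace\alpha_b\right\rbrace$ is \emph{not} admissible; this is the step your approach is missing, because it converts ``$2e_i$ appears in the support'' into a forced, non-cancellable feature of the orbit. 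The proof is then an extremal argument: among all $b$ with $\alpha_b\geq e_i+e_j$ choose one with $\alpha_b$ maximal, write $\alpha_b=\tau+\gamma$ with $\tau\in X(S)$, $\gamma\in\Phi_P^+$, and use essentiality of $2e_i$ to show $u_\gamma(t)$ fixes $e_{S\setminus\left\lbrace\tau\right\rbrace}$ and all relevant lower roots, so that $u_\gamma(t)b$ still lies in the set $K$ but has a strictly larger minimal bad root --- contradiction. If you want to salvage your route, you would need to prove a statement playing the role of Corollary \ref{corprimaparte}, i.e.\ that the presence of $2e_i$ in the support cannot coexist with $\supp(b.e_S)\cap\mathcal{B}\subseteq S$; direct bookkeeping of supports through the factors of $g$ does not give this without essentially redoing that corollary.
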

\begin{proof}
Write $\beta=2e_i$, $\pad(\beta)=e_i+e_j$ and $T=S\cup\left\lbrace e_i+e_j\right\rbrace$.

Consider the set 
\[K=\left\lbrace b\in B_v\mid S\cup\left\lbrace 2e_i\right\rbrace\subseteq\supp(b.e_S)\right\rbrace\]
which by assumption is non-empty.
Given that $T\cup \left\lbrace \beta\right\rbrace$ is admissible, $\beta$ must be orthogonal to $S$, so $S\cup\left\lbrace\beta\right\rbrace$ must be admissible and $X(S)\neq X(S\cup\left\lbrace\beta\right\rbrace)$.
Hence, by Corollary \ref{corprimaparte} and Lemma \ref{involuzione} for every $b\in K$ there must be a minimal root $\alpha_b\in \supp(be_S)$ such that $S\bigcup \left\lbrace \alpha_b \right\rbrace$ is not admissible and $\alpha_b<\beta$.

Then the claim is equivalent to saying that 
\[K'=\left\lbrace b\in K\mid \alpha_b\geq e_i+e_j\right\rbrace=\varnothing\]

Suppose by contradiction that $K'\neq\varnothing$ and fix $b_0\in K'$ such that $\alpha_{b_0}$ is maximal among the $\alpha_b$ for $b\in K'$.
Finally, denote for simplicity $\alpha=\alpha_{b_0}$.

Note that $\alpha\neq e_i+e_j$ because $S\cup\left\lbrace e_i+e_j\right\rbrace$ is admissible.
Then $\alpha=e_i+e_k$ with $j>k>i$.
The situation can be represented with the following diagram.
\[
\Yboxdimx{28pt}
\Yvcentermath1
\gyoung(;;{\pad(\beta)};\alpha;;\beta,;;;;,;;;,;;,;)
\]

where the elements in $S$ are not drawn, but we know they can't be on the upper row because they must be orthogonal to $\pad(\beta)$.
There must be $\tau\in X(S)$ that is not orthogonal to $\alpha$.
More precisely, there is $\gamma\in\Phi^+_P$ such that $\alpha=\tau+\gamma$.
We will sign with a $\star$ the possible positions of $\tau$
\[
\Yboxdimx{28pt}
\Yvcentermath1
\gyoung(;;{\pad(\beta)};\alpha;;\beta,;;;\star;,\star;\star;\star,;;,;)
\]

We claim that the one-parameter subgroup $u_\gamma(t)$ acts as the identity on $e_{S\setminus \left\lbrace\tau\right\rbrace}$.
This is clear if $\suc(\tau)\notin Z(S)$, so suppose $\suc(\tau)\in Z(S)$.
If we write $\suc(\tau)=2e_r$ and $\tau=e_r+e_s$ with $r<s$, then we know that $r>i$.
Moreover, given that $2e_i$ is essential, it must be $\tau<e_i+e_j$, hence $j<s$.
But now $\gamma=e_r+e_s$ and $\alpha=e_i+e_k$ are not orthogonal by hypothesis, and the only way to obtain this is if $r=k$ (because $s>j>k>i$).
Then $\gamma=e_i-e_s$ and is again the identity on $e_{2e_k}$.
Schematically, this means that if $\suc(\tau)\in Z(S)$ then we cannot be in the situation of Diagram $a)$, hence we must be in the situation of Diagram $b)$
\[\begin{array}{rlccrl}
a) & 
\Yboxdimx{28pt}
\Yboxdimy{16pt}
\Yvcentermath1
\gyoung(;;{\pad(\beta)};\alpha;;\beta,;;;\tau;{\suc(\tau)},;;;,;;,;) & & & b) & \Yboxdimx{28pt}
\Yboxdimy{16pt}\Yvcentermath1\gyoung(;;{\pad(\beta)};\alpha;;\beta,;;;;,\tau;;{\suc(\tau)},;;,;)
\end{array}\]
because $\beta$ is not essential in Diagram $a)$.

Note that given how we defined $\alpha$, every root in $\supp(b_0.e_S)$ that is smaller than $\beta$ is either greater than $\alpha$, $\alpha$ itself or in $S$, so $u_\gamma(t)$ must act as the identity on every such root different from $\tau$.
Hence, there is $t\in\mathbb{K}^*$ such that $2e_i\in\supp(u_\gamma(t)b.e_S)$ and  $S\subseteq \supp(u_\gamma(t)b_0.e_S)$, so $b'=u_\gamma(t)b_0\in K$.
But $\alpha_{b'}>\alpha$ and that's absurd because $\alpha=\alpha_{b_0}$ was maximal.
\end{proof}

\begin{theorem}\label{secondaparte}
Let $S,T\subseteq\Phi^+(v)$ be full admissible. 
Then 
$$B_ve_S=B_ve_T \text{ if and only if } S=T$$
\end{theorem}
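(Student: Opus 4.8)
The reverse implication $S=T\Rightarrow B_ve_S=B_ve_T$ being trivial, I would assume $B_ve_S=B_ve_T$ with $S=X(S)\sqcup Z(S)$ and $T=X(T)\sqcup Z(T)$ full admissible and deduce $S=T$. As in the remark following Lemma \ref{involuzione}, the orthogonal parts agree: Theorem \ref{BxB} turns $B_ve_S=B_ve_T$ into $Bv\exp(e_S)v^{-1}B=Bv\exp(e_T)v^{-1}B$, so Lemma \ref{involuzione} gives $B\sigma_{v(X(S))}B=B\sigma_{v(X(T))}B$ and hence $\sigma_{v(X(S))}=\sigma_{v(X(T))}$; as $X(S)$ and $X(T)$ are orthogonal subsets of $\Psi$ they are strongly orthogonal by Lemma \ref{sommaradici}, so $v(X(S))=v(X(T))$ by Corollary \ref{SugualeT} and $X(S)=X(T)=:X$. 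It remains to prove $Z(S)=Z(T)$, which I would do by contradiction.

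Since the long roots of $\Psi$ are totally ordered, if $Z(S)\ne Z(T)$ there is a minimal $\beta=2e_i$ in the symmetric difference; say $\beta\in Z(S)\setminus Z(T)$. By minimality $S$ and $T$ have the same roots strictly below $\beta$. Let $\alpha=\pad(\beta)$ (computed in $S$); by the remark after Definition \ref{admissibleC} it is a short root $e_i+e_j$ with $i<j$, so $\suc(\alpha)=\beta$ and $\alpha\in X\subseteq T$. Let $R$ be the set of roots of $S$ that are strictly below $\beta$ but not above $\alpha$. Using the uniqueness of $\pad$, and that $\beta$ is the only long root in the interval $[\alpha,\beta]$, one checks that $R$ and $R\cup\{\alpha,\beta\}$ are admissible. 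The combinatorial heart of the argument is that $\beta$ is \emph{essential} in $R\cup\{\alpha,\beta\}$, i.e. $\beta\notin\cdr{R\cup\{\alpha\}}$: feeding $\alpha$ — and every short root of $R$ whose $\suc$ equals $\beta$ — into the two conditions of Definition \ref{fulladmissibleC} for the full admissible set $T$, together with $\beta\in\Phi^+(v)$ and $\beta\notin Z(T)$, forces every long root of $X$ below $\beta$, and every $\pad$ of a root of $Z(T)$ below $\beta$, to lie below $\alpha$, which is precisely what keeps $\beta$ out of $A(R\cup\{\alpha\})$ and $B(R\cup\{\alpha\})$ of Definition \ref{fac}.

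Next I would transport the identity down to $R$. Choose $b\in B_v$ with $b.e_T=e_S$ and write $b=(g,y)$ in the coordinates $B_v=B_L\ltimes\mathfrak{u}_v$. By Lemma \ref{IET} the adjoint action of $B_L=T\cdot\prod_{\delta\in\Phi^+_P}U_\delta$ moves supports upward, $\adj_{u_\delta(t)}e_\mu\in e_\mu+\bigoplus_{\rho>\mu}\mathfrak{u}_\rho$ for $\delta>0$, and $\mathfrak{u}_v$ is $B_L$-stable. Writing $e_T=e_{S_{<\beta}}+e_{T\setminus S_{<\beta}}$ with $S_{<\beta}=\{\gamma\in S\mid\gamma<\beta\}$ and expanding $b.e_T=\adj_g e_{S_{<\beta}}+\adj_g e_{T\setminus S_{<\beta}}+\adj_g y$, the last two summands put nothing in the part of $\supp(b.e_T)=S$ lying in $\{\gamma\le\beta\}\cap\Phi^+(v)$, so $\supp(\adj_g e_{S_{<\beta}})\cap\{\gamma\le\beta\}\cap\Phi^+(v)=S_{<\beta}\cup\{\beta\}$. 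Subtracting the roots of $S_{<\beta}$ that lie above $\alpha$ (their $\adj_g$-spread stays above $\alpha$, and none can reach $\beta$ because of the vanishing coefficients in Lemma \ref{IET}), and then correcting $(g,0)$ by the element of $\mathfrak{u}_v\subseteq B_v$ that deletes the components outside $\Phi^+(v)$, I obtain $b'\in B_v$ with $\supp(b'.e_R)\subseteq\Phi^+(v)$, with $R\cup\{\beta\}\subseteq\supp(b'.e_R)$, and such that every root of $\supp(b'.e_R)$ strictly below $\beta$ that is not above $\alpha$ already belongs to $R$.

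The quadruple $(R,\beta,\alpha,b')$ now satisfies all hypotheses of Lemma \ref{tecnico}, which produces $\alpha'\in\supp(b'.e_R)\setminus R$ with $\alpha'<\beta$ and $\alpha'\not\ge\alpha$; by the previous paragraph such an $\alpha'$ lies in $R$, a contradiction. Hence $Z(S)=Z(T)$, so $S=T$. The two steps I expect to be most delicate are the essentiality claim in the second paragraph — the only point where full admissibility, rather than mere admissibility, of $T$ is really used — and the support bookkeeping in the third, where Lemma \ref{IET}, the $\suc$/$\pad$ combinatorics and the $B_L$-stability of $\mathfrak{u}_v$ must be combined to ensure that passing from $e_T$ to $e_R$ neither drops $\beta$ nor creates a new root below $\beta$ off to the side of $\alpha$.
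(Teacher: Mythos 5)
Your proposal is correct and follows essentially the same route as the paper: reduce to $X(S)=X(T)$ via Theorem \ref{BxB}, Lemma \ref{involuzione} and Corollary \ref{SugualeT}, take the minimal long root $\beta$ in the symmetric difference of $Z(S)$ and $Z(T)$, and obtain the contradiction by applying Lemma \ref{tecnico} to an admissible set of common roots below $\beta$ together with $\beta$, $\pad(\beta)$ and an element of $B_v$. The paper's implementation of the middle step is lighter than yours: it feeds Lemma \ref{tecnico} the set of all common roots below $\beta$ minus $\pad(\beta)$ with the original $b$ (using Proposition \ref{cortelunghe} to discard the contribution of $e_{\pad(\beta)}$), and gets essentiality of $\beta$ in one line from minimality together with full admissibility of the set not containing $\beta$, whereas your auxiliary set $R$, the modified element $b'$ and the explicit unwinding of Definition \ref{fulladmissibleC} redo this bookkeeping by hand --- correctly, but with more effort than necessary.
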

\begin{proof}
Suppose by contradiction that $T\neq S$ and take $\beta$ minimal in $\left(T\setminus S\right)\cup \left(S\setminus T\right)$.
Without loss of generality, we can suppose $\beta\in T\setminus S$.

Consider $b\in B_v$ such that $be_S=e_T$.
By Lemma \ref{involuzione} and Corollary \ref{SugualeT} it must be $X(S)=X(T)$, hence actually $\beta\in Z(T)\setminus Z(S)$.
Then if $M_\beta=\left\lbrace \alpha\in \Psi\mid \alpha<\beta\right\rbrace$ it must be $S\cap M_\beta=T\cap M_\beta\doteqdot M$.
Note that both $M$ and $M\cup\left\lbrace\beta\right\rbrace$ are admissible.

Now, $\beta$ is long, so it must be of the form $\beta=2e_i$ and if $\pad(\beta)=e_i+e_j\in X(T)$, then $e_i+e_j\in X(S)$ also.
Denote $M'=M\setminus\left\lbrace e_i+e_j\right\rbrace$.
We want to apply Lemma \ref{tecnico} to the quadruple $\left(M', \beta,\pad(\beta), b\right)$ to obtain a contradiction.
We know by hypothesis that $2e_i\in \supp(be_M)$ and $2e_i\notin \supp(be_{e_i+e_j})$ by Lemma \ref{cortelunghe}.
Hence, $2e_i\in \supp(be_{M'})$.
Moreover, $\beta$ must be essential for $M\cup\left\lbrace\beta\right\rbrace$ because if $\beta\in\cdr{M}$, then also $\beta\in\cdr{S}=S$.

It follows by Lemma \ref{tecnico} that there is $\alpha\in \supp(b e_{M'})\setminus M'$ such that $\alpha<\beta$ and $\alpha\ngeq e_i+e_j$.
But $\alpha\ngeq e_i+e_j$ implies $\alpha\notin\supp(be_{e_i+e_j})$, hence $\alpha\in\supp(be_M)\setminus M$.
On the other hand, it is clear that if $\alpha\in\supp(be_\gamma)$, then $\gamma<\alpha<\beta$, hence $\alpha$ can't be in the support of $be_\gamma$ for every $\gamma\nless\beta$.
This means that we must have $\alpha\in \supp(be_S)= T$ and that's a contradiction.
\end{proof}



\section{The dimension of the $B_v$-orbits}

We want to calculate the dimension of a $B_v$-orbit in $\mathfrak{p}_u$ or equivalently $B$-orbits in $G/L$.

%

In the next couple of paragraphs, we drop the hypothesis that the field $\mathbb{K}$ has characteristic 2.
\begin{Definition}\label{good}
For every $v\in W^P$ let $\mathcal{S}^v=\left\lbrace S^v_i\right\rbrace_i$ be a family of subsets $S^v_i\subseteq \Phi^+(v)$ that parametrizes the $B_v$-orbits in $\mathfrak{p}^u$ through the map $S^v_i\mapsto B_ve_{S^v_i}$.
Let $\mathcal{S}$ be the family of these parametrizations.
We know that $\mathcal{S}$ parametrizes the $B$-orbits in $G/L$.

We will say that $\mathcal{S}$ is a \emph{good} parametrization if:
\begin{enumerate}
\item for every $u<v$ the map
\begin{align*}
S^v&\longrightarrow S^u\\
S_i^v&\longmapsto S_i^v\cap \Phi^+(u)
\end{align*}
is well-defined;
\item  $\beta$  maximal in $\Phi^+(v)$ implies $S^v\cup\left\lbrace\beta\right\rbrace$ independent for every $S^v\in\mathcal{S}^v$;
\item \label{good2} $B_ve_{S^v\cup \left\lbrace \beta\right\rbrace}=B_ve_{S^v}$ implies that there is $b\in B_L$ such that 
$$\supp(be_{S^v}-e_{{S^v}\cup \left\lbrace \beta\right\rbrace})>\beta$$
\end{enumerate}
\end{Definition}

\begin{Lemma}
The following are good parametrizations:
 \begin{enumerate}
 \item the parametrization for the type $B$-case given in \ref{parametrizzazioneB};
 \item the parametrization for the type $C$-case given in \ref{parametrizzazioneC};
 \item the parametrization for the non-characteristic-$2$ cases given in \cite{GM}
\end{enumerate}
\end{Lemma}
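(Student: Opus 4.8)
The plan is to verify, for each of the three parametrizations, the three conditions in Definition \ref{good}. One observation streamlines everything: the non-characteristic-$2$ parametrization of \cite{GM} and the ``orthogonal part'' of the type $\mathbf{B}$ parametrization of Theorem \ref{parametrizzazioneB} both index the $B_v$-orbits by orthogonal subsets $S\subseteq\Phi^+(v)$, so these can be dealt with by a single argument (the \emph{orthogonal case}); one then only has to add the special sets $\{\alpha_0,\alpha_0+\gamma\}$ of Theorem \ref{parametrizzazioneB} and the full admissible sets of Definition \ref{parametrizzazioneC}. Throughout I would use the combinatorics of $W^P$ recorded in Section 3 (and \cite{GM}): for $u\le v$ in $W^P$ one has $\Phi^+(u)\subseteq\Phi^+(v)$, and $\Phi^+(w)$ is an order ideal of $\Psi$ for every $w\in W^P$.

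I would first dispose of condition (2). Let $\beta$ be maximal in $\Phi^+(v)$; if $\beta\in S^v$ there is nothing to prove, so assume $\beta\notin S^v$. In the orthogonal case, independence of $S^v\cup\{\beta\}$ follows from the strong orthogonality of the elements of $S^v$ (Lemma \ref{sommaradici}) together with the maximality of $\beta$ (if $\langle\gamma,\beta\rangle\ne0$ for some $\gamma\in S^v$ then $\beta-\gamma\in\Phi_P^+$, and one checks $\beta\notin\spa S^v$), exactly as in \cite{GM}; for the special type $\mathbf{B}$ sets it is read off the chain realization of $\Psi$; and for type $\mathbf{C}$ it is immediate, since $\beta\nless\alpha$ for every $\alpha\in S^v$ makes $S^v\cup\{\beta\}$ admissible (remark after Definition \ref{admissibleC}), hence linearly independent by Lemma \ref{independentC}. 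For condition (1), intersecting an orthogonal set, or a special set $\{\alpha_0,\alpha_0+\gamma\}$ (which lies on the chain $\Psi$), with the order ideal $\Phi^+(u)$ again gives an orthogonal set, or one of $\varnothing,\{\alpha_0\},\{\alpha_0,\alpha_0+\gamma\}$. For type $\mathbf{C}$, if $S=X(S)\sqcup Z(S)$ is full admissible for $v$, I would show $S':=S\cap\Phi^+(u)$ is full admissible for $u$ with $X(S')=X(S)\cap\Phi^+(u)$ and $Z(S')=Z(S)\cap\Phi^+(u)$: since $\pad(\beta)<\beta$, the function $\pad$ stays inside the order ideal $\Phi^+(u)$, so $S'$ is admissible; and every instance of the two conditions of Definition \ref{fulladmissibleC} for $(S',u)$ is an instance for $(S,v)$, so full admissibility is inherited.

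The substantial point is condition (3). Fix $S^v$ in the family, $\beta$ maximal in $\Phi^+(v)$, with $B_ve_{S^v\cup\{\beta\}}=B_ve_{S^v}$; if $\beta\in S^v$ take $b=\id$, so assume $\beta\notin S^v$. The uniform pattern is: \emph{either} $S^v\cup\{\beta\}$ (or its full admissible completion) is itself a genuine parameter, in which case the relevant injectivity statement makes the hypothesis vacuous, \emph{or} one exhibits $b$ by an explicit one-parameter-subgroup computation. In the orthogonal case: if $S^v\cup\{\beta\}$ is orthogonal it is a parameter and Theorem \ref{orthogonalsubset} forces $B_ve_{S^v\cup\{\beta\}}\ne B_ve_{S^v}$; otherwise choose $\gamma\in S^v$ with $\langle\gamma,\beta\rangle\ne0$, so (roots of $\Psi$ do not add, $\beta$ is maximal) $\delta:=\beta-\gamma\in\Phi_P^+$ and $U_\delta\subseteq B_L$, and by Lemma \ref{sommaradici} and Lemma \ref{IET} the subgroup $U_\delta$ fixes $e_{\gamma'}$ for every $\gamma'\in S^v\setminus\{\gamma\}$ and sends $e_\gamma$ to $e_\gamma+as\,e_\beta+(\text{a term supported on a root above }\beta)$ with $a\ne0$; choosing $s$ with $as=1$ produces $b\in B_L$ with $be_{S^v}=e_{S^v\cup\{\beta\}}+(\text{support above }\beta)$. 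The same computation treats the special type $\mathbf{B}$ sets: there $\alpha_0+\gamma$ is long, a maximal $\beta>\alpha_0+\gamma$ is non-orthogonal to it, $\delta=\beta-(\alpha_0+\gamma)\in\Phi_P^+$, and $U_\delta$ acts trivially on $e_{\alpha_0}$ by Lemma \ref{IET}. For type $\mathbf{C}$: if $S^v\cup\{\beta\}$ is admissible, then $B_ve_{S^v\cup\{\beta\}}=B_ve_{\cdr{S^v\cup\{\beta\}}}$ with $\cdr{S^v\cup\{\beta\}}$ full admissible (Theorem \ref{completion}) and strictly containing $S^v=\cdr{S^v}$ (Corollary \ref{compcomp}), so Theorem \ref{secondaparte} gives $B_ve_{S^v\cup\{\beta\}}\ne B_ve_{S^v}$ and the hypothesis is vacuous; while if $S^v\cup\{\beta\}$ is not admissible, then $\beta\nless\alpha$ for every $\alpha\in S^v$ lets Lemma \ref{corBL} (with $t=1$) supply $b\in B_L$ with $\supp(be_{S^v}-e_{S^v\cup\{\beta\}})>\beta$, which is precisely (3). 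Finally, for the \cite{GM} parametrization conditions (1)--(3) are exactly the orthogonal-case verifications above, which are characteristic-free (the $t^2$-terms of Lemma \ref{IET} occurring in the non-simply-laced types only ever contribute on roots above $\beta$).

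The step I expect to be the main obstacle is condition (3), specifically the bookkeeping of when the hypothesis $B_ve_{S^v\cup\{\beta\}}=B_ve_{S^v}$ can actually occur: in each family one must identify precisely which enlargements $S^v\cup\{\beta\}$ fail to be — or to complete to — a parameter, and then construct $b$; for type $\mathbf{C}$ this rests squarely on Lemma \ref{corBL} and on keeping track of where in the poset $\Psi$ the spurious coordinates of $be_{S^v}$ can sit.
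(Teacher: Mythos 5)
The paper offers no proof of this lemma (it is stated as a routine verification), so the only comparison to make is with correctness; your plan --- check the three conditions of Definition \ref{good} family by family, invoking the injectivity statements (Theorem \ref{orthogonalsubset}, Theorem \ref{parametrizzazioneB}, Theorems \ref{completion} and \ref{secondaparte}) when the enlarged set is again a parameter, and exhibiting an explicit element of $B_L$ (Lemma \ref{corBL} in type $\mathbf{C}$) otherwise --- is the natural one, and your type $\mathbf{C}$ treatment of conditions (1) and (3) is essentially right. Two steps, however, fail as written. In type $\mathbf{B}$ your ``orthogonal case'' for condition (3) breaks in exactly the situation that forces the special parameters: take $S^v=\left\lbrace\alpha_0\right\rbrace$ and $\beta$ the maximal root of $\Phi^+(v)$, so $\beta=\alpha_0+\gamma$ with $\gamma=\beta-\alpha_0\in\Phi^+_P$. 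Your recipe chooses the unique non-orthogonal root $\alpha_0\in S^v$, sets $\delta=\beta-\alpha_0$ and asserts $a\neq 0$; but $\alpha_0+\delta$ and $\alpha_0-\delta$ are both roots, so by Lemma \ref{IET} $a=0$ in characteristic $2$ and $U_\delta$ fixes $e_{\alpha_0}$. In fact no $b\in B_L$ with $\supp(be_{S^v}-e_{S^v\cup\left\lbrace\beta\right\rbrace})>\beta$ exists at all: such a $b$ would force $B_ve_{\left\lbrace\alpha_0\right\rbrace}=B_ve_{\left\lbrace\alpha_0,\alpha_0+\gamma\right\rbrace}$, contradicting Theorem \ref{parametrizzazioneB}. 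Condition (3) holds here only vacuously, because $S^v\cup\left\lbrace\beta\right\rbrace$ is itself a parameter of $\mathcal{H}_v$ (of the non-orthogonal kind); your branching recognises $S^v\cup\left\lbrace\beta\right\rbrace$ as a parameter only when it is orthogonal, so this configuration slips into a computation whose conclusion is false.

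Second, for condition (2) in type $\mathbf{C}$ you claim that $\beta\nless\alpha$ for all $\alpha\in S^v$ makes $S^v\cup\left\lbrace\beta\right\rbrace$ admissible; the remark after Definition \ref{admissibleC} says this only for $\beta$ long, and a maximal root of $\Phi^+(v)$ can be short (for instance $\Phi^+(v)$ the order ideal generated by $e_2+e_3$, $S^v=\left\lbrace e_2+e_4\right\rbrace$, $\beta=e_2+e_3$: then $S^v\cup\left\lbrace\beta\right\rbrace$ is not admissible). Independence is still true, but it needs a direct argument rather than Lemma \ref{independentC}: if $\beta=e_i+e_j$ (with $i<j$) lay in $\spa(S^v)$, the only element of $S^v$ with nonzero $e_i$-coordinate is some $e_i+e_k$ with $k>j$ (every other root through $e_i$ is greater than $\beta$, hence outside $\Phi^+(v)$), so its coefficient must be $1$; and admissibility excludes $2e_k\in S^v$, since $\pad(2e_k)$ would be a second short root of $S^v$ through $e_k$, so the $e_k$-coordinate of the combination cannot vanish, a contradiction. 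Two smaller points: for the characteristic-$\neq 2$ family the nonvanishing of the linear coefficient is the formula recalled just before Lemma \ref{IET}, not Lemma \ref{IET} itself (the delicate term is the linear one, not the $t^2$ term); and your uses of $\Phi^+(u)\subseteq\Phi^+(v)$ for $u\leq v$ in $W^P$ and of $\Phi^+(w)$ being an order ideal of $\Psi$ are correct for the cominuscule parabolics considered here, but these facts are not recorded in Section 3 and should be stated and justified where you use them (notably in the type $\mathbf{C}$ verification of condition (1)).
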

We can now prove the dimension formula.

\begin{theorem}\label{theodim}
Let $\mathcal{S}$ be a good parametrization and, with the notation above, fix $v\in W^P$ and $S\in\mathcal{S}^v$.
Then, the dimension of $Bvx_S$ is
$$\dim Bvx_S=\#\Psi+\#Y(v,S)$$
where $Y(v,S)=\left\lbrace \beta\in \Phi^+(v)\mid \exists b\in B_L \text{ such that } \supp(be_{S\cup \left\lbrace \beta\right\rbrace}-e_S)>\beta \right\rbrace$.
\end{theorem}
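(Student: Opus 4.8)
The plan is to proceed by induction on $l(v)$, and for fixed $v$ by downward induction on... actually by induction on the dimension of the orbit (equivalently on $\#Y(v,S)$), mirroring the structure of the proof of Lemma \ref{dimADE} but using the abstract axioms of a good parametrization rather than properties specific to the simply laced case. The base case is $v=\id$, where $B_{\id}$ acts transitively on $\mathfrak{p}^u$, so the unique orbit has dimension $\#\Psi$; one checks directly that $Y(\id,S)=\varnothing$ here since there are no $\beta\in\Phi^+(\id)=\varnothing$. For the inductive step, fix $v$ and $S\in\mathcal{S}^v$, pick $\alpha\in\Delta$ with $u=s_\alpha v<v$, and set $\beta=v^{-1}(-\alpha)$, so that $\beta$ is maximal in $\Phi^+(v)$ and $B_u=B_v\ltimes\mathfrak{u}_\beta$, whence $B_ue_{S}=\bigcup_{t\in\mathbb{K}}B_v(e_{S}+te_\beta)$ and therefore $\dim B_ue_{S\cap\Phi^+(u)}\le \dim B_ve_S+1$, with equality unless $B_v(e_S+te_\beta)$ is independent of $t$.

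The key dichotomy is whether $\beta\in Y(v,S)$ or not. First I would show that $\beta\notin Y(v,S)$ is equivalent to $B_ve_{S\cup\{\beta\}}=B_ve_S$ being false, i.e. to $B_ve_{S\cup\{\beta\}}\ne B_ve_S$: indeed by axiom \ref{good2} of Definition \ref{good}, if $B_ve_{S\cup\{\beta\}}=B_ve_S$ then $\beta\in Y(v,S)$ by definition; conversely if $\beta\in Y(v,S)$ then there is $b\in B_L$ with $\supp(be_{S\cup\{\beta\}}-e_S)>\beta$, and since $\beta$ is maximal in $\Phi^+(v)$ this forces $be_{S\cup\{\beta\}}=e_S$ (as $\beta\notin\supp(e_{S'})$ for any $S'$ with all roots $\ge\beta$ other than $\beta$ itself, and there are no roots in $\Phi^+(v)$ strictly above $\beta$), so $B_ve_{S\cup\{\beta\}}=B_ve_S$. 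In the case $\beta\notin Y(v,S)$, the orbits $B_ve_S$ and $B_ve_{S\cup\{\beta\}}$ are distinct, both lie in the $B_u$-orbit $B_ue_{S\cap\Phi^+(u)}$, and one of them is the representative $e_{S\cap\Phi^+(u)}$ furnished by axiom (1); here $\dim B_ve_S=\dim B_ue_{S\cap\Phi^+(u)}-1$, and I must match $Y(v,S)$ with $Y(u,S\cap\Phi^+(u))$ up to the single element $\beta$. In the case $\beta\in Y(v,S)$, we have $B_ve_S=B_ve_{S\cup\{\beta\}}$, the map from $B_v$-orbits to $B_u$-orbits is still surjective with $e_S$ mapping to $e_{S\cap\Phi^+(u)}$, but now $\dim B_ve_S=\dim B_ue_{S\cap\Phi^+(u)}$, and I must show $\#Y(v,S)=\#Y(u,S\cap\Phi^+(u))+1$, the extra element being exactly $\beta$.

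The main obstacle, and the heart of the argument, is the bookkeeping relating $Y(v,S)$ to $Y(u,S\cap\Phi^+(u))$: I need that for $\gamma\in\Phi^+(u)$ (i.e. $\gamma\ne\beta$), membership $\gamma\in Y(v,S)$ is equivalent to $\gamma\in Y(u,S\cap\Phi^+(u))$, together with the fact that $\beta\in Y(v,S)$ in exactly the second case above. The forward direction is essentially automatic since $B_L$ is the same for $u$ and $v$ and $S\cap\Phi^+(u)$ differs from $S$ only possibly by $\beta$ which lies above every $\gamma\in\Phi^+(u)$; the subtle direction uses axiom \ref{good2} to replace a witness $b\in B_L$ for $\gamma\in Y(v,S)$ by one adapted to $S\cap\Phi^+(u)$, absorbing or discarding the $e_\beta$-component, which is legitimate precisely because $\beta>\gamma$ and $\beta$ is maximal. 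Once this combinatorial identity $\#Y(v,S)=\#Y(u,S\cap\Phi^+(u))+[\beta\in Y(v,S)]$ is established, combining it with the dimension step $\dim B_ve_S=\dim B_ue_{S\cap\Phi^+(u)}-1+[\beta\in Y(v,S)]$ and the inductive hypothesis $\dim B_ue_{S\cap\Phi^+(u)}=\#\Psi+\#Y(u,S\cap\Phi^+(u))$ — together with Lemma \ref{dimensionrel} to pass between $\dim Bvx_S$ and $l(v)+\dim B_ve_S$ — yields the claim after noticing that $l(v)=l(u)+1$ cancels against the change in $\Phi^+$. I would be careful to phrase everything in terms of the $B_v$-orbits in $\mathfrak{p}^u$ via Lemma \ref{orderiso} and Lemma \ref{dimensionrel}, so that the statement about $Bvx_S$ follows formally.
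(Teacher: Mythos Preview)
Your approach is essentially the same as the paper's: induction on $l(v)$, pick $\alpha$ with $s_\alpha v<v$ and $\beta=v^{-1}(-\alpha)$ maximal in $\Phi^+(v)$, establish the equivalence $\beta\in Y(v,S)\Leftrightarrow B_ve_{S\cup\{\beta\}}=B_ve_S$ via axiom~(3), and then compare $Y(v,S)$ with $Y(s_\alpha v,S\cap\Phi^+(s_\alpha v))$ in the two cases. The paper phrases the dimension step through the $P_\alpha$-orbit decomposition of $Bvx_S$ in $G/L$ rather than through $B_u$-orbits in $\mathfrak{p}^u$, but via Lemma~\ref{dimensionrel} these are the same computation.

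Two imprecisions are worth flagging. First, from $\supp(be_{S\cup\{\beta\}}-e_S)>\beta$ you cannot conclude $be_{S\cup\{\beta\}}=e_S$; you only get equality modulo $\bigoplus_{\delta>\beta}\mathfrak{u}_\delta$. Since $\beta$ is maximal in $\Phi^+(v)$, that error term lies in $\mathfrak{u}_v$, and the $B_v$-action absorbs it, so $B_ve_{S\cup\{\beta\}}=B_ve_S$ follows anyway. Second, and more seriously, your assertion that ``$\beta$ lies above every $\gamma\in\Phi^+(u)$'' is false: $\beta$ is only \emph{maximal} in $\Phi^+(v)$, not the maximum, so there may be $\gamma\in\Phi^+(u)$ incomparable to $\beta$. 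Your justification of the bookkeeping identity $Y(v,S)\cap\Phi^+(u)=Y(u,S\cap\Phi^+(u))$ leans on $\beta>\gamma$ and therefore does not go through as written when $\beta\in S$ and $\gamma$ is incomparable to $\beta$. The paper itself simply asserts $Y(s_\alpha v,S)=Y(v,S)\setminus\{\beta\}$ without further comment, so you are not doing worse than the paper here; but be aware that the clean case is $\beta\notin S$ (which in particular is automatic whenever $\beta\notin Y(v,S)$), where $S\cap\Phi^+(u)=S$ and the identity is immediate from the definition.
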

\begin{proof}
We will show the claim by induction on $l(v)$.
If $l(v)=0$, then $Y=\varnothing$ and $Bvx_S=BL/L\cong B/B_L\cong\mathfrak{p}_u$ so the formula holds.

Now suppose $l(v)>0$ and $\alpha\in\Delta$ such that $s_\alpha v<v$.
Denote $\beta=v^{-1}(-\alpha)$.
Then $P_\alpha=Bs_\alpha\sqcup BU_{-\alpha}$, so
\begin{align*}
P_\alpha vx_S&=Bs_\alpha vx_{S\setminus\left\lbrace \beta\right\rbrace} \cup BU_{-\alpha}vx_S\\
&=Bs_\alpha vx_{S\setminus\left\lbrace \beta\right\rbrace} \cup \bigcup_{t\in\mathbb{K}}Bvu_{\beta}(t)x_S\\
&=Bs_\alpha vx_{S\setminus\left\lbrace \beta\right\rbrace}\sqcup Bvx_S\cup\bigcup_{t\in\mathbb{K}^*}Bvu_{\beta}(t)x_S\\
&=Bs_\alpha vx_{S\setminus\left\lbrace \beta\right\rbrace}\sqcup Bvx_{S\cup\left\lbrace \beta\right\rbrace}\sqcup Bvx_{S\setminus\left\lbrace \beta\right\rbrace}
\end{align*}
where we used the fact that $\beta\notin \Phi^+(s_\alpha v)$ and that $S\cup\left\lbrace\beta\right\rbrace$ is independent.

The $B$-orbit $Bs_\alpha vx_{S\setminus\left\lbrace \beta\right\rbrace}$ can't be the open one because if $\cdr{Bs_\alpha vx_{S\setminus\left\lbrace \beta\right\rbrace} }\supseteq Bvx_S$ then it follows $\cdr{Bs_\alpha vP}\supseteq BvP$ and that's impossible because $s_\alpha v<v$.

We show at first that $\beta\in Y(v,S)$ if and only if $Bvx_S=Bvx_{S\cup\left\lbrace\beta\right\rbrace}$.
It is clear that if we take the element $b\in B_L$ for which $\supp(be_{S\cup \left\lbrace \beta\right\rbrace}-e_S)>\beta$, then $\supp(be_{S\cup \left\lbrace \beta\right\rbrace}-e_S)\in\Psi\setminus \Phi^+(v)$ which means that $B_ve_S=B_ve_{S\cup\left\lbrace\beta\right\rbrace}$ and that's the same as $Bvx_S=Bvx_{S\cup\left\lbrace\beta\right\rbrace}$.
On the other hand by \ref{good2} in definition \ref{good} we have $Bvx_S=Bvx_{S\cup\left\lbrace\beta\right\rbrace}\Rightarrow B_ve_S=B_ve_{S\cup\left\lbrace\beta\right\rbrace}\Rightarrow \beta\in Y(v,S)$.

It follows that, if $\beta\in Y(v,S)$ then $Bvx_S$ is the open orbit and
$$\dim Bs_\alpha vx_{S\setminus \left\lbrace\beta\right\rbrace}=\dim Bvx_S-1$$

By inductive hypothesis $\dim Bs_\alpha vx_{S\setminus \left\lbrace\beta\right\rbrace}=\#\Psi+\#Y(s_\alpha v, S)$.
We conclude by noting that $Y(s_\alpha v, S)=Y(v, S)\setminus \left\lbrace\beta\right\rbrace$.

If instead $\beta\notin Y(v,S)$, then $Bv\exp(e_S+te_\beta)\neq Bvx_S$ and it is the open orbit.
It follows 
$$\dim Bs_\alpha vx_{S\setminus \left\lbrace\beta\right\rbrace}=\dim Bvx_S$$
as before $\dim Bs_\alpha vx_{S\setminus \left\lbrace\beta\right\rbrace}=\#\Psi+\#Y(s_\alpha v, S)$, but now $Y(s_\alpha v, S)=Y(v,S)$ and we are done.
\end{proof}

\begin{Corollary}
$$\dim B_ve_S=\#\left(\Psi\setminus \Phi^+(v)\right)+\#Y(v,S)$$
\end{Corollary}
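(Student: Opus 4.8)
The plan is to obtain the formula as an immediate consequence of the dimension formula for $G/L$ (Theorem \ref{theodim}) together with the relation between $\dim Bvx_S$ and $\dim B_ve_S$ recorded in Lemma \ref{dimensionrel}. There is no real obstacle here; the only point requiring a (standard) combinatorial identity is the comparison of $l(v)$ with the cardinality of $\Phi^+(v)$, and we must use that $\Phi^+(v)\subseteq\Psi$ for $v\in W^P$.

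First I would recall that $x_S$ is by definition $\exp(e_S)L/L$, so $Bvx_S=Bv\exp(e_S)L/L$ and Lemma \ref{dimensionrel} gives
\[\dim Bvx_S=l(v)+\dim B_ve_S.\]
Combining this with Theorem \ref{theodim}, which asserts $\dim Bvx_S=\#\Psi+\#Y(v,S)$, we obtain
\[\dim B_ve_S=\#\Psi+\#Y(v,S)-l(v).\]

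Next I would rewrite $\#\Psi-l(v)$. Since $v\in W^P$ we have $\Phi^+(v)\subseteq\Psi$ (this is the characterization of $W^P$ recalled in Section 2), and the usual length formula in the Weyl group gives $l(v)=\#\Phi^+(v)$. Hence $\#\Psi-l(v)=\#\Psi-\#\Phi^+(v)=\#(\Psi\setminus\Phi^+(v))$, and substituting this into the previous display yields
\[\dim B_ve_S=\#(\Psi\setminus\Phi^+(v))+\#Y(v,S),\]
as claimed. The step I would expect to take the most care over is simply making sure the identifications $Bvx_S=Bv\exp(e_S)L/L$ and $l(v)=\#\Phi^+(v)$ are invoked with the correct conventions, but both are routine and already implicit in the earlier sections.
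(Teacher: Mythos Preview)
Your proof is correct and follows exactly the same approach as the paper: combine Theorem \ref{theodim} with Lemma \ref{dimensionrel} and use $l(v)=\#\Phi^+(v)$. The paper's proof is the one-line remark ``It is sufficient to note that $\#\Phi^+(v)=l(v)$,'' which is precisely what you have spelled out.
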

\begin{proof}
It is sufficient to note that $\#\Phi^+(v)=l(v)$.
\end{proof}

Now consider the (good) parametrization of \cite{GM} in admissible pairs.
We have two different ways of computing the dimension, that is
$$\dim(Bvx_S)=\#\Psi+L(\sigma_{v(S)})$$
and
$$\dim(Bvx_S)=\#\Psi+\#Y(v,S)$$
This means that if the characteristic is different from $2$ it must be
$$\#Y(v,S)=L(\sigma_{v(S)})$$
It follows that studying how $Y(v,S)$ varies for similar orbits when the characteristic is (or isn't) $2$ should give us a more explicit formula.

We will study at first the case where the characteristic of the base field $\mathbb{K}$ is not $2$.
Then we will be able to give another description of $Y(v,S)$ that is more combinatoric

\begin{Lemma}\label{Ycharinf}
If $\ch \mathbb{K}\neq 2$ we have
\[Y(v,S)=\left\lbrace\beta\in\Phi^+(v)\mid \beta\in S \text{ or there is }\alpha\in S\text{ such that }\beta-\alpha\in\Phi^+\right\rbrace\]
\end{Lemma}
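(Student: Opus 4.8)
The plan is to establish the two inclusions separately; write $R(v,S)=\{\beta\in\Phi^+(v)\mid \beta\in S\text{ or }\beta-\alpha\in\Phi^+\text{ for some }\alpha\in S\}$ for the right-hand side (here $v\in W^P$ and $S$ is an orthogonal subset of $\Phi^+(v)$, as in the Gandini--Maffei parametrization). I will use repeatedly that for $\alpha,\beta\in\Psi$ with $\beta-\alpha\in\Phi$ one automatically has $\beta-\alpha\in\Phi_P$, that $U_\gamma\subseteq B_L$ whenever $\gamma\in\Phi_P^+$, and the commutator formula in characteristic $\neq 2$ recalled at the beginning of Section 3, by which $u_\gamma(t).e_\alpha=e_\alpha+at\,e_{\alpha+\gamma}+bt^2e_{\alpha+2\gamma}$ with $a\neq 0$ as soon as $\alpha+\gamma\in\Phi$.

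First I would prove $R(v,S)\subseteq Y(v,S)$. If $\beta\in S$ one may take $b=\id$, since then $\supp(be_{S\cup\{\beta\}}-e_S)=\varnothing>\beta$ vacuously. If $\beta\notin S$ and $\gamma:=\beta-\alpha\in\Phi^+$ for some $\alpha\in S$, then $\gamma\in\Phi_P^+$; by Lemma \ref{sommaradici}, $u_\gamma(t)$ fixes $e_\tau$ for every $\tau\in S\setminus\{\alpha\}$, while $u_\gamma(t).e_\alpha=e_\alpha+at\,e_\beta+(\text{support}\geq\beta+\gamma)$ with $a\neq 0$ and $u_\gamma(t).e_\beta=e_\beta+(\text{support}>\beta)$. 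Hence $b:=u_\gamma(-a^{-1})$ gives $\supp\big(b(e_S+e_\beta)-e_S\big)$ contained in $\{\delta\in\Psi\mid \delta>\beta\}$, so $\beta\in Y(v,S)$.

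For the reverse inclusion $Y(v,S)\subseteq R(v,S)$ I would induct on $l(v)$, mirroring the proof of Theorem \ref{theodim}. The case $l(v)=0$ is vacuous. For $l(v)>0$ choose $\alpha\in\Delta$ with $s_\alpha v<v$, set $\beta_0=v^{-1}(-\alpha)$ (maximal in $\Phi^+(v)$ by Proposition 2.5 of \cite{GM}) and $S'=S\cap\Phi^+(s_\alpha v)=S\setminus\{\beta_0\}$. As in the proof of Theorem \ref{theodim} one has $Y(s_\alpha v,S')=Y(v,S)\cap\Phi^+(s_\alpha v)$, and — using that $\beta_0$ is maximal in $\Phi^+(v)$ — a direct check gives $R(s_\alpha v,S')=R(v,S)\cap\Phi^+(s_\alpha v)$ as well; by the inductive hypothesis these coincide, so $Y(v,S)$ and $R(v,S)$ agree on $\Phi^+(v)\setminus\{\beta_0\}$ and it remains to show $\beta_0\in Y(v,S)\iff \beta_0\in R(v,S)$. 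If $\beta_0\in S$ this holds (both sides contain it). Assume $\beta_0\notin S$; then $\beta_0\in R(v,S)$ implies $\beta_0\in Y(v,S)$ by the previous paragraph. Conversely, if $\beta_0\in Y(v,S)$, then by the description of $Y$ in the proof of Theorem \ref{theodim} together with Lemma \ref{orderiso} we have $B_ve_S=B_ve_{S\cup\{\beta_0\}}$. Were $\beta_0$ orthogonal to every root of $S$, the pair $(v,S\cup\{\beta_0\})$ would be admissible with $\sigma_{v(S\cup\{\beta_0\})}=s_{v(\beta_0)}\sigma_{v(S)}>\sigma_{v(S)}$, whence $\dim Bvx_{S\cup\{\beta_0\}}>\dim Bvx_S$ by the dimension formula $\dim Bvx_S=\#\Psi+L(\sigma_{v(S)})$ of \cite{GM}, contradicting $B_ve_S=B_ve_{S\cup\{\beta_0\}}$. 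So some $\alpha\in S$ has $\langle\beta_0,\alpha\rangle\neq 0$; since $\beta_0,\alpha\in\Psi$ the vector $\beta_0+\alpha$ has $\alpha_P$-coefficient $2$ and is not a root, so $\beta_0-\alpha$ is a nonzero root lying in $\Phi_P$, and it must be positive, for otherwise $\alpha-\beta_0\in\Phi_P^+$ would force $\alpha>\beta_0$ against maximality of $\beta_0$ in $\Phi^+(v)\ni\alpha$. Hence $\beta_0-\alpha\in\Phi^+$, i.e.\ $\beta_0\in R(v,S)$, which closes the induction.

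The easy inclusion is routine and is the only place where $\ch\mathbb{K}\neq 2$ is used (through $a\neq 0$). The substance is the inductive step for the reverse inclusion: after reducing, via Theorem \ref{theodim}, to a root $\beta_0$ maximal in $\Phi^+(v)$, one must rule out that $\beta_0$ lies simultaneously in $Y(v,S)$ and is orthogonal to $S$ — this is the point forced by the characteristic-free dimension formula — after which elementary root combinatorics ($\beta_0+\alpha\notin\Phi$, plus maximality of $\beta_0$) promotes non-orthogonality to $\beta_0-\alpha\in\Phi^+$. The only thing requiring care throughout is the bookkeeping between $S$ and $S'=S\setminus\{\beta_0\}$, which is exactly where maximality of $\beta_0$ in $\Phi^+(v)$ gets used.
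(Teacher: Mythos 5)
Your easy inclusion is the same as the paper's, and your endgame for the hard inclusion (if $\beta_0$ were orthogonal to $S$ then $S$ and $S\cup\{\beta_0\}$ would be distinct orthogonal subsets of $\Phi^+(v)$ representing the same $B_v$-orbit, which is impossible; hence non-orthogonality, and then $\beta_0+\alpha\notin\Phi$ plus maximality force $\beta_0-\alpha\in\Phi_P^+$) is exactly the paper's root-combinatorial argument, just run in the contrapositive and with the dimension formula of \cite{GM} in place of Theorem \ref{orthogonalsubset}. The genuine difference is the reduction: you reduce to a maximal root by induction on $l(v)$, while the paper avoids any induction by treating each $\beta\in Y(v,S)$ separately: it passes to the element $v_\beta\in W^P$ for which $\beta$ is the \emph{maximum} of $\Phi^+(v_\beta)$, observes that the same witness $b\in B_L\subseteq B_{v_\beta}$ together with the translation part $\mathfrak{u}_{v_\beta}$ gives $B_{v_\beta}e_{S_\beta}=B_{v_\beta}e_{S_\beta\cup\{\beta\}}$ with $S_\beta=S\cap\Phi^+(v_\beta)$, and then derives the contradiction there.

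The weak point of your route is the transfer step $Y(s_\alpha v,S')=Y(v,S)\cap\Phi^+(s_\alpha v)$, which you justify ``as in the proof of Theorem \ref{theodim}.'' When $\beta_0\notin S$ this is a tautology (the defining condition for membership in $Y(\cdot,S)$ does not involve $v$), but when $\beta_0\in S$ the set acted upon changes from $e_S$ to $e_{S'}=e_{S\setminus\{\beta_0\}}$, and the inclusion you actually need, $Y(v,S)\setminus\{\beta_0\}\subseteq Y(s_\alpha v,S')$, is not automatic: writing $be_{S'\cup\{\gamma\}}-e_{S'}=\bigl(be_{S\cup\{\gamma\}}-e_S\bigr)-\bigl(be_{\beta_0}-e_{\beta_0}\bigr)$, the second term has support in $\{\delta\mid\delta\geq\beta_0\}$, and since $\Psi$ is not totally ordered, $\beta_0$ (a maximal, not maximum, element of $\Phi^+(v)$) need not be comparable to $\gamma$, so this support need not be $>\gamma$. (The paper's Theorem \ref{theodim} asserts the analogous identity only in the form $Y(s_\alpha v,S)=Y(v,S)\setminus\{\beta\}$, which dodges exactly this point.) The gap is repairable within your scheme: since $\Phi^+(v)$ is a lower set of $\Psi$, each $\gamma\in Y(v,S)$ lies below some maximal root of $\Phi^+(v)$, so you can prove $\gamma\in R(v,S)$ root by root, at each stage descending along a simple reflection whose associated maximal root $\beta_0$ satisfies $\beta_0>\gamma$ (then $\supp(be_{\beta_0}-e_{\beta_0})\geq\beta_0>\gamma$ and the same $b$ works), and handling the case $\gamma$ maximal by your orthogonality argument. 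Alternatively, replace the induction by the paper's per-root passage to $v_\gamma$, which makes the issue disappear because nothing outside $\Phi^+(v_\gamma)$, in particular nothing $>\gamma$, matters modulo $\mathfrak{u}_{v_\gamma}$.
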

\begin{proof}
The containment $\supseteq$ is clear whenever $\beta\in S$, while if there is $\alpha\in S$ such that $\beta-\alpha\in\Phi^+$, then $\beta-\alpha\in\Phi^+_P$, so $u_{\beta-\alpha}(t)\subseteq B_L$ and there is $t\in\mathbb{K}$ such that $\supp(u_{\beta-\alpha}(t).e_{S\cup\left\lbrace\beta\right\rbrace}-e_S)>\beta$.
Note that $u_{\beta-\alpha}(t)$ fixes all $e_\gamma$ with $\gamma\in S,\gamma\neq\alpha$ because of orthogonality.

For the converse, suppose $\beta\in Y(v,S)$ but $\beta\notin S$ and for every $\alpha\in S$ either $\beta-\alpha$ is not a root or it is a negative root.
Then denote with $v_\beta\in W^P$ the smallest element for which $\beta$ is maximal, or, equivalently, the only element for which $\beta$ is the maximum of $\Phi^+(v_\beta)$ and put $S_\beta=S\cap \Phi^+(v_\beta)$.
By the fact that $\beta\in Y(v,S)$ we have $b\in B_L$ such that $\supp(be_{S\cup\left\lbrace\beta\right\rbrace}-e_S)>\beta$, but $B_L\subseteq B_{v_\beta}$ so $B_{v_\beta}e_{S_\beta}=B_{v_\beta}e_{S_\beta\cup\left\lbrace\beta\right\rbrace}$ and that's a contradiction because $\beta$ is orthogonal to all $\alpha\in S_\beta$.
\end{proof}

We saw that when the root system is of type $\bf{B}$, the set of orbit is small enough to make viable a case by case analysis.
Concordantly, it is possible to have a similar analysis for the dimension of the orbits.

\begin{Corollary}[Dimension formula for type $\bf{B}$]
If $\ch(\mathbb{K})=2$ and the root system is of type $B$ then
\[\dim(Bvx_S)=\#\Psi+H(v,S)\]
where 

\[H(v,S)=\left\lbrace \begin{array}{ll}
						L(\sigma_{v(S)}) & \text{ if }S \text{ is orthogonal and }S\neq \left\lbrace\alpha_0\right\rbrace\\
						L(\sigma_{v(s_{\alpha_0})})-\#\left\lbrace \alpha_0<\alpha<\gamma\right\rbrace &\text{ if }S=\left\lbrace\alpha_0,\gamma\right\rbrace\\
						L(\sigma_{v(s_{\alpha_0})})-\#\left\lbrace \alpha\in\Phi^+(v)\mid\alpha_0<\alpha\right\rbrace &\text{ if }S=\left\lbrace\alpha_0\right\rbrace
						\end{array}
						\right.
						\]
\end{Corollary}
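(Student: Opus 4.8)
The plan is to invoke the general dimension formula of Theorem~\ref{theodim}, which applies because the type~$\mathbf{B}$ parametrization of Theorem~\ref{parametrizzazioneB} is a good parametrization (Definition~\ref{good}). By that theorem we know $\dim Bvx_S=\#\Psi+\#Y(v,S)$, so everything reduces to computing $\#Y(v,S)$ explicitly and matching it with $H(v,S)$. Recall that $Y(v,S)=\{\beta\in\Phi^+(v)\mid \exists\, b\in B_L \text{ with }\supp(be_{S\cup\{\beta\}}-e_S)>\beta\}$. The argument splits along the three shapes of an element $S\in\mathcal{H}_v$.

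First I would treat the case $S$ orthogonal with $S\neq\{\alpha_0\}$. Here the set $S$ contains no short root at all (the only short root in $\Psi$ is $\alpha_0$, and it is not orthogonal to any other root in $\Psi$, so if $\alpha_0\in S$ then $S=\{\alpha_0\}$). Thus $S$ consists entirely of long roots. For such $S$, I want to show $Y(v,S)$ agrees with the set appearing in Lemma~\ref{Ycharinf}, namely $\{\beta\in\Phi^+(v)\mid\beta\in S \text{ or }\exists\alpha\in S,\ \beta-\alpha\in\Phi^+\}$, and that this has cardinality $L(\sigma_{v(S)})$. The containment $\supseteq$ is the same as in the characteristic-$0$ proof: if $\beta\in S$ it is immediate, and if $\beta-\alpha=:\gamma\in\Phi^+$ for some $\alpha\in S$, then $\gamma\in\Phi_P^+$ (by the description of $\Psi$ and $\Phi_P$), the subgroup $U_\gamma\subseteq B_L$ acts on $e_\alpha$ by $u_\gamma(t).e_\alpha=e_\alpha+ate_\beta+bt^2e_{\alpha+2\gamma}$ with $a\neq0$, and by Lemma~\ref{sommaradici}(2) it fixes $e_{\alpha'}$ for $\alpha'\in S\setminus\{\alpha\}$; since $\alpha+2\gamma>\beta$ when it is a root, one gets $\supp(u_\gamma(t).e_{S\cup\{\beta\}}-e_S)>\beta$ for suitable $t$. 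For the reverse containment $\subseteq$ I would reduce, exactly as in Lemma~\ref{Ycharinf}, to the smallest $v_\beta\in W^P$ for which $\beta$ is maximal in $\Phi^+(v_\beta)$: if $\beta\notin S$ and $\beta-\alpha\notin\Phi^+$ for all $\alpha\in S$, then $\beta$ is orthogonal to $S_\beta=S\cap\Phi^+(v_\beta)$ (using strong orthogonality of roots in $\Psi$), so $B_{v_\beta}e_{S_\beta\cup\{\beta\}}\neq B_{v_\beta}e_{S_\beta}$, contradicting $\beta\in Y(v,S)$. Finally $\#Y(v,S)=L(\sigma_{v(S)})$: this is the standard combinatorial identity $\#\{\beta\in\Phi^+(v)\mid\beta\in S\text{ or }\beta-\alpha\in\Phi^+\text{ for some }\alpha\in S\}=\tfrac{l(\sigma_{v(S)})+\#S}{2}$ for $S$ an orthogonal set of long roots with $v(S)<0$, which holds independently of the characteristic and can be borrowed from \cite{GM}.

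Next, the two genuinely new cases, $S=\{\alpha_0\}$ and $S=\{\alpha_0,\gamma\}$ with $\gamma\in\Phi_P^+$ and $\alpha_0+\gamma\in\Psi$ a long root. The main point — and the only place characteristic~$2$ intervenes — is that the line $\spa(e_{\alpha_0})$ is $B_L$-stable: by Lemma~\ref{IET}, $u_\beta(t).e_{\alpha_0}=e_{\alpha_0}$ for every $\beta\in\Phi_P$, since adding $\beta$ to the short root $\alpha_0$ either leaves $\Phi$ or lands on a long root $\delta$ with $\delta-\beta\in\Phi$ too. Consequently, for $S=\{\alpha_0\}$: a root $\beta>\alpha_0$ in $\Phi^+(v)$ can \emph{never} be produced from $e_{\alpha_0}$ by $B_L$, so the only way $\beta\in Y(v,S)$ is $\beta=\alpha_0$ itself; hence $Y(v,\{\alpha_0\})=\{\alpha_0\}$ if $\alpha_0\in\Phi^+(v)$, whose cardinality $1$ must be checked to equal $L(\sigma_{v(s_{\alpha_0})})-\#\{\alpha\in\Phi^+(v)\mid\alpha_0<\alpha\}$. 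For that I would compute $L(\sigma_{v(\alpha_0)})=L(s_{v(\alpha_0)})=1+\#\{\text{long }\alpha>\alpha_0\text{ in }\Phi^+(v) \text{ with }\alpha-\alpha_0\in\Phi^+\}$ via the characteristic-$0$ count of $Y$, and observe that, because every root in $\Psi$ above $\alpha_0$ is non-orthogonal to $\alpha_0$ (every such root is either $e_1+e_i$ or $e_1$, and $\alpha_0=e_1$), this ``would-be'' contribution equals exactly $\#\{\alpha\in\Phi^+(v)\mid\alpha_0<\alpha\}$, giving the stated subtraction. The case $S=\{\alpha_0,\gamma+\alpha_0\}$ (writing $\gamma':=\gamma+\alpha_0$) is analogous: the long root $\gamma'$ behaves as in the orthogonal-long case, contributing roots $\beta\geq\gamma'$ with $\beta-\gamma'\in\Phi^+$, and additionally $\alpha_0$ and $\gamma'$ themselves lie in $Y$; the roots strictly between $\alpha_0$ and $\gamma'$, which \emph{would} contribute if we could move $e_{\alpha_0}$ but cannot because $\spa(e_{\alpha_0})$ is $B_L$-stable, are precisely the ones subtracted, i.e. $\#\{\alpha_0<\alpha<\gamma'\}$. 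One checks $L(\sigma_{v(s_{\alpha_0})})$ correctly records the ``untruncated'' count and that the roots lost are exactly those in that interval.

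The main obstacle I expect is bookkeeping in the last two cases: one must show that the only roots of $\Phi^+(v)$ which fail to enter $Y(v,S)$ relative to the characteristic-free count are exactly those that the $B_L$-action would otherwise reach through $e_{\alpha_0}$, and that these are enumerated cleanly by $\#\{\alpha_0<\alpha<\gamma\}$ (resp. $\#\{\alpha\in\Phi^+(v)\mid\alpha_0<\alpha\}$). This rests on two structural facts about type~$\mathbf{B}$: that $\Psi$ is totally ordered, and that $\spa(e_{\alpha_0})$ is $B_L$-stable. Once these are in hand, together with the fact — to be verified directly from Lemma~\ref{IET} — that for a long root $\beta\in\Psi$ and $b\in B_L$ the support $\supp(b.e_\beta)$ cannot create $e_{\alpha_0}$ either, the equality $\#Y(v,S)=H(v,S)$ follows by combining the count for the long part with the $+1$ or $+2$ coming from $\alpha_0$ (and $\gamma'$) and the truncation just described. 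I would close by remarking that $L(\sigma_{v(S)})$ here genuinely means $L$ of the involution attached to the orthogonal part, consistent with the notation $S_s$, $X(S)$ used later for type~$\mathbf{C}$.
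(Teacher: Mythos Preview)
Your plan is correct and is precisely the case-by-case analysis the paper alludes to; the paper itself gives no proof of this corollary beyond the remark that the type~$\mathbf{B}$ orbit set is small enough for a direct computation, so your outline is in fact more detailed than what the paper provides. One small omission: in the cases $S=\{\alpha_0\}$ and $S=\{\alpha_0,\gamma\}$ you only argue that roots $\beta>\alpha_0$ (resp.\ $\alpha_0<\beta<\gamma$) are excluded from $Y(v,S)$, but you should also note that any $\beta<\alpha_0$ is excluded too, since the $e_\beta$-component of $b.e_{S\cup\{\beta\}}$ comes solely from $b.e_\beta$ and is never zero.
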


We also have a similar result for the orbits in type $\bf{C}$.

\begin{theorem}[Dimension formula for type $\bf{C}$]\label{theodimC}
Fix $v\in W^P$ and $S$ full admissible for $v$.
Then:
\[\dim(Bvx_S)=\#\Psi+L(\sigma_{v(X(S))})-\#S_s+\#Z(S)\]
\end{theorem}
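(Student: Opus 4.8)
The plan is to reduce the formula to a purely combinatorial count by means of the general dimension result, Theorem \ref{theodim}, and then to pin that count down by comparing it with the odd‑characteristic situation, where it is already known.

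\textbf{Reduction.} Since the type $\mathbf{C}$ parametrization of Definition \ref{parametrizzazioneC} is good (the lemma preceding Theorem \ref{theodim}), Theorem \ref{theodim} gives, for $v\in W^P$ and $S$ full admissible, $\dim(Bvx_S)=\#\Psi+\#Y(v,S)$, where $Y(v,S)=\{\beta\in\Phi^+(v)\mid \exists\,b\in B_L \text{ with }\supp(be_{S\cup\{\beta\}}-e_S)>\beta\}$. Hence the statement is equivalent to the identity $\#Y(v,S)=L(\sigma_{v(X(S))})-\#S_s+\#Z(S)$, and everything reduces to computing $\#Y(v,S)$ combinatorially.

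\textbf{A description of $Y(v,S)$ in characteristic $2$.} Next I would prove an analogue of Lemma \ref{Ycharinf} in characteristic $2$. The point is that $\beta\in Y(v,S)$ precisely when $e_\beta$ can be pushed above $\beta$ by a one‑parameter subgroup $U_\gamma\subseteq B_L$ with $\gamma=\beta-\alpha\in\Phi^+_P$ for some $\alpha\in S$, without leaving collateral summands at or below $\beta$. Using Lemma \ref{IET} and the type $\mathbf{C}$ root arithmetic recalled at the start of Section \ref{tipoC}, the only obstruction, compared with odd characteristic, occurs when $\alpha$ is short and $\beta=\suc(\alpha)$: then $\gamma$ can be both added to and subtracted from $\alpha$, so $U_\gamma$ fixes $e_\alpha$ — this is the local content of Proposition \ref{cortelunghe}. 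The auxiliary $t^2$‑terms that arise when a long root is moved up must be shown to land strictly above $\beta$; this is controlled by strong orthogonality (Lemma \ref{sommaradici}), by Lemma \ref{Lemmautile}, by linear independence (Lemma \ref{independentC}), and by the structure of full admissible sets (Theorem \ref{completion}, Corollary \ref{compcomp}), exactly as in the proof of Theorem \ref{secondaparte}. The conclusion I would extract: $Y(v,S)$ is obtained from $Y_{\neq 2}(v,X(S))$ — the set produced by the odd‑characteristic recipe of Lemma \ref{Ycharinf} applied to the orthogonal set $X(S)\subseteq\Phi^+(v)$ — by deleting the successors $\suc(\alpha)$ for $\alpha$ ranging over the short roots of $S$, and adjoining the roots of $Z(S)$.

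\textbf{Bookkeeping and conclusion.} Here I would use the following facts: $\#Y_{\neq 2}(v,X(S))=L(\sigma_{v(X(S))})$, which follows from Theorem \ref{theodim} applied in odd characteristic together with the dimension formula of \cite{GM} for the pair $(v,X(S))$ (admissible in the sense of \cite{GM}, being orthogonal); $\suc$ is injective on the short roots of $S$, since orthogonal short roots in type $\mathbf{C}$ have disjoint index sets; for $\alpha\in\Phi^+(v)$ short one has $\suc(\alpha)\in\Phi^+(v)$ and $\suc(\alpha)\in Y_{\neq 2}(v,X(S))\setminus X(S)$; and $Z(S)\subseteq\suc(S_s)$ with $\pad$ its inverse, while $Z(S)\subseteq S\subseteq Y(v,S)$, by Definition \ref{admissibleC} and the uniqueness discussion following it. Combining these with the description of Step 2, passing from $Y_{\neq 2}(v,X(S))$ to $Y(v,S)$ removes one successor for each of the $\#S_s$ short roots of $S$ and restores the $\#Z(S)$ roots of $Z(S)$ (each being $\suc(\pad(\beta))$, one of the removed successors), whence $\#Y(v,S)=L(\sigma_{v(X(S))})-\#S_s+\#Z(S)$, as claimed.

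The main obstacle is Step 2: the combinatorial identification of $Y(v,S)$ in characteristic $2$. The delicate assertion is that in each move $u_\gamma(t).e_S$ no collateral summand lands at or below $\beta$; the full‑admissibility conditions (3) and (4) of Definition \ref{fulladmissibleC} (equivalently of Definition \ref{parametrizzazioneC}) are precisely what makes this work, and without them the count is genuinely different, as the examples $a2),b2)$ following Definition \ref{fac} show. Once that description is in hand, the rest is the elementary counting above.
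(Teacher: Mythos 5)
Your route is the same as the paper's: reduce to counting $\#Y(v,S)$ via Theorem \ref{theodim}, describe $Y(v,S)$ in characteristic $2$ by comparison with Lemma \ref{Ycharinf} applied to the orthogonal set $X(S)$, and then do the bookkeeping. Your Step 2, read as a set identity with the deletion performed inside $\Phi^+(v)$, agrees with the characterization the paper actually proves, namely $Y(v,S)=\{\beta\in\Phi^+(v)\mid \beta\in S \text{ or } \beta \text{ short and } \beta-\alpha\in\Phi^+ \text{ for some }\alpha\in X(S)\}$, and the mechanisms you invoke (Lemma \ref{IET}, Proposition \ref{cortelunghe}, full admissibility together with Theorem \ref{secondaparte} for excluding long $\beta\notin S$) are the ones the paper uses; that part is a sketch, but a faithful one.

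The genuine gap is in the bookkeeping, at your asserted ``fact'' that for $\alpha\in\Phi^+(v)$ short one has $\suc(\alpha)\in\Phi^+(v)$. This is false: for $v\in W^P$ the set $\Phi^+(v)$ is a lower order ideal of $\Psi$ (if $\gamma<\beta$ in $\Psi$ then $\beta-\gamma$ is a nonnegative sum of roots of $\Delta_P$, which $v$ keeps positive, so $v(\beta)<0$ forces $v(\gamma)<0$), whereas $\suc(\alpha)>\alpha$. Concretely, in type $\mathbf{C}_2$ take $v=s_{e_1-e_2}s_{2e_2}\in W^P$, so that $\Phi^+(v)=\{2e_2,\,e_1+e_2\}$, and $S=\{e_1+e_2\}$, which is full admissible for $v$; here $\suc(e_1+e_2)=2e_1\notin\Phi^+(v)$. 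Consequently the passage from $Y_{\neq 2}(v,X(S))$ to $Y(v,S)$ deletes only $\#\{\alpha\in S_s\mid \suc(\alpha)\in\Phi^+(v)\}$ elements rather than $\#S_s$, so your argument proves $\#Y(v,S)=L(\sigma_{v(X(S))})-\#\{\alpha\in S_s\mid \suc(\alpha)\in\Phi^+(v)\}+\#Z(S)$ and nothing more. The discrepancy is not repairable by a better argument for the same count: in the example above one computes directly that $B_ve_S=\{ae_{e_1+e_2}+be_{2e_1}\mid a\neq 0\}$ is two-dimensional and $l(v)=2$, so $\dim Bvx_S=4$, while $\#\Psi+L(\sigma_{v(X(S))})-\#S_s+\#Z(S)=3+1-1+0=3$. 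So the ``$-\#S_s$'' term is only correct under the extra hypothesis $\suc(S_s)\subseteq\Phi^+(v)$ (automatic for $v=\omega^P$, i.e.\ for the $B$-orbits on $\mathfrak{p}^u$). I should add, in fairness, that the paper's own last paragraph makes the same silent identification (equating the long roots removed with all short $\alpha\in S$ such that $\suc(\alpha)\notin S$, without requiring $\suc(\alpha)\in\Phi^+(v)$), so your proposal reproduces the paper's argument gap and all; but as a blind proof of the displayed formula this step fails, and the honest conclusion is the corrected count just stated.
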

\begin{proof}
We can then apply theorem \ref{theodim} to get
\[\dim(Bvx_S)=\#\Psi+\#Y(v,S)\]
We want to give another description on $Y(v,S)$.
We claim that
\[Y(v,S)=\left\lbrace\beta\in\Phi^+(v) \mid \beta\in S\text{ or } \beta \text{ is short and there is }\alpha\in X(S) \text{ such that }\beta-\alpha\in\Phi^+\right\rbrace\]
The containment $\supseteq$ is clear if $\beta\in S$, so suppose $\beta$ short and $\alpha\in X(S)$ such that $\gamma=\beta-\alpha$ is a positive root.
Note that $u_{\beta-\alpha}(t)$ fixes all $e_\gamma$ with $\gamma\in X(S),\gamma\neq\alpha$ because of orthogonality and if $\suc(\alpha)\in S$ than $\supp(u_{\beta-\alpha}(t).e_{\suc(\alpha)}-e_{\suc(\alpha)})>\beta$.
Then there is $t\in\mathbb{K}$ such that $\supp(u_{\beta-\alpha}(t).e_{S\cup\left\lbrace\beta\right\rbrace}-e_S)>\beta$.

On the other hand suppose $\beta \in Y(v,S)$, and suppose at first by contradiction that $\beta\notin S, \beta$ long.
Then, $S$ being full admissible $S\cup\left\lbrace \beta\right\rbrace$ is admissible and its full admissible completion represent a different orbit, so $\beta\notin Y(v,S)$.

Suppose instead $\beta$ short and for every $\alpha\in X(S)$ either $\beta-\alpha$ is not a root or it is a negative root.
Then for every $\alpha\in X(S)$ either $(\alpha,\beta)=0$ or $\beta<\alpha$.
So, consider $v_\beta\in W^P$ the element such that the unique maximal root in $\Phi^+(v_\beta)$ is $\beta$.
By the fact that $\beta\in Y(v,S)$ we have $b\in B_L$ such that $\supp(be_{S\cup\left\lbrace\beta\right\rbrace}-e_S)>\beta$, but $b\in B_{v_\beta}$ so $B_{v_\beta}e_S=B_{v_\beta}e_{S\cup\left\lbrace\beta\right\rbrace}$ and that's a contradiction because $\beta$ is orthogonal to all $\alpha\in X(S)\cap\Phi^+(v_\beta)$.

We now recall that, by lemma \ref{Ycharinf},
\[L(\sigma_{v(X(S))})=\#\left\lbrace\beta\in\Phi^+(v)\mid \beta\in S \text{ or there is }\alpha\in S\text{ such that }\beta-\alpha\in\Phi^+\right\rbrace\]
It follows that in this case $\#Y(v,S)$ is exactly $L(\sigma_{v(X(S))})$ minus the number of long roots $\beta\notin S$ such that there is $\alpha\in S$ and $\beta-\alpha\in\Phi^+$.
This is exactly the number of short roots $\alpha$ in $S$ for which $\suc(\alpha)\notin S$ or alternatively, the number of all short roots in $S$ minus the number of roots in $Z(S)$.
\end{proof}

\bibliography{bibliografia}{}

\begin{thebibliography}{1}

\bibitem{Seitz}
Gary~M. Seitz.
\newblock {Unipotent elements, tilting modules, and saturation}.
\newblock {\em Inventiones mathematicae}, 141(3):467--502, Sep 2000.

\bibitem{Pan}
Dmitri~I. Panyushev.
\newblock {On the orbits of a Borel subgroup in abelian ideals}.
\newblock {\em {Transformation groups}}, (22):503--524, 2017.

\bibitem{RS2}
R.W. Richardson and T.A. Springer.
\newblock {Combinatorics and geometry of K-orbits on the flag manifold}.
\newblock In R.S Elman, M.S. Schacher, and V.S. Varadarajan, editors, {\em
  {Linear Algebraic Groups and their Representations}}, volume 153 of {\em
  Contemporary Mathematics}, pages 109--142, 1993.

\bibitem{GM}
Jacopo {Gandini} and Andrea {Maffei}.
\newblock {The Bruhat order on Hermitian symmetric varieties and on abelian
  nilradicals}.
\newblock {\em arXiv e-prints}, page arXiv:1708.05523, August 2017.

\bibitem{RS}
R.W. Richardson and T.A. Springer.
\newblock {The Bruhat order on symmetric varieties}.
\newblock {\em {Geom. Dedicata}}, (35):389--436, 1990.

\bibitem{RRS}
R.W. Richardson, Gerhard Rohrle, and Robert Steinberg.
\newblock {Parabolic subgroups with abelian unipotent radical}.
\newblock {\em Inventiones Mathematicae}, 110:649--671, 1992.

\bibitem{Humphreys}
James~E. Humphreys.
\newblock {\em {Introduction to Lie algebra and Representation Theory}}.
\newblock Springer, New York, NY.

\bibitem{Conjugacy}
Robert Steinberg.
\newblock {\em {Conjugacy classes in algebraic groups}}.
\newblock Springer, Berlin, Heidelberg, 1978.

\end{thebibliography}
\bibliographystyle{unsrt}

\end{document}